\theoremstyle{plain}
\newtheorem{theorem}{Theorem}[section]
\newtheorem{remark}[theorem]{Remark}
\newtheorem{lemma}[theorem]{Lemma}
\newtheorem{corollary}[theorem]{Corollary}
\newtheorem{question}[theorem]{Question}
\newtheorem{proposition}[theorem]{Proposition}
\theoremstyle{definition}
\newtheorem{assumption}[theorem]{Assumption}
\newtheorem{definition}[theorem]{Definition}
\newtheorem{notation}[theorem]{Notation}
\newtheorem{example}[theorem]{Example}
\numberwithin{equation}{section}
\newcommand{\ca}{\mbox{$C\sp*$-}al\-ge\-bra\xspace}
\newcommand{\cas}{\mbox{$C\sp*$-}al\-ge\-bras\xspace}
\newcommand{\starhomo}{\mbox{$\sp*$-}ho\-mo\-morphism\xspace}
\newcommand{\stariso}{\mbox{$\sp*$-}iso\-morphism\xspace}
\newcommand{\starisos}{\mbox{$\sp*$-}iso\-morphisms\xspace}
\newcommand{\fct}[3]{\ensuremath{#1\colon #2\rightarrow #3}\xspace}
\newcommand{\Z}{\ensuremath{\mathbb{Z}}\xspace}
\newcommand{\N}{\ensuremath{\mathbb{N}}\xspace}
\newcommand{\C}{\ensuremath{\mathbb{C}}\xspace}
\newcommand{\Q}{\ensuremath{\mathbb{Q}}\xspace}
\newcommand{\K}{\ensuremath{\mathbb{K}}\xspace}
\newcommand{\ie}{\emph{i.e.}\xspace}
\newcommand{\cf}{\emph{cf.}\xspace}
\newcommand{\eg}{\emph{e.g.}\xspace}
\newcommand{\id}{\ensuremath{\operatorname{id}}\xspace}
\newcommand{\cok}{\operatorname{cok}}
\newcommand{\noshow}[1]{}
\newcommand{\kk}{\ensuremath{\operatorname{KK}}\xspace}
\newcommand{\uline}[1]{\underline{#1}}
\newcommand{\uuline}[1]{\underline{\underline{#1}}}
\newcommand{\FKRplus}{\ensuremath{\operatorname{FK}^+_\mathcal{R}}\xspace}
\newcommand{\FKR}{\ensuremath{\operatorname{FK}_\mathcal{R}}\xspace}
\newcommand{\FKRs}{\ensuremath{\operatorname{FK}_\mathcal{R}^s}\xspace}
\newcommand{\A}{\ensuremath{\mathfrak{A}}\xspace}
\newcommand{\B}{\ensuremath{\mathfrak{B}}\xspace}
\newcommand{\Asf}{\mathsf{A}}
\newcommand{\Bsf}{\mathsf{B}}
\newcommand{\Esf}{\mathsf{E}}
\newcommand{\Prim}{\operatorname{Prim}}
\newcommand{\Prime}{\operatorname{Prime}}
\newcommand{\calP}{\ensuremath{\mathcal{P}}\xspace}
\newcommand{\GLZ}[1][n]{\ensuremath{\operatorname{GL}(#1,\Z)}\xspace}
\newcommand{\GL}{\ensuremath{\operatorname{GL}}\xspace}
\newcommand{\SLZ}[1][n]{\ensuremath{\operatorname{SL}(#1,\Z)}\xspace}
\newcommand{\SL}{\ensuremath{\operatorname{SL}}\xspace}
\newcommand{\GLPZ}[1][\mathbf{n}]{\ensuremath{\operatorname{GL}_\calP(#1,\Z)}\xspace}
\newcommand{\GLP}{\ensuremath{\operatorname{GL}_\calP}\xspace}
\newcommand{\SLPZ}[1][\mathbf{n}]{\ensuremath{\operatorname{SL}_\calP(#1,\Z)}\xspace}
\newcommand{\SLP}{\ensuremath{\operatorname{SL}_\calP}\xspace}
\newcommand{\MZ}[1][\mathbf{m}\times\mathbf{n}]{\ensuremath{\mathfrak{M}(#1,\Z)}\xspace}
\newcommand{\MPZ}[1][\mathbf{m}\times\mathbf{n}]{\ensuremath{\mathfrak{M}_\calP(#1,\Z)}\xspace}
\newcommand{\MPZc}[1][\mathbf{m}\times\mathbf{n}]{\ensuremath{\mathfrak{M}^\circ_\calP(#1,\Z)}\xspace}
\newcommand{\MPZcc}[1][\mathbf{m}\times\mathbf{n}]{\ensuremath{\mathfrak{M}^{\circ\circ}_\calP(#1,\Z)}\xspace}
\newcommand{\MPZccc}[1][\mathbf{m}\times\mathbf{n}]{\ensuremath{\mathfrak{M}^{\circ\circ\circ}_\calP(#1,\Z)}\xspace}
\newcommand{\Mplus}[1][m\times n]{\ensuremath{\mathfrak{M}^+(#1,\Z)}\xspace}
\newcommand{\MPplusZ}[1][\mathbf{m}\times\mathbf{n}]{\ensuremath{\mathfrak{M}^+_\calP(#1,\Z)}\xspace}
\newcommand{\ftn}[3]{ #1 \colon #2 \rightarrow #3 }
\newcommand{\setof}[2]{\left\{ #1 \;\middle|\; #2 \right\}}
\newcommand{\GLPE}{\GLP-equivalent\xspace}
\newcommand{\SLPE}{\SLP-equivalent\xspace}
\newcommand{\GLPEe}{\GLP-equivalence\xspace}
\newcommand{\SLPEe}{\SLP-equivalence\xspace}
\newcommand{\Pul}{Pulelehua\xspace}
\newcommand{\Meq}{\ensuremath{\sim_{M\negthinspace E}}\xspace}
\newcommand{\MCeq}{\ensuremath{\sim_{C\negthinspace E}}\xspace}
\newcommand{\MCPeq}{\ensuremath{\sim_{P\negthinspace E}}\xspace}
\newcommand{\OO}{\mbox{\texttt{\textup{(O)}}}\xspace}
\newcommand{\II}{\mbox{\texttt{\textup{(I)}}}\xspace}
\newcommand{\RR}{\mbox{\texttt{\textup{(R)}}}\xspace}
\newcommand{\RRplus}{\mbox{\texttt{\textup{(R+)}}}\xspace}
\newcommand{\SSS}{\mbox{\texttt{\textup{(S)}}}\xspace}
\newcommand{\CC}{\mbox{\texttt{\textup{(C)}}}\xspace}
\newcommand{\PP}{\mbox{\texttt{\textup{(P)}}}\xspace}
\newcommand{\CO}{\mbox{\texttt{\textup{(Col)}}}\xspace}
\newcommand{\mm}{\underline{m}}
\newcommand{\oneone}{\underline{1}}
\newenvironment{smallpmatrix}{\left(\begin{smallmatrix}}{\end{smallmatrix}\right)}
\newcommand{\first}[1]{}
\newcommand{\third}[1]{}
\newcommand{\toke}[1]{}
\newcommand{\skippable}{\color{black}}
\newcommand{\nomoreskippable}{\color{black}}
\def\citefirst{\@ifnextchar[{\@with}{\@without}}
\def\@with[#1]{\cite[#1]{arXiv:1604.05439v2}\xspace}
\def\@without{\cite{arXiv:1604.05439v2}\xspace}
\title[The complete classification of unital graph $C^*$-algebras]{The complete classification of unital graph $C^*$-algebras: Geometric and strong}
\date{\today}
\author{S\o{}ren Eilers}
\address{Department of Mathe\-matical Sciences, University of Copen\-hagen, Universi\-tets\-park\-en~5, DK-2100 Copen\-hagen, Den\-mark}
\email{eilers@math.ku.dk}
\author{Gunnar Restorff}
\address{Department of Science and Technology, University of the Faroe Islands, N\'{o}at\'{u}n~3, FO-100 T\'{o}rshavn, the Faroe Islands}
\email{gunnarr@setur.fo}
\author{Efren Ruiz}
\address{Department of Mathematics, University of Hawaii, Hilo, 200 W.~Kawili St., Hilo, Hawaii, 96720-4091 USA}
\email{ruize@hawaii.edu}
\author{Adam P.~W.~S\o{}rensen}
\address{Department of Mathematics, University of Oslo, PO BOX 1053 Blindern, N-0316 Oslo, Norway}
\email{apws@math.uio.no}
\keywords{Graph $C^*$-algebras, Geometric classification, $K$-theory, Flow equivalence}
\subjclass[2010]{46L35, 46L80 (46L55, 37B10)}
\begin{document}

\begin{abstract}
We provide a complete classification of the class of  unital graph \cas\ --- prominently containing the full family of Cuntz-Krieger algebras --- showing that Morita equivalence in this case is determined 
by ordered, filtered $K$-theory. 
The classification result is \emph{geometric} in the sense that it establishes that any Morita equivalence between $C^*(E)$ and $C^*(F)$ in this class can be realized by a sequence of moves leading from $E$ to $F$, in a way resembling the role of Reidemeister moves on knots.  As a key ingredient, we introduce a new class of such moves, establish that they leave the graph algebras invariant, and prove that after this augmentation, the list of moves becomes complete in the sense described above.

Along the way, we prove that every  (reduced, filtered) $K$-theory order isomorphism can be lifted to an isomorphism between the stabilized \cas{} --- and, as a consequence, that every such order isomorphism preserving the class of the unit comes from a \stariso between the unital graph $C^*$-algebras themselves.

It follows that the question of Morita equivalence   and \stariso 
amongst unital graph $C^*$-algebras is a decidable one.
As immediate examples of applications of our results we revisit the classification problem for quantum lens spaces and verify, in the unital case, the Abrams-Tomforde conjectures.
\end{abstract}

\maketitle

\section{Introduction}
\subsection{Background}

The ambition to classify graph $C^*$-algebras --- and their precursors, the Cuntz-Krieger algebras --- goes back to the inception of the field. In fact, as recently highlighted in \cite{arXiv:1511.01193v3}, Enomoto, Fujii and Watatani already in \cite{MR634920} showed by \emph{ad hoc} methods how to classify all the simple Cuntz-Krieger algebras that may be described by a $3\times 3$-matrix, predating by more than a decade the classification result of R\o rdam (\cite{MR1340839}) covering all simple Cuntz-Krieger algebras.

With the classification problem resolved for all simple graph $C^*$-algebras  around the turn of the millennium by the superposition of \cite{MR0397420} and \cite{MR1780426} due to the realization that any such $C^*$-algebra is either AF or purely infinite, the quest for classification of graph $C^*$-algebras moved into the realm of non-simple classification, where it has been a key problem  ever since.
The endeavour has evolved in parallel with the gradual realization of what invariants may prove to be complete in the case when the number of (gauge invariant) ideals is finite and the \cas in question are not stably finite. In this sense, the fundamental results obtained on the classification of certain classes of graph \cas are playing a role parallel to the one played by \cite{MR1340839}  as a catalyst for the Kirchberg-Phillips classification in \cite{MR1780426}.
 
Based on ideas pioneered by Huang (\cite{MR1301504}), the first two general results on the classification problem for nonsimple graph \cas were obtained by R\o rdam in \cite{MR1446202} and by Restorff in \cite{MR2270572} by very different methods. R\o rdam showed the importance of involving the full data contained in six-term exact sequences of the \cas given and proved a very complete classification theorem while restricting the ideal lattice to be as small as possible: only one nontrivial ideal. In Restorff's work, the ideal lattice was arbitrary among the finite ideal lattices, but as his method was to reduce the problem to classification of shifts of finite type and appeal to deep results by Boyle and Huang  from symbolic dynamics (\cite{MR1907894}, \cite{MR1990568}), only graph \cas in the Cuntz-Krieger class were covered.

Subsequent progress has mainly followed the approach in \cite{MR1446202} (see \cite{arXiv:1101.5702v3}, \cite{MR2563693}, \cite{MR3056712}), and hence applies only to restricted kinds of ideal lattices but with few further restrictions on the nature of the underlying graphs. The case of purely infinite graph \cas with finitely many ideals has been resolved (very interestingly, by a different invariant than what we use here), in recent work by Bentmann and Meyer (\cite{MR3628788}), but as summarized in \cite{MR3142033} there is not at present sufficient technology to take this approach much farther in the mixed cases than to \cas with three or four primitive ideals. 

In the paper at hand we close the  classification problem for all {unital} graph \cas, which includes \emph{all} Cuntz-Krieger algebras and thereby answer all the open questions in the addenda of \cite{MR2270572}; the proof strategy follows the  strategy from \cite{MR2270572} as generalized by the authors in various constellations over a period of 5 years (\cite{MR3082546}, \cite{MR2666426}, \cite{MR3047630}, \cite{MR3759003}).

\subsection{Approach}

Our method of proof, a substantial elaboration of key ideas from the authors' earlier work along with key ideas from symbolic dynamics (\cite{MR1990568}, \cite{MR1907894}), leads to a  \emph{geometric} classification, allowing us to conclude from a Morita equivalence between a pair of graph \cas $C^*(E)$ and $C^*(F)$ that a sequence  of  basic moves on the graphs will lead from $E$ to $F$ in a way resembling the role of Reidemeister moves on knots. The standard list of such moves, defining the notion of \emph{Cuntz move equivalence}, contains  moves that are closely related to those defining flow equivalence for shift spaces, along with the so-called \emph{Cuntz splice} (Move~\CC) which was introduced in \cite{MR866492}.

It is a key corollary of the main result of the present paper that any stable isomorphism between two unital graph $C^*$-algebras \emph{of real rank zero} can be realized by a Cuntz move equivalence, but as demonstrated in  \cite{MR3759003}, this is not generally the case. Indeed,
consider the graphs $E$ and $F$ from Figure~\ref{fig:counterex-graph1} and \ref{fig:counterex-graph2}, respectively. 
\begin{figure}[h]
\quad    \begin{subfigure}[b]{0.4\textwidth}
         $$\xymatrix{\bullet\ar@(ul,ur)[]\ar[d]\ar@/_2.5em/[dd]\ar@/_3.5em/[dd] \\ \bullet \ar@(ul,dl)[]\ar@(u,l)[] \ar[d] \\ \bullet \ar@(dr,dl)[]}$$\\
        \caption{A graph $E$}
        \label{fig:counterex-graph1}
    \end{subfigure}\qquad
    ~ 
    \begin{subfigure}[b]{0.4\textwidth}
        $$\xymatrix{\bullet\ar@(ul,ur)[]\ar[d] \\ \bullet \ar@(dl,ul)[]\ar@(u,l)[] \ar[d] \\ \bullet \ar@(dr,dl)[]}$$\\
        \caption{A graph $F$}
        \label{fig:counterex-graph2}
    \end{subfigure}
        \caption{Two graphs that are not Cuntz move equivalent but $C^*(E)\otimes\K\cong C^*(F)\otimes\K$}\label{fig:counterex}
\end{figure}
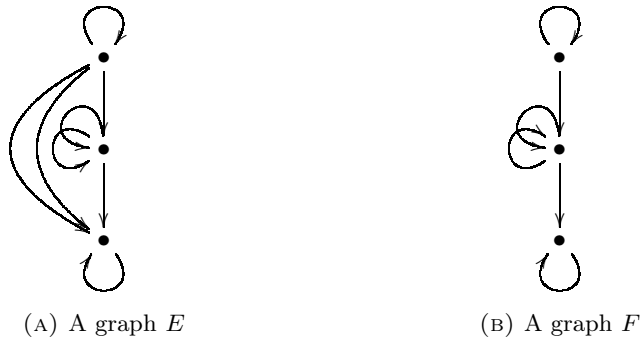
In \cite{MR3759003} it is shown that $C^*(E)\otimes\K$ is isomorphic to $C^*(F)\otimes\K$, but that $E$ is not Cuntz move equivalent to $F$. To be able to get a geometric classification, we therefore need to come up with a new invariant move for graphs that is not defined when the corresponding graph $C^*$-algebras are of real rank zero. In Definition~\ref{def:hashmove}, we define Move~\PP\ accomplishing this. The move applied to the graph $F$ can be seen in Figure~\ref{fig:counterex-changed-graph2}, while we have applied Move~\CC to $E$ in Figure~\ref{fig:counterex-changed-graph1}. It follows from the classical results of Boyle and Huang that the two graphs in Figure~\ref{fig:counterex-changed} are move equivalent, and using the results of \cite{MR3759003}, one can show that all the corresponding stabilized graph \cas are stably isomorphic. 
\begin{figure}[h]
    \quad
    \begin{subfigure}[b]{0.4\textwidth}
        $$\xymatrix{\bullet\ar@(ul,ur)[]\ar[d]\ar@/_2.5em/[dd]\ar@/_3.5em/[dd] \\ \bullet \ar@(ul,dl)[]\ar@(u,l)[] \ar[d]\ar@/^/[r] & \bullet\ar@(dr,dl)[]\ar@/^/[r]\ar@/^/[l] & \bullet\ar@(dr,dl)[]\ar@/^/[l] \\ \bullet \ar@(dr,dl)[]}$$\\
        \caption{Move \CC\ --- the Cuntz splice --- performed to the graph $E$}
        \label{fig:counterex-changed-graph1}
    \end{subfigure}\qquad
    ~ 
    \begin{subfigure}[b]{0.4\textwidth}
        $$\xymatrix{\bullet\ar@(ul,ur)[]\ar[d]\ar@/^1em/[drr]\ar@/^2em/[drr] \\ \bullet \ar@(ul,dl)[]\ar@(u,l)[] \ar[d]\ar@/^/[r] & \bullet\ar@(dr,dl)[]\ar@/^/[r]\ar@/^/[l] & \bullet\ar@(dr,dl)[]\ar@/^/[l] \\ \bullet \ar@(dr,dl)[]}$$\\
        \caption{Move~\PP performed to the graph $F$}
        \label{fig:counterex-changed-graph2}
    \end{subfigure}
    \caption{Two graphs that are Cuntz move equivalent}\label{fig:counterex-changed}
\end{figure}
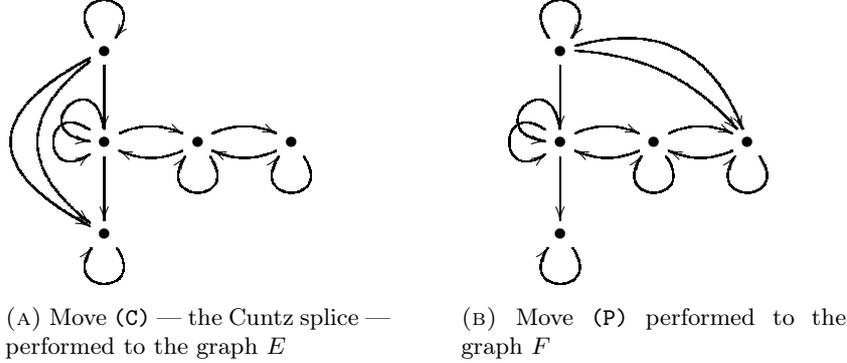

The main line of attack on establishing classification for general unital graph $C^*$-algebras follows the approach of establishing that the new move \PP\ does not change the Morita equivalence class of the $C^*$-algebra, something we prove after substantial work drawing on several recent results in the general classification theory of non-simple $C^*$-algebras. Using a localization trick essentially originating in \cite{MR866492} (see also \cite{MR1340839}, \cite{MR3713535}) we are able to establish invariance of this move in a setting which is way outside the scope of known general classification results.

By invoking  invariance of moves, the classification result follows as soon as one may infer from an isomorphism at the level of the invariants that there is a finite number of moves leading from one graph to another. We establish this by a substantial elaboration of the approach which led to the classification of flow equivalence of reducible shift spaces. Apart from having to develop tools to deal with the moves \CC\ and \PP, which have no counterpart in dynamics, we must also develop the theory for rectangular matrices to deal with graphs with singular vertices (sinks and infinite emitters), and in fact this part of the proof takes up more than half of the volume of the paper. A detailed roadmap of the rather technical argument is provided in Section \ref{section:outline} below.

\subsection{Improved classification}

In most areas of classification of unital $C^*$-algebras the proofs that establish classification by $K$-theory up to stable isomorphism may be rather easily amended to conclude that isomorphism of the  $K$-theory augmented with the class of the unit  leads to  exact isomorphism of the $C^*$-algebras. The classification of non-simple Cuntz-Krieger algebras is a notable exception to this rule, and indeed all of the foundational papers 
\cite{MR1301504}, \cite{MR1446202}, \cite{MR2270572} had to leave the question of exact isomorphism open in spite of providing stable classification results. The problem, as crystallized in \cite[Question 1--3]{MR2270572}, was that the given classification results were not \emph{strong}; it was not possible to infer that any given isomorphism at the level of the invariant would be realized by the \stariso guaranteed by the classification results.

An unexpected obstruction for obtaining strong classification was identified in \cite{MR3535322}, as indeed the full filtered $K$-theory fails to have this property even for Cuntz-Krieger algebras. This focused the efforts on establishing strong classification for the standard reduced invariant, and 
in \cite{MR3624419}, a complete description was given of the action on 
$K$-theory by the moves constituting Cuntz move equivalence. After obtaining such a description for Move \PP as well, we are able to conclude that our classification result is indeed {strong}:  Every given isomorphism at the level of the invariant is in fact induced by the moves in an appropriate sense. By methods that by now are standard, we obtain exact classification whenever the given $K$-theoretical isomorphism preserves the class of the unit, answering \cite[Question 1--3]{MR2270572} not just for the Cuntz-Krieger algebras of real rank zero considered by the second named author, but for all unital graph $C^*$-algebras.

The final Question 4 in \cite{MR2270572}  asks if there exist larger nice classes than the real rank zero Cuntz-Krieger algebras for which filtered $K$-theory is a complete invariant, and we now know that this indeed is the case --- even if one were unwilling to augment the invariant considered in \cite{MR2270572} by an order, the original version is complete for all unital graph $C^*$-algebras that are purely infinite. Further,  we now know that Restorff's classification result may be applied to any pair of purely infinite $C^*$-algebras having the same invariant as Cuntz-Krieger algebras, since by recent work of Bentmann, there are no \emph{phantom} Cuntz-Krieger algebras (\cite{MR3142030},\cite{arXiv:1511.09463v1}). Our work raises the question of whether phantom unital graph $C^*$-algebras exist, and we conjecture this to not be the case, at least when the real rank is zero.

\subsection{Applications and future work}

Unital graph $C^*$-algebras are ubiquitous in modern operator algebra, and hence our classification result can be expected to have many immediate applications, not least because the approach presented in this paper shows that the isomorphism problem in this case is decidable --- indeed, for any pair of graphs there are concrete algorithms to reduce the problem to one concerning the existence of a pair of invertible integer matrices on a certain block form which intertwine the adjacency matrices of the graphs. We give two concrete applications, addressing the uniqueness question for quantum lens spaces, and confirming the Abrams-Tomforde conjecture in the unital case.

In conclusion, let us briefly discuss the status of the classification problem for general (not necessarily unital) graph $C^*$-algebras with finitely many (gauge invariant) ideals. Since we see no obvious way to reduce to the unital case solved here, nor to mimic the geometric approach which is essential for our proof, it stands to reason that entirely different methods are going to be necessary. It is worth noting that complete classification results exist in the case where all simple subquotients are of the same type --- either AF (solved in \cite{MR0397420}) or purely infinite (solved in \cite{MR3628788}) --- and the early general results obtained by three of the authors (\cite{MR2666426},\cite{MR3056712}) similarly require restrictions on the amount of ``mixing'', so we predict that the key will be to  resolve how such issues influence the general $K\! K$-theoretic machinery which we in the unital case may replace with a geometric approach.
 
\skippable
\section{Preliminaries for statement of main theorem}
\label{genprel}

In this section, we introduce notation, concepts and results concerning graphs and their $C^*$-algebras that are needed for stating the main results. Except for the definition of  Move \PP and Pulelehua equivalence ($\MCPeq$), which are new to this paper, \cf\ Definitions~\ref{def:hashmove} and \ref{def:equivalences}, the definitions and notation follow closely \cite{MR3759003}. 
Also some important results from \cite{MR3759003} about how to view graph \cas as \cas over the space of those ideals that are prime within the gauge invariant ideals are restated. 

\subsection{\texorpdfstring{$C^*$}{C*}-algebras over topological spaces}
Throughout this paper, we will be working with \cas over topological spaces satisfying the $T_0$ separation condition. We will use the notation and definitions from \cite{MR3759003}, which are mainly adopted from \cite[\S\S{}2.2, 2.3]{MR2545613} --- concepts such as locally closed subsets, \cas over $X$, $X$-equivariant homomorphisms and isomorphisms, and notation such as $\mathbb{O}(X)$, $\mathbb{LC}(X)$, and $\mathbb{I}(\A)$ (see \cite{MR3759003} and the references therein). As usual, $\Prim(\A)$ denotes the \emph{primitive ideal space} of \A, equipped with usual hull-kernel topology. For each $X$-equivariant homomorphism $\Phi$ and each locally closed subset $Y\subseteq X$, $\fct{\Phi}{\A}{\B}$ induces a canonical \starhomo $\fct{\Phi}{\A(Y)}{\B(Y)}$. 

\subsection{Graphs and their matrices}

By a \emph{graph} we mean a directed graph. We say that a graph is \emph{finite}, if it has finitely many vertices and edges --- all graphs considered will be \emph{countable}, \ie, there are countably many vertices and edges. We follow the definitions and notation used in \cite{MR3759003}, which are mainly adopted from \cite[\S{}2]{MR3713535} --- concepts such as vertices, edges, range maps, source maps, loops, paths, length of a path, empty paths, cycles, vertex-simple cycles, exit for a cycle, return paths, regular vertices, singular vertices, sources, sinks, isolated vertices, adjacency matrices, Condition~(K), and notation like $r$, $s$, $E^0$, $E_\mathrm{reg}^0$,$E_\mathrm{sing}^0$, $E^1$, $\N$, $\N_0$, $\Asf_E$, $\Esf_A$, as well as we write $u\geq v$ when there is a path from vertex $u$ to vertex $v$.

As in \cite{MR3759003}, it will be essential for our approach to shift between a graph and its adjacency matrix. 
For convenience and to better fit previous work, we will mainly be considering adjacency matrices over the set $\{1,2,...,n\}$, where $n$ is the number of vertices in the graph --- up to graph isomorphism, this is always possible. 
In the same way as in \cite{MR3759003}, we will for a graph $E$ let $\Asf_{E}^\bullet$ denote the matrix obtained from adjacency matrix $\Asf_{E}$ by removing all rows corresponding to singular vertices of $E$, we let $\Bsf_E$ denote the matrix $\Asf_{E} - I$, and we let $\Bsf_{E}^\bullet$ denote the matrix $\Bsf_E$ with the rows corresponding to singular vertices of $E$ removed. 

\subsection{Graph \texorpdfstring{$C^*$}{C*}-algebras}
We follow the notation and definition for graph \cas in \cite{MR1670363}; this is not the convention used in Raeburn's monograph \cite{MR2135030}. 
For a graph $E$, we use the standard notion of a \emph{Cuntz-Krieger $E$-family} and of the associated \emph{graph \ca}, $C^*(E)$ (\cf\ \cite[\S{}2.1]{MR3713535} and the references therein). 

For a graph $E$, we define the canonical gauge action $\gamma$ and the gauge invariant ideals in the usual way, and let $\mathbb{I}_\gamma(C^*(E))$ denote the sublattice of $\mathbb{I}(C^*(E))$ consisting of gauge invariant ideals (\cf\ \cite[\S{}2.3]{MR3759003}). 

Every graph \ca (of a countable graph) is separable, nuclear in the UCT class (\cite{MR1738948},\cite{MR2117597}). 
A graph \ca is unital if and only if the corresponding graph has finitely many vertices. 
The class of graph \cas contains all Cuntz-Krieger algebras, and a graph \ca is isomorphic to a Cuntz-Krieger algebra if and only if the corresponding graph is finite with no sinks (\cf\ \cite[Theorem~3.12]{MR3391894}).

\subsection{Structure of graph \texorpdfstring{$C^*$}{C*}-algebras}
For this paper, it is essential for us to view the graph \cas as $X$-algebras over a topological space $X$ that --- in general --- is different from the primitive ideal space. This is due to the fact that when there exist ideals that are not gauge invariant, then there are infinitely many ideals. 
The space we choose to work with corresponds to the distinguished ideals being exactly the gauge invariant ideals. In \cite{MR3759003}, we show a \ca{ic} characterization of the gauge invariant ideals and we describe the space $X=\Prime_\gamma(C^*(E))$, such that the distinguished ideals under a canonical $X$-algebra action is exactly the gauge invariant ideals. 

We refer the reader to \cite[\S\S{}3.1, 3.2]{MR3759003} and the references therein for definitions, notation, results, and a more detailed exposition of theory of ideal lattices and prime ideals of graph \cas, and the theory of gauge invariant prime ideals in specific --- standard concepts like hereditary subsets, saturated subsets, breaking vertices $B_H$ corresponding to a saturated and hereditary subset $H$, admissible pairs, the hereditary subset $H(S)$ generated by a subset $S$, the saturation $\overline{S}$ of a subset $S$, the ideal $\mathfrak{I}_{(H,S)}$ corresponding to an admissible pair $(H,S)$ ($\mathfrak{I}_H$ if $B_H=\emptyset$), the gap projections $p_{v_0}^H$, the primitive ideal space $\Prim(\A)$ of a \ca, and the hull $\operatorname{hull}(\mathfrak{I})$ of an ideal $\mathfrak{I}$. It is a key result in the theory that $(H,S)\mapsto\mathfrak{I}_{(H,S)}$ is a lattice isomorphism between the lattice of admissable pairs and the lattice of gauge invariant ideals. 

For the benefit of the reader, we will recall the main definitions and results from the theory of gauge invariant prime ideals, which was devoloped in \cite{MR3759003}, and therefore are less standard; a more detailed exposition is found in \cite[\S\S{}3.1, 3.2]{MR3759003}.

\begin{definition}
Let $E = (E^0 , E^1 , r , s )$ be a graph. 
Let $\Prime_\gamma(C^*(E))$ denote the set of all proper ideals that are prime within the set of proper gauge invariant ideals. 

For every ideal $\mathfrak{I}$, we let $\operatorname{hull}_\gamma(\mathfrak{I})$ denote the set $\setof{\mathfrak{p} \in\Prime_\gamma(C^*(E))}{\mathfrak{p}\supseteq\mathfrak{I}}$ (similarly to the definition of $\operatorname{hull}(\mathfrak{I})$). 
We equip $\Prime_\gamma(C^*(E))$ with the topology where the closure of a subset $S\subseteq\Prime_\gamma(C^*(E))$ is 
$$\operatorname{hull}_\gamma(\cap S)=\setof{\mathfrak{p}\in\Prime_\gamma(C^*(E))}{\mathfrak{p}\supseteq\cap S}.$$
\end{definition}

The following proposition tells us how we may consider a graph \ca as an algebra over $\Prime_\gamma(C^*(E))$ such that the distinguished ideals are exactly the gauge invariant ideals. Proofs and a more thorough exposition are given in \cite{MR3759003}.

\begin{proposition}[\cf\ {\cite[Lemmata~3.3, 3.5, 3.6, and 3.7]{MR3759003}}] \label{prop:structure-first-prime-gamma}
Let $E$ be a graph.
\begin{enumerate}[(1)]
\item Isomorphisms between graph \cas preserve gauge invariant ideals --- the same holds for isomorphisms of the stabilization (with the standard identification of the ideal lattice of a \ca with that of its stabilization). 
\item
Every primitive gauge invariant ideal of $C^*(E)$ is in $\Prime_\gamma(C^*(E))$, and every primitive ideal of $C^*(E)$ that is not gauge invariant has a largest gauge invariant ideal as a subset, moreover this gauge invariant ideal is in $\Prime_\gamma(C^*(E))$. 
\item 
The map 
$$\mathfrak{I}\mapsto\operatorname{hull}_\gamma(\mathfrak{I})
=\setof{\mathfrak{p}\in\Prime_\gamma(C^*(E))}{\mathfrak{p}\supseteq\mathfrak{I}}$$ 
is an order reversing bijection from the gauge invariant ideals of $C^*(E)$ to the closed subsets of $\Prime_\gamma(C^*(E))$. 
Its inverse map is $S\mapsto \cap S$.
\item
Consider the map $\zeta$ from $\Prim(C^{*}(E))$ to $\Prime_\gamma(C^{*}(E))$ sending each primitive ideal to the largest element of $\Prime_\gamma(C^{*}(E))$ that it contains. 
This map is continuous and surjective. Moreover,
\begin{equation}
\zeta^{-1}(\operatorname{hull}_\gamma(\mathfrak{I}))
=\operatorname{hull}(\mathfrak{I}),
\end{equation}
for every gauge invariant ideal $\mathfrak{I}$ of $C^*(E)$, so the distinguished ideals under this $\Prime_\gamma(C^{*}(E))$ action are exactly the gauge invariant ideals. 
\end{enumerate}
\end{proposition}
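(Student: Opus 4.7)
The plan is to tackle the four claims in sequence, with (1) and the surjectivity assertion in (4) being the substantive steps and the rest following by formal manipulations. For (1), the key is to establish the intrinsic characterization that an ideal $\mathfrak{I}$ of $C^*(E)$ is gauge invariant if and only if it is generated, as a closed two-sided ideal, by projections. The ``if'' direction is immediate because each $\gamma_z$ fixes every projection of $C^*(E)$. The ``only if'' direction is built into Fact~\ref{fact:structure-1}: every gauge invariant ideal has the form $\mathfrak{J}_{(H,S)}$ and hence is generated by the projections $\{p_v : v\in H\}\cup\{p_v^H : v\in S\}$. Since any \stariso carries projections to projections, this characterization is manifestly preserved under isomorphisms. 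The stable version then follows from the standard identification $\mathfrak{I}\leftrightarrow \mathfrak{I}\otimes\K$; the generating projections simply pick up matrix units of $\K$.

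For (2), a primitive ideal is automatically prime in the full lattice $\mathbb{I}(C^*(E))$, hence a fortiori prime in the sublattice $\mathbb{I}_\gamma(C^*(E))$, so every primitive gauge invariant ideal lies in $\Prime_\gamma(C^*(E))$. For a primitive ideal $\mathfrak{p}$ that is not gauge invariant, set
\begin{equation*}
\mathfrak{p}_\gamma = \bigvee\{\mathfrak{J}\in\mathbb{I}_\gamma(C^*(E)) : \mathfrak{J}\subseteq \mathfrak{p}\}.
\end{equation*}
This ideal is gauge invariant, contained in $\mathfrak{p}$ since $\mathfrak{p}$ is closed, and thus the unique largest gauge invariant ideal inside $\mathfrak{p}$. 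Primality of $\mathfrak{p}_\gamma$ in $\mathbb{I}_\gamma$ is a one-line diagram chase: if $\mathfrak{I}\cap\mathfrak{J}\subseteq\mathfrak{p}_\gamma\subseteq\mathfrak{p}$ with $\mathfrak{I},\mathfrak{J}$ gauge invariant, primality of $\mathfrak{p}$ forces $\mathfrak{I}\subseteq\mathfrak{p}$ or $\mathfrak{J}\subseteq\mathfrak{p}$, and maximality of $\mathfrak{p}_\gamma$ then gives the same containment in $\mathfrak{p}_\gamma$.

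For (3) and (4), order-reversal of $\operatorname{hull}_\gamma$ and surjectivity onto closed subsets are built into the topology on $\Prime_\gamma(C^*(E))$, so it suffices to verify $\bigcap \operatorname{hull}_\gamma(\mathfrak{I}) = \mathfrak{I}$ for each gauge invariant $\mathfrak{I}$. This follows from the Jacobson identity $\mathfrak{I}=\bigcap\operatorname{hull}(\mathfrak{I})$ together with the observation that for every primitive $\mathfrak{q}\supseteq\mathfrak{I}$ one has $\mathfrak{I}\subseteq\zeta(\mathfrak{q})\subseteq\mathfrak{q}$ with $\zeta(\mathfrak{q})\in\operatorname{hull}_\gamma(\mathfrak{I})$. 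The identity $\zeta^{-1}(\operatorname{hull}_\gamma(\mathfrak{I}))=\operatorname{hull}(\mathfrak{I})$ is then immediate: $\zeta(\mathfrak{q})\supseteq\mathfrak{I}$ iff $\mathfrak{q}\supseteq\mathfrak{I}$, using $\zeta(\mathfrak{q})\subseteq\mathfrak{q}$ in one direction and gauge invariance of $\mathfrak{I}$ with maximality of $\zeta(\mathfrak{q})$ in the other; continuity of $\zeta$ is a formal consequence.

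The main obstacle is surjectivity of $\zeta$: given $\mathfrak{P}\in\Prime_\gamma(C^*(E))$ one must exhibit a primitive ideal $\mathfrak{q}$ with $\zeta(\mathfrak{q})=\mathfrak{P}$. My plan is to pass to the quotient $C^*(E)/\mathfrak{P}$, which is again (up to isomorphism) a graph \ca in which $0$ is prime within the gauge invariant lattice, and invoke the standard classification of the primitive ideal space of a graph \ca via maximal tails and cycles without exits to locate a primitive ideal whose largest gauge invariant subideal is $0$. The structural input required is that every graph \ca with $0$ prime in $\mathbb{I}_\gamma$ admits such a ``free'' primitive ideal, and this is where the real work of the argument in \cite{arXiv:1604.05439v2} is concentrated.
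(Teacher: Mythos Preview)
The paper does not supply its own proof of this proposition; it is stated with a citation to the companion paper \cite{arXiv:1604.05439v2}, where the proofs are given. So there is no in-paper argument to compare against directly. That said, your sketch is largely reasonable for parts (2)--(4), but there is a genuine error in your treatment of (1), and your handling of surjectivity in (4) is only a plan, not a proof.

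\textbf{The error in (1).} You claim that the ``if'' direction of the equivalence (gauge invariant $\Leftrightarrow$ generated by projections) is immediate because ``each $\gamma_z$ fixes every projection of $C^*(E)$.'' This is false. Already in $\mathcal{O}_2$ there are projections outside the fixed-point algebra: the self-adjoint element $a=s_1+s_1^*$ satisfies $\gamma_{-1}(a)=-a\neq a$, so $a$ is at positive distance from the (closed) fixed-point subalgebra; since $\mathcal{O}_2$ has real rank zero, approximating $a$ by a self-adjoint element with finite spectrum forces at least one spectral projection to lie outside the fixed-point algebra. What \emph{is} true, and what makes the characterization work, is that for any projection $p$ the path $z\mapsto\gamma_z(p)$ is norm-continuous, so $\gamma_z(p)$ and $p$ are Murray--von~Neumann equivalent (norm-close projections are equivalent, and one chains along the compact path), hence generate the same ideal. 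Thus the ideal generated by any family of projections is $\gamma$-invariant. Your ``only if'' direction via Fact~\ref{fact:structure-1} is fine. With this correction the intrinsic characterization goes through, and the preservation under \starisos{} and under stabilization follows as you indicate.

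\textbf{On surjectivity in (4).} You correctly isolate this as the nontrivial point and outline passing to $C^*(E)/\mathfrak{P}$ and invoking the Hong--Szyma\'nski description of the primitive ideal space to find a primitive ideal whose largest gauge-invariant subideal is zero. This is the right shape of argument, but as written it is only a pointer: you have not verified that the quotient is again (stably) a graph \ca{} in the generality needed, nor that the relevant maximal-tail/breaking-vertex analysis produces the desired primitive ideal. This is exactly the content deferred to \cite{arXiv:1604.05439v2}, so your proposal does not close the gap; it relocates it.
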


\begin{remark}
Assume that $E$ is a graph with finitely many vertices. Then $E$ satisfies Condition~(K) if and only if $C^*(E)$ has real rank zero if and only if $C^*(E)$ has finitely many ideals. 
When $E$ satisfies Condition~(K), then $\Prim(C^*(E))=\Prime_\gamma(C^*(E))$. 
\end{remark}

\subsection{Moves on graphs}
We need certain moves on graphs in order to formulate the geometric classification result, and these moves are also essential for the proofs of the classification results of this paper. 
The moves Move~\SSS (remove a regular source), Move~\RR (reduction at a regular vertex), Move~\OO (outsplit at a non-sink), Move~\II (insplit at a regular non-source), and Move~\CC (Cuntz splicing a vertex with at least two return paths) on a graph $E $ are all introduced in Propositions~3.1 and~3.2, Theorems~3.3 and~3.5 and the definition following Theorem~3.5 in \cite{MR3082546}, and we denote the resulting graphs by $E_S$, $E_R$, $E_O$, $E_I$, and $E_C$, respectively. 

See \cite{MR3713535} and especially Definition~2.11 therein for the definition and notation used for the Cuntz spliced graph. 
We also use the notation $E_{v,-}$ for this graph --- even in the case where $v$ is not regular or not supporting two return paths. We can also Cuntz splice the vertex $u_1$ in $E_{v,-}$, and the resulting graph we denote $E_{v,--}$. 
In Figures~\ref{fig:counterex-graph1} and~\ref{fig:counterex-changed-graph1} we saw an example of a Cuntz splicing. 
See \cite[Notation~4.1 and Example~4.2]{MR3713535} for more general illustrations of the Cuntz splice.
Let $S$ be a subset of $E^0$.  We denote the graph where we have Cuntz spliced each vertex $w$ in $S$ by $E_{S,-}$.  The additional vertices will be denoted by $v_1^w$ and $v_2^w$ for $w \in S$. 

Now we will introduce a new important move, that is needed to classify unital graph \cas in general. It has been shown in \cite{MR3082546} that the moves \SSS, \RR, \OO, and \II preserve the Morita equivalence class of the associated graph \cas, and it has been shown in \cite{MR3713535} that the Cuntz splice also preserves the Morita equivalence class of the associated graph \ca. 
Using results from \cite{MR3624419}, we will in the current paper keep track of what such a Morita equivalence induces on the reduced filtered $K$-theory as well (see Corollary~\ref{cor:cuntzspliceinvariant}). In the current paper we will, moreover, show that the new move \PP\ also preserves the Morita equivalence class of the associated graph \ca with a certain control on the behaviour on the reduced filtered $K$-theory (Proposition~\ref{prop:hash} and Theorem~\ref{thm:InvarianceOfPulelehua}). 

\begin{definition}[Move \PP: Eclosing a cyclic component] \label{def:hashmove}
Let $E=(E^0,E^1,r,s)$ be a graph and let $u$ be a regular vertex that supports a loop and no other return path, the loop based at $u$ has an exit, and if $w \in E^0 \setminus \{u\}$ and $s_E^{-1} (u) \cap r_E^{-1}(w) \neq \emptyset$, then $w$ is a regular vertex that supports at least two distinct return paths.  We construct $E_{u, P}$ as follows.  

Set $S = \setof{w \in E^0 \setminus \{u\}}{s_E^{-1}(u) \cap r_E^{-1}(w) \neq \emptyset}$ (since the loop based at $u$ has an exit, $S$ is nonempty, and clearly $u \notin S$).  Set $E_{u, P}^0 = E_{S,-}^0$ and
\[
E_{u, P}^1 = E_{S,-}^1 \sqcup \setof{\overline{e}_w, \tilde{e}_w }{w \in S, e \in s_E^{-1} (u) \cap r_E^{-1}(w) }
\]
with $s_{E_{u, P}} \vert_{E_{S,-}^1} = s_{E_{S,-}}$, $r_{E_{u, P}} \vert_{E_{S,-}^1} = r_{E_{S,-}}$, $s_{E_{u, P}} ( \overline{e}_w ) = s_{E_{u, P}} ( \tilde{e}_w ) = u$, and $r_{E_{u, P}} ( \overline{e}_w ) = r_{E_{u, P}} ( \tilde{e}_w ) = v_2^w$.
We call $E_{u,P}$ the \emph{graph obtained by eclosing $E$ at $u$}, and we say that $E_{u,P}$ is formed by performing Move \PP to $E$. 

Note that 
\[
\Bsf_{E_{u, P} } ( u, v_2^w ) = 2 \Bsf_E ( u, w )
\]
for all $w \in S$.  Therefore, $E_{u,P}$ is precisely $E_{S,-}$ where we add additional edges from $u$ to $v_2^w$ for each $w \in S$.  The number of edges from $u$ to $v_2^w$ is twice the number of edges from $u$ to $w$.  In Figures~\ref{fig:counterex-graph2} and~\ref{fig:counterex-changed-graph2} we saw an example of Move~\PP. 
See Notation~\ref{notation:OnceAndTwice-hash-2} and
Example~\ref{example:hash} for illustrations of Move~\PP, in the special case that $E$ and $u$ satisfy the conditions in Assumption~\ref{assumption:1-hash} (here $E_{u,\#}$ is exactly $E_{u,P}$ for these special case). 
\end{definition}

\begin{definition}\label{def:equivalences}
The equivalence relation generated by the moves \OO, \II, \RR, \SSS together with graph isomorphism is called \emph{move equivalence}, and is denoted \Meq. 
The equivalence relation generated by the moves \OO, \II, \RR, \SSS, \CC together with graph isomorphism is called \emph{Cuntz move equivalence}, and is denoted \MCeq. 
The equivalence relation generated by the moves \OO, \II, \RR, \SSS, \CC, \PP together with graph isomorphism is called \emph{\Pul equivalence}, and is denoted \MCPeq. 
\end{definition}

It was essentially proved in \cite{MR2054048} that move equivalence of graphs implies the corresponding graph \cas to be stably isomorphic (see also  \cite[Propositions~3.1, 3.2 and 3.3 and Theorem~3.5]{MR3082546}). 
Combining this with \cite[Theorem~4.8]{MR3713535}, it follows that Cuntz move equivalence of graphs also implies the corresponding graph \cas to be stably isomorphic. 

As in \cite[Definition~2.10]{MR3759003}, we also extend the notation of equivalences to adjacency matrices. Note that the Cuntz move equivalence, $\MCeq$, is called \emph{move prime equivalence} in \cite{MR3082546}. Since the similarity of the two terms could create confusion, we have chosen to use the term \emph{Cuntz move equivalence} instead. 

\subsection{Reduced filtered \texorpdfstring{$K$}{K}-theory}(\cf\ {\cite[Section~3.3]{MR3759003}}).

Let $X$ be a topological space satisfying the $T_0$ separation
property and let \A be a \ca over $X$.  For open subsets $U_{1} ,
U_{2} , U_{3}$ of $X$ with $U_{1} \subseteq U_{2} \subseteq U_{3}$, let $Y_{1} = U_{2} \setminus U_{1}, Y_{2} = U_{3} \setminus U_{1},
Y_{3} = U_{3} \setminus U_{2}\in \mathbb{LC}(X)$.  Then the
diagram
\begin{equation*}
\xymatrix{
K_{0} ( \A ( Y_{1}  ) ) \ar[r]^{ \iota_{*} } & K_{0} ( \A ( Y_{2} ) ) \ar[r]^{ \pi_{*} } & K_{0} ( \A ( Y_{3} ) ) \ar[d]^{\partial_{*}} \\
K_{1} ( \A ( Y_{3}  ) ) \ar[u]^{ \partial_{*}} & K_{1} ( \A ( Y_{2} ) ) \ar[l]^{ \pi_{*} } & K_{1} ( \A ( Y_{1} ) ) \ar[l]^{\iota_{*}}
}
\end{equation*}
is an exact sequence. The collection of all such exact sequences is an invariant of the \cas over $X$ often referred to as the \emph{filtered $K$-theory}. We use here a refined notion called \emph{reduced filtered $K$-theory}, which only includes some of these groups and some of these maps; the reduced filtered $K$-theory of a \ca \A is denoted $\FKR(X;\A)$. There is also a variant called \emph{ordered reduced filtered $K$-theory}, which takes into account the order of the involved groups; the ordered reduced filtered $K$-theory of a \ca \A is denoted $\FKRplus(X;\A)$. 
We refer the reader to \cite[\S{}3.3]{MR3759003} for a detailed exposition of the reduced filtered $K$-theory.

\begin{remark}[\cf\ {\cite[Remark~3.22]{MR3759003}}]\label{howtocompute}
Let $E$ be a graph. 
Then $C^*(E)$ has a canonical structure as a $\Prime_\gamma(C^*(E))$-algebra.
So if $E$ has finitely many vertices --- or, more generally, if $\Prime_\gamma(C^*(E))$ is finite --- then we can consider the reduced filtered $K$-theory, $\FKR(\Prime_\gamma(C^*(E));C^*(E))$. 
We use the results of \cite{MR2922394} to identify the $K$-groups and the homomorphisms in the cyclic six term sequences using the adjacency matrix of the graph. 
\end{remark}

\begin{remark}[\cf\ {\cite[Remark~3.23]{MR3759003}}]
Let \A be an $X$-algebra. 
Since $\mathfrak{I}\mapsto\mathfrak{I}\otimes\K$ is a lattice isomorphism between $\mathbb{I}(\A)$ and $\mathbb{I}(\A\otimes\K)$, the \ca $\A\otimes\K$ is an $X$-algebra in a canonical way, and the embedding $\kappa_\A$ given by $a\mapsto a\otimes e_{11}$ is an $X$-equivariant homomorphism from $\A$ to $\A\otimes\K$. 
Also, it is clear that  $\FKR(X;\kappa_\A)$ is an (order) isomorphism. 
Note also that the invariant $\FKR(X;-)$ has been considered in \cite{MR3177344,MR3349327}.
\end{remark}
\nomoreskippable

\begin{definition}\label{def:PreservingTheUnit}
Let $E_1$ and $E_2$ be graphs with finitely many vertices.
Assume that we have a homeomorphism between $\Prime_\gamma(C^*(E_1))$ and $\Prime_\gamma(C^*(E_2))$.  
We view $C^{*} ( E_{2} )$ as a $\Prime_\gamma(C^*(E_1))$-algebra via this homeomorphism.

Set $X=\Prime_\gamma(C^*(E_1))$.  From \cite[Lemma~8.3]{MR3349327} it follows that every homomorphism $\phi$ from $\FKR(X;C^*(E_1))$ to $\FKR(X;C^*(E_2))$ induces a unique homomorphism $\phi_0$ from $K_0(C^*(E_1))$ to $K_0(C^*(E_2))$ --- this gives, in fact, a unique functor $H$ from the category of exact real-rank-zero-like $\mathcal{R}$-modules to the category of abelian groups such that $H\circ \FKR=K_0\circ F$, where $F$ is the forgetful functor sending an $X$-algebra to the corresponding \ca (\cf\ \cite[Definitions~3.6 and~3.9]{MR3349327} and \cite[Theorems~3.2 and~3.3]{arXiv:1301.7695v1}). 
We say that the reduced filtered $K$-theory isomorphism $\phi$ \emph{preserves the class of the unit} if $\phi_0([1_{C^*(E_1)}]_0)=[1_{C^*(E_2)}]_0$. 
Whenever such an isomorphism exists, 
we write $(\FKR(X;C^*(E_1)),[1_{C^*(E_1)}]_0)\cong(\FKR(X;C^*(E_2)),[1_{C^*(E_2)}]_0)$ and say that $(\FKR(X;C^*(E_1)),[1_{C^*(E_1)}]_0)$ and $(\FKR(X;C^*(E_2)),[1_{C^*(E_2)}]_0)$ are \emph{isomorphic}. 
\end{definition}

\section{Main results}

\label{sec:main}

Our first main theorem is the following geometric classification of all unital graph \cas up to stable isomorphism. 
\begin{theorem}\label{thm:main-1}
Let $E_1$ and $E_2$ be graphs with finitely many vertices. 
Then the following are equivalent:
\begin{enumerate}[(1)]
\item\label{thm:main-1-item-1} 
$E_{1} \MCPeq E_{2}$, 
\item\label{thm:main-1-item-2} 
$C^{*} (E_{1} )\otimes\K \cong C^{*} ( E_{2} )\otimes\K$, and,
\item\label{thm:main-1-item-3} 
there exists a homeomorphism between $\Prime_\gamma(C^*(E_1))$ and $\Prime_\gamma(C^*(E_2))$ such that we have an isomorphism from 
$\FKRplus(\Prime_\gamma(C^*(E_1)); C^{*} (E_{1} ) )$ to $\FKRplus(\Prime_\gamma(C^*(E_1)); C^{*} ( E_{2} ) )$, where we view $C^{*} ( E_{2} )$ as a $\Prime_\gamma(C^*(E_1))$-algebra via the homeomorphism between $\Prime_\gamma(C^*(E_1))$ and $\Prime_\gamma(C^*(E_2))$.
\end{enumerate}
\end{theorem}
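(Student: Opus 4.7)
The plan is to prove the three-way equivalence by establishing (1) $\Rightarrow$ (2) $\Rightarrow$ (3) $\Rightarrow$ (1), with the last implication comprising the bulk of the work.

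For (1) $\Rightarrow$ (2), I invoke the invariance results collected in the preliminaries. Moves \OO, \II, \RR, \SSS preserve the stable isomorphism class by Theorem~\ref{thm:moveimpliesstableisomorphism} (with \OO\ in fact yielding a genuine isomorphism by Proposition~\ref{prop:moveOimpliesisomorphism}); Move \CC\ preserves the stable isomorphism class by Theorem~\ref{thm:cuntz-splice-implies-stable-isomorphism}; and Move \PP\ preserves it by Theorem~\ref{thm:InvarianceOfPulelehua}. Graph isomorphisms trivially induce \starisos, so any finite chain witnessing $E_1\MCPeq E_2$ produces a chain of stable isomorphisms. For (2) $\Rightarrow$ (3), a stable \stariso $\Phi\colon C^*(E_1)\otimes\K\to C^*(E_2)\otimes\K$ induces a lattice isomorphism on closed ideals and, by Proposition~\ref{prop:structure-first-prime-gamma}(1), a bijection of the gauge invariant ideals; combined with Proposition~\ref{prop:structure-first-prime-gamma}(3) this yields the required homeomorphism between the $\Prime_\gamma$ spaces. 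Naturality of the six-term sequence and the fact that stable isomorphisms preserve the order on $K_0$ then deliver the ordered, reduced filtered $K$-theory isomorphism.

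The central task is (3) $\Rightarrow$ (1), which I would decompose into a reduction step and a matricial realization step. First I would use Moves \OO, \II, \RR, \SSS in a preprocessing step to replace $E_1$ and $E_2$ with \Meq\ representatives in which every strongly connected component is in some standard form, and in which sources and unnecessarily reducible configurations have been eliminated. The obstruction exhibited by Figure~\ref{fig:counterex} is precisely that the ordered, reduced filtered $K$-theory does not distinguish between strongly connected components that satisfy Condition~(K) and those that consist of a single loop supporting an exit. Moves \CC\ and \PP\ serve to resolve this mismatch: applying Move \CC\ at each vertex supporting at least two return paths and Move \PP\ at the remaining regular vertices fitting the hypothesis of Definition~\ref{def:hashmove}, both graphs can be promoted to \MCPeq\ representatives whose adjacency data have the "doubled" structure needed for the matricial argument. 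That these modifications interact with $\FKRplus$ in a controlled manner is exactly the content of Corollary~\ref{cor:cuntzspliceinvariant} and Proposition~\ref{prop:hash}, which allows the hypothesized isomorphism to be pulled across the move transformations.

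Once both sides are in standard form, I would invoke the strong lifting results established along the way to lift the given $\FKRplus$ isomorphism to a $\Prime_\gamma$-equivariant $\kk$-equivalence, and then to a stable \stariso of the modified \cas. The final step is a substantial elaboration of the Boyle--Huang classification of reducible flow equivalence: the lifted data produces a pair of invertible block matrices over $\Z$ (respecting the block structure imposed by $\Prime_\gamma$) that intertwine $\Bsf^\bullet_{E_1}$ and $\Bsf^\bullet_{E_2}$, and these must be factored into elementary operations that translate back into Moves \OO, \II, \RR, \SSS on the graphs. The main obstacle is this combinatorial factorization: the classical Boyle--Huang theory is developed for square integer matrices in the irreducible (single-ideal) setting, and extending it to block upper-triangular matrices of mixed rectangular sizes---so as to accommodate singular vertices (sinks and infinite emitters) and the full gauge invariant ideal lattice---requires essentially new elementary-matrix lemmas. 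Once the rectangular reducible version of the factorization is in place, the sequence of matrix moves translates directly into the sequence of graph moves witnessing $E_1\MCPeq E_2$.
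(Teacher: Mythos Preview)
Your (1)$\Rightarrow$(2) and (2)$\Rightarrow$(3) are correct and match the paper. For (3)$\Rightarrow$(1), however, you have the role of Moves \CC\ and \PP\ backwards, and this is a genuine gap. You treat them as preprocessing moves applied uniformly at every eligible vertex to ``promote'' both graphs to some doubled normal form before the matricial argument begins. That is not how they enter, and applying them blindly would not help. The paper's actual route is: after reducing to standard form (Section~\ref{sec:standardform}), one lifts the $\FKRplus$ isomorphism directly to a \GLPEe $(U,V)$ between $\Bsf^\bullet_{E_1}$ and $\Bsf^\bullet_{E_2}$ via the generalized Boyle--Huang theorem (Theorem~\ref{thm:mainBH}). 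The obstruction is then that Boyle's positive factorization (Theorem~\ref{thm:BoyleTheorem4.4}) only decomposes \SLPEe{s} into basic positive equivalences --- a diagonal block of $U$ or $V$ with determinant $-1$ blocks the factorization entirely. Moves \CC\ and \PP\ are invoked \emph{after} $(U,V)$ is in hand, and \emph{selectively}, exactly at those components where the determinant sign is wrong: a single Cuntz splice flips the sign of $\det V\{i\}$ for a purely infinite block (Corollary~\ref{cor:cuntzspliceinvariant}), and Move \PP\ flips the sign of $U\{j\}$ at a cyclic component (Proposition~\ref{prop:hash}, Corollary~\ref{cor:Pulelehua}). Only after this sign-correction (Theorem~\ref{thm:GLtoSL}) does one obtain an \SLPEe, which the positive factorization then turns into graph moves. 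Your reading of the Figure~\ref{fig:counterex} obstruction is accordingly off: $\FKRplus$ does not fail to distinguish anything there --- the two algebras \emph{are} stably isomorphic --- rather, Moves \OO, \II, \RR, \SSS, \CC\ are insufficient to realize that isomorphism because they cannot repair a $-1$ determinant on a cyclic block of $U$.

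Your intermediate step of lifting the $\FKRplus$ isomorphism to a $\kk$-equivalence and then to a stable \stariso is also misplaced. No such general lifting machinery is available for these mixed ideal lattices --- the absence of it is precisely why the paper adopts a geometric approach --- and in any case, producing a stable \stariso would establish (2), not (1). The paper never passes through $\kk$; it stays at the level of matrix equivalences throughout, and the stable isomorphism in Theorem~\ref{thm:main-3} is a byproduct of the factorization, not an input to it.
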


Note that when $E$ has finitely many vertices and satisfies Condition~(K), then there are only finitely many ideals and they are all gauge invariant, and consequently $\Prim(C^*(E))=\Prime_\gamma(C^*(E))$. Since in this case we cannot perform Move~\PP, we get the following.

\begin{corollary}\label{cor:main-2}
Let $E_1$ and $E_2$ be graphs with finitely many vertices satisfying Condition~(K).
Then the following are equivalent:
\begin{enumerate}[(1)]
\item\label{cor:main-2-item-1} 
$E_{1} \MCeq E_{2}$, 
\item\label{cor:main-2-item-2} 
$C^{*} (E_{1} )\otimes\K \cong C^{*} ( E_{2} )\otimes\K$, and,
\item\label{cor:main-2-item-3} 
there exists a homeomorphism between $\Prim(C^*(E_1))$ and $\Prim(C^*(E_2))$ such that  
we have an isomorphism from $\FKRplus(\Prim(C^*(E_1)); C^{*} (E_{1} ) )$ to $\FKRplus(\Prim(C^*(E_1)); C^{*} ( E_{2} ) )$, where we view $C^{*} ( E_{2} )$ as a $\Prim(C^*(E_1))$-algebra via the above homeomorphism between $\Prim(C^*(E_1))$ and $\Prim(C^*(E_2))$.
\end{enumerate}
\end{corollary}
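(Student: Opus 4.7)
The plan is to deduce the corollary directly from Theorem~\ref{thm:main-1} by showing that under Condition~(K) the space $\Prime_\gamma(C^*(E_i))$ coincides with $\Prim(C^*(E_i))$ and that Move~\PP\ is never needed in the chain of equivalences. First I would handle the identification of the invariant: for a graph with finitely many vertices, Condition~(K) is equivalent to $C^*(E_i)$ having real rank zero, equivalently to $C^*(E_i)$ having finitely many ideals, in which case every ideal is gauge invariant (as recorded in the remark following Proposition~\ref{prop:structure-first-prime-gamma}). Hence $\Prim(C^*(E_i)) = \Prime_\gamma(C^*(E_i))$ for $i=1,2$, so condition~(3) of the corollary is precisely condition~(3) of Theorem~\ref{thm:main-1} after this identification.

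The implications (1)$\Rightarrow$(2) and (2)$\Rightarrow$(3) then follow verbatim from the corresponding parts of Theorem~\ref{thm:main-1}: invariance of Moves \OO, \II, \RR, \SSS\ under stable isomorphism is Theorem~\ref{thm:moveimpliesstableisomorphism}, invariance of Move~\CC\ is Theorem~\ref{thm:cuntz-splice-implies-stable-isomorphism}, and (2)$\Rightarrow$(3) is functoriality of $\FKRplus$ together with the standard lattice identification of the ideals of a \ca and its stabilization. The substance of the corollary thus lies in (3)$\Rightarrow$(1); Theorem~\ref{thm:main-1} produces a finite chain of graphs $E_1 = F_0, F_1, \dots, F_n = E_2$ whose consecutive terms are related by one of the moves \OO, \II, \RR, \SSS, \CC, \PP\ (or by graph isomorphism), and the remaining task is to show no instance of Move~\PP\ is needed.

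I would argue this as follows. Since each of the moves preserves the stable isomorphism class, we have $C^*(F_j)\otimes\K \cong C^*(E_1)\otimes\K$ for every $j$. Stable isomorphism preserves the lattice of closed two-sided ideals, so each $F_j$ has only finitely many ideals; combined with the characterization of Condition~(K) invoked above, this forces every $F_j$ to satisfy Condition~(K). On the other hand, Definition~\ref{def:hashmove} requires a regular vertex $u$ which supports the loop at $u$ and no other return path; such a $u$ has exactly one return path and so violates Condition~(K) at itself. Whichever of $F_j, F_{j+1}$ would serve as the source for an application of Move~\PP\ (forward or reverse) must then fail Condition~(K), contradicting what was just established. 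Hence the chain uses only moves from \OO, \II, \RR, \SSS, \CC\ and graph isomorphisms, giving $E_1 \MCeq E_2$. The main (and essentially only) obstacle in this corollary is the invocation of Theorem~\ref{thm:main-1}; the remaining work is a consistency check that Condition~(K) is preserved along the chain and is strictly incompatible with the source hypothesis of Move~\PP.
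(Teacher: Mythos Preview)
Your proposal is correct and follows essentially the same approach as the paper. The paper's argument is stated tersely just before the corollary (``since in this case we cannot perform Move~\PP'') and again at the end of the proof in Section~\ref{sec:proof}, where it notes that under Condition~(K) there are no cyclic components and hence Move~\PP\ is never invoked; you spell out the same idea more explicitly by tracking Condition~(K) along an arbitrary \MCPeq-chain via preservation of the ideal lattice under stable isomorphism, which is a perfectly good (and slightly more robust) way to reach the identical conclusion.
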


In the case that no cyclic component has only noncyclic strongly connected components as immediate successors, we cannot perform Move~\PP\ --- for finite graphs this condition is considered in \cite{MR3759003} and is called Condition~(H). So we get the following. 

\begin{corollary}\label{cor:main-2.5}
Let $E_1$ and $E_2$ be graphs with finitely many vertices satisfying that no cyclic component has only noncyclic strongly connected components as immediate successors.
Then the following are equivalent:
\begin{enumerate}[(1)]
\item\label{cor:main-2.5-item-1} 
$E_{1} \MCeq E_{2}$, 
\item\label{cor:main-2.5-item-2} 
$C^{*} (E_{1} )\otimes\K \cong C^{*} ( E_{2} )\otimes\K$, and,
\item\label{cor:main-2.5-item-3} 
there exists a homeomorphism between $\Prime_\gamma(C^*(E_1))$ and $\Prime_\gamma(C^*(E_2))$ such that we have an isomorphism from
$\FKRplus(\Prime_\gamma(C^*(E_1)); C^{*} (E_{1} ) )$ to $\FKRplus(\Prime_\gamma(C^*(E_1)); C^{*} ( E_{2} ) )$, where we view $C^{*} ( E_{2} )$ as a $\Prime_\gamma(C^*(E_1))$-algebra via the homeomorphism between $\Prime_\gamma(C^*(E_1))$ and $\Prime_\gamma(C^*(E_2))$.
\end{enumerate}
\end{corollary}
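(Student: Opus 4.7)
The plan is to deduce this corollary from Theorem~\ref{thm:main-1} by showing that, under the stated hypothesis (Condition~(H) of \cite{arXiv:1604.05439v2}), the relation $\MCPeq$ can be replaced by $\MCeq$. The implications $(1) \Rightarrow (2) \Rightarrow (3)$ are routine: each generator of $\MCeq$ preserves the stable isomorphism class of the graph \ca by Theorem~\ref{thm:moveimpliesstableisomorphism} and Theorem~\ref{thm:cuntz-splice-implies-stable-isomorphism}, while any stable isomorphism induces both a homeomorphism of $\Prime_\gamma$-spaces and an isomorphism of $\FKRplus$ by Proposition~\ref{prop:structure-first-prime-gamma}(1) together with functoriality.

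For the implication $(3) \Rightarrow (1)$, Theorem~\ref{thm:main-1} immediately yields $E_1 \MCPeq E_2$. The essential observation is that Move~\PP, as formulated in Definition~\ref{def:hashmove}, can only be performed at a regular vertex $u$ whose unique return path is a loop with an exit, and whose out-neighbors $w \neq u$ all support at least two distinct return paths. In structural terms, this says that $u$ lies in a cyclic strongly connected component whose immediate successors are all noncyclic. The hypothesis on $E_1$ and $E_2$ forbids exactly this configuration, so Move~\PP cannot be initiated at either endpoint.

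The main obstacle is that an $\MCPeq$-chain witnessing $E_1 \MCPeq E_2$ may still traverse intermediate graphs that admit Move~\PP, even when both endpoints do not. To handle this, I would appeal to the structure of the proof of Theorem~\ref{thm:main-1}: the passage from an $\FKRplus$-isomorphism to an actual sequence of moves proceeds by reducing both graphs to a common canonical form. When the input graphs satisfy Condition~(H), the canonical form agrees with the one already used in \cite{arXiv:1604.05439v2}, and the reduction to it uses only the moves \OO, \II, \RR, \SSS, and \CC, with no application of Move~\PP required. Hence the move sequence connecting $E_1$ and $E_2$ can be chosen entirely within $\MCeq$, yielding the desired conclusion.
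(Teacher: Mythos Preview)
Your proposal is correct and follows the same approach as the paper, which derives the corollary from Theorem~\ref{thm:main-1} via the one-line observation that ``we cannot perform Move~\PP'' under the stated hypothesis. You are right to flag the concern about intermediate graphs in an arbitrary $\MCPeq$-chain, and your resolution---tracing through the proof of Theorem~\ref{thm:main-1} rather than reasoning about abstract chains---is exactly what the paper has in mind, though the paper leaves this implicit. To make the sketch fully precise, note that the only place Move~\PP\ enters the proof is in Corollary~\ref{cor:Pulelehua}, and there it is invoked for a cyclic component $j$ only when $U\{j\}=-1$, which the proof shows forces all immediate successors of $j$ to be noncyclic strongly connected; under the hypothesis of the corollary this never happens, so the construction yields a $\MCeq$-chain directly.
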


Our second main theorem is the following theorem that says that the functor $\FKR$ is a strong classification functor up to stable isomorphism of all unital graph \cas.  This closes an open problem from \cite[Question~3]{MR2270572}, that has very interesting connections to the result \cite[Theorem~3.5/Proposition~4.3]{MR3535322} --- for more on this see Remark~\ref{rem:openquestionCK} below.

\begin{theorem}\label{thm:main-3}
Let $E_1$ and $E_2$ be graphs with finitely many vertices.  Assume that we have a homeomorphism between $\Prime_\gamma(C^*(E_1))$ and $\Prime_\gamma(C^*(E_2))$.
We view $C^{*} ( E_{2} )$ as a $\Prime_\gamma(C^*(E_1))$-algebra via this homeomorphism.

Set $X=\Prime_\gamma(C^*(E_1))$.  Then for every isomorphism $\phi$ from $\FKRplus(X; C^{*} (E_{1} ) )$ to $\FKRplus(X; C^{*} ( E_{2} ) )$, there exists an $X$-equivariant isomorphism $\Phi$ from $C^{*} (E_{1} )\otimes\K$ to $C^{*} ( E_{2} )\otimes\K$ satisfying $\phi=\FKR(X;\Phi)$ (where we use the canonical identification of $\FKRplus(X; C^{*} (E_{i} ) )$ with $\FKRplus(X; C^{*} (E_{i}) \otimes\K )$, for $i=1,2$).
\end{theorem}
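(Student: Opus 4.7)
The plan is to leverage the constructive nature of the implication $(3)\Rightarrow(1)\Rightarrow(2)$ in Theorem~\ref{thm:main-1}: the $X$-equivariant isomorphism of stabilized algebras produced from a given $\FKRplus$-isomorphism $\phi$ should, by construction, induce precisely $\phi$ on $\FKRplus$, not just some other isomorphism. To make this precise, I would first translate $\phi$ to its matrix avatar via \cite{MR2922394} and Remark~\ref{howtocompute}: $\phi$ corresponds to a family of invertible integer matrices in a certain block form that intertwine $\Bsf_{E_{1}}^{\bullet}$ with $\Bsf_{E_{2}}^{\bullet}$ and respect the positivity together with all the connecting-map data carried by the sequences~\eqref{eq:longtype} and~\eqref{eq:shorttype}.

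From this matrix data one builds a chain of moves from $E_{1}$ to $E_{2}$ using the Boyle--Huang strategy from symbolic dynamics, adapted to (i) rectangular matrices (to accommodate singular vertices) and (ii) cyclic strongly connected components (handled by Moves~\CC\ and the new Move~\PP). Each individual move's effect on $\FKR$ is explicitly computed: for \SSS, \RR, \OO, \II, and \CC\ this is the content of \cite{arXiv:1605.06153v1}, and Proposition~\ref{prop:hash} supplies the analogous formula for Move~\PP. Composing the $X$-equivariant isomorphisms of stabilizations associated to the chain --- via Theorems~\ref{thm:moveimpliesstableisomorphism}, \ref{thm:cuntz-splice-implies-stable-isomorphism}, and~\ref{thm:InvarianceOfPulelehua} --- therefore produces an $X$-equivariant $\Phi\colon C^{*}(E_{1})\otimes\K\to C^{*}(E_{2})\otimes\K$ whose induced map on $\FKRplus$ is, by the accumulated bookkeeping, exactly $\phi$.

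An equivalent and perhaps cleaner framing is to use Theorem~\ref{thm:main-1} first to extract \emph{any} $X$-equivariant $\Phi_{0}$, let $\psi=\FKR(X;\Phi_{0})$, and then reduce to showing that the automorphism $\alpha=\phi\circ\psi^{-1}$ of $\FKRplus(X;C^{*}(E_{2}))$ lifts to an $X$-equivariant automorphism $\Psi$ of $C^{*}(E_{2})\otimes\K$, in which case $\Phi=\Psi\circ\Phi_{0}$ is the desired lift. This isolates the essential content as a realization statement: every ordered reduced filtered $K$-theory automorphism of a unital graph $C^{*}$-algebra is realized by an $X$-equivariant automorphism of its stabilization.

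I expect the main obstacle to be the matrix decomposition step: factoring a general $\phi$ into elementary row and column operations, each realizable by a single move, while simultaneously preserving positivity on every $K_{0}$-group and intertwining every connecting map in \eqref{eq:longtype} and \eqref{eq:shorttype} across all $x\in X$. The rectangular nature of $\Bsf^{\bullet}$ and the presence of cyclic components --- neither of which is accommodated by the classical symbolic-dynamics machinery nor by previous graph $C^*$-algebra technology alone --- are precisely the features that force the extension of the Boyle--Huang framework and the introduction of Move~\PP. Once that factorization is in hand, the strong classification statement reduces to the functoriality of $\FKR$ together with the move-invariance theorems.
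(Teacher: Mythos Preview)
Your proposal is essentially correct and matches the paper's approach: Theorem~\ref{thm:main-3} is proved simultaneously with $(3)\Rightarrow(1)$ of Theorem~\ref{thm:main-1} in Section~\ref{sec:proof}, by bringing the pair to standard form (Proposition~\ref{prop:standard-form}), lifting $\phi$ to a \GLPEe via Theorem~\ref{thm:mainBH}, using Move~\PP\ (Corollary~\ref{cor:Pulelehua}) and then the Cuntz splice (Corollary~\ref{cor:GLtoSL}) to reduce to an \SLPEe, and finally invoking the positive factorization Theorem~\ref{thm:BoyleTheorem4.4} together with Proposition~\ref{prop:toke} to realize it by a chain of stable isomorphisms with prescribed $\FKR$-effect. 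The one step you undersell is that passing from the $K$-web description of $\phi$ to an actual \GLPEe intertwining $\Bsf_{E_1}^\bullet$ and $\Bsf_{E_2}^\bullet$ is not just Remark~\ref{howtocompute} but the content of the generalized Boyle--Huang lifting Theorem~\ref{thm:mainBH}; your alternative automorphism-lifting framing is logically equivalent but not the route the paper takes.
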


Using this result, we can show the following theorem --- which is our third main theorem. It says that the functor $\FKR$ together with the class of the unit is a strong classification functor up to isomorphism of all unital graph \cas.  This closes an open problem from \cite[Question~1]{MR2270572}.  

\begin{theorem}\label{thm:main-4}
Let $E_1$ and $E_2$ be graphs with finitely many vertices.
Assume that we have a homeomorphism between $\Prime_\gamma(C^*(E_1))$ and $\Prime_\gamma(C^*(E_2))$.
We view $C^{*} ( E_{2} )$ as a $\Prime_\gamma(C^*(E_1))$-algebra via this homeomorphism.

Set $X=\Prime_\gamma(C^*(E_1))$.  Then for every isomorphism $\phi$ from $\FKRplus(X; C^{*} (E_{1} ) )$ to $\FKRplus(X; C^{*} ( E_{2} ) )$ that preserves the class of the unit (in the sense of Definition~\ref{def:PreservingTheUnit}), there exists an $X$-equivariant isomorphism $\Phi$ from $C^{*} (E_{1} )$ to $C^{*} ( E_{2} )$ satisfying $\phi=\FKR(X;\Phi)$.
\end{theorem}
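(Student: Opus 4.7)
The plan is to derive Theorem~\ref{thm:main-4} from the strong stable classification of Theorem~\ref{thm:main-3} by the standard device of cutting down along a partial isometry that implements a Murray--von Neumann equivalence of unit-sized projections. First, I would apply Theorem~\ref{thm:main-3} to $\phi$ to obtain an $X$-equivariant $*$-isomorphism $\Psi\colon C^{*}(E_1)\otimes\K\to C^{*}(E_2)\otimes\K$ with $\FKR(X;\Psi)=\phi$ under the canonical identifications $\FKR(X;\kappa_{C^{*}(E_i)})$. Writing $p_i:=1_{C^{*}(E_i)}\otimes e_{11}$, the hypothesis that $\phi$ preserves the class of the unit, together with the functor $H$ of Definition~\ref{def:PreservingTheUnit}, gives
\[
[\Psi(p_1)]_0 \;=\; \phi_0([1_{C^{*}(E_1)}]_0) \;=\; [1_{C^{*}(E_2)}]_0 \;=\; [p_2]_0
\]
in $K_0(C^{*}(E_2)\otimes\K)$.

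Since $C^{*}(E_2)\otimes\K$ is stable, projections with equal $K_0$-class are Murray--von Neumann equivalent, so I would fix $v\in C^{*}(E_2)\otimes\K$ with $v^{*}v=\Psi(p_1)$ and $vv^{*}=p_2$, and then set
\[
\Phi(a) \;:=\; \kappa_{C^{*}(E_2)}^{-1}\bigl(v\,\Psi(\kappa_{C^{*}(E_1)}(a))\,v^{*}\bigr),\qquad a\in C^{*}(E_1).
\]
Since $v\,\Psi(\kappa_{C^{*}(E_1)}(a))\,v^{*}$ lies in $p_2(C^{*}(E_2)\otimes\K)p_2=C^{*}(E_2)\otimes e_{11}$, the formula defines a unital $*$-homomorphism, and running the same construction with $\Psi^{-1}$ and $v^{*}$ produces an inverse, so $\Phi$ is a $*$-isomorphism. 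For $X$-equivariance, if $I\subseteq C^{*}(E_1)$ is the distinguished ideal associated with $U\in\mathbb{O}(X)$ and $J\subseteq C^{*}(E_2)$ is its counterpart under $\Psi$, then $\Psi(I\otimes\K)=J\otimes\K$; because $J\otimes\K$ is a two-sided ideal of $C^{*}(E_2)\otimes\K$, one has $v\,\Psi(\kappa_{C^{*}(E_1)}(a))\,v^{*}\in(J\otimes\K)\cap(C^{*}(E_2)\otimes e_{11})=J\otimes e_{11}$ for every $a\in I$, whence $\Phi(I)\subseteq J$.

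To verify that $\FKR(X;\Phi)=\phi$, I would apply $\FKR(X;-)$ to the identity $\kappa_{C^{*}(E_2)}\circ\Phi=\operatorname{Ad}_v\circ\Psi\circ\kappa_{C^{*}(E_1)}$. Restricted to each distinguished subquotient, $\operatorname{Ad}_v$ is an $X$-equivariant $*$-isomorphism between the full corners cut down by $\Psi(p_1)$ and $p_2$ and, together with the corner embeddings $\kappa_{C^{*}(E_i)}$, acts as the identity on $K$-theory by the standard invariance of $K$-theory under Murray--von Neumann equivalence of full projections, and this is natural with respect to the boundary maps in the six-term sequences underlying $\FKR$. The main (but essentially routine) obstacle will be tracking this naturality uniformly across the sequences~\eqref{eq:longtype} and the maps~\eqref{eq:shorttype}; once this is done, the conclusion follows from the stable cancellation of projections and Theorem~\ref{thm:main-3}.
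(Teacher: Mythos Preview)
Your approach is exactly the standard descent from stable to exact classification, and it is essentially what the paper invokes by citing \cite[Theorem~3.3]{arXiv:1301.7695v1} (with \cite[Theorem~2.1]{MR2379290}); see the proof outline in Section~\ref{section:outline}. You are writing out by hand the argument that those references package as a black box.

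There is, however, one genuine gap. Your claim ``Since $C^{*}(E_2)\otimes\K$ is stable, projections with equal $K_0$-class are Murray--von Neumann equivalent'' is false as stated: stability by itself does not force this. What you need is that both $\Psi(p_1)$ and $p_2$ are \emph{full} projections in $C^{*}(E_2)\otimes\K$ --- which they are, since $p_1$ is full and $\Psi$ is an isomorphism --- together with the fact that graph $C^{*}$-algebras have \emph{stable weak cancellation}: projections in the stabilization generating the same ideal and having the same $K_0$-class are Murray--von Neumann equivalent. The paper uses exactly this property elsewhere (see the proofs of Lemma~\ref{lem: K-theory ideal} and Proposition~\ref{prop:csdouble-hash}), and it is the hypothesis underlying the cited results in \cite{arXiv:1301.7695v1,MR2379290}. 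Once you replace ``stability'' by ``fullness plus stable weak cancellation for graph $C^{*}$-algebras'', the remainder of your argument --- including the routine verification that $\mathrm{Ad}_v$ between full corners acts trivially on filtered $K$-theory --- goes through as you describe.
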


As an immediate consequence, we get the following classification result.
\begin{corollary}\label{cor:main-5}
Let $E_1$ and $E_2$ be graphs with finitely many vertices.  
Then the following are equivalent
\begin{enumerate}[(1)]
\item\label{cor:main-5-item-2} 
$C^{*} (E_{1} )\cong C^{*} ( E_{2} )$, and,
\item\label{cor:main-5-item-3} 
there exists a homeomorphism between $\Prime_\gamma(C^*(E_1))$ and $\Prime_\gamma(C^*(E_2))$ such that 
\[
(\FKRplus(\Prime_\gamma(C^*(E_1));C^*(E_1)),[1_{C^*(E_1)}]_0)
\] 
and  
\[
(\FKRplus(\Prime_\gamma(C^*(E_1));C^*(E_2)),[1_{C^*(E_2)}]_0)
\] 
are isomorphic, where we view $C^{*} ( E_{2} )$ as a $\Prime_\gamma(C^*(E_1))$-algebra via the given homeomorphism between $\Prime_\gamma(C^*(E_1))$ and $\Prime_\gamma(C^*(E_2))$.
\end{enumerate}
\end{corollary}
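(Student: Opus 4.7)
The plan is to handle the two directions of the equivalence separately. The direction (1) $\Rightarrow$ (2) is essentially automatic: any $*$-isomorphism $\Phi\colon C^*(E_1)\to C^*(E_2)$ is unital and carries projections to projections, so the induced $K_0$-maps are positive and satisfy $\Phi_*([1_{C^*(E_1)}]_0)=[1_{C^*(E_2)}]_0$. Combined with the lattice isomorphism of gauge invariant ideals provided by Proposition~\ref{prop:structure-first-prime-gamma}(1), this yields both the required homeomorphism between $\Prime_\gamma(C^*(E_1))$ and $\Prime_\gamma(C^*(E_2))$ and the $\FKR$-isomorphism preserving the class of the unit.

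For the nontrivial direction (2) $\Rightarrow$ (1), the strategy is to reduce to Theorem~\ref{thm:main-4}, which delivers an $X$-equivariant $*$-isomorphism $C^*(E_1)\to C^*(E_2)$ from an isomorphism of the \emph{ordered} reduced filtered $K$-theory $\FKRplus$ preserving the unit. Since we are given only an $\FKR$-isomorphism $\phi$ preserving $[1]$, the heart of the proof will be to show that $\phi$ is automatically positive---that is, an $\FKRplus$-isomorphism.

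To argue this automatic positivity, I would descend to the simple subquotients. For each $x\in X:=\Prime_\gamma(C^*(E_1))$, the simple subquotient $C^*(E_1)(\{x\})$ is a graph $C^*$-algebra of one of three types: a matrix algebra $M_n$ (or its stabilization $\K$), a cyclic-type algebra $M_n\otimes C(\mathbb{T})$ (or $\K\otimes C(\mathbb{T})$), or a (possibly stable) Kirchberg algebra in the UCT class. In the Kirchberg case every element of $K_0$ is represented by a projection in the stabilization, so $K_0^+ = K_0$ and positivity is automatic. In the matrix and cyclic cases, $K_0\cong\Z$ with positive cone $\N_0$, and the positive generator is realized as the class of a vertex projection; compatibility of $\phi$ with the structural maps in $\FKR$ together with the unit-preservation condition will force $\phi$ to send positive generators to positive generators. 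Propagating positivity from the simple subquotients to the intermediate ideal pieces $C^*(E_i)(V)$ for $V\in I_0$ can be carried out via iterated six-term exact sequences, using the admissible-pair description of ideals from Fact~\ref{fact:structure-1}.

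The step I expect to be the main obstacle is this propagation from simple-subquotient positivity to full ideal-level positivity, since extensions of ordered groups do not in general have uniquely determined middle positive cones; the argument must exploit the concrete realization of positive cones in terms of vertex projections specific to the graph $C^*$-algebra setting. Once $\phi$ has been upgraded to an $\FKRplus$-isomorphism preserving $[1]$, Theorem~\ref{thm:main-4} directly supplies the desired $X$-equivariant $*$-isomorphism $\Phi\colon C^*(E_1)\to C^*(E_2)$ that realizes $\phi$.
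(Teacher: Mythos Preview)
The paper treats this corollary as an immediate consequence of Theorem~\ref{thm:main-4}: the forward direction is standard (a $*$-isomorphism preserves gauge invariant ideals by Proposition~\ref{prop:structure-first-prime-gamma}(1) and induces the required $\FKR$-data with unit), and for the reverse direction one feeds the isomorphism into Theorem~\ref{thm:main-4}. Your proposal is more careful: you correctly flag that the corollary as written uses the \emph{unordered} invariant $\FKR$ while Theorem~\ref{thm:main-4} requires the ordered version $\FKRplus$, and you set out to bridge this by proving that the given $\FKR$-isomorphism is automatically positive.

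Your diagnosis of where the difficulty lies is, however, inverted. The step you call the ``main obstacle''---propagating positivity from the gauge simple subquotients to all the intermediate pieces $C^*(E_i)(V)$---is in fact not an obstacle here: as explained in Section~\ref{ordereasy}, the proof of the lifting theorem only ever uses the order on $K_0$ of the gauge simple subquotients, so once positivity is established there one runs the whole machine and the resulting $*$-isomorphism witnesses positivity on all of $\FKR$ a posteriori. What actually needs arguing is the step you pass over lightly: that unit preservation together with the $\FKR$ structural maps forces $\phi_{\{x\},0}$ to be $+1$ rather than $-1$ on each simple subquotient with $K_0\cong\Z$. Your sketch works when $\{x\}$ sits at the top of the ideal lattice (so that $[1]$ projects directly to the positive generator), but for components deeper down you have not explained how the sign constraint propagates, and the $\FKR$ structural maps alone do not obviously carry it. The paper does not spell this out either; its ``clearly follows'' glosses over exactly this point, and the most defensible reading is that the corollary is meant with $\FKRplus$ (or at least with positivity on simple subquotients assumed), in which case it genuinely is immediate from Theorem~\ref{thm:main-4}.
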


\begin{remark}\label{rem:openquestionCK}
The results above close the classification problem for the case of general Cuntz-Krieger algebras as well as for the case of general unital graph \cas (over countable graphs) and they answer the remaining open problems from the addendum in \cite{MR2270572}. 
An important motivation for showing these strong classification results has been the results in \cite{arXiv:1101.5702v3} and \cite{MR3535322}. Building on results from \cite{MR2953205}, it is shown in \cite{arXiv:1101.5702v3}  that for finite connected $T_0$ spaces that are not so-called accordion, the full filtered K-theory does not classify the stable, (strongly) purely infinite, nuclear, separable \cas in the bootstrap class --- on the contrary the results of \cite{MR2270572} show that all purely infinite Cuntz-Krieger algebras are classified up to stable isomorphism by the coarser invariant \emph{reduced filtered $K$-theory} as well as the full filtered $K$-theory. The results in \cite[Theorem 3.5]{MR3535322} even show that the full filtered $K$-theory is \emph{not} a strong classification functor up to stable isomorphism for the class of purely infinite Cuntz-Krieger algebras with at most four primitive ideals, while the smaller invariant \FKR is.  
\end{remark}

\subsection{Strategy of proof and structure of the paper}\label{section:outline}
The proof of Theorem~\ref{thm:main-1} goes as follows. 
That \ref{thm:main-1-item-1} implies \ref{thm:main-1-item-2} is shown in three stages. 
First we note that Move equivalence implies stable isomorphism (which has already been shown elsewhere, \cf\ \cite[Theorem~2.7]{MR3759003} and the references there). 
Then we note that in \cite[Theorem~4.8]{MR3713535} it was proved that Move \CC (Cuntz splicing) implies stable isomorphism. 
Finally, Section~\ref{sec:MoveP} is to a large extent devoted to prove that Move \PP implies stable isomorphism (\cf\ Theorem~\ref{thm:InvarianceOfPulelehua}).
That \ref{thm:main-1-item-2} implies \ref{thm:main-1-item-3} is clear. 
The proof that \ref{thm:main-1-item-3} implies \ref{thm:main-1-item-1} is done together with the proof of Theorem~\ref{thm:main-3}. 
Corollaries~\ref{cor:main-2} and~\ref{cor:main-2.5} will follow directly from Theorem~\ref{thm:main-1} --- as explained above. 

Theorem~\ref{thm:main-4} will also follow from Theorem~\ref{thm:main-3} using \cite[Theorem~3.3]{arXiv:1301.7695v1} (\cf~also \cite[Theorem~2.1]{MR2379290}). 
Corollary~\ref{cor:main-5} will clearly follow from Theorem~\ref{thm:main-4}. 

The outline of the proof of Theorem~\ref{thm:main-3} and of the proof that \ref{thm:main-1-item-3} implies \ref{thm:main-1-item-1} in Theorem~\ref{thm:main-1} is as follows. 
In parts, we emulate the previous proofs that go from reduced filtered $K$-theory data to stable isomorphism or flow equivalence, as in \cite{MR1990568, MR1907894, MR2270572, MR3759003,MR3624419}.
A key component of those proofs is manipulation of the matrix $\Bsf_E^{\bullet}$, in particular that we can perform basic row and column operations without changing the stable isomorphism class or the move equivalence class, depending on context. 
It was proved in \cite{MR3713535} that these matrix manipulation are allowed (\cf\ Proposition~\ref{prop:matrix-moves}).
When we do edge expansions and the above mentioned matrix manipulations, we get induced stable isomorphisms; in \cite{MR3624419} the reduced filtered $K$-theory isomorphisms induced by such stable isomorphisms are described (\cf\ Section~\ref{sec:edgeandtoke}). 
Once we understand these matrix manipulations, our proof of Theorem~\ref{thm:main-1} goes through 6 steps.

\begin{enumerate}[(1)]
 \item[Step 1] \label{mainthm-step-standardform}  First we reduce the problem by finding graphs $F_1$ and $F_2$ in a certain standard form such that $F_i \Meq E_i$ and $C^*(E_i)\otimes\K\cong C^*(F_i)\otimes\K$. This standard form will ensure that the adjacency matrices $\Bsf^{\bullet}_{F_i}$ have the same size and block structure, and that they satisfy certain additional technical conditions. This will be done in Section~\ref{sec:standardform}. 
 
 \item[Step 2] \label{mainthm-step-FKtoGL} In Section~\ref{sec:boylehuang} we generalize a result of Boyle and Huang (\cite{MR1990568}), to show that the isomorphism $\FKRplus(X; C^{*} (F_{1} ) ) \cong \FKRplus(X; C^{*} ( F_{2} ) )$ is induced by a \GLPEe from $\Bsf^{\bullet}_{F_1}$ to $\Bsf^{\bullet}_{F_2}$ with a certain positivity condition (where the action of $X=\Prime_\gamma(C^{*} (E_{1}))$ is as given in the theorem). 
 
 \item[Step 3] \label{mainthm-step-Pulelehua} In Section~\ref{sec:MoveP} we find graphs $F_1', F_2'$ such that $F_i' \MCPeq F_i$ and such that we have an equivariant isomorphism from $C^*(F_i)\otimes\K$ to $C^*(F_i')\otimes\K$ that induces an isomorphism on the reduced filtered $K$-theory that is induced by a \GLPEe such that the diagonal blocks of the composed \GLPEe from $\Bsf^{\bullet}_{F_1'}$ to $\Bsf^{\bullet}_{F_2'}$ are \SL-matrices for all blocks that do not correspond to simple purely infinite subquotients. 
 
 \item[Step 4] \hypertarget{mainthm-step-GLtoSL}{} In Section~\ref{sec:crelle-trick} we find graphs $G_1, G_2$ such that $G_i \MCeq F_i'$ and such that we have an equivariant isomorphism from $C^*(F_i')\otimes\K$ to $C^*(G_i)\otimes\K$ that induces an isomorphism on the reduced filtered $K$-theory that is induced by a \GLPEe such that the composed \GLPEe from $\Bsf^{\bullet}_{G_1}$ to $\Bsf^{\bullet}_{G_2}$ is in fact an \SLPEe. 
 
 \item[Step 5] \label{mainthm-step-SLtoMoves} In Section~\ref{sec:boyle}, we generalize Boyle's positive factorization result from \cite{MR1907894} to show that there exists a positive \SLPEe between $\Bsf^{\bullet}_{G_1}$ and $\Bsf^{\bullet}_{G_2}$.
 
 \item[Step 6] It now follows from the results of Section~\ref{sec:derivedmoves} that $G_1 \Meq G_2$ and hence that $E_1 \MCPeq E_2$. 
 Moreover, it follows from the results of Section~\ref{sec:edgeandtoke} that there exists an equivariant isomorphism from $C^*(G_1)\otimes\K$ to $C^*(G_2)\otimes\K$ that induces exactly the same morphism on the reduced filtered $K$-theory as the above \SLPEe from $\Bsf^{\bullet}_{G_1}$ to $\Bsf^{\bullet}_{G_2}$. Thus showing that the original isomorphism between the ordered reduced filtered $K$-theory of $C^*(E_1)$ and $C^*(E_2)$ can be lifted to a \stariso between $C^*(E_1)\otimes\K$ and $C^*(E_2)\otimes\K$. 
\end{enumerate}

In Section~\ref{sec:notation-for-proof} we introduce some notation and concepts about block matrices needed in the proof.
In Section~\ref{sec:proof} we combine the results of the previous sections to prove the main theorem. 

In Section~\ref{sec:addenda}, we address some applications of the above main results of this paper. 
In Section~\ref{ordereasy}, we note that we can relax the conditions about positivity of reduced, filtered $K$-theory isomorphisms because of a certain amount of automatic positivity for graph \cas, and we raise the question of existence of phantom unital graph \cas. 
In Section~\ref{sec:decidability}, we address the decidability, and prove that Morita equivalence of unital graph \cas is decidable using results of Boyle and Steinberg. 
In Section~\ref{sec:postliminal}, we prove for unital graph \cas that are postliminal/of type I that they are stably isomorphic if and only if they are isomorphic, and apply this to the quantum lens spaces to complete the investigation started in \cite{MR3759003}.
In Section~\ref{sec:abrams-tomforde}, we prove that the Abrams-Tomforde conjectures hold true for graphs with finitely many vertices. 

\skippable
\section{Derived moves and matrix moves}
\label{sec:derivedmoves}

In \cite[Section~2.5]{MR3759003}, we discussed --- following and  generalizing  \cite[Section 5]{MR3082546} --- ways of changing the graphs without   changing their move equivalence class and we introduced  a \emph{collapse} move, and presented criteria allowing us to conclude that two graphs are move equivalent when one arises from the other by a row or column addition of the $\Bsf$-matrices. 

We refer the reader to \cite[Section~3]{MR3759003} and the references therein for the collapse move (alternatively, \cite[Definition~2.7]{MR3713535}); we denote the resulting graph by $E_{COL}$ and denote the move \CO. Here, it is also noted that  collapsing a regular vertex that does not support a loop yields move equivalent graphs.

In the result \cite[Proposition~2.12]{MR3759003} below from \cite{MR3713535}, we showed how we can perform row and column additions on $\Bsf_E$ without changing the move equivalence class of the associated graphs, when $E$ is a graph with finitely many vertices. 
In Section~\ref{sec:edgeandtoke} we will see how these act on reduced filtered $K$-theory. 
Since we will constantly make use of this result, we will for the convenience of the reader recall it here. 

\begin{proposition}[{\cf\ \cite[Proposition~2.12 and Remark~2.13]{MR3759003}}]
\label{prop:matrix-moves}
Let $E = (E^0, E^1, r,s)$ be a graph with finitely many vertices. 
Suppose $u,v \in E^0$ are distinct vertices with a path from $u$ to $v$.
Let $E_{u,v}$ be equal to the identity matrix except for on the $(u,v)$'th entry, where it is $1$. 
Then $\Bsf_E E_{u,v}$ is the matrix formed from $\Bsf_E$ by adding the $u$'th column into the $v$'th column, while $E_{u,v}\Bsf_E $ is the matrix formed from $\Bsf_E$ by adding the $v$'th row into the $u$'th row.
Then the following holds.
\begin{enumerate}[(i)]
\item Suppose $u$ supports a loop or suppose that there is an edge from $u$ to $v$ and $u$ emits at least two edges. Then  
\[
	\Asf_E \Meq \Bsf_E E_{u,v} + I.
\]\label{prop:matrix-moves:I}
\item Suppose $v$ is regular and either $v$ supports a loop or there is an edge from $u$ to $v$. Then  
\[
	\Asf_E \Meq E_{u,v}\Bsf_E  + I.
\]\label{prop:matrix-moves:II}
\end{enumerate}

Note that we can also use this proposition backwards to subtract columns or rows in $\Bsf_E$ as long as the addition that undoes the subtraction is legal.
\end{proposition}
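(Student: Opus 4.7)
The plan is to realize each of the two matrix operations as a concrete sequence of graph moves \OO, \II, \RR, \SSS, together with the derived collapse move \CO. Note first that $\Bsf_E E_{u,v}+I$ is the adjacency matrix of the graph obtained from $E$ by adding, for each edge $f$ with $r(f)=u$, a parallel copy of $f$ with range changed to $v$; dually, $E_{u,v}\Bsf_E+I$ is the adjacency matrix of the graph obtained from $E$ by adding, for each edge $f$ with $s(f)=v$, a parallel copy of $f$ with source changed to $u$. In each case the task is to manufacture these new edges via a finite sequence of moves.

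For (i), I would outsplit at $u$ (Move \OO) using a partition $s^{-1}(u)=\mathcal{E}_1\sqcup\mathcal{E}_2$ in which $\mathcal{E}_1$ consists of a single edge that begins a chosen path from $u$ to $v$. The hypothesis is precisely what makes such a partition admissible: if $u$ supports a loop, the loop can populate $\mathcal{E}_2$ while an edge out of $u$ heading toward $v$ sits in $\mathcal{E}_1$; otherwise $u$ has a direct edge to $v$ and at least one other outgoing edge, and we take the edge $u\to v$ as $\mathcal{E}_1$ and the rest as $\mathcal{E}_2$. The outsplit replaces $u$ with two copies $u^1,u^2$ that each inherit all original incoming edges of $u$. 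Iteratively collapsing (\CO) the interior vertices of the chosen path out of $u^1$ (each of which is regular and loopless by construction) and applying Move \RR to eliminate the trailing reducible segments fuses $u^1$ into $v$, thereby producing exactly the required parallel edges into $v$ from each former predecessor of $u$, and leaves the $u$-column untouched via $u^2$.

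Case (ii) is dual, handled by insplitting at $v$; the regularity of $v$ is what makes Move \II available. A partition of $r^{-1}(v)$ separating a single distinguished incoming edge (a loop at $v$, if present, otherwise the edge $u\to v$) from the remaining incoming edges yields two copies $v^1,v^2$, one of which retains all outgoing edges of $v$; a symmetric chain of collapses and reductions fuses that copy into $u$, creating the required parallel copies of the outgoing edges of $v$ as edges out of $u$.

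The main obstacle is not the overall picture but the bookkeeping at each step: every partition used in an out- or in-split must be nonempty with at most one infinite class, every vertex fed into \CO must be regular and loopless, and the hypotheses of \RR must be met at each reduction. The precise assumptions in (i) and (ii) (loop or auxiliary outgoing/incoming edge, regularity of $v$, and existence of a path from $u$ to $v$) are tailored so that these admissibility conditions persist throughout the construction, and the proof reduces to a careful but essentially combinatorial verification that the moves can be performed in the stated order.
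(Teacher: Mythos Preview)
The paper does not give its own proof of this proposition; it is quoted from the companion paper \cite{arXiv:1602.03709v2}, so there is no in-paper argument to compare against. Your outsplit-then-collapse idea is indeed the core of the standard proof when there is a \emph{direct} edge $u\to v$: isolating that edge as the one-element class $\mathcal{E}_1$ produces a copy $u^1$ with a single outgoing edge and no loop, and collapsing $u^1$ yields exactly the column addition. A dual insplit-then-collapse at $v$ handles the direct-edge subcase of (ii).

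The gap is in the loop cases without a direct edge. You assert that the interior vertices of the chosen path from $u^1$ to $v$ are ``regular and loopless by construction,'' but the outsplit only modifies $u$; the vertices $w_1,\dots,w_{k-1}$ along the path are untouched and may well carry loops, so they cannot be collapsed. Thus for (i) in the subcase ``$u$ supports a loop but $\Asf_E(u,v)=0$,'' and for (ii) in the subcase ``$v$ supports a loop but $\Asf_E(u,v)=0$,'' your sketch does not produce a sequence of legal moves. The argument in \cite{arXiv:1602.03709v2} handles these loop cases with additional work (exploiting the loop itself rather than attempting to walk down an arbitrary path), not by a single chain of collapses. A smaller issue: your description of $\Bsf_E E_{u,v}+I$ as ``add, for each edge into $u$, a parallel copy with range $v$'' misses the $-1$ coming from $\Bsf_E=\Asf_E-I$; the correct $(u,v)$-entry is $\Asf_E(u,v)+\Asf_E(u,u)-1$, which is precisely why a direct edge or a loop at $u$ is needed to keep that entry nonnegative.
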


\section{Notation needed for the proof}
\label{sec:notation-for-proof}

In this section we provide notation and results that are needed in the proof of the main result. This is mainly a condensed version of some definitions, results and remarks from \cite{MR3759003}. The reader is referred to \cite{MR3759003} for a more comprehensive exposition, and especially for the proofs of the mentioned results.

\subsection{The component poset}(\cf\ \cite[Section~3.2]{MR3759003}).
For our purposes, it will be essential to work with block matrices in a way that resembles the ideal structure and the filtered $K$-theory of the graph \cas. 
For the case we are considering --- where there are \emph{finitely many vertices} --- the framework for this has been provided in \cite{MR3759003}. 
We will recall some of the basic definitions and the main results. 

\begin{definition}\label{def:structure-a}
Let $E=(E^0,E^1,r,s)$ be a graph with finitely many vertices. 
We say that a nonempty subset $S$ of $E^{0}$ is \emph{strongly connected} if for any two vertices $v,w\in S$ there exists a nonempty path from $v$ to $w$. 
In particular every vertex in a strongly connected set has to be the base of a cycle. 
We let $\Gamma_E$ denote the set of all maximal strongly connected sets together with all singletons consisting of singular vertices that are not the base point of a cycle. The elements of $\Gamma_E$ are called the \emph{components of the graph} $E$ and the vertices in $E^{0}\setminus\cup\Gamma_E$ are called the \emph{transition states} of $E$. 
In particular, with this convention, regular sources are called transition states. 
A component is called a \emph{cyclic component} if one of the vertices in it has exactly one return path.

We define a partial order $\geq$ on $\Gamma_E$ by saying that $\gamma_1\geq\gamma_2$ if there exist vertices $v_1\in\gamma_1$ and $v_2\in\gamma_2$ such that $v_1\geq v_2$. 
We say that a subset $\sigma\subseteq\Gamma_E$ is hereditary if whenever $\gamma_1,\gamma_2\in \Gamma_E$ with $\gamma_1\in \sigma$ and $\gamma_1\geq\gamma_2$, then $\gamma_2\in \sigma$. 
We equip $\Gamma_E$ with the topology that has the hereditary subsets as open sets. 
For every subset $\sigma\subseteq\Gamma_E$, we let $\eta(\sigma)$ denote the smallest hereditary subset of $\Gamma_E$ containing $\sigma$. 
\end{definition}

The next two propositions tell us the following.  Suppose $E$ is a graph with finitely many vertices such that every infinite emitter emits infinitely many edges to any vertex it emits any edge to and every transition state has exactly one outgoing edge.  Then we can partition the vertices of $E$ such that we can use this partition to write the adjacency matrix in a block form that reflects the ideal structure. This will enable us to compute $K$-theory more easily and to talk about the $K$-web, \GLPEe{s} and \SLPEe{s}. This is essential for what follows. 

\begin{proposition}[{\cite[Lemmata~3.13 and~3.17 and Corollary~3.14]{MR3759003}}]\label{prop:first-collect-structure-1}
Let $E=(E^0,E^1,r,s)$ be a graph with finitely many vertices. 
\begin{enumerate}[(1)]
\item The map $\eta\mapsto\overline{H(\cup\eta)}$ from the set of hereditary subsets of $\Gamma_E$ to the set of saturated hereditary subsets of $E^0$ is a bijective order isomorphism (with respect to the order coming from set containment). The inverse map is given by the equivalence classes of $(\cup\Gamma_E)\cap H$ for each saturated hereditary subset $H\subseteq E^0$. 
\item
If $E$ does not have any transition state, then every hereditary subset of $E^0$ is saturated and $\eta\mapsto \cup \eta$ is a lattice isomorphism between the hereditary subsets of $\Gamma_E$ and the hereditary subsets of $E^0$. 
\item
If $\gamma\in\Gamma_E$, then $\overline{H(\gamma)\setminus\gamma}$ is the largest proper saturated hereditary subset of $\overline{H(\gamma)}$. 
\item 
If every transition state has exactly one edge going out, then 
$H_1\cup H_2=\overline{H_1\cup H_2}$ for all saturated hereditary subsets $H_1,H_2\subseteq E^0$ and the collection 
$\setof{ \overline{H(\gamma)}\setminus\overline{H(\gamma)\setminus\gamma} }{ \gamma\in\Gamma_E}$ is a partition of $E^0$. 
\end{enumerate}
\end{proposition}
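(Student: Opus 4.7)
The plan is to establish all four parts by exploiting the dictionary between strongly connected components and saturated hereditary sets, using maximality of components as the key structural fact.

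For part (1), I would first check that $\eta \mapsto \overline{H(\cup\eta)}$ is well-defined, i.e.\ lands in the saturated hereditary subsets of $E^0$; this is immediate since hereditary closure followed by saturation produces a saturated hereditary set. To show it is an order isomorphism with inverse $H \mapsto \{\gamma \in \Gamma_E : \gamma \subseteq H\}$ (equivalently, the equivalence classes of $(\cup\Gamma_E)\cap H$), the main observation is: for a saturated hereditary $H \subseteq E^0$, the intersection $H \cap \gamma$ for any $\gamma \in \Gamma_E$ is either $\emptyset$ or all of $\gamma$. Indeed, if $\gamma$ is a singleton singular vertex this is trivial, and if $\gamma$ is a maximal strongly connected component and some $v \in \gamma \cap H$, then heredity forces all of $\gamma$ into $H$ since every $w \in \gamma$ satisfies $v \geq w$. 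This gives injectivity of both maps. For surjectivity (that every saturated hereditary $H$ equals $\overline{H(\cup\eta)}$ for $\eta = \{\gamma : \gamma \subseteq H\}$), one shows by induction on a suitable rank that all vertices of $H$, including transition states, lie in the saturated hereditary closure of $\cup\eta$; the transition states are captured precisely by the saturation step. Order-preservation in both directions is immediate from the definitions.

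For part (2), absence of transition states means $\cup\Gamma_E = E^0$, so every subset $H$ is automatically saturated (the saturation operation adjoins regular vertices whose range lies in $H$, but those are transition states by hypothesis). Then $\overline{H(\cup\eta)} = H(\cup\eta) = \cup\eta$ since each $\gamma \in \Gamma_E$ is already closed under the hereditary relation within itself. The bijection from (1) degenerates to the stated lattice isomorphism.

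For part (3), the set $\overline{H(\gamma)\setminus\gamma}$ is saturated hereditary by construction and is a proper subset of $\overline{H(\gamma)}$ since $\gamma \not\subseteq \overline{H(\gamma)\setminus\gamma}$: were some $v \in \gamma$ to lie in this set, then by the dichotomy established in part (1) the whole of $\gamma$ would, contradicting that $\gamma \cap (H(\gamma)\setminus\gamma) = \emptyset$ and that the saturation cannot pull $\gamma$ back in (a strongly connected component supporting a cycle is never absorbed by saturation from below; a singular singleton component likewise is never reached by saturation). For maximality, any proper saturated hereditary subset $H' \subsetneq \overline{H(\gamma)}$ must omit at least one $v \in \gamma$, hence by the dichotomy must omit all of $\gamma$, so $H' \subseteq \overline{H(\gamma)\setminus\gamma}$.

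For part (4), the hypothesis that every transition state emits exactly one edge means saturation adds no new vertices once a hereditary set is closed downward through the unique outgoing edges of each transition state, so $H_1 \cup H_2$ is already saturated when $H_1$ and $H_2$ are. For the partition statement, define $U_\gamma := \overline{H(\gamma)} \setminus \overline{H(\gamma)\setminus\gamma}$. By (3), $U_\gamma$ contains $\gamma$. The claim is that the $U_\gamma$ cover $E^0$ and are pairwise disjoint. Coverage follows by identifying, for each vertex $v$, a minimal $\gamma$ with $v \in \overline{H(\gamma)}$ (using that the poset $\Gamma_E$ is finite); then $v \in U_\gamma$. Disjointness uses (1) plus the claim that distinct components $\gamma \neq \gamma'$ yield disjoint $U_\gamma, U_{\gamma'}$: if $\gamma \not\geq \gamma'$ then $\overline{H(\gamma')} \subseteq \overline{H(\gamma)\setminus\gamma}$, separating them; the symmetric case is analogous.

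The main obstacle is part (1), specifically verifying that the saturation operation interacts cleanly with the component decomposition so that the claimed inverse really is an inverse; the dichotomy ``$H \cap \gamma$ is $\emptyset$ or $\gamma$'' is the linchpin, and once established the remaining parts follow with mostly bookkeeping.
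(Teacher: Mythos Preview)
The paper does not prove this proposition; it is imported verbatim from the companion paper \cite{arXiv:1604.05439v2} (see the citation in the proposition header and the opening of Section~\ref{sec:notation-for-proof}), so there is no in-paper argument to compare against. Your outline follows the natural route and is essentially what one expects in the cited source: the dichotomy ``$H\cap\gamma$ is either $\emptyset$ or all of $\gamma$'' is the linchpin, and parts~(1)--(3) go through as you sketch (with minor imprecisions, e.g.\ in part~(2) the reason $H(\cup\eta)=\cup\eta$ is that $\eta$ is hereditary in $\Gamma_E$, not merely that each $\gamma$ is closed within itself).

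There is, however, a genuine error in your disjointness argument for part~(4). You assert that if $\gamma\not\geq\gamma'$ then $\overline{H(\gamma')}\subseteq\overline{H(\gamma)\setminus\gamma}$. This is false when $\gamma$ and $\gamma'$ are incomparable: then $\gamma'\subseteq\overline{H(\gamma')}$ while $\gamma'\not\subseteq\overline{H(\gamma)}\supseteq\overline{H(\gamma)\setminus\gamma}$, so the inclusion cannot hold. Your case split (``$\gamma\not\geq\gamma'$'' and ``the symmetric case'') thus never correctly handles incomparable pairs. A clean repair: the bijection of part~(1) is an order isomorphism between finite lattices and hence preserves meets, so $\overline{H(\gamma)}\cap\overline{H(\gamma')}$ corresponds to $\{\gamma'':\gamma''\leq\gamma\text{ and }\gamma''\leq\gamma'\}$; when $\gamma$ and $\gamma'$ are distinct this set omits $\gamma$, whence $\overline{H(\gamma)}\cap\overline{H(\gamma')}\subseteq\overline{H(\gamma)\setminus\gamma}$ and $U_\gamma\cap U_{\gamma'}=\emptyset$. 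Alternatively one can argue directly from the one-outgoing-edge hypothesis: every transition state has a unique forward path terminating in a unique component, which singles out the unique $U_\gamma$ containing it.
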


\begin{proposition}[{\cite[Lemmata~3.15 and~3.16 and Proposition~3.18]{MR3759003}}] \label{prop:first-collect-structure-2}
Let $E$ be a graph with finitely many vertices such that every infinite emitter emits infinitely many edges to any vertex it emits any edge to. 

Then $B_H=\emptyset$ for every saturated hereditary subset $H \subseteq E^0$, and the map \fct{\upsilon_E}{\Gamma_E}{\Prime_\gamma(C^*(E))} given for each $\gamma_0\in\Gamma_E$ by
$\upsilon_E(\gamma_0)=\mathfrak{J}_{\overline{H(\cup\eta_{\gamma_0}})},$
where 
$\eta_{\gamma_0}=\Gamma_E\setminus\setof{\gamma\in\Gamma_E}{\gamma\geq\gamma_0},$
is a homeomorphism. 

Let $\omega$ denote the map given by Proposition~\ref{prop:first-collect-structure-1} and \cite[Fact~3.2]{MR3759003}, \ie, 
$\omega(\eta)=\mathfrak{J}_{\overline{H(\cup\eta)}}$
for every hereditary subset $\eta$ of $\Gamma_E$, and let $\varepsilon$ denote the map from $\mathbb{O}(\Prime_\gamma(C^*(E)))$ to $\mathbb{I}_\gamma(C^*(E))$ given in Proposition~\ref{prop:structure-first-prime-gamma}, \ie, 
$\varepsilon(O)=\cap(\Prime_\gamma(C^*(E))\setminus O)$ for every open subset $O\subseteq\Prime_\gamma(C^*(E))$. 
Then we have a commuting diagram 
$$\xymatrix{\mathbb{O}(\Gamma_E)\ar[d]^{\upsilon_E}_\cong\ar[r]_-\cong^-\omega & 
\mathbb{I}_\gamma(C^*(E))\ar@{=}[d] \\ 
\mathbb{O}(\Prime_\gamma(C^*(E)))\ar[r]_-\cong^-\varepsilon
& \mathbb{I}_\gamma(C^*(E))}$$
of lattice isomorphisms. 
\end{proposition}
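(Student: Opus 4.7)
The plan is to handle the three assertions in turn, with the bulk of the work going into identifying the primes within the gauge-invariant ideals and checking that the natural bijection is a homeomorphism.

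\textbf{Step 1: $B_H = \emptyset$.} This is immediate from the hypothesis. If $v$ is an infinite emitter and $H$ is any saturated hereditary subset, then every vertex $w$ which is the range of some edge out of $v$ is actually the range of infinitely many edges out of $v$. Hence the finite set $s^{-1}(v) \cap r^{-1}(E^0 \setminus H)$ must be empty, for if it contained an edge $e$ with $r(e) = w \notin H$, then $s^{-1}(v) \cap r^{-1}(\{w\})$ would be infinite and contained in $s^{-1}(v) \cap r^{-1}(E^0 \setminus H)$. So the middle condition in the definition of $B_H$ fails for every infinite emitter, and $B_H = \emptyset$.

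\textbf{Step 2: $\upsilon_E$ is a bijection.} Combining Proposition~\ref{prop:first-collect-structure-1}(1) with Fact~\ref{fact:structure-1}, and using that admissible pairs collapse to saturated hereditary subsets since $B_H=\emptyset$, I obtain a lattice isomorphism $\omega\colon \mathbb{O}(\Gamma_E)\to \mathbb{I}_\gamma(C^*(E))$, $\eta\mapsto \mathfrak{J}_{\overline{H(\cup\eta)}}$. To identify $\Prime_\gamma(C^*(E))$ on the $\Gamma_E$-side, I translate primeness through $\omega$: a proper ideal $\mathfrak{p}$ is prime within the gauge-invariant ideals iff the corresponding hereditary $\eta\subsetneq \Gamma_E$ has the property that $\eta_1\cap\eta_2\subseteq \eta$ forces $\eta_1\subseteq\eta$ or $\eta_2\subseteq\eta$, equivalently its complement $\Gamma_E\setminus\eta$ is an irreducible closed set in the Alexandrov topology on $\Gamma_E$. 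In a finite poset this occurs precisely for principal upsets $\{\gamma\in\Gamma_E:\gamma\geq\gamma_0\}$, so $\eta = \eta_{\gamma_0}$ for a unique $\gamma_0\in\Gamma_E$. Thus $\upsilon_E$ is well-defined and bijective.

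\textbf{Step 3: $\upsilon_E$ is a homeomorphism.} The closed sets of $\Prime_\gamma(C^*(E))$ are, by definition, the sets $\operatorname{hull}_\gamma(\mathfrak{I})$ with $\mathfrak{I}\in\mathbb{I}_\gamma(C^*(E))$. Writing $\mathfrak{I}=\omega(\eta_{\mathfrak{I}})$ and unwinding, $\upsilon_E^{-1}(\operatorname{hull}_\gamma(\mathfrak{I}))=\{\gamma_0:\eta_{\gamma_0}\supseteq \eta_{\mathfrak{I}}\}$; because $\eta_{\mathfrak{I}}$ is hereditary, the condition $\eta_{\gamma_0}\supseteq \eta_{\mathfrak{I}}$ is just $\gamma_0\notin \eta_{\mathfrak{I}}$. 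Hence preimages of closed sets are exactly complements of hereditary subsets, which are the closed sets in $\Gamma_E$, and $\upsilon_E$ is a homeomorphism.

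\textbf{Step 4: The diagram commutes.} Fix a hereditary $\eta\subseteq \Gamma_E$. The key identity is that in the lattice of hereditary subsets,
\begin{equation*}
\eta \;=\; \bigcap_{\gamma_0\notin \eta} \eta_{\gamma_0},
\end{equation*}
which follows because $\gamma\in \eta_{\gamma_0}$ iff $\gamma\not\geq \gamma_0$, and $\gamma\in\eta$ iff every $\gamma_0\leq\gamma$ lies in $\eta$ (by heredity). Since $\omega$ is a lattice isomorphism it preserves meets, so $\omega(\eta)=\bigcap_{\gamma_0\notin\eta}\omega(\eta_{\gamma_0})$, and the right-hand side is exactly $\varepsilon(\upsilon_E(\eta))=\bigcap(\Prime_\gamma(C^*(E))\setminus\upsilon_E(\eta))$ by the definition of $\varepsilon$ and $\upsilon_E$. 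This gives $\omega=\varepsilon\circ\upsilon_E$, proving commutativity.

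The only real subtlety I anticipate is Step 2, namely the identification of primes within the gauge-invariant lattice with complements of principal up-sets; everything else is bookkeeping using the earlier results. The finiteness of $\Gamma_E$ is what ensures that the irreducible closed subsets in the Alexandrov topology are exactly the principal up-sets, without which this identification could fail.
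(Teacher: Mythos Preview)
Your proof is correct. The paper does not include its own proof of this proposition; it is stated there as an import from \cite{arXiv:1604.05439v2}, so there is no in-paper argument to compare against. Your approach is the natural one: pull back the lattice of gauge-invariant ideals to $\mathbb{O}(\Gamma_E)$ via $\omega$, identify the primes as complements of principal up-sets in the finite poset $\Gamma_E$, and then read off the homeomorphism and the commutativity of the square by elementary order-theoretic manipulations. One minor remark on Step~3: you establish that $\upsilon_E^{-1}$ carries closed sets bijectively onto closed sets (since every closed subset of $\Gamma_E$ is $\Gamma_E\setminus\eta$ for some hereditary $\eta$, and every closed subset of $\Prime_\gamma(C^*(E))$ is $\operatorname{hull}_\gamma(\omega(\eta))$ for some such $\eta$ by Proposition~\ref{prop:structure-first-prime-gamma}(3)), which is what gives the homeomorphism; it may be worth making explicit that continuity of the bijection alone would not suffice for finite $T_0$-spaces.
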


The following proposition ensures that we can --- up to unital isomorphism --- get every graph \ca in such a form. If we only ask for stable isomorphism, we can even assume that there are no transition states and all cyclic components are singletons. It will be important for us to reduce to these forms. 

\begin{proposition}[{\cite[Lemma~3.17]{MR3759003}}] \label{prop:structure-2}
Let $E$ be a graph with finitely many vertices. 
\begin{enumerate}[(1)]
\item\label{prop:structure-2-circ}
There exists a graph $F$ with finitely many vertices such that every infinite emitter emits infinitely many edges to any vertex it emits any edge to, every transition state has exactly one edge going out, $E\Meq F$, and $C^*(E)\cong C^*(F)$, 
\item\label{prop:structure-2-circcirc}
There exists a graph $F'$ with finitely many vertices, such that every infinite emitter emits infinitely many edges to any vertex it emits any edge to, $F'$ has no transition states, $E\Meq F'$ and $C^*(E)\otimes\K\cong C^*(F')\otimes\K$,
\item\label{prop:structure-2-circcirccirc}
There exists a graph $F''$ with finitely many vertices, such that every infinite emitter emits infinitely many edges to any vertex it emits any edge to, $F''$ has no transition states, all cyclic components are singletons, $E\Meq F''$ and $C^*(E)\otimes\K\cong C^*(F'')\otimes\K$. 
\end{enumerate}
\end{proposition}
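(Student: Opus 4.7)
The plan is to normalize $E$ in stages, using Move~\OO\ alone to achieve~(1) (since only~\OO\ guarantees unital isomorphism, by Proposition~\ref{prop:moveOimpliesisomorphism}) and then using collapses (Move~\CO, a derived move obtained from~\OO\ and~\RR) to further refine to~(2) and~(3), each additional step preserving move equivalence and yielding stable isomorphism via Theorem~\ref{thm:moveimpliesstableisomorphism}.

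For~(1), I would perform two successive outsplitting sweeps. First, for each infinite emitter $v$, let $W_{\infty}$ be the (nonempty, by the pigeonhole principle, since the vertex set is finite) set of targets of $v$ receiving infinitely many edges and $W_{\mathrm{fin}}$ the set of remaining targets. Partition $s^{-1}(v)=\mathcal{E}_0\sqcup\bigsqcup_{w\in W_{\mathrm{fin}}}\mathcal{E}_w$, where $\mathcal{E}_0$ consists of all edges into $W_\infty$ and $\mathcal{E}_w$ consists of the finitely many edges $v\to w$ for $w\in W_{\mathrm{fin}}$. Only $\mathcal{E}_0$ is infinite, so Move~\OO\ applies; the resulting infinite-emitter copy $v^0$ emits only infinitely many edges to each of its targets, while the remaining copies $v^w$ are regular. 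Second, for each transition state $v$ with $k>1$ outgoing edges, outsplit $v$ using the singleton partition $\{e_1\}\sqcup\cdots\sqcup\{e_k\}$, replacing $v$ by $k$ transition states each with a single outgoing edge. Since outsplitting at any vertex $v'$ replicates each predecessor's edge count to $v'$ identically onto every copy of $v'$, the first-sweep condition is preserved throughout the second sweep, and Proposition~\ref{prop:moveOimpliesisomorphism} then gives $C^*(E)\cong C^*(F)$.

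For~(2), starting from $F$ I would collapse each transition state; each such vertex is regular and (lying in no cycle) supports no loop, so Move~\CO\ applies. Collapsing such a $v$ reroutes every predecessor's edge count to $v$ onto each successor of $v$, so an infinite emitter still emits infinitely many edges to every remaining target, and no new transition state is produced. The resulting $F'$ satisfies~(2). For~(3), continuing from $F'$, each non-singleton cyclic component is a simple cycle $v_1\to v_2\to\cdots\to v_n\to v_1$ with single edges (any extra edge or multi-edge inside the component would yield a second return path at $v_1$, contradicting cyclicity); successively collapsing $v_2,\ldots,v_n$ shrinks the cycle to a single loop at $v_1$ with all external edges transferred intact. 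Each such collapse is legal ($v_i$ is regular and does not support a loop, since it lies on a multi-vertex cycle) and introduces no new transition states, producing the desired $F''$.

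The main obstacle is the bookkeeping: one must check that every move preserves the normal-form conditions established by earlier moves. The central observation, used throughout, is that an outsplit at $v'$ replicates each predecessor's edge count to $v'$ identically onto every copy, while a collapse of $v$ adds each predecessor's edge count to $v$ onto its edge count to each successor of $v$. Both operations therefore preserve the property that any predecessor emitting infinitely many (respectively finitely many) edges continues to do so, and neither places a previously acyclic vertex inside a cycle. The legality hypotheses for~\OO\ (at most one infinite partition block) and for~\CO\ (regular, loop-free vertex) are fulfilled by the construction in each step.
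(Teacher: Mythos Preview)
The paper does not prove this proposition here; it is quoted verbatim from \cite{arXiv:1604.05439v2} and used as a black box. So there is no proof in the present paper to compare against, and I will simply assess your argument.

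Your overall strategy is sound and essentially the natural one: outsplits alone for~(1) (to get exact isomorphism), then collapses for~(2) and~(3). Two points deserve more care.

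First, your second sweep in~(1) has an ordering issue you do not address. Outsplitting a transition state $v$ into singleton pieces replicates every edge \emph{into} $v$ onto each copy; hence a predecessor transition state $u$ that previously had a single outgoing edge (to $v$) now has $k>1$ outgoing edges. If $u$ was processed before $v$, it is now ``bad'' again. You need to process transition states in an order compatible with the DAG they form---for instance by increasing height $h(t)=1+\max\{h(t'):t\to t',\ t'\text{ a transition state}\}$ (with $h(t)=1$ if $t$ has no transition-state successor)---so that outsplitting never disturbs an already-processed level. With this order the sweep terminates and achieves the claim; without it you have not established termination.

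Second, in~(3) you assert that each $v_i$ in a non-singleton cyclic component is regular ``since it lies on a multi-vertex cycle''. Lying on a cycle does not by itself exclude infinite emitters. The correct reason is that after~(1)--(2) every infinite emitter emits infinitely many edges to each target; if some $v_i$ were an infinite emitter it would emit infinitely many edges to $v_{i+1}$, producing infinitely many return paths and contradicting cyclicity. With this observation your collapse argument goes through.

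With these two refinements your proof is correct.
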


\subsection{Block matrices and equivalences}(\cf\ \cite[Section~4.1]{MR3759003}).
In this subsection, we recall some some notation and definitions about rectangular block matrices and equivalences between such. This is a generalization of the definitions in \cite{MR1907894,MR1990568} (in the finite matrix case) to the cases with rectangular diagonal blocks and possibly vacuous blocks as well. This generalization is introduced in \cite{MR3759003}, where it has been defined and discussed, and we use the notation and definitions verbatim as in \cite{MR3759003}. For the convenience of the reader, we will briefly recall which notation and definitions we will use.

We use $\MZ[m\times n]$ for $m\times n$-matrices (equivalently: group homomorphisms from $\Z^n$ to $\Z^m$), and we use \Mplus for the positive $m\times n$ matrices and write $B>0$ whenever $B\in\Mplus$. We let $B(i,j)$ denote the $(i,j)$'th entry of the corresponding matrix, and we let $\gcd B$ be the greatest common divisor of the entries $B(i,j)$, for $i=1,\ldots,m$, $j=1,\ldots,n$, if $B$ is nonzero, and zero otherwise. 

\begin{assumption} \label{ass:preorder}
Let $N\in\N$. 
For the rest of the paper, we let $\calP=\{1,2,\ldots,N\}$ denote a partially ordered set with order $\preceq$ satisfying
$$i\preceq j\Rightarrow i\leq j,$$
for all $i,j\in\calP$, where $\leq$ denotes the usual order on \N. 
We denote the corresponding irreflexive order by $\prec$.
\end{assumption}

We will use multiindices, operations on multiindices, the four sets of block matrices \MZ, \MPZ, \GLP, and \SLP, and operations on block matrices, the embedding (functor) $\iota_\mathbf{r}$, as well as $\GLPEe$ and $\SLPEe$ --- we refer the reader to \cite[Definitions~4.4, 4.6, 4.7, and Remarks~4.5, 4.8]{MR3759003}.

\subsection{\texorpdfstring{$K$}{K}-web and induced isomorphisms}(\cf\ \cite[Section~4.2]{MR3759003}).
\label{sec:K-web-and-induced-isomorphisms}
In what follows we will need the $K$-web, $K(B)$, of a matrix $B \in \MPZ$ and the description of how a \GLPEe	 $\ftn{(U,V)}{B}{B'}$ induces an isomorphism $\ftn{ \kappa_{(U,V)} }{ K(B) }{K(B')}$. 
This is a generalization of the definitions in \cite{MR1990568} (in the finite matrix case) to the cases with rectangular diagonal blocks and possibly vacuous blocks as well. This generalization is introduced in \cite{MR3759003}, where it has been defined and discussed. There concepts like $\cok B$, $\ker B$, the (reduced) $K$-web of $B$, $K(B)$, and $K$-web isomorphisms are introduced. Note that any \GLPEe $\ftn{ (U,V) }{ B }{ B' }$ induces a $K$-web isomorphism from $B$ to $B'$; we denote this induced isomorphism by $\kappa_{(U,V)}$. 
To avoid possible confusion, we point out that $\mathrm{Imm}(i)$ denotes the set of immediate predecessors of $i\in\calP$ (we say that $j$ is an \emph{immediate predecessor of $i$} if $j \prec i$ and there is no $k$ such that $j \prec k \prec i$).  
Also note that we have a canonical identification of the $K$-webs $K(B)$ and $K(-\iota_{\mathbf{r}}(-B))$ by embedding a vector by setting it to be zero on the new coordinates. 

\subsection{Block structure for graphs}(\cf\ \cite[Section~4.3]{MR3759003}).

We will need to work with graphs having their adjacency matrices in very certain block structure. For this we introduced in \cite[Section~4.3]{MR3759003}
the notation $\Bsf_E\in\MPZc$ if $\Bsf_E$ is a matrix indexed over $\{1,2,\ldots,|E^0|\}$  such that it satisfies certain conditions. 
We also define the subsets $\MPZcc$ and $\MPZccc$ of $\MPZc$ there. 
Note that if we have $\Bsf_E\in\MPZc$ and $\Bsf_{E'}\in\MPZc[\mathbf{m}'\times\mathbf{n}']$, then we say that a \starhomo $\Phi$ from $C^*(E)$ to $C^*(E')$ (or from $C^*(E)\otimes \K$ to $C^*(E')\otimes \K$) is \calP-equivariant if $\Phi$ is $\Prime_\gamma(C^*(E))$-equivariant under the canonical identification $\Prime_\gamma(C^*(E))\cong\Gamma_E\cong\calP\cong\Gamma_{E'}\cong \Prime_\gamma(C^*(E'))$ coming from the block structure. 

\subsection{Reduced filtered \texorpdfstring{$K$}{K}-theory, \texorpdfstring{$K$}{K}-web and \texorpdfstring{\GLPEe}{GLP-equivalence}}(\cf\ \cite[Section~4.4]{MR3759003}).
\label{sec:red-filtered-K-theory-K-web-GLP-and-SLP-equivalences}

In \cite{MR3759003}, we interconnect the reduced filtered $K$-theory, the $K$-web and $\GLPEe{s}$ --- we will follow the definitions and notation introduced there.  
For a partially ordered set $(\calP, \preceq )$ that satisfies Assumption~\ref{ass:preorder}, we introduce the partially ordered set $(\calP^\mathsf{T},\preceq^\mathsf{T})$, which really is the set $\calP$ equipped with the opposite order, followed by a permutation to ensure that it satisfies 
Assumption~\ref{ass:preorder}.
We introduce $\mathbf{m}^\mathsf{T}$ and $J_\mathbf{m}$ for a multiindex $\mathbf{m}$.
For $\Bsf_E\in\MPZcc[\mathbf{m}_E \times \mathbf{n}_E]$, we let 
$\mathsf{C}_E=J_{\mathbf{n}_E}\left(\Bsf_{E}^\bullet\right)^\mathsf{T}J_{\mathbf{m}_E}$. 
It was noted that a reduced filtered $K$-theory isomorphism from $\FKR(\calP;C^*(E))$ to $\FKR(\calP;C^*(F))$ corresponds exactly to a (reduced) $K$-web isomorphism from $K(\mathsf{C}_E)$ to 
$K(\mathsf{C}_F)$ together with an isomorphism from 
$\ker(\mathsf{C}_E\{i\})$ 
to $\ker (\mathsf{C}_F\{i\})$ for every $i\in(\calP^{\mathsf{T}})_{\min{}}$, where $\calP_{\min}=\setof{i\in\calP}{ j\preceq i\Rightarrow i=j}$ and $(\calP^\mathsf{T})_{\min}=\setof{i\in\calP}{ j\preceq^\mathsf{T} i\Rightarrow i=j}$.

If $\Bsf_E\in\MPZccc$, then positivity is very easy to describe on the gauge simple subquotients --- and it turns out that this will be all we need (\cf\ Section~\ref{ordereasy}). For components with a vertex supporting at least two distinct return paths, the positive cone is all of $K_0$. For the other components, the ordered $K_0$ is canonically isomorphic to $(\Z,\N_0)$. 

The reduced filtered $K$-theory isomorphism from $\FKR(\calP;C^*(E))$ to $\FKR(\calP;C^*(F))$ induced by a \GLPEe $(U,V)$ from $-\iota_{\mathbf{r}_E}(-\Bsf_E^\bullet)$ to $-\iota_{\mathbf{r}_F}(-\Bsf_F^\bullet)$ will be denoted by $\FKR(U,V)$, while we let $\FKRs(U,V)$ denote corresponding induced reduced filtered $K$-theory isomorphism from $\FKR(\calP;C^*(E)\otimes\K)$ to $\FKR(\calP;C^*(F)\otimes\K)$. 

\section{Maps induced on \texorpdfstring{$\FKR$}{FKR} by edge expansion and matrix moves}
\label{sec:edgeandtoke}

We will show in Section~\ref{sec:boyle} a positive factorization theorem as in \cite[Theorem~4.4]{MR1907894}. 
So if we have a reduced filtered $K$-theory isomorphism that is induced by an \SLPEe (subject to a certain positivity condition), we know that it can be decomposed into basic positive equivalences and using the results in Proposition~\ref{prop:matrix-moves}, we see that the original graphs are move equivalent, and, consequently, the corresponding graph \cas are stably isomorphic. But since we actually want to prove the stronger statement, that there exists an equivariant stable isomorphism inducing the original reduced filtered $K$-theory isomorphism, we need to know that a basic positive equivalence $(U,V)$ induces a stable isomorphism such that the induced map on $\FKR$ is exactly $\FKRs(U,V)$. 
Similarly, we need to know how edge expansion changes $K$-theory when going via stable isomorphisms. 

These results are proved in \cite{MR3624419}, and we state the needed results here for the convenience of the reader. First we define the edge expanded graph. 

\begin{definition}\label{def: edge expansion homomorphism}
Let $E = ( E^0 , E^1 , r_E , s_E)$ be a graph and let $e_0 \in E^1$.  Set $v_0 = s_E ( e_0 )$.  Let $F$ be the \emph{simple expansion graph} at $e_0$ defined by 
\[
F^0 = E^0 \sqcup \{ \tilde{v}_0 \}  \qquad \text{and} \qquad F^1 = \left( E^1 \setminus \{ e_0 \}  \right) \sqcup \{ f_1 , f_2 \}
\]
where $s_F \vert_{ E^1 \setminus \{ e_0 \} } = s_E \vert_{ E^1 \setminus \{ e_0 \} }$, $r_F \vert_{E^1 \setminus \{ e_0 \}} = r_E \vert_{ E^1 \setminus \{ e_0 \} }$, $s_F ( f_1 ) = v_0$, $r_F ( f_1 ) = \tilde{v}_0 = s_F ( f_2 )$, and $r_F ( f_2 )  = r_E( e_0 )$.  
\end{definition}

Note that $E \Meq F$ by applying Move~\RR to $F$.

\begin{definition}
For a subset $S\subseteq\{1,2,\ldots,N\}=\calP$, we let $\mathbf{e}_S$ denote the multiindex $\mathbf{r}=(r_1,r_2,\ldots,r_N)$ satisfying $r_i=1$ for all $i\in S$ and $r_i=0$ for all $i\not\in S$. 
Also we will write $\mathbf{e}_i$ instead of $\mathbf{e}_ {\{i\}}$ when $i\in\calP$. 
\end{definition}

The following two propositions are proved in \cite{MR3624419}. 

\begin{proposition}[{\cf\ \cite[Proposition~5.4]{MR3624419}}] \label{prop:edge-expansion}
Let $E$ be a graph with finitely many vertices, and let $v_0$ be a vertex in $E^0$ that supports a cycle $\mu = \mu_1 \cdots \mu_n$. 
Assume that $\Bsf_E\in\MPZc$, and that $v_0$ belongs to the block $j\in\calP$. 
Let $F$ be the graph in Definition~\ref{def: edge expansion homomorphism} with $e_0 = \mu_1$ (\ie, the graph is obtained with a simple expansion of $e_0$). Then $E\Meq F$.

Moreover, we can consider $\Bsf_{F}$ as an element of $\MPZc[(\mathbf{m}+\mathbf{e}_j)\times(\mathbf{n}+\mathbf{e}_j)]$ and there exist $U\in\SLPZ[\mathbf{m}+\mathbf{e}_j]$ and $V\in\SLPZ[\mathbf{n}+\mathbf{e}_j]$ that are the identity matrices everywhere except for the $j$'th diagonal block, such that $(U,V)$ is an \SLPEe from $-\iota_{\mathbf{e}_j}(-\Bsf_{E}^\bullet)$ to  $\Bsf_{F}^\bullet$, and such that there exists a \calP-equivariant isomorphism $\Phi$ from $C^{*}(E) \otimes \K$ to $C^{*}(F) \otimes \K$ satisfying $\FKR(\calP;\Phi)=\FKRs(U,V)$. 
\end{proposition}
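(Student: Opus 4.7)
I would verify the three conclusions in sequence. The move equivalence $E \Meq F$ is immediate by reversing the expansion: in $F$ the vertex $\tilde{v}_0$ is regular with $s_F^{-1}(\tilde{v}_0) = \{f_2\}$ and $r_F^{-1}(\tilde{v}_0) = \{f_1\}$, so that $s_F(r_F^{-1}(\tilde{v}_0)) = \{v_0\}$, and $r_F(f_2) = r_E(e_0) \neq \tilde{v}_0$; consequently Move~\RR\ applied at $\tilde{v}_0$ returns $E$ up to isomorphism. Setting $k := r_E(e_0)$, the vertex $k$ lies on the cycle $\mu$ and $\tilde{v}_0$ lies on the expanded cycle $f_1 f_2 \mu_2 \cdots \mu_n$ in $F$ based at $v_0$, so both $k$ and $\tilde{v}_0$ belong to the strongly connected component corresponding to block $j$. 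Ordering the vertices so that $\tilde{v}_0$ is last in block $j$ then gives $\Bsf_F \in \MPZc[(\mathbf{m}+\mathbf{e}_j)\times(\mathbf{n}+\mathbf{e}_j)]$, the remaining defining conditions (finiteness, the transition-state condition, and the infinite-emitter condition on the vertices carried over from $E$) being inherited.

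\textbf{Constructing the equivalence.} I plan to compare $\Bsf_F^\bullet$ with $-\iota_{\mathbf{e}_j}(-\Bsf_E^\bullet)$ entry by entry. They agree everywhere except at three positions: $\Bsf_F^\bullet(v_0, k) = \Bsf_E^\bullet(v_0, k) - 1$ (reflecting the removal of $e_0$), $\Bsf_F^\bullet(v_0, \tilde{v}_0) = 1$ (the edge $f_1$), and $\Bsf_F^\bullet(\tilde{v}_0, k) = 1$ (the edge $f_2$); on the new diagonal entry $(\tilde{v}_0, \tilde{v}_0)$ both matrices equal $-1$. Taking $U$ to be the identity with an additional $-1$ at position $(v_0, \tilde{v}_0)$ and $V$ to be the identity with an additional $-1$ at position $(\tilde{v}_0, k)$, a short direct calculation confirms $U \cdot (-\iota_{\mathbf{e}_j}(-\Bsf_E^\bullet)) \cdot V = \Bsf_F^\bullet$. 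Because $v_0, \tilde{v}_0, k$ all lie in block $j$, both $U$ and $V$ are identity outside the $j$-th diagonal block and restrict to unimodular matrices on it, whence $U \in \SLPZ[\mathbf{m}+\mathbf{e}_j]$ and $V \in \SLPZ[\mathbf{n}+\mathbf{e}_j]$.

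\textbf{Main obstacle.} The hard part is the final assertion: producing a \calP-equivariant \stariso $\Phi \colon C^*(E) \otimes \K \to C^*(F) \otimes \K$ whose induced map on reduced filtered $K$-theory is precisely $\FKRs(U, V)$. My plan is to invoke the concrete equivariant \stariso implementing the Move~\RR\ reduction identified in step one (produced in the proof of Theorem~\ref{thm:moveimpliesstableisomorphism} via the techniques of \cite{MR3082546}), and then to track its effect on $K$-theory block by block. Under the identifications $K_0(C^*(E)(\{j\})) \cong \cok \Bsf_E^\bullet\{j\}$ and $K_1(C^*(E)(\{j\})) \cong \ker \Bsf_E^\bullet\{j\}$ described in Section~\ref{sec:red-filtered-K-theory-K-web-GLP-and-SLP-equivalences}, one expects the action of $\Phi$ on $K_0$ to be induced by $V^{\mathsf{T}}$ on the canonical generators, and on $K_1$ by $(U^{\mathsf{T}})^{-1}$; one must additionally verify compatibility with every connecting homomorphism appearing in the sequences \eqref{eq:longtype} and \eqref{eq:shorttype}. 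Since the change in $\Bsf$ is local to block $j$, after restricting to the relevant simple subquotient one reduces to the case of a single strongly connected component and its simple expansion, which is precisely the content of \cite[Proposition~\ref{CK-prop:edge-expansion}]{arXiv:1605.06153v1}; naturality of the six-term sequence in the filtration then transports the conclusion from the single-block computation to the full reduced filtered invariant, completing the argument.
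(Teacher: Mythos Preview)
The paper does not supply an in-text proof: the sentence immediately preceding the proposition states that it is proved in \cite{arXiv:1605.06153v1}. Your verification that $E\Meq F$ via Move~\RR\ at $\tilde v_0$ matches the paper's one-line remark after Definition~\ref{def: edge expansion homomorphism}, and your explicit construction of $(U,V)$ is correct and records more detail than the paper itself does.

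The gap is in your final step. Appealing to \cite[Proposition~\ref{CK-prop:edge-expansion}]{arXiv:1605.06153v1} for the single-block case is circular: that external result is precisely what the paper invokes for the \emph{entire} statement, so you are assuming what is to be proved rather than reducing to a genuinely prior fact. Separately, the claim that ``naturality of the six-term sequence'' transports a block-$j$ computation to all of $\FKR$ is insufficient as written: naturality gives commuting ladders, but it does not by itself identify the action of $\Phi$ on the groups $\cok\Bsf_E^\bullet\{S_x\}$ and $\cok\Bsf_E^\bullet\{R_x\}$ when the relevant convex set contains $j$ together with other indices --- those groups are not direct sums of block contributions, and knowing $K_*(\Phi)$ on the simple subquotient at $j$ does not pin them down. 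A self-contained argument must take the concrete full-corner isomorphism realising Move~\RR\ and compute its effect on each such $K$-group directly; carrying out that computation is the actual content of the external reference, not something one bypasses by a reduction.
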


\begin{proposition}[{\cf\ \cite[Proposition~6.1]{MR3624419}}]\label{prop:toke}
Let $E$ be a graph such that $\Bsf_E,\Bsf_F\in\MPZccc$. 
Let $u,v\in E^0$ with $u\neq v$ and $\Asf_{E}(u,v)>0$ and let $E_{u,v}\in\SLPZ[\mathbf{n}]$ denote the positive basic elementary matrix with $E_{u,v}(u,v)=1$ and equal to the identity everywhere else. 

Assume that $E_{u,v}\Bsf_E=\Bsf_{F}$ and the vertex $v$ in $E$ is regular. 
Then $(E_{u,v}^\bullet,I)$ is an \SLPEe from $\Bsf_E^\bullet$ to $\Bsf_{F}^\bullet$, where $E_{u,v}^\bullet$ is the matrix we get from $E_{u,v}$ by removing all columns and rows corresponding to singular vertices of $E$. 
Moreover, there exists a $\calP$-equivariant isomorphism $\Phi$ from $C^*(E)$ to $C^*(F)$ such that $\FKR(\calP;\Phi)=\FKR(E_{u,v}^\bullet,I)$.

Now assume instead that $\Bsf_E E_{u,v}=\Bsf_{F}$ and the vertex $u$ emits at least two edges. 
Then $(I,E_{u,v})$ is an \SLPEe from $\Bsf_E^\bullet$ to $\Bsf_{F}^\bullet$. Moreover, there exists a $\calP$-equivariant isomorphism $\Phi$ from $C^*(E)\otimes\K$ to $C^*(F)\otimes\K$ such that $\FKR(\calP;\Phi)=\FKRs(I,E_{u,v})$. 
\end{proposition}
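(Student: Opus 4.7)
The plan is to verify the asserted SLP-equivalences directly, realize the $\calP$-equivariant (stable) isomorphism using Proposition~\ref{prop:matrix-moves}, and finally match the induced K-theory map by unpacking the Cuntz-Krieger families involved. For the SLP-equivalence statements: since $\Asf_E(u,v)>0$ there is an edge $u\to v$, which forces $u\preceq v$ in the block order, so $E_{u,v}\in\SLPZ[\mathbf{n}]$. In the row-addition case, $v$ is regular, hence $v$ survives the ``$\bullet$'' truncation; restricting the matrix identity $E_{u,v}\Bsf_E=\Bsf_F$ to the regular rows gives $E_{u,v}^\bullet\Bsf_E^\bullet=\Bsf_F^\bullet$, so $(E_{u,v}^\bullet,I)$ is an \SLPEe from $\Bsf_E^\bullet$ to $\Bsf_F^\bullet$. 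In the column-addition case the equality $\Bsf_E E_{u,v}=\Bsf_F$ does not act on rows, so it passes trivially to the $\bullet$-truncation, giving $(I,E_{u,v})$.

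For the isomorphism in the first case, Proposition~\ref{prop:matrix-moves}\ref{prop:matrix-moves:II} applies (since $v$ is regular and $u\to v$ exists), yielding $E\Meq F$. Inspecting the proof of that proposition from \cite{arXiv:1602.03709v2}, this move equivalence is realized by an outsplit at $v$ (Move~\OO) that separates the edge from $u$, followed by a reduction (Move~\RR) at the newly split vertex. Move~\OO induces a unital $\calP$-equivariant \stariso by Proposition~\ref{prop:moveOimpliesisomorphism}, and the ensuing reduction at a regular non-loop vertex whose sole outgoing edge targets $v$ can also be taken to give a unital \stariso directly (no stabilisation is needed here because the contracted vertex is not being ``swallowed'' into a sink). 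Composing these yields the desired unital $\calP$-equivariant isomorphism $\Phi\colon C^*(E)\to C^*(F)$. In the second case, the hypothesis that $u$ emits at least two edges enables Proposition~\ref{prop:matrix-moves}\ref{prop:matrix-moves:I}; its concrete realisation uses an insplit at $u$ (Move~\II), whose invariance is known only up to stabilisation, which is precisely why the statement restricts to a $\calP$-equivariant isomorphism $\Phi\colon C^*(E)\otimes\K\to C^*(F)\otimes\K$.

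It then remains to identify $\FKR(\calP;\Phi)$ with $\FKR(E_{u,v}^\bullet,I)$ respectively $\FKRs(I,E_{u,v})$. For this one writes down an explicit Cuntz-Krieger $E$-family inside $C^*(F)$ (resp.\ $C^*(F)\otimes\K$) built from the $F$-family via the outsplit/reduction (resp.\ insplit), and tracks the image of each canonical projection $p_w$. In the first case the class $[p_u]_0$ lands on $[p_u]_0+[p_v]_0$ on every subquotient containing both, which under the block/transpose conventions (whereby $V^{\mathsf T}$ acts on $K_0$'s) is exactly the action of $E_{u,v}^\bullet$ at the $\cok$-level; the $K_1$ generators of the cyclic components and all connecting maps are pinned down by naturality of the six-term sequence. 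The second case proceeds dually, with the column matrix controlling a shift on generators of the singular columns.

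The main obstacle will be this last bookkeeping step: one must reconcile the concrete combinatorial action of Moves~\OO, \RR, \II on the canonical $K_0$- and $K_1$-generators of each gauge-invariant subquotient with the algebraic action of the single elementary matrix $E_{u,v}$, paying attention to the order-reversal built into $\mathsf{C}_E=J_{\mathbf{n}_E}(\Bsf_E^\bullet)^{\mathsf T}J_{\mathbf{m}_E}$ and to sign conventions in the exponential/index maps. The cyclic components are the delicate case, because there $\ker\Bsf_E^\bullet\{i\}$ is nontrivial, so the connecting map in \eqref{eq:exact-seq-Kweb} is nonzero, and compatibility of $\Phi_*$ with these connecting maps has to be checked against the induced $\kappa_{(E_{u,v}^\bullet,I)}$ on the $K$-web.
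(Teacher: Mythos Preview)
The paper does not prove this proposition in-text; it defers entirely to \cite{arXiv:1605.06153v1}. Your SLP-equivalence verification is correct, but the move decomposition is garbled and, more seriously, does not deliver the unital isomorphism claimed in the row case.

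You have the moves swapped. Outsplitting at $v$ partitions $s^{-1}(v)$, the edges \emph{out of} $v$; it cannot ``separate the edge from $u$'', which lies in $r^{-1}(v)$. Likewise, insplitting at $u$ partitions $r^{-1}(u)$ and cannot isolate the edge $e_0\colon u\to v\in s^{-1}(u)$. The natural move realizations are the other way around: column addition is realized by outsplitting $u$ at $\{e_0\}\sqcup (s^{-1}(u)\setminus\{e_0\})$ (this uses the hypothesis that $u$ emits at least two edges) and then collapsing the resulting degree-one vertex $u^1$; row addition is realized by insplitting $v$ at $\{e_0\}\sqcup (r^{-1}(v)\setminus\{e_0\})$ and collapsing $v^1$. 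But Move~\II\ and the collapse step give only \emph{stable} isomorphism, so this route does not produce the unital $\Phi\colon C^*(E)\to C^*(F)$ asserted for the row case. Your remark that the reduction ``can also be taken to give a unital $*$-isomorphism directly'' is unjustified: Move~\RR\ requires all incoming edges at the reduced vertex to originate from a single source, which is not guaranteed here, and in general it only yields stable isomorphism.

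The way the cited reference obtains the unital isomorphism in the row case is by a direct construction of a Cuntz-Krieger $E$-family inside $C^*(F)$: fix one edge $e_0\colon u\to v$ and observe that $F$ is obtained from $E$ by deleting $e_0$ and adding, for each $g\in s_E^{-1}(v)$, a new edge $[e_0g]\colon u\to r(g)$. One then sets $\Phi(p_w)=q_w$, $\Phi(s_e)=t_e$ for $e\neq e_0$, and $\Phi(s_{e_0})=\sum_{g\in s^{-1}(v)} t_{[e_0g]}\,t_g^{*}$; regularity of $v$ makes this sum finite and forces $\Phi(s_{e_0})^*\Phi(s_{e_0})=q_v$, and one checks the Cuntz-Krieger relations at $u$ and recovers $t_{[e_0g]}=\Phi(s_{e_0}s_g)$ for surjectivity. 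With this explicit $\Phi$ in hand, the $K$-theory identification is immediate on $K_0$ (each $[p_w]$ goes to $[q_w]$, matching $V=I$), and the $K_1$ computation reduces to a direct check on kernels. Your closing paragraph correctly identifies the $K$-theory bookkeeping as the substantive content, but as written it is a description of what must be checked rather than a verification.
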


\nomoreskippable

\section{Canonical and standard forms}
\label{sec:standardform}

In this section, we prove that every graph with finitely many vertices is move equivalent to a graph in canonical form (see Definition~\ref{def:CanonicalForm}).
This will allow us to reduce the proof of our classification result to graphs in canonical form.  
In fact, we will do even better: 
We will reduce the proof of our classification result to graphs whose adjacency matrices are in the same block form.
  
The first results of this type were the results in Proposition~\ref{prop:structure-2}, that allow us to remove breaking vertices and transition states and to make cyclic components $1\times 1$.

\begin{lemma}\label{lem:legalcolumnaddition}
Let $E$ be a graph such that $\Bsf_E\in\MPZccc$, let $u$ and $v$ be distinct vertices of $E$ such that $\Asf_E ( u, v ) > 0$ and $u$ emits at least two edges.  Let $E_{(u,v)}\in\SLPZ[\mathbf{n}]$ denote the positive basic elementary matrix with $E_{(u,v)}(u,v)=1$ and equal to the identity everywhere else and let $F$ be the graph such that $\Bsf_E E_{(u,v)} = \Bsf_{F}$.  Then $\Bsf_{F}$ is an element of $\MPZccc$ if one of the following holds:
\begin{enumerate}[(1)]
\item \label{lem:legalcolumnaddition-item1}
$u$ is the base point of a loop,
\item \label{lem:legalcolumnaddition-item2}
$\Bsf_{E} ( u, v ) \geq 2$, or
\item \label{lem:legalcolumnaddition-item3}
$u$ and $v$ are in the same strongly connected component $\gamma_0$ and $u$ emits at least two edges to vertices in $\gamma_0$.
\end{enumerate} 
\end{lemma}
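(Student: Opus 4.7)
The plan is to verify each of the conditions in the definition of $\MPZccc$ for $\Bsf_F$. First, I check that $\Bsf_F$ corresponds to a genuine graph $F$ on $E^0$: only column $v$ is altered, with $\Bsf_F(w,v) = \Bsf_E(w,v) + \Bsf_E(w,u)$, and the only potentially negative new off-diagonal entry is $\Bsf_F(u,v) = \Asf_E(u,v) + \Asf_E(u,u) - 1$, which is nonnegative under each of the three hypotheses --- case (1) gives $\Asf_E(u,u) \ge 1$, case (2) gives $\Asf_E(u,v) \ge 2$, and case (3) uses $\Asf_E(u,v) \ge 1$ from the lemma hypothesis. Since $\Asf_E(u,v) > 0$ forces $u \ge v$ in the vertex preorder and thus $i_u \preceq i_v$ in $\calP$ (as the isomorphism $\calP \to \Gamma_E$ is order-reversing), the column addition deposits entries only into blocks $(k,i_v)$ with $k \preceq i_u \preceq i_v$, preserving the $\MPZ$ structure. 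The infinite-emitter condition persists because entries in the row of an infinite emitter are $0$ or $\infty$ and their sum remains in $\{0,\infty\}$.

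The heart of the argument is the assertion that the reachability preorder on $E^0$ is identical in $E$ and $F$. The direction $F \Rightarrow E$ follows by observing that any new edge $w \to v$ in $F$ comes from $\Asf_E(w,u) > 0$ and is witnessed by $w \to u \to v$ in $E$. For the direction $E \Rightarrow F$, the only edge that can disappear is $u \to v$, and this survives in cases (1) and (2). In case (3) it disappears only in the subcase $\Asf_E(u,u) = 0$ and $\Asf_E(u,v) = 1$; in this subcase the hypothesis that $u$ emits at least two edges to $\gamma_0$ furnishes an edge $u \to y$ with $y \in \gamma_0 \setminus \{u,v\}$, and strong connectedness of $\gamma_0$ provides some $w \in \gamma_0 \setminus \{u\}$ with $w \to u$ in $E$, so the column operation creates a new edge $w \to v$ in $F$. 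Taking a shortest $E$-path from $y$ to $u$, all of whose edges survive in $F$, one assembles a path $u \to y \to \cdots \to w \to v$ in $F$, restoring reachability from $u$ to $v$.

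With reachability preserved, $E$ and $F$ have the same strongly connected components and the same set of singular singletons, so $F$ inherits the absence of transition states from $E$. For cyclicity, any component $\gamma$ whose vertex set does not contain both $u$ and $v$ has internal edges unchanged by the operation and therefore retains its cyclic/non-cyclic character, so in particular singletons stay singletons. For the component $\gamma_0$ arising in case (3), which has $|\gamma_0| \ge 2$ and is non-cyclic in $E$ (the two edges of $u$ into $\gamma_0$ produce two distinct return paths at $u$), non-cyclicity persists in $F$: choosing any $w \in \gamma_0$ with $w \to u$ in $E$ (necessarily $w \ne u$), the vertex $w$ has in $F$ the preserved edge $w \to u$ and the newly created edge $w \to v$, each of which can be extended by a shortest path back to $w$ in $F$ to produce a return path, and these two return paths are distinct since they begin with edges of different targets $u \ne v$ (or, when $w = v$, as a loop versus a proper edge via $u$). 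The principal obstacle is this final step: the hypothesis ``$u$ emits at least two edges to $\gamma_0$'' is calibrated precisely so that the new edges $w \to v$ compensate for the lost edge $u \to v$, keeping $\gamma_0$ non-cyclic.
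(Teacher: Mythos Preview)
Your proof is correct and follows essentially the same approach as the paper: both arguments reduce the verification of $\Bsf_F\in\MPZccc$ to showing that the reachability preorder on $E^0$ coincides in $E$ and $F$, and both handle the delicate case (3) by rerouting the lost edge $u\to v$ through another vertex of $\gamma_0$. You are in fact more thorough than the paper, which simply asserts that reachability preservation is ``enough'' and leaves the checks of nonnegativity, the infinite-emitter condition, absence of transition states, and the cyclic-singleton condition implicit; you verify these explicitly. Two small presentational points: the parenthetical ``necessarily $w\neq u$'' is false when $u$ supports a loop (just choose $w\neq u$, which exists by strong connectedness), and your non-cyclicity paragraph is phrased as if it applies only in case (3), whereas in cases (1) and (2) with $u,v$ in the same component it is also needed --- but since (1) or (2) together with $u,v\in\gamma_0$ forces (3), and since your argument for $\gamma_0$ uses nothing beyond $u,v\in\gamma_0$ and preserved reachability, no logical gap results.
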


\begin{proof}
To show that $\Bsf_{F} \in \MPZccc$ it is enough to show that for all $w, w' \in E^0$, there exists a path from $w$ to $w'$ in $E$ if and only if there exists a path from $w$ to $w'$ in $F$.  Note that $F$ can be described as follows:  Take an edge $f$ from $u$ to $v$.  Set $F^0 = E^0$, 
\[
F^1 = \left( E^1 \setminus \{f\} \right) \sqcup \setof{\overline{e}}{e \in r_E^{-1} (u)}
\]
with $s_{F} ( e ) = s_E (e)$, $r_{F} (e) = r_E (e)$ for all $e \in E^1 \setminus \{ f \}$, and $s_{F} ( \overline{e} ) = s_E (e)$ and $r_{F} ( \overline{e} ) = v$.  

Suppose there exists a path $f_1 \cdots f_n$ from $w$ to $w'$ in $F$.  If $f_i = \overline{e_i}$, then $s_{F} ( f_i ) = s_E ( e_i )$ and $r_F ( e_i ) = u$.  Therefore, replacing all the edges $f_i$ that are equal to $\overline{ e_i }$ with $e_i f$, we get a path from $w$ to $w'$ in $E$.   Hence, if there exists a path from $w$ to $w'$ in $F$, then there exists a path from $w$ to $w'$ in $E$.  

We now show that if there exists a path $e_1\cdots e_n$ from $w$ to $w'$ in $E$, then there exists a path from $w$ to $w'$ in $F$.  Suppose $e_i = f$.  If $u$ is the base point of a loop $e$, then we may replace $e_i$ with $\overline{e}$, and if $\Bsf_E ( u, v ) \geq 2$, we may replace $e_i$ with an edge $g \in ( s_E^{-1}(u) \cap r_E^{-1}(v) ) \setminus \{f\}$.  Doing this for all $e_i$ with $e_i = f$, we obtain a path in $F$ from $w$ to $w'$ if \ref{lem:legalcolumnaddition-item1} or \ref{lem:legalcolumnaddition-item2} hold.

Suppose $u$ and $v$ are in the same strongly connected component $\gamma_0$ and $u$ emits at least two edges to vertices in $\gamma_0$.  We may assume that $u$ is not the base point of a loop and $\Bsf_E (u,v) = 1$.  Otherwise, the previous cases will imply that there exists a path from $w$ to $w'$ in $F$.  Suppose $e_i = f$.  If $i \neq 1$, then $r_E ( e_{i-1} ) = u$.  Replace $e_{i-1} e_i$ with $\overline{e_{i-1}}$.  Suppose $e_1 = f$.  By assumption there exists $v' \in \gamma_0 \setminus \{ v, u \}$ and an edge $g \in s^{-1}(u) \cap r^{-1}(v')$.  Since $v', v \in \gamma_0$, there exists a path $\mu_1 \cdots \mu_m$ in $E$ from $v'$ to $v$.  If $\mu_j = f$ (note that $j \neq 1$), then $r_E( \mu_{j-1} ) = u$.  So, replacing $\mu_{j-1} \mu_j$ with $\overline{ \mu_{j-1} }$ for all $\mu_j = f$, we get a path $\nu$ from $v'$ to $v$ in $F$.  Then $g \nu$ is a path from $u$ to $v$ in $F$.  So replacing $e_1$ with $g \nu$ if $e_1 = f$ and replacing $e_{i-1} e_i$ with $\overline{ e_{i-1} }$ whenever $i \neq 1$ and $e_i = f$, we obtain a path from $w$ to $w'$ in $F$. 
\end{proof}

The following result allows us to make sure that all vertices that support a cycle support at least one loop. It also brings the adjacency matrix in a form such that every entry that possibly could be nonzero is nonzero. 

\begin{lemma}\label{lem:GoingFromEdgeExpansionToCanonicalForm}
Let $E$ be a graph such that $\Bsf_E\in\MPZccc$. 

Then $E \Meq F$, where $F$ is a graph such that $F^0=E^0$, $F^0_{\mathrm{reg}}=E^0_{\mathrm{reg}}$, $\Bsf_{F}\in\MPZccc$, $\Bsf_F^\bullet\{i\}>0$ for all $i\in\calP$ satisfying $m_i > 1$, all singular vertices $v$ satisfy the property that if there exists a path of positive length from $v$ to $w$, then $|s^{-1}(v) \cap r^{-1} (w) | = \infty$, and $\Bsf_F^\bullet\{i,j\}>0$ whenever $i,j\in\calP$ with $i\neq j$, $i\preceq j$ and $m_i\neq 0$. 

Moreover, the move equivalence we get from $E$ to $F$ can be obtained by using \SLPEe{s} of the form in the second part of Proposition~\ref{prop:toke}. This means that we get an \SLPEe $(I,V)$ from $\Bsf_E$ to $\Bsf_F$ and a $\calP$-equivariant isomorphism $\Psi$ from $C^*(E)\otimes\K$ to $C^*(F)\otimes\K$ such that $\FKR(\calP;\Psi)=\FKRs(I,V)$. 
\end{lemma}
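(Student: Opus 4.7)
The plan is to reach $F$ from $E$ by finitely many column additions $\Bsf E_{(u,v)}$, each legal in the sense of the second part of Proposition~\ref{prop:toke} and preserving $\MPZccc$ via Lemma~\ref{lem:legalcolumnaddition}. Such moves change neither the vertex set nor which vertices are regular, so $F^0=E^0$ and $F^0_{\mathrm{reg}}=E^0_{\mathrm{reg}}$ are automatic. Composing the elementary matrices yields $V\in\SLPZ[\mathbf{n}]$ with $\Bsf_E V=\Bsf_F$, and composing the $\calP$-equivariant isomorphisms produced by Proposition~\ref{prop:toke} yields $\Psi$ with $\FKR(\calP;\Psi)=\FKRs(I,V)$.

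The substantive task is to choose the column additions so that $F$ has the listed positivity properties, and I would do so in two stages. \emph{Stage 1 (within blocks).} For each block $i$ with $m_i\geq 2$, the component $\gamma_i$ is non-cyclic and strongly connected of size at least~$2$, because cyclic components are singletons in $\MPZccc$. Looking at the subgraph of $E$ on $\gamma_i$, strong connectivity together with non-cyclicity forces it to be more than a simple cycle, hence some vertex $u_0\in\gamma_i$ emits at least two edges inside $\gamma_i$ (otherwise the in-$\gamma_i$ subgraph would be a functional graph and thus a single cycle, making $\gamma_i$ cyclic). Starting from $u_0$, case~(3) of Lemma~\ref{lem:legalcolumnaddition} makes it legal to add its column into other columns inside $\gamma_i$, producing new edges; each new edge created enables further legal additions, in particular via cases~(1) and~(2) once loops or parallel edges appear. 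A path-shortening procedure --- given any path inside $\gamma_i$, replace its last two edges by a direct edge via a column addition whose source is a sufficiently enriched intermediate vertex --- iteratively produces $\Asf_F(u,v)\geq 1$ for $u\ne v$ in $\gamma_i$ and $\Asf_F(u,u)\geq 2$ for each regular $u\in\gamma_i$, which is precisely $\Bsf_F^\bullet\{i\}>0$.

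\emph{Stage 2 (across blocks and at singular vertices).} For $i\prec j$ with $m_i\ne 0$ and any regular $u\in\gamma_i$ and any $v\in\gamma_j$, a path from $u$ to $v$ exists; thanks to the enrichment from Stage~1, this path can be shortened by the same column-addition procedure to a direct edge, so $\Bsf_F^\bullet\{i,j\}>0$. Finally, whenever a singular vertex $v$ has a path of positive length to some $w$, the same shortening creates at least one edge $v\to w$, and since $\MPZccc$ requires every infinite emitter to emit infinitely many edges to any vertex it emits any edge to, this automatically promotes to $|s^{-1}(v)\cap r^{-1}(w)|=\infty$.

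The hard part will be scheduling: each column addition requires the source vertex to emit at least two edges of the right kind (cases~(1)--(3) of Lemma~\ref{lem:legalcolumnaddition}) at that moment, and this must be verified at every step. The remedy is to bootstrap --- first use the existing richness at $u_0$ to create loops and parallel edges inside each non-cyclic block, and only afterwards propagate across blocks and from singular vertices --- so that whenever an addition is applied, the hypotheses of Lemma~\ref{lem:legalcolumnaddition} are already in place and we remain inside $\MPZccc$ throughout.
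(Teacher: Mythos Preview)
Your proposal is correct and follows essentially the same approach as the paper: reach $F$ by a finite sequence of column additions $\Bsf E_{(u,v)}$ that are legal for Proposition~\ref{prop:toke} and remain in $\MPZccc$ by Lemma~\ref{lem:legalcolumnaddition}, first bootstrapping loops inside each non-cyclic block from a vertex $u_0$ emitting two in-block edges, then filling in all in-block and cross-block edges by path-shortening, with the singular-vertex condition following automatically since column additions preserve the ``infinite emitter emits $\infty$ to any target'' property. The paper's proof is the same strategy carried out with explicit scheduling: it first walks a path backwards to create one loop, then propagates loops vertex-by-vertex through the component, then fills in all pairwise edges, then doubles the loops, and finally handles $u\geq v$ across components; your sketch compresses these phases into ``enrich then shorten'' but identifies the same obstacles and the same remedies.
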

\begin{proof}
We will prove the lemma by doing column additions as in the second part of Proposition~\ref{prop:toke}, and then use this proposition to get the results. 
We first want to show that we can get every vertex in a component corresponding to a diagonal block with $m_i>1$ to have at least two loops. 
These vertices correspond exactly to the vertices in the strongly connected components with at least two elements. 
So let there be given $\gamma\in\Gamma_E$ with $|\gamma|>1$. 

The first step is to show that we can get at least one vertex in $\gamma$ with at least one loop. 
Since $\gamma$ is strongly connected, but not a cyclic component, there exists a vertex $u\in\gamma$ that emits at least two edges to vertices in $\gamma$. 
If $u$ is the base of a loop, we are done. 
If not, then there exists a vertex $v\in\gamma\setminus\{u\}$ such that there is an edge from $u$ to $v$. 
Choose a path from $v$ to $u$ going through distinct vertices $v=v_0$, $v_1,\ldots,v_n=u$. 
Then we can add the column corresponding to $v_n$ to the column corresponding to $v$. Now $v_{n-1}$ will emit at least two edges (one to $v_n$ and one to $v$). 
Then we can add the column corresponding to $v_{n-1}$ to the column corresponding to $v$. Now $v_{n-2}$ will emit at least two edges (one to $v_{n-1}$ and one to $v$). 
By induction we can do column additions in this way within the component to get at least one loop based at $v$. Note that by Lemma~\ref{lem:legalcolumnaddition} the resulting graph gives a matrix in $\MPZccc$ in each step.

So now we have a vertex in $v_0'=v\in\gamma$ with at least one loop. 
Since $\gamma$ is strongly connected, we have that $v$ must emit at least two edges and there is at least one vertex $v_1'\in\gamma\setminus\{v_0'\}$ that emits an edge to $v_0'$. 
Now, using column addition, we can make sure that $v_1'$ emits at least two edges. 
As above, we can use column additions to get at least one loop at $v_1'$ --- and $v_0'$ will still have at least one loop. 
If $\{v_0',v_1'\}\neq\gamma$, then there exists at least one vertex $v_2'\in\gamma\setminus\{v_0',v_1'\}$ that emits an edge to $\{v_0',v_1'\}$. 
Again, using column addition, we can make sure that $v_2'$ emits at least two edges. 
As above, we can use column additions to get at least one loop at $v_2'$ --- and $v_0'$ and $v_1'$ will still have at least one loop. 
By induction, we get that by using column additions within the component, every vertex has at least one loop and it emits at least one edge to another vertex of the component. 
Again, by Lemma~\ref{lem:legalcolumnaddition} the resulting graph gives a matrix in $\MPZccc$ in each step.  

Suppose $u$ and $v$ are vertices in $\gamma$ and there is no edge from $u$ to $v$.  Choose a path from $u$ to $v$ through distinct vertices $u = u_0$, $u_1 \ldots, u_n = v$.  Then we can add the column corresponding to $u_{n-1}$ to the column corresponding to $u_n = v$. 
Since $u_{n-1}$ supports a loop, for any edge from a vertex $w$ to a vertex $w'$ in the graph before the column addition there will still be an edge from $w$ to $w'$ after the column addition. Moreover, now there is an edge from $u_{n-2}$ to $u_n=v$.
By induction we can do column additions in this way within the component to get an edge from $u$ to $v$ (and in the new graph, there will be an edge from $w$ to $w'$ if there was an edge in the graph before the operations).  
Continuing this, we get that for all $u,v \in \gamma$, there is an edge from $u$ to $v$.  
Given $v\in\gamma$, we can choose $v'\in\gamma\setminus\{v\}$. 
By adding the column corresponding to $v'$ to the column corresponding to $v$, we get that $v$ supports at least two loops with all the previous properties preserved. 
Continuing this for all vertices in $\gamma$, we get that every vertex in $\gamma$ is a base point of at least two loops and for all $u,v \in \gamma$, there exists an edge from $u$ to $v$.  We do this for all strongly connected components with at least two elements.  Call the resulting graph $E'$.  By Lemma~\ref{lem:legalcolumnaddition}, $\Bsf_{E'} \in \MPZccc$ in each step.  

Now assume that $u,v\in (E')^0$ such that $u\geq v$, $u\neq v$. 
If $\Bsf_{E'}(u,v)>0$, then we do nothing, so assume that $\Bsf_{E'}(u,v)=0$. 
Recall that every regular vertex of $E'$ has at least one loop, so the only vertices not emitting at least two edges are the sinks and the vertices in cyclic components that do not emit edges to other components. 
Therefore, there exists a path from $u$ to $v$ through distinct vertices $u=u_0$, $u_1,\ldots,u_n=v$ that all emit at least two edges (except for possibly the last vertex, $v$) with $n\geq 2$ and $u_i$ being the base point of a loop or $\Bsf_{E'} ( u_i , u_{i+1} ) = \infty$ (when $u_i$ is an infinite emitter that is not a base point of a cycle), for $i=0,1,\ldots,n-1$.  Using Proposition~\ref{prop:toke}, we can add the column corresponding to $u_{n-1}$ to the column corresponding to $v$. Now $u_{n-2}$ will emit at least two edges (one to $u_{n-1}$ and one to $v$), and all other obtained properties are preserved.  Then we can add the column corresponding to $u_{n-2}$ to the column corresponding to $v$. Now $u_{n-3}$ will emit at least two edges (one to $u_{n-2}$ and one to $v$), and all other obtained properties are preserved.  By induction, we get that by using column additions through this path, we get a graph $E''$ such that $\Bsf_{E''} ( u, v ) > 0$, and such that all other obtained properties are preserved.  And by Lemma~\ref{lem:legalcolumnaddition}, $\Bsf_{E''} \in \MPZccc$ in each step.

Let $F$ be the resulting graph where we have done column additions to get that $\Bsf_F ( u, v ) > 0$ for all $u \geq v$ with $u\neq v$.  
The original graph satisfied, that every infinite emitter emits infinitely many edges to every vertex it emits any edge to. This property is preserved when we do the above column additions. Therefore the resulting graph satisfies this, and it even 
satisfies that all singular vertices $v$ satisfy the property that if there exists a path of positive length from $v$ to $w$, then $|s^{-1}(v) \cap r^{-1} (w) | = \infty$. 
By Lemma~\ref{lem:legalcolumnaddition}, $\Bsf_{F} \in \MPZccc$ in each step.
\end{proof}

We now expand on the conditions we can put on graphs. 
To turn $K$-theory isomorphisms into \GLPEe{s} or \SLPEe{s}, the matrices $\Bsf^{\bullet}_E$ and $\Bsf^{\bullet}_{E'}$ must have sufficiently big diagonal blocks, this requirement is captured in \ref{def:canonical-item-size3} and \ref{def:canonical-item-rowrank} below. 
The positivity condition, \ref{def:canonical-item-positive}, is also convenient when dealing with matrix manipulations. 
Condition \ref{def:canonical-item-paths} and \ref{def:canonical-item-positive} ensures that we can apply Proposition~\ref{prop:matrix-moves} to do matrix manipulations.
Condition~\ref{def:canonical-item-singularfirst} ensures us that for graphs $E$ and $F$ with $\Bsf_E,\Bsf_F\in\MPZccc$ in canonical form, we have that $\Bsf_E^\bullet=\Bsf_F^\bullet$ implies that $\Bsf_E=\Bsf_F$, so the graphs are equal (up to relabelling of the vertices and edges). 

\begin{definition}\label{def:CanonicalForm}
Let $E$ be a given graph.
We say that $E$ is in \emph{canonical form} if $\Bsf_E\in\MPZccc$ and it satisfies the following
\begin{enumerate}[(1)]
\item \label{def:canonical-item-loops-inf} 
every vertex of $E$ is either a regular vertex that is the base point of a loop or a singular vertex $v$ satisfying the property that if there exists a path of positive length from $v$ to $w$, then $| s^{-1}(v) \cap r^{-1} (w) | = \infty$;
\item \label{def:canonical-item-paths} for all regular vertices $v, w$ of $E$ with $v \geq w$, there exists a path in $E$ from $v$ to $w$ through regular vertices in $E$;
\item \label{def:canonical-item-size3} $m_{i} \geq 3$ whenever $i$ corresponds to a strongly connected component that is not cyclic; 
\item \label{def:canonical-item-positive} if $i \preceq j$, $\Bsf^{\bullet}_{E} \{ i, j \}$ is not the empty matrix and either $i\neq j$ or $m_i>1$, then $\Bsf^{\bullet}_{E} \{ i , j \} > 0$; 
\item \label{def:canonical-item-rowrank}  if $m_i>1$ for an $i\in\calP$, then the Smith normal form of $\Bsf^{\bullet}_{E} \{ i \}$ has at least two $1$'s (for rectangular matrices, see Theorem~\ref{thm:smith-normal-form} for the definition of Smith normal form) --- in particular, the rank of $\Bsf^{\bullet}_{E} \{ i \}$ is at least $2$; and
\item \label{def:canonical-item-singularfirst}
the vertices are ordered such that the singular vertices come before the regular vertices in each component (when writing down the adjacency matrix, $\Asf_E$).
\end{enumerate}
\end{definition}

\begin{lemma}\label{lem:we-can-put-it-in-canonical-form}
Let $E$ be a graph with finitely many vertices. 
Then there exists a graph $F$ such that $F$ is in canonical form and $E\Meq F$.

If $E$ is a graph with $\Bsf_E\in\MPZccc$ satisfying Conditions~\ref{def:canonical-item-size3}, \ref{def:canonical-item-rowrank} and~\ref{def:canonical-item-singularfirst} in Definition~\ref{def:CanonicalForm}, 
then there exist a graph $F$ in canonical form with $E\Meq F$, $\Bsf_{F}\in \MPZccc$ and a \calP-equivariant isomorphism $\Phi$ from $C^*(E)\otimes\K$ to $C^*(F)\otimes\K$ such that $\FKR( \Phi )$ is induced by an \SLPEe from $\Bsf_E^\bullet$ to $\Bsf_{F}^\bullet$ (via the canonical isomorphisms).

If $E$ is a graph with $\Bsf_E\in\MPZccc$, 
then there exist a graph $F$ in canonical form with $E\Meq F$, $\Bsf_{F}\in \MPZccc[(\mathbf{m}+\mathbf{r})\times(\mathbf{n}+\mathbf{r})]$ for some multiindex $\mathbf{r}$, and a \calP-equivariant isomorphism $\Phi$ from $C^*(E)\otimes\K$ to $C^*(F)\otimes\K$ such that $\FKR(\Phi)$ is induced by an \SLPEe from $-\iota_\mathbf{r}(-\Bsf_E^\bullet)$ to $\Bsf_{F}^\bullet$ (via the canonical isomorphisms).

Now assume that $E$ is a graph in canonical form with $\Bsf_E\in\MPZccc$, and let $\mathbf{r}\in\N_0^N$ be a multiindex such that $r_i=0$ for all $i\in\calP$ with $n_i=1$. 
Then there exist a graph $F$ in canonical form with $E\Meq F$, $\Bsf_{F}\in \MPZccc[(\mathbf{m}+\mathbf{r})\times(\mathbf{n}+\mathbf{r})]$ and a \calP-equivariant isomorphism $\Phi$ from $C^*(E)\otimes\K$ to $C^*(F)\otimes\K$ such that $\FKR(\Phi)$ is induced by an \SLPEe from $-\iota_\mathbf{r}(-\Bsf_E^\bullet)$ to $\Bsf_{F}^\bullet$ (via the canonical isomorphisms).
\end{lemma}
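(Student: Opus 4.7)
The four claims are layered: the later ones refine the first. The strategy is to prove the second claim as the core technical step, then build up Claims 3 and 4 by prepending a sequence of edge expansions (Proposition~\ref{prop:edge-expansion}), and finally deduce Claim 1 by first invoking Proposition~\ref{prop:structure-2}\ref{prop:structure-2-circcirccirc} to reduce to $\MPZccc$ and then quoting Claim 3.

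\textbf{Claim 2 (core).} Assume $\Bsf_E\in\MPZccc$ satisfies \ref{def:canonical-item-size3}, \ref{def:canonical-item-rowrank}, \ref{def:canonical-item-singularfirst}. The plan is to apply Lemma~\ref{lem:GoingFromEdgeExpansionToCanonicalForm} directly to produce $F$ satisfying \ref{def:canonical-item-loops-inf}, \ref{def:canonical-item-paths}, \ref{def:canonical-item-positive}, together with an \SLPEe $(I,V)$ from $\Bsf_E^\bullet$ to $\Bsf_F^\bullet$ and a $\calP$-equivariant isomorphism $\Psi\colon C^*(E)\otimes\K\to C^*(F)\otimes\K$ realizing it on $\FKR$. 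What I must check is that the hypotheses \ref{def:canonical-item-size3}, \ref{def:canonical-item-rowrank}, \ref{def:canonical-item-singularfirst} survive the column-addition process of Lemma~\ref{lem:GoingFromEdgeExpansionToCanonicalForm}: property \ref{def:canonical-item-size3} is immediate because column additions leave the row dimensions $m_i$ untouched; property \ref{def:canonical-item-rowrank} is immediate because the Smith normal form of each diagonal block $\Bsf_E^\bullet\{i\}$ is invariant under right multiplication by elements of $\SL$; property \ref{def:canonical-item-singularfirst} is immediate because the vertex set and its ordering are unchanged.

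\textbf{Claim 3.} Given $\Bsf_E\in\MPZccc$, I first perform a finite sequence of edge expansions (Proposition~\ref{prop:edge-expansion}) to reach a graph $E'$ satisfying \ref{def:canonical-item-size3}, \ref{def:canonical-item-rowrank}, \ref{def:canonical-item-singularfirst}. In each non-cyclic block (which must contain a vertex supporting a cycle in a non-cyclic strongly connected set, or has $m_i\leq 1$ in which case no expansion is needed for \ref{def:canonical-item-size3}), I perform enough expansions to force $m_i\geq 3$. Since an edge expansion of $\Bsf_E^\bullet$ produces a matrix $\SLP$-equivalent to $-\iota_{\mathbf{e}_j}(-\Bsf_E^\bullet)$, its block $\{j\}$ gains an extra $\pm 1$ summand; hence the number of $1$'s in the Smith normal form of the $j$-th diagonal block grows by one with each expansion in block $j$, yielding \ref{def:canonical-item-rowrank} after finitely many steps. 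I place each new vertex at the end of its block so that \ref{def:canonical-item-singularfirst} is preserved. The composite of the resulting \SLPEe{s} is an \SLPEe from $-\iota_\mathbf{r}(-\Bsf_E^\bullet)$ to $\Bsf_{E'}^\bullet$, and the composite of the $\calP$-equivariant isomorphisms is again $\calP$-equivariant. Now apply Claim~2 to $E'$ and compose.

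\textbf{Claims 4 and 1.} For Claim~4, $E$ is already in canonical form, so for each $i$ with $r_i>0$ we have $n_i\geq 2$, and condition~\ref{def:canonical-item-loops-inf} guarantees a regular vertex supporting a loop in block $i$; I perform exactly $r_i$ edge expansions there (placing the new vertices at the end to maintain \ref{def:canonical-item-singularfirst}), yielding the required \SLPEe from $-\iota_\mathbf{r}(-\Bsf_E^\bullet)$ to the adjacency matrix of the resulting graph, together with an equivariant isomorphism; this graph still satisfies \ref{def:canonical-item-size3}, \ref{def:canonical-item-rowrank}, \ref{def:canonical-item-singularfirst} (more strongly than before), so Claim~2 finishes the argument. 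For Claim~1, apply Proposition~\ref{prop:structure-2}\ref{prop:structure-2-circcirccirc} to obtain $E'\Meq E$ with $\Bsf_{E'}\in\MPZccc$, and then invoke Claim~3, ignoring the $K$-theoretic data. The main obstacle is the careful bookkeeping in Claim~3 showing that each edge expansion really advances the Smith normal form invariant \ref{def:canonical-item-rowrank}; this is where the explicit description of edge expansion in Proposition~\ref{prop:edge-expansion} as $-\iota_{\mathbf{e}_j}$ on the level of $\Bsf^\bullet$ is essential.
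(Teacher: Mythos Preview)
Your proposal is correct and follows essentially the same route as the paper: reduce to $\MPZccc$ via Proposition~\ref{prop:structure-2}\ref{prop:structure-2-circcirccirc}, use edge expansions (Proposition~\ref{prop:edge-expansion}) to enforce Conditions~\ref{def:canonical-item-size3} and~\ref{def:canonical-item-rowrank}, then apply Lemma~\ref{lem:GoingFromEdgeExpansionToCanonicalForm} to obtain \ref{def:canonical-item-loops-inf} and~\ref{def:canonical-item-positive} (from which \ref{def:canonical-item-paths} follows), composing the \SLPEe{s} and $\calP$-equivariant isomorphisms at each step. Your explicit check that Conditions~\ref{def:canonical-item-size3}, \ref{def:canonical-item-rowrank}, \ref{def:canonical-item-singularfirst} survive the column operations of Lemma~\ref{lem:GoingFromEdgeExpansionToCanonicalForm}, and your justification that each edge expansion adds a $1$ to the Smith normal form of the relevant diagonal block (via $-\iota_{\mathbf{e}_j}(-\cdot)$), are more detailed than the paper's ``up to two times'' and ``the other conditions are still satisfied,'' but the logic is the same. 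One small phrasing issue: in Claim~3 your parenthetical ``or has $m_i\leq 1$ in which case no expansion is needed for~\ref{def:canonical-item-size3}'' is ambiguous---it reads as though a non-cyclic strongly connected block with $m_i\leq 1$ needs no expansion, which is false; you presumably mean that blocks which are \emph{not} non-cyclic strongly connected (cyclic singletons or singular singletons) have $m_i\leq 1$ and are exempt from~\ref{def:canonical-item-size3}.
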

\begin{proof}
We will find a graph $F$ in canonical form such that $E\Meq F$ as follows.
We will find a series of graphs that are move equivalent to $E$ and satisfy more and more conditions of the canonical form. 
We can use Proposition~\ref{prop:structure-2} to get a graph $E'$ with finitely many vertices such that $E\Meq E'$ and $\Bsf_{E'}\in\MPZccc$ for a suitable $\calP$. 
Now we can do inverse reduction moves as in Proposition~\ref{prop:edge-expansion} up to two times for each noncyclic strongly connected component to make sure that the Conditions~\ref{def:canonical-item-size3} and~\ref{def:canonical-item-rowrank} are satisfied. 
If necessary, we permute the vertices within each component to obtain Condition~\ref{def:canonical-item-singularfirst} as well. 
Now we can use Lemma~\ref{lem:GoingFromEdgeExpansionToCanonicalForm} to get Conditions~\ref{def:canonical-item-loops-inf} and~\ref{def:canonical-item-positive} satisfied. Note that the other conditions are still satisfied. 
Call the resulting graph $F$. 
Condition~\ref{def:canonical-item-paths} follows from Condition~\ref{def:canonical-item-positive}.
Thus $F\Meq E$ and $F$ is in canonical form. 

If $E$ is a graph with $\Bsf_E\in\MPZccc$, 
then it follows from Proposition~\ref{prop:edge-expansion} and Lemma~\ref{lem:GoingFromEdgeExpansionToCanonicalForm} there exist a graph $F$ in canonical form with $E\Meq F$, $\Bsf_{F}\in \MPZccc[(\mathbf{m}+\mathbf{r})\times (\mathbf{m}+\mathbf{r})]$, for some multiindex $\mathbf{r}$, and a \calP-equivariant isomorphism $\Phi$ from $C^*(E)\otimes\K$ to $C^*(F)\otimes\K$ such that $\FKR(\Phi)$ is induced by an \SLPEe from $-\iota_\mathbf{r}(-\Bsf_E^\bullet)$ to $\Bsf_{F}^\bullet$ (via the canonical isomorphisms).
If $E$ already satisfies Conditions~\ref{def:canonical-item-size3}, \ref{def:canonical-item-rowrank} and~\ref{def:canonical-item-singularfirst}, then we can choose all entries of $\mathbf{r}$ to be zero.

Now assume that $E$ is a graph in canonical form with $\Bsf_E\in\MPZccc$, and let $\mathbf{r}\in\N_0^N$ be a multiindex such that $r_i=0$ for all $i\in\calP$ with $n_i=1$. 
Then it follows from Proposition~\ref{prop:edge-expansion} that we have a graph $E'$ that is obtained by a series of simple edge expansion moves that satisfies that $E\Meq E'$, $\Bsf_{E'}\in\MPZccc[(\mathbf{m}+\mathbf{r})\times(\mathbf{n}+\mathbf{r})]$, and that there exists a \calP-equivariant isomorphism $\Phi$ from $C^*(E)\otimes\K$ to $C^*(E')\otimes\K$ such that $\FKR( \Phi )$ is induced by an \SLPEe from $-\iota_\mathbf{r}(-\Bsf_E^\bullet)$ to $\Bsf_{E'}^\bullet$ (via the canonical isomorphisms). 
It now follows from Lemma~\ref{lem:GoingFromEdgeExpansionToCanonicalForm} that we can get a graph $F$ in canonical form with $E'\Meq F$, $\Bsf_{F}\in \MPZccc[(\mathbf{m}+\mathbf{r})\times(\mathbf{n}+\mathbf{r})]$, such that we have a \calP-equivariant isomorphism $\Psi$ from $C^*(E')\otimes\K$ to $C^*(F)\otimes\K$ such that $\FKR(\Psi)$ is induced by an \SLPEe from $\Bsf_{E'}^\bullet$ to $\Bsf_{F}^\bullet$ (via the canonical isomorphisms).
\end{proof}

\begin{definition}[{\cite[Definition~4.22]{MR3759003}}]\label{def:StandardForm}
Let $E$ and $F$ be graphs with finitely many vertices. We say that $(\Bsf_E,\Bsf_F)$ is in \emph{standard form} if $E$ and $F$ are in canonical form and $\Bsf_E,\Bsf_F\in\MPZccc$ for some multiindices $\mathbf{m}$ and $\mathbf{n}$.  This means that the adjacency matrices have exactly the same sizes and block structures.
\end{definition}

In \cite[Definition~4.22]{MR3759003}, the \emph{temperatures} of $E$ and $F$ is used to define when a pair $(\Bsf_E,\Bsf_F)$ is in standard form.  Since we have included the assumption that $E$ and $F$ are in canonical form, the temperature condition in \cite[Definition~4.22]{MR3759003} follows from the fact that $\Bsf_E,\Bsf_F\in\MPZccc$.

The notion of a standard form is of course only useful if we can assume that our graphs have the standard form; the next proposition shows that we can indeed assume this, if the corresponding \cas have isomorphic ordered reduced filtered $K$-theory. 

\begin{proposition}\label{prop:standard-form}
Let $E_{1}$ and $E_{2}$ be graphs with finitely many vertices. 
Set $X=\Prime_\gamma(C^*(E_1))$, assume that we have a homeomorphism from $\Prime_\gamma(C^*(E_1))$ to $\Prime_\gamma(C^*(E_2))$ and view $C^*(E_1)$ and $C^*(E_2)$ as $X$-algebras under this homeomorphism. 
Assume, moreover, that we have an isomorphism $\varphi\colon\FKRplus(X;C^*(E_1))\rightarrow\FKRplus(X;C^*(E_2))$. 

Then there exists a pair of graphs $(F_1,F_2)$ with finitely many vertices such that 
\begin{itemize}
\item
$E_i\Meq F_i$, for $i=1,2$, 
\item 
$\Bsf_{F_1},\Bsf_{F_2}\in\MPZccc$, for some multiindices $\mathbf{m},\mathbf{n}$ and a poset \calP satisfying Assumption~\ref{ass:preorder},

\item 
$(\Bsf_{F_1},\Bsf_{F_2})$ is in standard form, 
 
\item
there is an equivariant isomorphism $\Phi_i$ from $C^{*}(E_i)\otimes\K$ to $C^{*}(F_i)\otimes\K$, for $i=1,2$,
\item 
the order reversing isomorphisms between \calP and $\Gamma_{F_1}$ and between \calP and $\Gamma_{F_2}$ corresponding to the chosen block structures $\Bsf_{F_1},\Bsf_{F_2}\in\MPZccc$ is compatible with the chosen homeomorphism between $\Prime_\gamma(C^{*}(E_1))$ and $\Prime_\gamma(C^{*}(E_2))$ via the isomorphisms $\Phi_1$ and $\Phi_2$.
\end{itemize}
\end{proposition}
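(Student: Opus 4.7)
The plan is to carry out three successive reductions within each move-equivalence class so that at the end the two adjacency matrices share a common block structure. First, by Proposition~\ref{prop:structure-2}\ref{prop:structure-2-circcirccirc}, for $i=1,2$ I obtain a graph $E_i^\circ$ with $\Bsf_{E_i^\circ}\in\MPZccc[\mathbf{m}^{(i)}\times\mathbf{n}^{(i)}]$ relative to some poset $\calP^{(i)}$ satisfying Assumption~\ref{ass:preorder}, together with a $\Prime_\gamma$-equivariant stable $*$-isomorphism $C^*(E_i)\otimes\K\cong C^*(E_i^\circ)\otimes\K$ and with $E_i\Meq E_i^\circ$. Using the given homeomorphism $\Prime_\gamma(C^*(E_1))\cong\Prime_\gamma(C^*(E_2))$ together with the canonical identifications of Proposition~\ref{prop:first-collect-structure-2}, I identify $\calP^{(1)}$ and $\calP^{(2)}$ as partially ordered sets (composing with a permutation if necessary to preserve Assumption~\ref{ass:preorder}), giving a common poset $\calP$ and transporting $\varphi$ to an isomorphism $\varphi^\circ\colon\FKRplus(\calP;C^*(E_1^\circ))\to\FKRplus(\calP;C^*(E_2^\circ))$.

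Next, I match the multiindices block by block. For each $j\in\calP$, the isomorphism $\varphi^\circ$ restricts to an isomorphism between the $K$-groups of the corresponding simple subquotients, which by Section~\ref{sec:red-filtered-K-theory-K-web-GLP-and-SLP-equivalences} coincide with $\cok\mathsf{C}_{E_i^\circ}\{j\}$ and $\ker\mathsf{C}_{E_i^\circ}\{j\}$. A rank count then yields
\[
n_j^{(1)}-m_j^{(1)}=\operatorname{rank}\cok\mathsf{C}_{E_1^\circ}\{j\}-\operatorname{rank}\ker\mathsf{C}_{E_1^\circ}\{j\}=n_j^{(2)}-m_j^{(2)},
\]
and the type of the component (cyclic, trivial singular, or non-cyclic strongly connected) is likewise determined by the pair $(K_0,K_1)$ together with its order, since only cyclic components admit a simple subquotient with nontrivial $K_1$ and nontrivial order structure on $K_0$. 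For cyclic and trivial indices one automatically has $n_j^{(1)}=n_j^{(2)}=1$ in $\MPZccc$, so nothing needs to be done. For the remaining indices I apply Proposition~\ref{prop:edge-expansion} to perform simple edge expansions at a vertex supporting a cycle in the relevant block of each graph; each expansion increments both $m_j$ and $n_j$ by one while leaving every other block fixed, and supplies a $\calP$-equivariant stable $*$-isomorphism. After the appropriate number of expansions in each graph, I arrive at graphs $F_i^{(1)}$ with common multiindices $\mathbf{m},\mathbf{n}$ and with the composed equivariant stable $*$-isomorphism from $C^*(E_i)\otimes\K$ to $C^*(F_i^{(1)})\otimes\K$ at hand.

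Finally, I invoke the second part of Lemma~\ref{lem:we-can-put-it-in-canonical-form} to bring each $F_i^{(1)}$ into canonical form without disturbing the multiindices, obtaining $F_i$ together with a further $\calP$-equivariant stable $*$-isomorphism $C^*(F_i^{(1)})\otimes\K\cong C^*(F_i)\otimes\K$. Composing with all previous isomorphisms yields the desired $\Phi_i$, the pair $(\Bsf_{F_1},\Bsf_{F_2})$ is in standard form by construction, and the compatibility of the order-reversing identifications $\calP\cong\Gamma_{F_i}$ with the given homeomorphism is immediate from how $\calP$ was produced in the first step. The hard part is the bookkeeping in step two: one must genuinely extract both the type of each component and the difference $n_j-m_j$ from the ordered reduced filtered $K$-theory, and one must verify that edge expansions may always be performed at a suitable vertex in every non-cyclic strongly connected component --- but the latter is automatic since such a component always contains a vertex supporting a cycle.
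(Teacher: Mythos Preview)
Your approach is essentially the same as the paper's: reduce to $\MPZccc$ form, identify the two posets via the given homeomorphism, read off from the ordered filtered $K$-theory that the component types and the differences $n_j-m_j$ agree, then use edge expansions to equalise the multiindices and finally pass to canonical form. The paper organises the steps slightly differently --- it first puts both graphs into canonical form via Lemma~\ref{lem:we-can-put-it-in-canonical-form}, and only then enlarges to the common multiindex $(\max_i m_{G_1,i}, \max_i n_{G_1,i})$ using the last part of that lemma --- but the content is the same.

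There is one small bookkeeping gap in your version. You invoke the second part of Lemma~\ref{lem:we-can-put-it-in-canonical-form} to bring each $F_i^{(1)}$ into canonical form \emph{without disturbing the multiindices}, but that part of the lemma requires Conditions~\ref{def:canonical-item-size3}, \ref{def:canonical-item-rowrank} and~\ref{def:canonical-item-singularfirst} of Definition~\ref{def:CanonicalForm} to hold already. Condition~\ref{def:canonical-item-singularfirst} is harmless (a relabelling), but \ref{def:canonical-item-size3} and \ref{def:canonical-item-rowrank} demand $m_j\geq 3$ and at least two $1$'s in the Smith form for every non-cyclic strongly connected block. If, say, both graphs happen to have a non-cyclic strongly connected component with a single regular vertex, your matching step performs no expansions there and these conditions fail. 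The fix is immediate: in your expansion step, target a common multiindex with $m_j\geq\max(m_j^{(1)},m_j^{(2)},3)$ for each non-cyclic strongly connected $j$ (two expansions already guarantee two $1$'s in the Smith form, by the form of the \SLPEe in Proposition~\ref{prop:edge-expansion}). The paper's ordering --- canonical form first, enlarge second --- sidesteps this issue because canonical form already forces $m_j\geq 3$.
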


\begin{proof}
It follows from Lemma~\ref{lem:we-can-put-it-in-canonical-form} that we can find graphs $G_1, G_2$ such that $G_i \Meq E_i$ and $G_i$ are in canonical form, $i = 1,2$. 
Therefore we have \starisos $\Phi_i$ from $C^*(E_i)\otimes\K$ to $C^*(G_i)\otimes\K$, for $i=1,2$. 
Write $\Bsf_{G_1}\in\mathfrak{M}_{\calP_1}^{\circ\circ\circ}(\mathbf{m}_{G_1}\times\mathbf{n}_{G_1},\Z)$ and $\Bsf_{G_2}\in\mathfrak{M}_{\calP_2}^{\circ\circ\circ}(\mathbf{m}_{G_2}\times\mathbf{n}_{G_2},\Z)$ for multiindices $\mathbf{m}_{G_i}$, $\mathbf{n}_{G_i}$ and a poset $\calP_i$ satisfying Assumption~\ref{ass:preorder}, for $i=1,2$. 
The isomorphisms $\Phi_1$, $\Phi_2$ together with the homeomorphism between $\Prime_\gamma(C^*(E_1))$ to $\Prime_\gamma(C^*(E_2))$ gives a canonical isomorphism between $\calP_1$ and $\calP_2$. 
Thus we can without loss of generality assume that $\calP=\calP_1=\calP_2$ and that the above mentioned canonically induced isomorphism from $\calP_1$ to $\calP_2$ is the identity. 

Note that $n_{G_1,i}$ is the number of elements in the $i$'th component of $G_1$, while $n_{G_1,i}-m_{G_1,i}$ is the number of singular vertices in the $i$'th component of $G_1$ --- and similarly for $G_2$. 
For each $\gamma\in\Gamma_{G_i}$, the number of singular vertices in $\gamma$ is the rank of $K_0(\mathfrak{J}_{\overline{H(\gamma)}}^{C^*(G_i)} / \mathfrak{J}_{\overline{H(\gamma)\setminus \gamma}}^{C^*(G_i)})$ minus the rank of $K_1(\mathfrak{J}_{\overline{H(\gamma)}}^{C^*(G_i)} / \mathfrak{J}_{\overline{H(\gamma)\setminus \gamma}}^{C^*(G_i)})$, for $i=1,2$ (\cf\ Proposition~\ref{prop:first-collect-structure-1}(3)). 
Therefore, we get from the isomorphism $\varphi$ that $n_{G_1,i}-m_{G_1,i}=n_{G_2,i}-m_{G_2,i}$, for all $i\in\calP$. 
Also, since the isomorphism $\varphi$ is positive on these $K_0$-groups, it follows that $n_{G_1,i}=1$ if and only if $n_{G_2,i}=1$, for all $i\in\calP$. 

The only potential problem now is that we may not have $n_{G_1,i} = n_{G_2, i}$ when these are not $1$. 
Define the multiindices $\mathbf{m}=(m_i)_{i\in\calP}$ and $\mathbf{n}=(n_i)_{i\in\calP}$ by $m_i=\max(m_{G_1,i},m_{G_2,i})$ and $n_i=\max(n_{G_1,i},n_{G_2,i})$ for each $i\in\calP$.

Now it follows from 
Lemma~\ref{lem:we-can-put-it-in-canonical-form} that we get graphs $F_1$ and $F_2$ with finitely many vertices such that $G_i\Meq F_i$, for $i=1,2$, $\Bsf_{F_1},\Bsf_{F_2}\in\MPZccc$, $(\Bsf_{F_1}, \Bsf_{F_2} )$ is in standard form, there is a \calP-equivariant isomorphism $\Phi_i'$ between $C^{*}(G_i)\otimes\K$ and $C^{*}(F_i)\otimes\K$, for $i=1,2$.
\end{proof}

When $E$ is in canonical form, the rows of $\Bsf_E$ that are removed to form $\Bsf^{\bullet}_E$ either have all entries equal to $0$ except on the diagonal, which is $-1$ (this is the case where the corresponding vertex is a sink), or it only contains $0$ and $\infty$ except on the diagonal, which is either $-1$ or $\infty$ (this is the case where the corresponding vertex is an infinite emitter). 
It therefore follows from Proposition~\ref{prop:matrix-moves} that adding one column in $\Bsf^{\bullet}_E$ into another will preserve move equivalence, so long as it maintains the block structure and similarly for rows. 
Hence we have:

\begin{corollary} \label{cor:rc-in-bullet-old}
Let $E$ be a graph with finitely many vertices and suppose that $E$ is in canonical form. 
In $\Bsf^{\bullet}_E$ we can add column $l$ into column $k$ without changing the move equivalence class of the associated graph if the diagonal entry of column $l$ is in block $i$, the diagonal entry of column $k$ is in block $j$ and $i \preceq j$. 
Similarly we can add row $l$ into row $k$ without changing the move equivalence class if the diagonal entry of row $l$ is in block $i$, the diagonal entry of row $k$ is in block $j$ and $j \preceq i$. 
\end{corollary}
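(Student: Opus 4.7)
The plan is to lift column (resp.\ row) operations on $\Bsf_E^\bullet$ to the corresponding elementary right (resp.\ left) multiplication on $\Bsf_E$ itself and then invoke Proposition~\ref{prop:matrix-moves}. First I would observe that the hypothesis ``the diagonal entry of column $l$ is in block $i$'' implicitly requires that row $l$ survive the $\bullet$-truncation, so that the vertex $v_l$ indexing column $l$ is regular; similarly $v_k$ is regular. By Condition~\ref{def:canonical-item-loops-inf} of Definition~\ref{def:CanonicalForm}, both $v_l$ and $v_k$ then support loops in $E$.

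For the column addition, I would apply Proposition~\ref{prop:matrix-moves}\ref{prop:matrix-moves:I} with $u = v_l$ and $v = v_k$: the loop at $v_l$ supplies the proposition's hypothesis and produces a graph $E'$ with $\Asf_{E'} = \Bsf_E E_{v_l,v_k} + I$ and $E \Meq E'$. The next task is to check that the singular vertices of $E'$ coincide with those of $E$, so that the $\bullet$-truncation commutes with this elementary column operation and the induced column operation on $\Bsf_E^\bullet$ is exactly the descent of the column operation on $\Bsf_E$. This reduces to inspecting out-degrees: each vertex's out-degree in $E'$ differs from that in $E$ only by a finite non-negative amount (or remains $\infty$), so no regular vertex becomes singular and vice versa. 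The structural description of the deleted singular rows given in the paragraph preceding the corollary is what makes this comparison go through cleanly.

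Finally I would verify that the new matrix still lies in $\MPZ$. If some entry $\Bsf_E^\bullet(v,l)$ is nonzero with $v$ in block $i'$, then $\Bsf_E^\bullet \in \MPZ$ forces $i' \preceq i$; combined with the standing hypothesis $i \preceq j$, transitivity gives $i' \preceq j$, so that the newly created contribution at position $(v,k)$ respects the block structure. The row case is entirely symmetric, invoking Proposition~\ref{prop:matrix-moves}\ref{prop:matrix-moves:II} with $u = v_k$ and $v = v_l$ (the loop at $v_l$ again discharges the hypothesis), and using the reversed relation $j \preceq i$ for the analogous block-preservation step.

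I do not anticipate a serious obstacle. The corollary is essentially an unpacking of Proposition~\ref{prop:matrix-moves}, with the canonical-form guarantee that every regular vertex supports a loop used to automatically verify the nontrivial hypothesis of that proposition, while the order conditions $i \preceq j$ and $j \preceq i$ are precisely what keep the resulting matrix inside the block-triangular class $\MPZccc$.
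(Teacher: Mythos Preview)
Your approach matches the paper's: lift the elementary operation on $\Bsf^{\bullet}_E$ to $\Bsf_E$ and invoke Proposition~\ref{prop:matrix-moves}, with the canonical-form loop at the relevant vertex supplying the disjunctive hypothesis. One small point you should make explicit: Proposition~\ref{prop:matrix-moves} also carries the standing assumption that there is a path from $u$ to $v$, which your argument never verifies; this follows from $i \preceq j$ via the order-reversing identification of $\calP$ with $\Gamma_E$ (so the component of $v_l$ dominates that of $v_k$) together with condition~\ref{def:canonical-item-positive} of Definition~\ref{def:CanonicalForm}, which in fact gives a direct edge.
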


\section{Generalizing the Boyle-Huang lifting result}
\label{sec:boylehuang}
We aim to prove Theorem~\ref{thm:mainBH} below, which says that --- in certain cases --- every $K$-web isomorphism is induced by a \GLPEe. This is the main result of this section. 
To prove Theorem~\ref{thm:mainBH}, we first strengthen \cite[Theorem~4.5]{MR1990568}.
We recall the concept of Smith normal form for rectangular matrices, \cf\ \cite[Section~II.15]{MR0340283}. 
\begin{theorem}[Smith normal form]
\label{thm:smith-normal-form}
Suppose $B$ is an $m\times n$ matrix over \Z with $m,n\in\N$. 
Then there exist matrices $U\in\GLZ[m]$ and $V\in\GLZ$ such that the matrix 
$D=UBV$ satisfies the following
\begin{itemize}
\item $D(i,j)=0$ for all $i\neq j$,
\item the $\min(m,n)\times\min(m,n)$ principal submatrix of $D$ 
is a diagonal matrix $$\operatorname{diag}(d_1,d_2,\ldots,d_r,0,0,\ldots,0),$$
where $r\in\{0,1,\ldots,\min(m,n)\}$ is the rank of $B$ and $d_1,d_2,\ldots,d_r$ are positive integers such that $d_i|d_{i+1}$ for $i=1,\ldots,r-1$. 
\end{itemize}
For each matrix $B$, the matrix $D$ is unique and is called the \emph{Smith normal form} of $B$. 
\end{theorem}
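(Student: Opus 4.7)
The plan is to prove existence by a constructive algorithm that repeatedly applies left and right multiplications by elementary matrices in $\GLZ[m]$ and $\GLZ$, and to prove uniqueness via the well-known invariance of the gcds of $k\times k$ minors. Both parts are classical, but they need to be carried out with some care in the rectangular case; the block structure apparatus of the paper is not needed here, only the fact that the three elementary operations (transposition of two rows/columns, multiplication of a row/column by $\pm 1$, and addition of an integer multiple of one row/column to another) are realized by left/right multiplication by matrices in $\GLZ[m]$ and $\GLZ$.

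For existence, I would induct on $\min(m,n)$. If $B=0$ there is nothing to do, so assume $B\ne 0$. The core step is to bring $B$ into a form whose $(1,1)$-entry divides every other entry, and whose first row and column are zero away from position $(1,1)$. To do so, at each stage pick a nonzero entry $B(i_0,j_0)$ of minimal absolute value, swap it to position $(1,1)$, and for each other entry $B(1,j)$ of the first row write $B(1,j)=qB(1,1)+r$ with $0\le r<|B(1,1)|$ and subtract $q$ times column $1$ from column $j$; do the analogous thing for the first column. If any remainder $r$ is nonzero, the minimum of absolute values of nonzero entries has strictly decreased, so one repeats. After finitely many steps, $B(1,1)$ divides every other entry in its row and column, and those entries can be zeroed out by further column/row additions. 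If $B(1,1)$ does not divide some entry $B(i,j)$ with $i,j\ge 2$, add row $i$ to row $1$ and repeat the process; this too must terminate because the minimum of absolute values of nonzero entries strictly decreases each time. Once row $1$ and column $1$ are cleared and $B(1,1)$ divides every entry of the remaining $(m-1)\times(n-1)$ block, apply the inductive hypothesis to that block; the divisibility $d_1\mid d_2$ is preserved because elementary operations on the lower block preserve divisibility by $d_1$. This yields the desired diagonal form $D=UBV$ with $d_1\mid d_2\mid\cdots\mid d_r$ and zeros elsewhere, and one reads off that the rank of $D$ (and hence of $B$) is exactly $r$.

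For uniqueness, the standard argument is as follows. For $1\le k\le\min(m,n)$, let $\Delta_k(B)$ denote the gcd of all $k\times k$ minors of $B$, with the convention $\Delta_k(B)=0$ if every such minor vanishes, and $\Delta_0(B)=1$. Each $k\times k$ minor of $UBV$ is a $\Z$-linear combination of $k\times k$ minors of $B$ (by the Cauchy–Binet formula applied to $(UB)V$ and $U(BV)$), so $\Delta_k(B)\mid\Delta_k(UBV)$. Since $U$ and $V$ are invertible over $\Z$, the same argument applied to $U^{-1}(UBV)V^{-1}=B$ gives the reverse divisibility, hence $\Delta_k(UBV)=\Delta_k(B)$ up to sign, and so $|\Delta_k|$ is invariant under $\GLZ[m]\times\GLZ$-equivalence. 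Computing these invariants directly on the diagonal form $D$ gives $\Delta_k(D)=d_1d_2\cdots d_k$ for $k\le r$ and $\Delta_k(D)=0$ for $k>r$; this determines $r$ as the rank of $B$ and recovers each $d_k$ as $\Delta_k(B)/\Delta_{k-1}(B)$, proving that $D$ is unique.

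The only mildly delicate point is the termination of the existence algorithm, which is the step I would flag as the main obstacle to a clean write-up; but this follows from the standard well-ordering argument on the minimum of absolute values of nonzero entries, and everything else is bookkeeping. Since this theorem is entirely classical, I would simply cite \cite{MR0340283} and present the above only to the extent needed for the reader's convenience.
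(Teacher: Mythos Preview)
Your proposal is correct and contains the standard classical argument; note, however, that the paper does not prove this theorem at all---it simply states it and cites Newman's \emph{Integral matrices} \cite{MR0340283}, exactly as you suggest doing at the end of your write-up. So there is nothing to compare: your existence argument (reducing the minimal absolute value of a nonzero entry via elementary operations) and your uniqueness argument (invariance of the determinantal divisors $\Delta_k$) are the textbook proofs, and the paper treats the result as known background.
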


\begin{lemma}\label{lem:gcd}
Let $B$ be an $m\times n$ matrix over \Z with $m,n\in\N$, and let $U\in\GLZ[m]$ and $V\in\GLZ[n]$ be given invertible matrices. 
Then $\gcd B=\gcd (UBV)$. In particular, if $D$ is the Smith normal form of $B$, then $\gcd B=D(1,1)=d_1$, whenever $B\neq 0$. 
\end{lemma}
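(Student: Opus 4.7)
The plan is to establish two divisibility relations and conclude by antisymmetry. First I would observe that every entry of the product $UBV$ is an integer linear combination of entries of $B$: explicitly, $(UBV)(i,j)=\sum_{k,l} U(i,k)\,B(k,l)\,V(l,j)$, which is a $\Z$-linear combination of the entries $B(k,l)$. Consequently any common divisor of the entries of $B$ divides every entry of $UBV$, so $\gcd B \mid \gcd(UBV)$.

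Next, I would apply the same reasoning with the roles reversed, using the key fact that $U^{-1}\in\GLZ[m]$ and $V^{-1}\in\GLZ[n]$ (this is where integrality of the inverse matters, and it is exactly what being in $\GL(\cdot,\Z)$ buys us). Writing $B=U^{-1}(UBV)V^{-1}$, the same linear-combination argument yields $\gcd(UBV)\mid \gcd B$. Since both gcd's are nonnegative, we conclude $\gcd B=\gcd(UBV)$.

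For the in particular statement, let $D$ be the Smith normal form of $B$. By Theorem~\ref{thm:smith-normal-form}, $D$ has off-diagonal entries equal to $0$, and its diagonal is $(d_1,d_2,\ldots,d_r,0,\ldots,0)$ with $d_1\mid d_2\mid\cdots\mid d_r$ and $r\geq 1$ (using $B\neq 0$). Hence $d_1$ divides every entry of $D$, so $\gcd D = d_1 = D(1,1)$. Combined with the first part applied to the $U$ and $V$ realizing the Smith normal form, this gives $\gcd B = \gcd D = d_1$.

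I do not anticipate any real obstacle; the statement is essentially a direct consequence of the definitions, with the only subtle point being the use of integrality of $U^{-1},V^{-1}$ to get both divisibility directions.
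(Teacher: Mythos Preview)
Your proof is correct and follows essentially the same idea as the paper's: both arguments rest on the fact that entries of $UBV$ are integer linear combinations of entries of $B$ and conversely (via invertibility over $\Z$). The paper packages this as a single chain of equivalences using the bilinear pairing $x^{\mathsf T}By$ over all integer vectors, whereas you do the two divisibilities separately with explicit inverses, but the content is the same.
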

\begin{proof}
We may assume that $B\neq 0$. 
Let $d$ be a positive integer. Then
\begin{align*}
d\text{ divides all entries of }B & \Leftrightarrow \forall i,j\colon d\,|\,e_i^\mathsf{T}Be_j \\
& \Leftrightarrow \forall x\in\Z^m,y\in\Z^n\colon d\,|\,x^\mathsf{T}By \\
& \Leftrightarrow \forall x\in\Z^m,y\in\Z^n\colon d\,|\,x^\mathsf{T}UBVy \\
& \Leftrightarrow \forall i,j\colon d\,|\,e_i^\mathsf{T}UBVe_j \\
& \Leftrightarrow d\text{ divides all entries of }UBV.
\end{align*}
Now the lemma follows.
\end{proof}

\begin{remark}
Let $B$ be an $m\times n$ matrix over \Z with $m,n\in\N$. 
Then it follows from the above, that $m$ is greater than the number of generators of $\cok B$ according to the decomposition from the Smith normal form into direct sums of nonzero cyclic groups (the invariant factor decomposition) if and only if $\gcd B=1$. 
\end{remark}

\begin{remark}
Boyle and Huang show in their paper \cite{MR1990568} a fundamental theorem about lifting automorphisms of cokernels to \GL-equivalences respectively \SL-equivalences (\cf\ \cite[Theorem~4.4]{MR1990568}). 

As we will see, it is possible to generalize the part about \GL-allowance in this theorem to rectangular matrices (with $\mathcal{R}=\Z$), but the analogous statement to the part about \SL-allowance in \cite[Theorem~4.4]{MR1990568} does not hold in general (for rectangular matrices). 
We get a counterexample if we consider the matrix
$$B=\begin{pmatrix}
3 \\ 0
\end{pmatrix}$$ 
and the automorphism $-\id$ on $\cok B\cong \Z/3\oplus\Z$ induced by the matrix 
$$M=\begin{pmatrix}
-1 & 0 \\ 0 & -1
\end{pmatrix}.$$

Although we believe it can be done, we do not investigate this further, since for our purposes we do not need to know when automorphisms can be lifted to \SL-equivalences. 
\end{remark}

Recall that 
$$\calP_{\min}=\setof{i\in\calP}{ j\preceq i\Rightarrow i=j}.$$

In \cite[Corollary~4.3]{MR3624419} there is the following strengthening of \cite[Corollary~4.7]{MR1990568}.

\begin{corollary} \label{cor:BH-4.6-B}
Let $\mathbf{n}=(n_i)_{i\in\calP}$ be a multiindex with $n_i\neq 0$, for all $i\in\calP$. 
Suppose $B$ and $B'$ are matrices in $\MPZ[\mathbf{n}]$ with $\gcd B\{i\}=\gcd B'\{i\}\in\{0,1\}$ for all $i\in\calP$.
Then for any $K$-web isomorphism $\kappa\colon K(B)\rightarrow K(B')$ together with isomorphisms $\psi_i\colon\ker B\{i\}\rightarrow\ker B'\{i\}$, for $i\in\calP_{\min}$, 
there exist matrices $U,V\in\GLPZ[\mathbf{n}]$ such that we have a \GLPEe $(U, V )\colon B\rightarrow B'$ satisfying $\kappa_{(U,V )} = \kappa$ and $V^{-1}\{i\}$ induces $\psi_i$ for each $i\in\calP_{\min}$.
\end{corollary}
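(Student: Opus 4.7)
My plan is to reduce the statement to the original Boyle-Huang theorem by first producing some lift $(U_0, V_0) \in \GLPZ \times \GLPZ$ of $\kappa$ via \cite[Corollary~4.7]{MR1990568}, and then adjusting the diagonal blocks $V_0\{i\}$ at minimal indices $i \in \calP_{\min}$ so that the prescribed kernel isomorphisms $\psi_i$ are also realized. For the first step the hypothesis $\gcd B\{i\} \in \{0,1\}$ is invoked: when $\gcd B\{i\} = 1$ this is exactly the Boyle-Huang setting, while $\gcd B\{i\} = 0$ forces $B\{i\} = B'\{i\} = 0$ and then any element of $\GL(n_i, \Z)$ serves as a diagonal block, so the Boyle-Huang induction extends painlessly to such ``null'' indices.

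For the correction step, the key observation is that for a minimal $i_0 \in \calP_{\min}$ the block $V\{i_0, i_0\}$ appears in the product $UBV$ only inside the diagonal block $(UBV)\{i_0\}$: in any contribution $U\{i', j\} B\{j, l\} V\{l, k'\}$ to $(UBV)\{i', k'\}$ involving $V\{i_0, i_0\}$ we need $l = k' = i_0$, and minimality of $i_0$ then forces $i' = j = i_0$. Hence modifying only $V_0\{i_0\}$ affects the single equation $U_0\{i_0\} B\{i_0\} V_0\{i_0\} = B'\{i_0\}$ and leaves the cokernel part of $\kappa_{(U_0, V_0)}$ as well as all kernel maps at non-minimal indices untouched.

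To carry out the correction, let $\tilde\psi_{i_0}$ be the kernel isomorphism currently induced by $V_0\{i_0\}^{-1}$ and set $\sigma_{i_0} := \psi_{i_0} \circ \tilde\psi_{i_0}^{-1} \in \Aut(\ker B'\{i_0\})$. Because the quotient $\Z^{n_{i_0}}/\ker B'\{i_0\} \cong B'\{i_0\} \Z^{n_{i_0}}$ is free abelian, the inclusion $\ker B'\{i_0\} \hookrightarrow \Z^{n_{i_0}}$ splits; fix a complement $C$ with projection $\pi\colon \Z^{n_{i_0}} \to \ker B'\{i_0\}$ and define
$$X := I + (\sigma_{i_0}^{-1} - I) \circ \pi.$$
In the decomposition $\Z^{n_{i_0}} = \ker B'\{i_0\} \oplus C$, the map $X$ is block upper-triangular with diagonal blocks $\sigma_{i_0}^{-1}$ and $I_C$, so $X \in \GL(n_{i_0}, \Z)$; and $X - I$ takes values in $\ker B'\{i_0\}$, so $B'\{i_0\} X = B'\{i_0\}$. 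Therefore, replacing $V_0\{i_0\}$ by $V_0\{i_0\} X$ preserves the relation $UBV = B'$ and changes the induced kernel map at $i_0$ from $\tilde\psi_{i_0}$ to $\sigma_{i_0} \circ \tilde\psi_{i_0} = \psi_{i_0}$. Iterating over the (finitely many) elements of $\calP_{\min}$ completes the construction.

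The hard part will be the first step, namely the invocation of Boyle-Huang under the slightly relaxed hypothesis allowing vanishing diagonal blocks. The kernel correction in the second step is then a local, elementary maneuver made possible by minimality and the fact that $\ker B'\{i\}$ for $i \in \calP_{\min}$ carries no relations in the $K$-web.
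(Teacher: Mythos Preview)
The paper does not supply its own proof of this corollary; it simply cites it from \cite{arXiv:1605.06153v1} as a strengthening of \cite[Corollary~4.7]{MR1990568}. So there is nothing in the present paper to compare your argument against.

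That said, your two-step strategy is correct. The correction step at minimal indices is completely sound: for $i_0\in\calP_{\min}$ the block $V\{i_0\}$ enters the product $UBV$ only through the diagonal block $\{i_0,i_0\}$, the cokernel components of the $K$-web are governed by $U$ alone, and $\ker B\{i_0\}$ is not part of the $K$-web data (since $I_1^\calP$ excludes minimal indices). Your construction of $X$ via a splitting of $\ker B'\{i_0\}\hookrightarrow\Z^{n_{i_0}}$ is exactly the right maneuver; note that $X$ is in fact block \emph{diagonal} in the decomposition $\ker B'\{i_0\}\oplus C$, not merely upper-triangular.

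The only place that could use more care is the first step. Your claim that ``the Boyle--Huang induction extends painlessly to null indices'' is correct, but it deserves a sentence of justification: when $B\{i\}=0$, the cokernel isomorphism at $i$ determines $U\{i\}\in\GL(n_i,\Z)$ uniquely (there is no quotient), while $V\{i\}$ is unconstrained by the diagonal equation; one then checks that the inductive construction of the off-diagonal blocks $U\{i,j\}$, $V\{i,j\}$ in \cite[Theorem~4.5]{MR1990568} goes through without using invertibility of $B\{i\}$ over $\Q$ at these null indices. This is routine but worth stating.
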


The following theorem is the main result of this section, and allows us --- in certain cases --- to lift $K$-web isomorphisms to \GLPEe{s} for rectangular cases. 
Although it is possible to prove this directly, imitating the proof in \cite{MR1990568}, the present proof is much shorter since it reduces the rectangular case to the square case so that the results from \cite{MR1990568} apply. 

\begin{theorem}\label{thm:mainBH}
Let $\mathbf{m}=(m_i)_{i\in\calP},\mathbf{n}=(n_i)_{i\in\calP}\in \N_0^N$ be multiindices. 
Suppose $B$ and $B'$ are matrices in \MPZ with $\gcd B\{i\}=\gcd B'\{i\}\in\{0,1\}$ for all $i\in\calP$ with $m_i\neq 0$ and $n_i\neq 0$. 

Then for any $K$-web isomorphism $\kappa\colon K(B)\rightarrow K(B')$ there exist matrices $U\in\GLPZ[\mathbf{m}]$ and $V\in\GLPZ[\mathbf{n}]$ such that we have a \GLPEe $(U, V )\colon B\rightarrow B'$ satisfying $\kappa_{(U,V )} = \kappa$. 

If, moreover, we are given an isomorphism $\psi_i\colon\ker B\{i\}\to\ker B'\{i\}$, for every $i\in\calP_{\min}$, then we can choose the above \GLPEe $(U, V )$ such that --- in addition to the above --- also $V^{-1}\{i\}$ induces the $\psi_i$, for all $i\in\calP_{\min}$.
\end{theorem}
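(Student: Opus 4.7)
The plan is to reduce the rectangular case to the square case and invoke Corollary~\ref{cor:BH-4.6-B}. For each $i \in \calP$, set $a_i = \max(0, n_i - m_i)$, $b_i = \max(0, m_i - n_i)$, and $k_i = m_i + a_i = n_i + b_i = \max(m_i, n_i)$. Construct $\tilde B, \tilde B' \in \MPZ[\mathbf{k} \times \mathbf{k}]$ by placing $B\{i,j\}$ in the top-left corner of the $k_i \times k_j$ block $\tilde B\{i,j\}$ and zeros elsewhere, and similarly for $\tilde B'$. Concretely this amounts to padding row block $i$ with $a_i$ zero rows at the bottom and column block $i$ with $b_i$ zero columns on the right. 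The $\calP$-block-upper-triangular structure is preserved, and since padding with zeros does not change gcds, the condition $\gcd \tilde B\{i\} \in \{0,1\}$ holds for all $i$. (Blocks with $m_i = n_i = 0$ are trivial and handled separately.)

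A direct computation gives canonical identifications $\cok \tilde B\{c\} \cong \cok B\{c\} \oplus \Z^{\sum_{i\in c} a_i}$ and $\ker \tilde B\{i\} \cong \ker B\{i\} \oplus \Z^{b_i}$, under which the connecting maps in $K(\tilde B)$ restrict to those of $K(B)$ on the original summands and vanish on the new free summands. Hence $\kappa$ together with the optional minimal kernel isomorphisms $\psi_i$ extend canonically to a $K$-web isomorphism $\tilde \kappa \colon K(\tilde B) \to K(\tilde B')$ and $\tilde\psi_i = \psi_i \oplus \id$ by acting as the identity on the new $\Z$-summands. Applying Corollary~\ref{cor:BH-4.6-B} yields $\tilde U, \tilde V \in \GLPZ[\mathbf{k}]$ realising $\tilde \kappa$ and the $\tilde\psi_i$.

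Now define $U\{i,j\}$ and $V\{i,j\}$ as the top-left $m_i \times m_j$ and $n_i \times n_j$ sub-blocks of $\tilde U\{i,j\}$ and $\tilde V\{i,j\}$, respectively. Expanding $\tilde U \tilde B \tilde V = \tilde B'$ and reading off only its top-left $m_i \times n_j$ sub-block, the added rows and columns of $\tilde B, \tilde B'$ are zero and drop out, yielding $UBV = B'$. The $\calP$-block-upper-triangular structure of $\tilde U, \tilde V$ passes to $U, V$.

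The main obstacle is verifying that the extracted diagonal blocks $U\{i\}$ and $V\{i\}$ are invertible. The key observation: the requirement that $\tilde\kappa$ acts as the identity on the free summand $\Z^{a_i}$ in $\cok \tilde B\{i\}$ forces $\tilde U\{i,i\}$, when $a_i > 0$, to have block form $\begin{smallpmatrix} \ast & \ast \\ 0 & I \end{smallpmatrix}$ in the $(m_i,a_i)$-decomposition; analogously, the constraint on $\ker \tilde B\{i\}$ (coming from the $K$-web isomorphism for $i \notin \calP_{\min}$ or from $\tilde\psi_i$ for $i \in \calP_{\min}$) forces $\tilde V\{i,i\}$, when $b_i > 0$, to have block form $\begin{smallpmatrix} \ast & 0 \\ \ast & I \end{smallpmatrix}$ in the $(n_i,b_i)$-decomposition. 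Invertibility of $\tilde U\{i,i\}, \tilde V\{i,i\}$ then yields invertibility of the upper-left sub-blocks $U\{i\}, V\{i\}$, and the same bookkeeping verifies $\kappa_{(U,V)} = \kappa$ and that $V^{-1}\{i\}$ induces $\psi_i$ for minimal $i$. The delicate part is coordinating the various $K$-web constraints simultaneously across diagonal and off-diagonal blocks; if necessary, the raw lift from Corollary~\ref{cor:BH-4.6-B} can be modified by right-multiplication by a $\calP$-block-triangular matrix that preserves the induced $K$-web isomorphism so as to guarantee the extracted $U, V$ have the desired form.
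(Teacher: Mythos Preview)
Your approach is correct and follows the same overall strategy as the paper: pad to square matrices, apply Corollary~\ref{cor:BH-4.6-B}, then cut back down. The execution differs in two ways. First, the paper begins by putting each diagonal block into Smith normal form, so that $C\{i\}=C'\{i\}$; it then uses the equation $UCV=C'$ together with invertibility of the nonzero Smith block over $\Q$ to force several sub-blocks of $\tilde U\{i\}$ and $\tilde V\{i\}$ to vanish, and combines this with the identity-on-new-summands constraint. You skip Smith normal form entirely and derive the needed block structure of $\tilde U\{i\}$ and $\tilde V\{i\}$ directly from the $K$-web constraint alone --- this is legitimate and slightly cleaner, since the identity on the free summand $\Z^{a_i}\subseteq\cok\tilde B\{i\}$ already forces the bottom rows of $\tilde U\{i\}$ to be $(0\ I)$, and dually for $\tilde V^{-1}\{i\}$. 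Second, the paper performs the cutting-down inductively, one diagonal block at a time, whereas you extract all top-left sub-blocks simultaneously; your computation that the top-left $m_i\times n_j$ corner of $(\tilde U\tilde B\tilde V)\{i,j\}$ equals $(UBV)\{i,j\}$ is correct because the padded rows and columns of $\tilde B$ are zero.

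Two small points deserve explicit mention. For the first conclusion of the theorem (no $\psi_i$ supplied), when invoking Corollary~\ref{cor:BH-4.6-B} you must still feed in some $\tilde\psi_i$ for minimal $i$; you should say that one picks an arbitrary isomorphism $\psi_i'\colon\ker B\{i\}\to\ker B'\{i\}$ (these groups are free of equal rank since $\cok B\{i\}\cong\cok B'\{i\}$) and sets $\tilde\psi_i=\psi_i'\oplus\id$, so that the structural argument for $\tilde V\{i\}$ goes through. And your final hedging sentence about possibly modifying the lift is unnecessary --- the extraction works as stated without further adjustment.
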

\begin{proof}
For each $i\in\calP$, choose $U_i,U_i'\in\GLZ[m_i]$ and $V_i,V_i'\in\GLZ[n_i]$ such that $D_i=U_iBV_i$ and $D_i'=U_i'B'V_i'$ are the Smith normal forms of $B$ and $B'$, respectively (\cf\ Theorem~\ref{thm:smith-normal-form}). 
Let $U,U'\in\GLPZ[\mathbf{m}]$ and $V,V'\in\GLPZ[\mathbf{n}]$ be the block diagonal matrices with $U_i,U_i',V_i$ and $V_i'$ in the diagonals, respectively.

Then $UBV$ and $U'B'V'$ are in $\MPZ$ and $(U,V)\colon B\rightarrow UBV$ and $(U,V)\colon B'\rightarrow U'B'V'$ are \GLPEe{s} inducing $K$-web isomorphisms $\kappa_{(U,V)}$ from $K(B)$ to $K(UBV)$ and $\kappa_{(U',V')}$ from $K(B')$ to $K(U'B'V')$, respectively. 
Moreover, Lemma~\ref{lem:gcd} ensures that we still have $\gcd(UBV)\{i\}= \gcd(U'B'V')\{i\} \in \{0,1\}$. 
Thus we can without loss of generality assume that each diagonal block is equal to its Smith normal form. 
Also note that because we have a $K$-web isomorphism from $K(B)$ to $K(B')$, now the diagonal blocks are necessarily identical. 

Let $\mathbf{r}$ be such that $r_i=\max(m_i,n_i)$ for all $i\in\calP$. 
Let, moreover, $C,C'\in\MPZ[\mathbf{r}]$ denote the matrices $B$ and $B'$ enlarged by putting zeros outside the original matrices. 
Define $\mathbf{r}^c$ and $\mathbf{r}^k$ by $r_i^c=\max(n_i-m_i,0)$ and $r_i^k=\max(m_i-n_i,0)$ for all $i\in\calP$. 
In the (reduced) $K$-web we are considering the modules $C_c(B)=\cok B(c)$ where $c$ is $\{i\}$, $\setof{j\in\calP}{j\prec i}\neq \emptyset$ or $\setof{j\in\calP }{j\preceq i}$ for $i\in\calP$ --- and similarly for $B'$. 
It is clear that when we consider $C$ and $C'$ we just add onto these cokernels 
$$\bigoplus_{j\in c}\Z^{r_j^c},$$
and that the maps between the modules are the obvious ones. 
Similarly for the modules $K_d(B)=\ker B(d)$ where $d$ is $\{i\}$ where $\setof{j\in\calP }{ j\prec i}\neq \emptyset$ --- and similarly for $B'$. 
It is also clear that when we consider $C$ and $C'$ we just add onto these kernels 
$$\Z^{r_i^k},$$
where $d=\{i\}$. And the connecting homomorphisms will be the zero maps. 

Thus we can extend the isomorphism $\kappa$ to an isomorphism $\widetilde{\kappa}\colon K(C)\rightarrow K(C')$ by setting it to be the identity on the new groups. 
By Corollary~\ref{cor:BH-4.6-B}, we see that there exist matrices $U,V\in\GLPZ[\mathbf{r}]$ such that we have a \GLPEe $(U, V )\colon C\rightarrow C'$ satisfying $\kappa_{(U,V )} = \widetilde{\kappa}$ and that, moreover, $(U,V)$ induces $\psi_i$ plus the identity on the new summands of $\ker B\{i\}$ for $i\in\calP_{\min}$. 

Now let us look at the $i$'th diagonal block. We now want to cut $U$ and $V$ down to match the original structure. Naturally there are three cases to consider. The first one, $m_i=n_i$ is trivial. 

Now consider the case $m_i<n_i$. 
In this case, $\cok C\{i\}=\cok B\{i\}\oplus \Z^{n_i-m_i}$ and $\ker C\{i\}=\ker B\{i\}$ --- and similarly for $B'$ and $C'$. 
We write $C\{i\}=C'\{i\}$ as
$$\begin{pmatrix}C_{00} & 0 & 0 \\ 0 & 0 & 0 \\ 0 & 0 & 0\end{pmatrix},$$
where $C_{00}$ is an invertible matrix over \Q and the last diagonal block has size $(n_i-m_i)\times(n_i-m_i)$. 
We write $U$ and $V$ as
$$\begin{pmatrix}U_{11} & U_{12} & U_{13} \\ U_{21} & U_{22} & U_{23} \\ U_{31} & U_{32} & U_{33}\end{pmatrix} \quad\text{and}\quad
\begin{pmatrix}V_{11} & V_{12} & V_{13} \\ V_{21} & V_{22} & V_{23} \\ V_{31} & V_{32} & V_{33}\end{pmatrix},$$
according to the block structure of $C\{i\}$ (and $C'\{i\}$). 

The condition 
$$UCV=C'$$
implies that 
$$U_{11}C_{00}V_{11}=C_{00},\quad
U_{i1}C_{00}V_{1j}=0,\text{ for all }(i,j)\neq(1,1).$$
Since $C_{00}$ is invertible as a matrix over \Q, we see that also 
$U_{11}$ and $V_{11}$ have to be invertible over \Q. 
Thus $V_{12}=0$, $V_{13}=0$, $U_{21}=0$, and $U_{31}=0$. 
Moreover, since we have to get the identity homomorphism on the new direct summand, we need to have $U_{33}=I$, $U_{23}=0$ and $U_{32}=0$. 
So now let $U_0$ be the block matrix where we erase the rows and columns corresponding to change the size of the $i$'th diagonal block from $r_i\times r_i$ to $m_i\times m_i$ --- call the new size $\mathbf{r}'$. 
Moreover, we let $C_0$ and $C_0'$ be the block matrices where we erase the rows corresponding to changing the size of the $i$'th diagonal block from $r_i\times r_i$ to $m_i\times n_i$. 
Note that the $i$'th diagonal block now is the matrix $\begin{smallpmatrix} U_{11} & U_{12} \\ 0 & U_{22} \end{smallpmatrix}$. 
This is a \GL matrix that induces the right automorphism of $\cok B\{i\}$. 
Moreover, clearly $U_0\{i\}B\{i\}V\{i\}=B\{i\}$. 
But more is true. 
We have that $U_0$ is a $\GLPZ[\mathbf{r}']$ matrix and 
that $U_0C_0V=C_0'$ and the induced $K$-web isomorphism agrees with the original on all parts except for the direct summands we cut out.

Now consider instead the case $m_i>n_i$. 
In this case, $\cok C\{i\}=\cok B\{i\}$ and $\ker C\{i\}=\ker B\{i\}\oplus \Z^{m_i-n_i}$ --- and similarly for $B'$ and $C'$. 
As above, we write $C\{i\}=C'\{i\}$, $U$ and $V$ as block matrices. 
In the same way, we see that $V_{12}=0$, $V_{13}=0$, $U_{21}=0$, and $U_{31}=0$, and that $V_{33}=I$, $V_{23}=0$ and $V_{32}=0$. 
So now let $V_0$ be the block matrix where we erase the rows and columns corresponding to changing the size of the $i$'th diagonal block from $r_i\times r_i$ to $n_i\times n_i$ --- call the new size $\mathbf{r}'$. 
Moreover, we let $C_0$ and $C_0'$ be the block matrices where we erase the rows corresponding to changing the size of the $i$'th diagonal block from $r_i\times r_i$ to $m_i\times n_i$. 
Note that the $i$'th diagonal block now is the matrix $\begin{smallpmatrix} V_{11} & 0 \\ V_{21} & V_{22} \end{smallpmatrix}$. 
This is a \GL matrix that induces the right automorphism of $\ker B\{i\}$. 
Moreover, clearly $U\{i\}B\{i\}V_0\{i\}=B\{i\}$. 
But more is true. 
We have that $V_0$ is a $\GLPZ[\mathbf{r}']$ matrix and 
that $UC_0V_0=C_0'$ and the induced $K$-web isomorphism agrees with the original on all parts except for the direct summands we cut out.

Induction finishes the proof. 
\end{proof}

\section{Generalization of Boyle's positive factorization method}
\label{sec:boyle}
In \cite{MR1907894}, Boyle proved several factorization theorems for square matrices.  These theorems are the key components to go from \SLPEe to flow equivalence.  In this section, we prove similar factorization theorems for rectangular matrices.  These are our key technical results to go from \SLPEe to move equivalence.  Although the assumptions might seem restrictive, every unital graph \ca is move equivalent to another unital graph \ca whose adjacency matrix satisfies the assumptions of the factorization theorem (\cf\ Section~\ref{sec:standardform}).  The proof for rectangular matrices will in parts closely follow the proof in \cite{MR1907894} for square matrices.  

First, we introduce a new equivalence called \emph{positive equivalence} of two matrices in \MPplusZ (see Definition~\ref{def: positive matrices}) and show that if $n_{i} \neq 0$ for all $i$, then two matrices in \MPplusZ that are \SLPE are positive equivalent.

\begin{definition}\label{def: positive matrices}
Define \MPplusZ to be the set of all $B \in \MPZ$ satisfying the following (\cf\ also the canonical form in Definition~\ref{def:CanonicalForm}):
\begin{enumerate}[(i)]
\item If $i \prec j$ and $B \{ i, j \}$ is not the empty matrix, then $B \{ i , j \} > 0$.

\item If $m_{i} = 0$, then $n_{i} =1$.

\item If $m_{i} = 1$, then $n_{i} = 1$ and $B \{ i \} = 0$.

\item If $m_{i} > 1$, then $B \{ i \} > 0$, $n_{i} , m_{i} \geq 3$, and the Smith normal form of $B\{i\}$ has at least two 1's (and thus the rank of $B \{ i \}$ is at least 2).  \end{enumerate}

Recall from \cite{MR1907894} that a \emph{basic elementary matrix} is a matrix that is equal to the identity matrix except for on at most one offdiagonal entry, where it is either $1$ or $-1$. 
Let $B, B' \in \MPplusZ$.  
An \SLPEe $\ftn{ (U,V) }{B}{B'}$ is said to be a \emph{basic positive equivalence through $\MPplusZ$} if one of $U$ and $V$ is a basic elementary matrix and the other is the identity matrix. 
An \SLPEe $\ftn{ (U,V) }{B}{B'}$ is said to be a \emph{positive equivalence through $\MPplusZ$} if it is the composition of basic positive equivalences 
\begin{align*}
B = B_{0} \to B_{1} \to B_{2} \to \cdots \to B_{k} = B'
\end{align*}
in which every $B_{i}$ is in $\MPplusZ$. 
We denote a positive equivalence $(U,V)$ through $\MPplusZ$ by writing $\xymatrix{B \ar[r]^-{(U,V)}_-{+} & B'}$ through $\MPplusZ$.
\end{definition}

Note that every element $U \in \SLPZ$ has a factorization as a product of basic elementary matrices in \SLPZ.
Therefore, a positive equivalence $\ftn{ (U,V) }{B}{B'}$ through $\MPplusZ$ is an \SLPEe-equivalence that allows one to stay in \MPplusZ in each step for some factorization of $U$ and $V$.

\subsection{Factorization: positive case}

In this section, we prove a factorization theorem similar to that of \cite[Theorem~5.1]{MR1907894} for positive rectangular matrices (in the sense that each entry is positive).  The proof is imitating the proof in \cite{MR1907894} for square matrices.   

\begin{definition}
By a \emph{signed transposition matrix}, we mean a matrix which is the matrix of a transposition, but with one of the offdiagonal $1$'s replaced by $-1$.  By a \emph{signed permutation matrix} we mean a product of signed transposition matrices.
\end{definition}

Note that for $K > 1$, any $K \times K$ permutation matrix with determinant 1 is a signed permutation matrix.  A $K \times K$ matrix $S$ is a signed permutation matrix if and only if $\det (S) = 1$ and the matrix $| S |$ is a permutation matrix (where $| S | (i,j) := | S(i,j) |$).

Let $B, B' \in \Mplus[m\times n]$. 
An equivalence $\ftn{ (U,V) }{ B }{ B' }$ is said to be a \emph{basic positive equivalence through \Mplus[m\times n]} if one of $U$ and $V$ is a basic elementary matrix and the other is the identity matrix. An equivalence $\ftn{ (U,V) }{ B }{ B' }$ is said to be a \emph{positive equivalence through \Mplus[m\times n]} if it is a composition of basic positive equivalences 
through \Mplus[m\times n]. 

An inspection of the proofs of \cite[Lemma~5.3 and Lemma~5.4]{MR1907894} shows that the proofs also hold for rectangular matrices.  Thus, we have the following lemmata.  

\begin{lemma}[{\cf\ \cite[Lemma~5.3]{MR1907894}}]\label{lem:BoyleLemma5.3}
Suppose $B \in \Mplus[m\times n]$, $E$ is a basic elementary matrix with nonzero offdiagonal entry $E(i,j)$, and the $i$'th row of $EB$ is not the zero row.  Then there exists $Q \in \SLZ[n]$ that is a product of nonnegative basic elementary matrices and there exists a signed permutation matrix $S \in \SLZ[m]$ such that $\ftn{ (SE, Q ) }{ B }{ SE B Q }$ is a positive equivalence through \Mplus[m\times n].
\end{lemma}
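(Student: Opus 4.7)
The plan is to follow the template of Boyle's original proof of \cite[Lemma~5.3]{MR1907894}, observing that the construction uses only column additions and manipulation of two specified rows of $B$, neither of which makes any use of $B$ being square, so that the same argument goes through for rectangular $B$. I would first dispose of the trivial case: if the nonzero offdiagonal entry $E(i,j)$ equals $+1$, then $EB$ is obtained from $B$ by adding row $j$ into row $i$, so $EB > 0$ already and we may take $S = I$, $Q = I$. Assume henceforth that $E(i,j) = -1$, so that $EB$ replaces row $i$ by the nonzero vector $r := B_{i,\cdot} - B_{j,\cdot}$. I then split into cases according to whether $r$ has any strictly positive coordinate.

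\emph{Case A: $r$ has a strictly positive entry in some column $c^*$.} Define $Q$ by adding column $c^*$ to each column $c$ with $r(c) \leq 0$ a number of times large enough that $(rQ)(c) > 0$; since $c^*$ itself is never modified, $r(c^*) > 0$ is preserved throughout. Each addition is a basic positive equivalence through $\Mplus[m\times n]$ because $B > 0$ and we are adding one positive column into another. After $Q$ has been applied, the single row operation $E$ produces $EBQ$ with row $i$ equal to $rQ > 0$ and the remaining rows unchanged and positive, so $S = I$ suffices.

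\emph{Case B: $r \leq 0$ componentwise.} Then $-r \geq 0$ has at least one strictly positive coordinate $-r(c^*) > 0$. Let $S$ be the signed transposition with $S(i,j) = 1$, $S(j,i) = -1$, so that left multiplication by $S$ swaps rows $i$ and $j$ and negates the new row $j$. Build $Q$ as in Case A by adding column $c^*$ to every column where $-r$ vanishes, producing $-rQ > 0$. The key observation is the factorization
\[
SE \;=\; \bigl(\text{add row }j\text{ into row }i\bigr)\cdot\bigl(\text{subtract row }i\text{ from row }j\bigr),
\]
which writes $SE$ as a product of two basic elementary matrices $E_1'$ and $E_2'$. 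Applied to $BQ$, the intermediate matrix $E_2'(BQ)$ has row $j$ equal to $-rQ > 0$ and other rows unchanged; then $E_1' E_2'(BQ) = SEBQ$ has row $i$ equal to $(BQ)_{j,\cdot} > 0$ and the remaining rows still positive. Thus the full sequence $B \to BQ_1 \to \cdots \to BQ \to E_2'(BQ) \to SEBQ$ stays within $\Mplus[m\times n]$ at every step.

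The main obstacle I anticipate is Case B, specifically producing the two-step factorization of $SE$ whose partial products remain positive. This requires simultaneously choosing the pivot $c^*$, the column operations $Q$ that make $-rQ$ strictly positive, and the ordering of the row operations; once these are in place, positivity at each intermediate step follows automatically. Because the rectangularity of $B$ enters only through the sizes of the row and column operations, and never through any rank or determinantal hypothesis on $B$ itself, the argument transfers from the square case of \cite{MR1907894} without modification.
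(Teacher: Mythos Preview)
Your proposal is correct and follows exactly the approach the paper takes: the paper simply states that an inspection of Boyle's original proof of \cite[Lemma~5.3]{MR1907894} shows it carries over verbatim to rectangular matrices, and what you have written is precisely such an inspection, reproducing the case split (offdiagonal entry $+1$; entry $-1$ with $r$ having a positive coordinate; entry $-1$ with $r\le 0$) and the factorization $SE=E_1'E_2'$ that keeps every intermediate matrix in $\Mplus[m\times n]$.
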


\begin{lemma}[\cf\ {\cite[Lemma~5.4]{MR1907894}}]\label{lem:BoyleLemma5.4}
Let $B$ be an element of $\MZ[K_1\times K_2]$ for $K_{1} ,K_{2} \geq 3$ such that the rank of $B$ is at least 2.  Suppose $U \in \SLZ[K_{1}]$ such that no row of $B$ and no row of $U B$ is the zero row.  Then $U$ is the product of basic elementary matrices $U = E_{k} \cdots E_{1}$ such that for $1 \leq j \leq k$ the matrix $E_{j} E_{j-1} \cdots E_{1} B$ has no zero rows. 
\end{lemma}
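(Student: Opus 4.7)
The plan is to follow Boyle's proof of the square-matrix version \cite[Lemma~5.4]{MR1907894} essentially verbatim; on inspection, the combinatorial ``rerouting'' argument there is purely a row-side argument and uses only that $K_1 \geq 3$ and $\operatorname{rank} B \geq 2$, never that $B$ is square. The column dimension $K_2$ appears only as the ambient space for the rows, so the argument carries over without modification (and the hypothesis $K_2 \geq 3$ is presumably only present to match a parallel column-side statement).

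First I would fix any factorization $U = F_\ell \cdots F_1$ into basic elementary matrices, which exists because $\SLZ[K_1]$ is generated by such matrices. Set $B_j := F_j \cdots F_1 B$; by hypothesis $B_0 = B$ and $B_\ell = UB$ have no zero rows, but some intermediate $B_j$ may. Proceeding by induction on the smallest index $j_0$ with $B_{j_0}$ having a zero row, I aim to replace the factorization by a (typically longer) one with strictly larger first-bad-index, still factoring the same $U$. Iterating then pushes the first bad index past $\ell$ and yields the required factorization.

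For the inductive step, write $F_{j_0} = I + \epsilon e_{a,b}$ with $\epsilon \in \{\pm 1\}$; the zero row of $B_{j_0}$ at position $a$ forces row $a$ of $B_{j_0-1}$ to equal $-\epsilon$ times row $b$ of $B_{j_0-1}$. Since $\operatorname{rank} B_{j_0-1} = \operatorname{rank} B \geq 2$ and $B_{j_0-1}$ has $K_1 \geq 3$ rows, we may pick an index $c \notin \{a,b\}$ whose row in $B_{j_0-1}$ is not proportional to row $b$ (equivalently, to row $a$). One then rewrites the consecutive pair $F_{j_0+1} F_{j_0}$ (or $F_{j_0} F_{j_0-1}$ at the endpoint) as a longer product of basic elementary matrices whose net effect is unchanged and whose partial products applied to $B_{j_0-1}$ avoid zero rows. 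Concretely, this is accomplished by inserting at a suitable place an inverse pair of the form $(I + \delta e_{a,c})(I - \delta e_{a,c})$ and collecting factors so that the intermediate matrices produced along the detour have row $a$ equal to $r_a + \delta r_c$, $r_a + \delta r_c + \epsilon r_b = \delta r_c$, and then $\delta r_c + (\text{contribution from the next factor})$, each of which is nonzero thanks to the choice of $c$ and the fact that $B_{j_0+1}$ itself has no zero row. All other rows are unchanged along the detour, so nonvanishing is preserved globally.

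The main obstacle is verifying that such a detour can always be performed consistently — preserving the product $U$, keeping all factors basic elementary, avoiding zero rows everywhere, and strictly increasing the first-bad-index. This is precisely the case analysis carried out in Boyle's original proof and depends only on the hypotheses $K_1 \geq 3$ and $\operatorname{rank} B \geq 2$; the required matrix identities reduce to the standard commutation relations $e_{ij} e_{kl} = \delta_{jk} e_{il}$ among elementary generators. Since each iteration strictly increases the first-bad-index and there are only finitely many bad indices to address, the process terminates in finitely many steps with a factorization $U = E_k \cdots E_1$ in which $E_j \cdots E_1 B$ has no zero row for every $1 \leq j \leq k$, as required.
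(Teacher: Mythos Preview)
Your proposal is correct and matches the paper's approach exactly: the paper gives no independent proof but simply states that ``an inspection of the proofs of \cite[Lemma~5.3 and Lemma~5.4]{MR1907894} shows that the proofs also hold for rectangular matrices,'' which is precisely your observation that Boyle's argument is purely row-side and uses only $K_1\geq 3$ and $\operatorname{rank} B\geq 2$. Your sketch of Boyle's rerouting argument is a reasonable outline, and your explicit acknowledgment that the delicate part is the case analysis already carried out in \cite{MR1907894} is exactly the right attitude here.
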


The following lemma is inspired by the reduction step in the proof of \cite[Lemma~5.5]{MR1907894}.  We give the entire proof for the convenience of the reader. 

\begin{lemma}\label{lem:BoyleLemma5.5a}
Let $B \in \Mplus[K_{1}\times K_{2}]$ with $K_{1} ,K_{2} \geq 3$.  Suppose the rank of $B$ is at least 2 and there exists $U \in \SLZ[K_{1}]$ such that $U B > 0$.  Then the equivalence $\ftn{ (U , I_{K_{2}} ) }{ B }{ UB }$ is a positive equivalence through \Mplus[K_{1}\times K_{2}]. 
\end{lemma}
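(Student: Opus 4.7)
The plan is to adapt the proof of the square-matrix analogue \cite[Lemma~5.5]{MR1907894} to the rectangular setting, using the rank hypothesis on $B$ as the key source of flexibility. The first step is to apply Lemma~\ref{lem:BoyleLemma5.4} to produce a factorization $U = E_k E_{k-1}\cdots E_1$ into basic elementary matrices in $\SLZ[K_1]$ such that each intermediate product $B_j := E_j\cdots E_1 B$ has no zero row; this is legitimate because $B>0$ and $UB>0$ both have no zero rows, and $\operatorname{rank}(B)\geq 2$. I would then induct on $k$, reducing the problem to showing that each single step $B_{j-1} \rightsquigarrow B_j$ can be realized by a positive equivalence through $\Mplus[K_1\times K_2]$.

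If the offdiagonal entry of $E_j$ equals $+1$, the step $B_{j-1}\to B_j$ is already a basic positive equivalence through $\Mplus[K_1\times K_2]$, since adding one row of a strictly positive matrix to another preserves strict positivity. The delicate case is when the offdiagonal entry equals $-1$. Here I would invoke Lemma~\ref{lem:BoyleLemma5.3} to produce a signed permutation $S\in\SLZ[K_1]$ and a product $Q\in\SLZ[K_2]$ of nonnegative basic elementary column operations such that $(SE_j, Q)\colon B_{j-1}\to SE_j B_{j-1} Q$ is a positive equivalence through $\Mplus[K_1\times K_2]$; intuitively, the column operations $Q$ inflate certain entries enough to absorb the impending row subtraction. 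The signed permutation $S$ and the column operations $Q$ then need to be unwound by further positive equivalences before one proceeds to step $j+1$. That the unwinding can be carried out through $\Mplus[K_1\times K_2]$ relies on $K_1, K_2\geq 3$ and on the fact that the current matrix continues to have rank at least $2$, giving enough strictly positive rows and columns to realize any signed transposition in $\SLZ[K_1]$ or column-transposition in $\SLZ[K_2]$ as a sequence of basic positive operations.

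The main obstacle I expect is the careful bookkeeping of accumulated auxiliary operations: after each step $j$, one has introduced correction matrices $S_j$ and $Q_j$ whose inverses must be composed into a positive equivalence, while simultaneously ensuring that no intermediate matrix falls outside $\Mplus[K_1\times K_2]$. The combinatorial interplay between the row operations dictated by the factorization of $U$ and the auxiliary column and permutation corrections is what makes the argument intricate, but the rank and size hypotheses provide sufficient room at each stage to keep the path inside $\Mplus[K_1\times K_2]$ and thereby yield the desired positive equivalence $(U, I_{K_2})\colon B\to UB$.
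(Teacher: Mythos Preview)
Your proposal identifies the right ingredients --- Lemma~\ref{lem:BoyleLemma5.4} for the factorization and Lemma~\ref{lem:BoyleLemma5.3} for handling negative elementary matrices --- but the inductive scheme you describe has a genuine gap. You write that the problem reduces to showing that each single step $B_{j-1}\to B_j$ is a positive equivalence through $\Mplus[K_1\times K_2]$, and you argue the $+1$ case by saying that adding a row of a strictly positive matrix to another preserves strict positivity. But Lemma~\ref{lem:BoyleLemma5.4} only guarantees that the intermediate matrices $B_j=E_j\cdots E_1 B$ have no \emph{zero} rows; for $0<j<k$ they need not lie in $\Mplus[K_1\times K_2]$ at all. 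So the step-by-step reduction you propose cannot work as stated: there is no path through $\Mplus$ connecting $B_{j-1}$ to $B_j$ when these matrices have negative entries, and your later remark about ``unwinding $S$ and $Q$ before proceeding to step $j+1$'' would land you back at such a $B_j$.

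The paper's proof resolves this by never visiting the $B_j$. Instead, after applying Lemma~\ref{lem:BoyleLemma5.3} at step $j$ to obtain $(S_jE_j',Q_j)$, one \emph{conjugates} the remaining elementary matrices $E_{j+1},\dots,E_k$ by the accumulated signed permutation $S_j\cdots S_1$. This produces a chain of genuinely positive matrices $S_j\cdots S_1 E_j\cdots E_1 B\, Q_1\cdots Q_j$, ending at $SUBQ$ with $S=S_k\cdots S_1$ a signed permutation and $Q$ a product of nonnegative elementary matrices. The unwinding is then done only once, at the end, and its correctness hinges on a point you do not mention: since $UB>0$ and $Q$ is nonnegative one has $UBQ>0$, while also $SUBQ>0$; comparing these forces $S$ to be an \emph{unsigned} permutation in $\SLZ[K_1]$, hence a product of $3$-cycles, each of which has an explicit six-term factorization whose partial products are nonnegative with no zero rows. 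Your vague appeal to ``enough strictly positive rows and columns to realize any signed transposition'' does not capture this, and indeed a genuinely signed transposition cannot be so realized.
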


\begin{proof}
By Lemma~\ref{lem:BoyleLemma5.4}, we can write $U$ as a product of basic elementary matrices $U = E_{k} E_{k-1} \cdots E_{1}$, such that for $1 \leq j \leq k$, the matrix $E_{j} \cdots E_{1}B$ has no zero rows.  By Lemma~\ref{lem:BoyleLemma5.3}, given the pair $( E_{1} , B)$, there is a nonnegative $Q_{1}$ that is a product of nonnegative basic elementary matrices and a signed permutation matrix $S_{1}$ such that $\ftn{ (S_1E_{1} , Q_{1} ) }{ B }{ S_{1} E_{1} B Q_{1} }$ is a positive equivalence through \Mplus[K_{1}\times K_{2}].  Note that 
\begin{align*}
UBQ_{1} = S_{1}^{-1} [ S_{1} E_{k} S_{1}^{-1} ] \cdots [ S_{1} E_{2} S_{1}^{-1} ] [ S_{1} E_{1} ] B Q_{1}.
\end{align*}  
Now, for $2 \leq j \leq k$, the matrix $S_{1} E_{j} S_{1}^{-1}$ is again a basic elementary matrix $E_{j}'$. Since we have that $E_{j}' \cdots E_{2}' ( S_{1} E_{1} B Q_{1} ) = S_{1} E_{j} \cdots E_{2} E_{1} B Q_{1}$, for $2 \leq j \leq k$, and since $E_{j} \cdots E_{2} E_{1} B Q_{1}$ has no zero rows, and $S_{1}$ is a signed permutation, we have that $E_{j}' \cdots E_{2}' ( S_{1} E_{1} B Q_{1} )$ has no zero rows for all $2 \leq j \leq k$.  

Using Lemma~\ref{lem:BoyleLemma5.3}, for the pair $( S_{1} E_{2} S_{1}^{-1} , S_{1} E_{1} B Q_{1} )$, we get a signed permutation matrix $S_{2}$ and a nonnegative $Q_{2}$ that is a product of nonnegative basic elementary matrices such that 
\begin{align*}
\ftn{ ( S_{2} [ S_{1} E_{2} S_{1}^{-1} ] , Q_{2} ) }{ S _{1} E_{1} B Q_{1} }{ S_{2} [ S_{1} E_{2} S_{1} ]^{-1} S_{1} E_{1} B Q_{1} Q_{2} }
\end{align*}
is a positive equivalence through \Mplus[K_{1}\times K_{2}].  Thus, we get a positive equivalence through \Mplus[K_{1}\times K_{2}] 
\begin{align*}
\ftn{ ( [S_{2} S_{1} E_{2} S_{1}^{-1} ] [ S_{1} E_{1} ] , Q_{1}Q_{2} ) }{ B }{ S_{2} S_{1} E_{2}  E_{1} B Q_{1} Q_{2} }
\end{align*}
and we observe that 
\begin{align*}
U B Q_{1} Q_{2} &= S_{1}^{-1} S_{2}^{-1} [ S_{2} S_{1} E_{k} S_{1}^{-1} S_{2}^{-1} ] \cdots \\ 
& \qquad\cdots [ S_{2} S_{1} E_{3} S_{1}^{-1} S_{2}^{-1} ][ S_{2} S_{1} E_{2} S_{1}^{-1} ] [ S_{1} E_{1} ] B Q_{1} Q_{2}.
\end{align*}
Continue this to obtain a signed permutation matrix $S = S_{k} \cdots S_{1}$ and a nonnegative matrix $Q = Q_{1} Q_{2} \cdots Q_{k}$ that is a product of nonnegative basic elementary matrices such that 
\begin{align*}
U B Q = S^{-1} [ S_{k} \cdots S_{1} E_{k} S_{1}^{-1} \cdots S_{k-1}^{-1} ] \cdots [ S_{2} S_{1} E_{2} S_{1}^{-1} ][ S_{1} E_{1} ] BQ = S^{-1} ( SU B Q )
\end{align*}
and $\ftn{ ( SU, Q ) }{ B }{ SU B Q }$ is a positive equivalence through \Mplus[K_{1}\times K_{2}].  

We claim that the equivalence $\ftn{ ( S, I_{K_{2}} ) }{ U B Q }{ SU B Q }$ is a positive equivalence through \Mplus[K_{1}\times K_{2}].  Since $S$ is a product of signed transposition matrices, it may be described as a permutation matrix in which some rows have been multiplied by $-1$.  Since $UBQ$ and $SUBQ$ are strictly positive, it must be that $S$ is a permutation matrix.  Also, $\det( S ) = 1$, so if $S \neq I_{K_{1}}$, then $S$ is a permutation matrix that is a product of $3$-cycles.  So it is enough to realize the positive equivalence through \Mplus[K_{1}\times K_{2}] in the case that $S$ is the matrix of a $3$-cycle.  For this we write the matrix
\begin{align*}
C = \begin{pmatrix} 0 & 1 & 0 \\ 0 & 0 & 1 \\ 1 & 0 & 0 \end{pmatrix}
\end{align*}
as the following product $C_{0} C_{1} C_{2} C_{3} C_{4} C_{5}$:
\begin{align*}
\begin{pmatrix}
1 & 0 & 0 \\
0 & 1 & 0 \\
0 & -1 & 1
\end{pmatrix}
\begin{pmatrix}
1 & 0 & 0 \\
-1 & 1 & 0 \\
0 & 0 & 1
\end{pmatrix}
\begin{pmatrix}
1 & 0 & -1 \\
0 & 1 & 0 \\
0 & 0 & 1
\end{pmatrix}
\begin{pmatrix}
1 & 1 & 0 \\
0 & 1 & 0 \\
0 & 0 & 1
\end{pmatrix}
\begin{pmatrix}
1 & 0 & 0 \\
0 & 1 & 0 \\
1 & 0 & 1
\end{pmatrix}
\begin{pmatrix}
1 & 0 & 0 \\
0 & 1 & 1 \\
0 & 0 & 1
\end{pmatrix}.
\end{align*}
For $0 \leq i \leq 5$, the matrix $C_{i} C_{i+1} \cdots C_{5}$ is nonnegative and has no zero rows. Therefore, the equivalence $\ftn{ ( C, I ) }{ D }{ CD }$ is a positive equivalence through \Mplus[K_{1}\times K_{2}] whenever $D \in \Mplus[K_{1}\times K_{2}]$.  Therefore, $\ftn{ ( S, I_{K_{2}} ) }{ U B Q }{ SU B Q }$ is a positive equivalence through \Mplus[K_{1}\times K_{2}] proving the claim.  Therefore, $\ftn{ ( S^{-1} , I_{K_{2} } ) }{ SU BQ }{ UBQ }$ is a positive equivalence through \Mplus[K_{1}\times K_{2}].  Since $Q$ is the product of nonnegative basic elementary matrices and $UB \in \Mplus[K_{1}\times K_{2}]$, the equivalence $\ftn{ ( I_{K_{1}} , Q ) }{ UB }{ UBQ }$ is a positive equivalence through \Mplus[K_{1}\times K_{2}].  Thus, $\ftn{ ( I_{K_{1}} , Q^{-1} ) }{ UBQ }{ UB }$ is a positive equivalence through \Mplus[K_{1}\times K_{2}].  Now the composition of positive equivalences through \Mplus[K_{1}\times K_{2}]
\begin{align*}
\xymatrix{
B \ar[rr]^-{ ( SU, Q ) }_-{+}  & & SU BQ \ar[rr]^-{ ( S^{-1} , I_{K_{2} } ) }_-{+} & & UBQ \ar[rr]^-{ ( I_{K_{1}} , Q^{-1} ) }_-{+} & & UB
}
\end{align*}
is a positive equivalence through \Mplus[K_{1}\times K_{2}] but the composition of these equivalences is equal to the equivalence $\ftn{ ( U, I_{K_{2} } ) }{ B }{ UB }$.  Hence, the equivalence $\ftn{ ( U, I_{K_{2} } ) }{ B }{ UB }$ is a positive equivalence through \Mplus[K_{1}\times K_{2}].
\end{proof}

The proof of the next lemma is similar to the proof of \cite[Lemma~5.5]{MR1907894}.  Since there are some differences between the two proofs we provide the entire argument.  

\begin{lemma}[\cf\ {\cite[Lemma~5.5]{MR1907894}}]\label{lem:BoyleLemma5.5b}
Let $B$ and $B'$ be elements of \Mplus[K_{1}\times K_{2}] with $K_{1} ,K_{2} \geq 3$, and the rank of $B$ and $B'$ at least 2.  Let $U \in \SLZ[K_{1}]$ and $W \in \SLZ[K_{2}]$ be given such that $U B$ has at least one strictly positive entry and $UB = B' W$.  Then the equivalence $\ftn{ (U , W^{-1} ) }{ B }{ B' }$ is a positive equivalence through \Mplus[K_{1}\times K_{2}]. 
\end{lemma}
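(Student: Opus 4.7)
The plan is to adapt the strategy of \cite[Lemma~5.5]{MR1907894}, reducing the statement to the row-case Lemma~\ref{lem:BoyleLemma5.5a} already established. The idea is to first perform column additions to turn $UB$ into a strictly positive matrix, then apply Lemma~\ref{lem:BoyleLemma5.5a} to handle the row factor $U$, and finally undo the column additions using a column-side analogue of the same lemma obtained via transposition.

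For the first step, I would produce a matrix $Q \in \SLZ[K_{2}]$ which is a product of nonnegative basic elementary matrices such that $UBQ > 0$. Such a $Q$ can be constructed since $UB$ has at least one strictly positive entry and $UB = B'W$ with $B' > 0$: by successively adding columns in a manner that exploits these positivity properties, every entry of $UBQ$ can be forced to become strictly positive. Because $B > 0$ and $Q$ is built from nonnegative basic elementary matrices with $1$ on the diagonal, every intermediate matrix $BQ'$ remains in $\Mplus[K_{1}\times K_{2}]$ throughout the construction, so $(I, Q)\colon B \to BQ$ is a positive equivalence through $\Mplus[K_{1}\times K_{2}]$.

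For the second step, I apply Lemma~\ref{lem:BoyleLemma5.5a} to the pair $(BQ, U)$: $BQ \in \Mplus[K_{1}\times K_{2}]$ (indeed $BQ \geq B > 0$), the rank of $BQ$ equals the rank of $B$ which is at least $2$, and $U(BQ) = UBQ > 0$ by construction. The lemma yields that $(U, I)\colon BQ \to UBQ$ is a positive equivalence through $\Mplus[K_{1}\times K_{2}]$. Since $UBQ = B'WQ$, it remains only to show that $(I, (WQ)^{-1})\colon B'WQ \to B'$ is a positive equivalence through $\Mplus[K_{1}\times K_{2}]$. For this, I apply Lemma~\ref{lem:BoyleLemma5.5a} to the transposed data: $(B')^{T} \in \Mplus[K_{2}\times K_{1}]$ has rank at least $2$, and $(WQ)^{T}(B')^{T} = (B'WQ)^{T} > 0$ since $B'WQ = UBQ > 0$. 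The lemma yields that $((WQ)^{T}, I)\colon (B')^{T} \to (B'WQ)^{T}$ is a positive equivalence through $\Mplus[K_{2}\times K_{1}]$. Transposing the entire chain of basic positive equivalences (the transpose of a basic elementary matrix is a basic elementary matrix, and $\Mplus[K_{2}\times K_{1}]$ transposes onto $\Mplus[K_{1}\times K_{2}]$) gives that $(I, WQ)\colon B' \to B'WQ$ is a positive equivalence through $\Mplus[K_{1}\times K_{2}]$, and then reversing the sequence (the inverse of a basic elementary matrix is again a basic elementary matrix, while the intermediates remain unchanged) produces the desired $(I, (WQ)^{-1})\colon B'WQ \to B'$ as a positive equivalence through $\Mplus[K_{1}\times K_{2}]$.

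Composing the three positive equivalences $B \to BQ \to UBQ = B'WQ \to B'$ yields the sought positive equivalence $(U, W^{-1})\colon B \to B'$ through $\Mplus[K_{1}\times K_{2}]$. I expect the main obstacle to lie in the first step: producing a $Q$ with $UBQ > 0$ using only nonnegative basic elementary matrices, while respecting the constraint that $BQ$ remains in $\Mplus[K_{1}\times K_{2}]$ at every intermediate stage, requires a delicate combinatorial argument exploiting both the single strictly positive entry of $UB$ and the strict positivity together with the rank hypotheses on $B$ and $B'$. This parallels, and must carefully adapt to the rectangular setting, the reduction step in the proof of \cite[Lemma~5.5]{MR1907894}.
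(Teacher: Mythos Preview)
Your overall strategy is right and matches the paper's approach closely: reduce to the case $UB>0$, then apply Lemma~\ref{lem:BoyleLemma5.5a} twice (once directly, once via transposition). The gap is in your first step.

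You propose to find $Q$ that is a product of \emph{nonnegative} basic elementary matrices with $UBQ>0$. But such a $Q$ need not exist under the stated hypotheses: nothing prevents some row of $UB$ from being entirely negative, and nonnegative column operations cannot repair that. Concretely, take $K_1=K_2=3$, $B'=\begin{smallpmatrix}2&1&1\\1&2&1\\1&1&1\end{smallpmatrix}$, $W=\begin{smallpmatrix}-2&-3&-1\\1&1&0\\1&1&1\end{smallpmatrix}\in\SLZ[3]$; then $B'W=\begin{smallpmatrix}-2&-4&-1\\1&0&0\\0&-1&0\end{smallpmatrix}$ has a strictly positive entry but its first row is strictly negative. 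Since $\det(B'W)=1$, choosing any $B\in\Mplus[3\times 3]$ with $\det B=1$ and setting $U=(B'W)B^{-1}\in\SLZ[3]$ gives data satisfying all hypotheses of the lemma. For any product $Q$ of nonnegative basic elementary matrices, $Q$ has nonnegative entries, so the first row of $UBQ=(B'W)Q$ remains $\leq 0$; hence $UBQ$ is never in $\Mplus[3\times 3]$.

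The paper sidesteps this by using row additions as well: starting from the single positive entry $(UB)(i,j)$, one first makes row $i$ of $UBQ$ strictly positive via column additions $Q$, then makes $PUBQ$ strictly positive via row additions $P$ (a product of nonnegative basic elementary matrices in $\SLZ[K_1]$). One then has positive equivalences $(I,Q)\colon B\to BQ$ and $(P,I)\colon B'\to PB'$ through $\Mplus[K_1\times K_2]$, and the problem is reduced to showing $(PU,(WQ)^{-1})\colon BQ\to PB'$ is a positive equivalence --- i.e.\ after replacing $(U,B,B',W)$ by $(PU,BQ,PB',WQ)$ one may assume $UB>0$. From there your steps~2 and~3 go through exactly as you wrote them.
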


\begin{proof}
We will first reduce to the case that $UB > 0$.  By assumption $(UB)(i,j) > 0$ for some $(i,j)$.  We can repeatedly add column $j$ to other columns until all entries of row $i$ are strictly positive.  This corresponds to multiplying from the right by a nonnegative matrix $Q$ in \SLZ[K_{2}], where $Q$ is the product of nonnegative basic elementary matrices, giving $UB Q = B' W Q$.  Then we can repeatedly add row $i$ of $UBQ$ to other rows until all entries are positive.  This corresponds to multiplying from the left by a nonnegative matrix $P$ in \SLZ[K_{1}], where $P$ is the product of nonnegative basic elementary matrices, giving $(PU)(BQ) = (PB')(WQ) > 0$.  We also have positive equivalences through \Mplus[K_{1}\times K_{2}] given by 
\begin{align*}
\ftn{ ( I , Q ) }{ B }{ BQ } \quad \text{and} \quad \ftn{ (P,I) }{ B' }{ PB' }.
\end{align*}

Note that the equivalence $\ftn{ (U, W^{-1}) }{ B }{ B' }$ is the composition of equivalences, $\ftn{( I, Q ) }{ B }{ BQ }$ followed by $\ftn{ (PU, (WQ)^{-1} ) }{ BQ }{ PB' }$ followed by $\ftn{ ( P^{-1} , I ) }{ PB' }{ B' }$.  Since $\ftn{( I, Q ) }{ B }{ BQ }$ and $\ftn{ ( P^{-1} , I ) }{ PB' }{ B' }$ are positive equivalences through \Mplus[K_{1}\times K_{2}], it is enough to show that the equivalence $\ftn{ (PU, (WQ)^{-1} ) }{ BQ }{ PB' }$ is a positive equivalence through \Mplus[K_{1}\times K_{2}].  Therefore, after replacing $(U, B, B' ,W)$ with $(PU, BQ, PB', WQ)$, we may assume without loss of generality that $UB > 0$.

By Lemma~\ref{lem:BoyleLemma5.5a}, the equivalence $\ftn{ ( U , I_{K_{2}} ) }{ B }{ UB }$ is a positive equivalence through \Mplus[K_{1}\times K_{2}]. Therefore, by Lemma~\ref{lem:BoyleLemma5.5a}, $\ftn{ ( ( W)^{\mathsf{T}} , I_{K_{1}} ) }{ ( B')^{\mathsf{T}} }{ W^{\mathsf{T}} ( B')^{\mathsf{T}} }$ is a positive equivalence through \Mplus[K_{2}\times K_{1}] which implies the equivalence $\ftn{ ( I_{K_{1}} , W )}{ B' }{ B' W }$ is a positive equivalence through \Mplus[K_{1}\times K_{2}].  Thus, the equivalence $\ftn{ ( I_{K_{1}} , W^{-1} ) }{ B' W }{ B' }$ is a positive equivalence through \Mplus[K_{1}\times K_{2}].  Since the equivalence $\ftn{ (U , W^{-1} ) }{ B }{ B' }$ is the composition of positive equivalences through \Mplus[K_{1}\times K_{2}] via $\ftn{ ( U, I_{K_{2}} ) }{ B }{ UB }$ followed by $\ftn{ (I_{K_{1}} , W^{-1} ) }{ B' W }{ B' }$, the equivalence $\ftn{ (U , W^{-1} ) }{ B }{ B' }$ a positive equivalence through \Mplus[K_{1}\times K_{2}].  
\end{proof}

\begin{theorem}[{\cf\ \cite[Theorem~5.1]{MR1907894}}]\label{thm:BoyleTheorem5.1}
Let $K_{1} ,K_{2} \geq 3$ and let $B \in \Mplus[K_{1}\times K_{2}]$.  Suppose $U \in \SLZ[K_{1}]$ and $V \in \SLZ[K_{2}]$ such that $U B V \in \Mplus[K_{1}\times K_{2}]$ and suppose that $X \in \SLZ[K_{1}]$ and $Y \in \SLZ[K_{2}]$ such that 
\begin{align*}
X B Y = 
\begin{pmatrix}
\begin{matrix} 1 & 0  \\
0 & 1 
\end{matrix} & 0 \\ 
0 & F
\end{pmatrix}.
\end{align*}
Then the equivalence $\ftn{ ( U , V ) }{ B }{ UB V }$ is a positive equivalence through \Mplus[K_{1}\times K_{2}].
\end{theorem}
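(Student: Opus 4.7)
My plan is to adapt Boyle's strategy from the proof of \cite[Theorem~5.1]{MR1907894} to the rectangular setting, with Lemma~\ref{lem:BoyleLemma5.5b} playing the role of Boyle's Lemma~5.5. Setting $B' := UBV$, which by hypothesis lies in $\Mplus[K_1\times K_2]$, I first observe that the factorization through $XBY$ forces both $B$ and $B'$ to have rank at least $2$, so the rank hypothesis of Lemma~\ref{lem:BoyleLemma5.5b} is available.

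The easy case is when $UB$ already has at least one strictly positive entry; there I will apply Lemma~\ref{lem:BoyleLemma5.5b} directly with $W = V^{-1}$ (so that $UB = B'W$), and conclude in one step that $(U,V)\colon B \to B'$ is positive through $\Mplus[K_1\times K_2]$.

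The substantive case is when no entry of $UB$ is strictly positive. I plan to handle this by inserting an intermediate matrix $B_1 = PBQ$ and showing that both $(P,Q)\colon B \to B_1$ and $(UP^{-1}, Q^{-1}V)\colon B_1 \to B'$ are positive equivalences through $\Mplus[K_1\times K_2]$. The auxiliary matrices $P \in \SLZ[K_1]$ and $Q \in \SLZ[K_2]$ will be constructed from the Smith normal form data, by conjugating suitable unipotent $2\times 2$ $\SL$-matrices through $X$ and $Y$ with a large parameter; the presence of the two leading $1$'s in the Smith form of $B$ is exactly what gives enough flexibility here. For appropriately chosen parameter I will verify that (i) $B_1$ is strictly positive, (ii) $(P,Q)\colon B \to B_1$ factors into basic positive equivalences through $\Mplus[K_1\times K_2]$ (using iterated applications of Lemmas~\ref{lem:BoyleLemma5.3}--\ref{lem:BoyleLemma5.5b}), and (iii) the intermediate product $UBQ = (UP^{-1})\,B_1 = B'(V^{-1}Q)$ has at least one strictly positive entry, so that Lemma~\ref{lem:BoyleLemma5.5b} can be applied to the second leg with $W = Q^{-1}V$. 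Composing the two legs then gives the desired positive equivalence $B \to B'$ through $\Mplus[K_1\times K_2]$.

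The hard part will be the construction of $P$ and $Q$, and specifically the verification of (ii): the basic elementary factors of $P$ and $Q$ must be arrangeable so that every intermediate matrix along the way remains in $\Mplus[K_1\times K_2]$. This amounts to performing row and column operations in coordinates adapted to the Smith decomposition $XBY$, and tracking their effect on positivity throughout. The rectangularity adds a genuine subtlety over Boyle's square case, because when $K_1 \neq K_2$ we cannot freely transpose row and column roles; a careful balance of operations applied on the two sides, licensed precisely by the two leading $1$'s in the Smith form, is what makes the argument go through.
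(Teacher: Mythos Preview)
Your overall architecture---factor $(U,V)$ through an auxiliary equivalence built from the Smith-form data and apply Lemma~\ref{lem:BoyleLemma5.5b} to each leg---is the paper's, but you are missing the one observation that makes the argument clean. The paper does not look for an intermediate $B_1\neq B$; it notices that for any $H\in\SLZ[2]$, setting $P=X^{-1}G_{H,1}X$ and $Q=YG_{H,2}^{-1}Y^{-1}$ (with $G_{H,i}$ the $K_i\times K_i$ matrix $\begin{smallpmatrix}H&0\\0&I\end{smallpmatrix}$) gives a \emph{self}-equivalence $(P,Q)\colon B\to B$, since $PBQ=X^{-1}G_{H,1}(XBY)G_{H,2}^{-1}Y^{-1}=X^{-1}(XBY)Y^{-1}=B$. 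Thus $B_1=B$ automatically, your requirement~(i) is vacuous, and your ``hard part''~(ii) collapses to a single invocation of Lemma~\ref{lem:BoyleLemma5.5b}: one only needs $PB$ to have a positive entry. The paper takes $H=H_m=\begin{smallpmatrix}m&-1\\1&0\end{smallpmatrix}H'$ for a well-chosen $H'\in\SLZ[2]$ (note: \emph{not} unipotent) and large $m$, and shows via a rank-$2$ argument on the first two rows of $XB$ that both $PB$ and $(UP)B$ acquire a positive entry. Then $(U,V)$ is the composition of the positive self-equivalence $(P^{-1},Q^{-1})\colon B\to B$ with $(UP,QV)\colon B\to B'$, each positive by Lemma~\ref{lem:BoyleLemma5.5b}.

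Two consequences for your write-up. First, the case split on whether $UB$ already has a positive entry is unnecessary; the self-equivalence handles both cases at once. Second, and more importantly, without the self-equivalence your step~(ii) has a real gap: to apply Lemma~\ref{lem:BoyleLemma5.5b} to $(P,Q)\colon B\to B_1$ you need $PB$ to have a positive entry, which you never mention---you only assert $B_1>0$, and that is not what the lemma asks for. Your plan becomes correct as soon as you constrain $Q$ to be the matched $YG_{H,2}^{-1}Y^{-1}$, forcing $B_1=B$; then the positivity-tracking you flag as the delicate rectangular issue simply disappears.
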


\begin{proof}
Note that for any $H \in \SLZ[2]$, the $K_{1} \times K_{1}$ matrix $G_{H, 1}$ and the $K_{2} \times K_{2}$ matrix $G_{H, 2}$, given by 
$$G_{H, 1} = \begin{pmatrix} H & 0 \\ 0 & I_{K_{1}-2}  \end{pmatrix}\quad\text{and}\quad G_{H, 2} = \begin{pmatrix} H & 0 \\ 0 & I_{K_{2}-2}  \end{pmatrix},$$ 
give a self-equivalence $\ftn{ ( X^{-1} G_{H,1} X , Y G_{H,2}^{-1} Y^{-1} ) }{ B }{ B}$.

For a matrix $Q$, we let $Q( 12; * )$ denote the submatrix consisting of the first two rows.  Since $( XBY)( 12; * )$ has rank $2$ and $Y$ is invertible, we have that $(XB) ( 12; * )$ has rank two.  Therefore, there exists $H' \in \SLZ[2]$ such that the first row $r = \begin{pmatrix} r_{1}, \dots, r_{K_{2}} \end{pmatrix}$ of $H' [ ( XB ) ( 12; * ) ]$ has both a positive entry and a negative entry.   

Let $c = \left(\begin{smallmatrix} c_{1} \\ \vdots \\ c_{K_{1}} \end{smallmatrix}\right)$ denote the first column of $X^{-1}$, and note that it is nonzero.  Since $cr$ is the $K_{1} \times K_{2}$ matrix with $(i,j)$ entry equal to $c_{i} r_{j}$, we have that $cr$ has a positive and a negative entry.  For each $m \in \N$, set $H_{m} = \left(\begin{smallmatrix} m & -1 \\ 1 & 0 \end{smallmatrix}\right) H'$.  Choose $m$ large enough such that the entries of the two matrices $X^{-1} G_{H_{m}, 1} X B$ and $mc r$ will have the same sign whenever the entries of $mcr$ are nonzero.  In particular, $X^{-1} G_{H_{m} , 1 } X B$ will have a positive entry.  From Lemma~\ref{lem:BoyleLemma5.5b} it follows that $\ftn{ ( X^{-1} G_{ H_{m} , 1} X ,   Y G_{ H_{m} , 2}^{-1} Y^{-1} ) }{ B }{ B }$ is a positive equivalence through \Mplus[K_{1}\times K_{2}].      

Similarly for large enough $m$, the entries of $U X^{-1} G_{H_{m} , 1} X B$ will agree in sign with the entries $Ucr$ whenever the entries of the latter matrix are nonzero.  Since $U$ is invertible, the matrix $Ucr$ is nonzero, and thus contains positive and negative entries, because $r$ does.  Therefore, $U X^{-1} G_{H_{m} , 1} X B$ contains a positive entry.  By Lemma~\ref{lem:BoyleLemma5.5b}, 
\begin{align*}
\ftn{ ( U X^{-1} G_{ H_{m} , 1} X ,   Y G_{ H_{m} , 2}^{-1} Y^{-1} V) }{ B }{ B' }
\end{align*}
gives a positive equivalence through \Mplus[K_{1}\times K_{2}] with $B' = UBV$.  Hence, the equivalence $\ftn{ ( U,V) }{ B }{ B' }$ is a positive equivalence through \Mplus[K_{1}\times K_{2}] since it is the composition of positive equivalences through \Mplus[K_{1}\times K_{2}]: 
\begin{align*}
\ftn{ ( X^{-1} G_{ H_{m} , 1}^{-1} X  , Y G_{ H_{m} , 2} Y^{-1} ) }{ B }{ B }
\end{align*}
followed by 
\begin{equation*}
\ftn{ ( U X^{-1} G_{ H_{m} , 1} X ,   Y G_{ H_{m} , 2}^{-1} Y^{-1} V) }{ B }{ B' } \qedhere
\end{equation*}
\end{proof}

\subsection{Factorization:  general case}

We now use the above results to prove a factorization for general $B, B' \in \MPplusZ$ with $n_{i}\neq 0$ that are \SLPE.  Again, many of the arguments follow \cite{MR1907894}.

\begin{lemma}[{\cf\ \cite[Lemma~4.6]{MR1907894}}]\label{lem:BoyleLemma4.6}
Let $B, B' \in\MPplusZ$ with $n_{i} \neq 0$ for all $i$.  If $\ftn{ (U, V) }{ B }{ B' }$ is an \SLPEe such that $U\{ i \}$ and $V \{ j\}$ are the identity matrices of the appropriate sizes whenever they are not the empty matrix, then $\ftn{ (U,V) }{ B }{ B' }$ is a positive equivalence through \MPplusZ.
\end{lemma}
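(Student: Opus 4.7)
The plan is to factor $U$ and $V$ into products of basic elementary matrices and to interleave these operations so that every intermediate matrix lies in $\MPplusZ$.

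Since $U\{i\}$ and $V\{j\}$ are identity matrices on every nonempty diagonal block, both $U$ and $V$ are block-unipotent with respect to the block-triangular structure determined by $\calP$. Consequently each of them can be written as a product of basic elementary matrices $I \pm E_{k,\ell}$ in which $k$ lies in some block $i$ and $\ell$ lies in a strictly larger block $j$. Any such basic elementary matrix preserves the block-triangular support structure of $\MPplusZ$, so the only nontrivial condition to verify at each intermediate step is strict positivity of the off-diagonal blocks $B\{i,j\}$ for $i \prec j$. The other defining properties of $\MPplusZ$ --- vanishing of $B\{i\}$ when $m_i \leq 1$, and strict positivity plus the rank condition on $B\{i\}$ when $m_i > 1$ --- are unaffected by off-diagonal row or column additions, since such additions only modify off-diagonal blocks of $B$.

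For basic elementary matrices with coefficient $+1$, positivity is automatically preserved because we are adding columns (or rows) with nonnegative entries in the relevant off-diagonal positions to columns (or rows) whose entries in those positions are already strictly positive. The main obstacle is handling basic elementary matrices with coefficient $-1$, since subtracting could in principle destroy positivity. I would overcome this by a pad-and-cancel strategy adapted from Boyle's proof in the square case: before carrying out a planned $-1$ operation, first perform enough $+1$ operations to make the target off-diagonal block entries sufficiently large that the subsequent subtraction leaves them strictly positive; then, once the main operations are done, undo the padding by compensating moves. Because we have freedom in the order of operations and we can work across different block pairs, these compensation moves can be arranged to act in block coordinates that do not interfere with the main operations, so that the intermediate matrices remain in $\MPplusZ$ throughout.

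I would carry out the argument by induction on $|\calP|$. The base case $|\calP|=1$ is immediate: the hypothesis forces $U$ and $V$ to be identity matrices, so $B = B'$. For the inductive step, I would fix a maximal element $j_{\max}$ of $\calP$ and first handle all basic elementary operations in $U$ and $V$ whose nontrivial entry lies in a block pair $(i, j_{\max})$; what remains then acts only on the smaller poset $\calP \setminus \{j_{\max}\}$ and is handled by induction. The most delicate point is to verify that the pad-and-cancel scheme can be carried out simultaneously across all block-pairs $(i, j_{\max})$ without destroying positivity in other off-diagonal blocks; this is where I expect the main difficulty, and I would follow Boyle's argument in \cite[Lemma~4.6]{MR1907894} closely, using essentially the strict positivity of off-diagonal blocks and the rank condition on diagonal blocks built into $\MPplusZ$, together with the hypothesis $n_i \neq 0$ that guarantees enough room in each block column to carry out the auxiliary positive operations.
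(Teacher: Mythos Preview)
Your outline captures the right starting observation --- all basic elementary factors of $U$ and $V$ are off-diagonal, so only the off-diagonal blocks of $B$ change --- but the proposed mechanism for preserving positivity is not the one the paper uses, and it has a real gap.

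The paper does \emph{not} pad and then cancel. Instead it handles $U$ and $V$ separately: it first factors $U = U_n \cdots U_1$ into pieces supported on single off-diagonal blocks, splits each $U_t = U_t^- U_t^+$ into its nonnegative and nonpositive parts, and shows that for each $U_t$ one can find a product $Q_t$ of \emph{nonnegative} basic elementary column operations so that $(U_t, Q_t)$ is a positive equivalence. The padding $Q = Q_1 \cdots Q_n$ is never undone; it is absorbed into an auxiliary matrix, and an analogous construction produces $P$ for the $V^{-1}$ side. Your ``compensating moves'' to cancel the padding would themselves be $-1$ operations, and you give no argument that these do not reintroduce the positivity problem you just solved --- the claim that they ``act in block coordinates that do not interfere'' is exactly what needs proof.

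More seriously, you do not address the case $m_i \leq 1$, which is the heart of the rectangular generalization. When $m_i = 1$ the diagonal block $B\{i\}$ is the $1\times 1$ zero matrix by the definition of $\MPplusZ$, and when $m_i = 0$ it is empty; in either case there are no positive diagonal entries available for padding. The paper's proof splits into cases here. When there is an intermediate block $i \prec j \prec j_1$, one pads using the off-diagonal block $B\{i, j\}$. But when $j_1$ is an \emph{immediate} successor of $i$ with $m_i \leq 1$, no padding is possible at all, and the paper instead performs a direct computation using the identity $UBV = B'$ together with $U\{i\} = I$, $V\{j_1\} = I$ to show that the resulting block equals $B'\{i, j_1\}$ (or $B\{i_s, n+1\}$ in the dual argument), which is positive by hypothesis. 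This step is where the hypothesis that the diagonal blocks of $U$ and $V$ are identities is really used, and your outline does not account for it. The induction on $|\calP|$ you propose is also not the paper's organization and does not obviously interact well with this immediate-successor phenomenon.
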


\begin{proof}
We will first find $Q$ in \SLPZ that is a product of nonnegative basic elementary matrices in \SLPZ such that $\ftn{ ( U, Q ) }{ B }{ U B Q }$ is a positive equivalence through \MPplusZ. 
We may assume that $U$ is not the identity matrix. 
Factor $U = U_{n} \cdots U_{1}$ where for each $U_{t}$ there is an associated pair $( i_{t}, j_{t} )$ with $i_t\prec j_t$ such that the following hold
\begin{itemize}
\item  $U_{t} = I$ except in the block $U_{t} \{ i_{t} , j_{t} \}$, where it is nonzero
\item if $s \neq t$, then $( i_{s} , j_{s} ) \neq ( i_{t} , j_{t} )$.
\end{itemize} 
Factor $U_{1} = U_{1}^{-} U_{1}^{+}$, where $U_{1}^{-}$ and $U_{1}^{+}$ are equal to $I$ outside the block $\{ i_{1} , j_{1} \}$, $U_{1}^{-} \{ i_{1} , j_{1} \}$ is the nonpositive part of $U_{1} \{ i_{1} , j_{1} \}$ and $U_{1}^{+} \{ i_{1} , j_{1} \}$ is the nonnegative part of $U_{1} \{ i_{1} , j_{1} \}$. Note that $U_{1}^{+}$ is a product of nonnegative basic elementary matrices in \SLPZ[\mathbf{m}] and $U_{1}^{-}$ is a product of nonpositive basic elementary matrices in \SLPZ[\mathbf{m}]. It is now clear that $\ftn{ ( U_{1}^{+} , I ) }{ B }{ U_{1}^{+}B }$ is a positive equivalence through \MPplusZ.  

Now, note that $U_{1}^{-} U_{1}^{+} B = U_{1}^{+} B$ outside the blocks $\{ i_{1} , k \}$ such that $i_{1} \prec j_{1} \preceq k$.  Also note that $m_{i_{1}} \neq 0$ (since $U_1\neq I$).   

Suppose $m_{i_{1}} > 1$.  Then $B \{ i_{1} \}$ is not the empty matrix and $B \{ i_{1} \} > 0$ since $B\in\MPplusZ$.  Hence, $(U_{1}^{+} B) \{ i_{1} \} > 0$ since $\ftn{ ( U_{1}^{+} , I ) }{ B }{ U_{1}^{+}B }$ is a positive equivalence through \MPplusZ. We can now add columns of  $(U_{1}^{+}B )\{ i_{1} \}$ to columns of $(U_{1}^{+}B) \{ i_{1} , k\}$ for all $i_{1} \prec j_{1} \preceq k$ enough times to obtain a $Q_{1}$ that is a product of nonnegative basic elementary matrices in \SLPZ such that $\ftn{ ( U_{1}^{-} , Q_{1} ) }{ U_{1}^{+} B }{ U_{1}^{-} U_{1}^{+} B Q_{1} }$ is a positive equivalence through \MPplusZ. Since $\ftn{ (U_{1} , Q_{1} ) }{ B }{ U_{1} B Q_1 }$ is the composition of positive equivalences 
\begin{align*}
\xymatrix{
B \ar[rr]^-{ ( U_{1}^{+} , I ) }_-{+} & & U_{1}^{+}B \ar[rr]^-{ ( U_{1}^{-} , Q_{1} ) }_-{+} & & U_{1} B Q_{1}
}
\end{align*}
through \MPplusZ, we get that the equivalence $\ftn{ (U_{1} , Q_{1} ) }{ B }{ U_{1} B Q_{1} }$ is a positive equivalence through \MPplusZ.  

Suppose $m_{i_{1}} = 1$.  Then $B\{i_{1} \}$ and $B'\{i_{1} \}$ are the $1\times 1$ zero matrix.  Since  $B\in\MPplusZ$, we have that $B \{ i_{1} , j \} > 0$ and $(U_{1}^{+}B) \{ i_{1} , j \} > 0$ for all $i_{1} \prec j$.  Therefore, for all $j_{1} \prec k$, we can add columns of $(U_{1}^{+}B) \{ i_{1} , j_{1} \}$ to columns of $U_{1}^{+} B \{ i_{1} , k \}$ enough times to obtain a $Q_{1}'$ that is a product of nonnegative basic elementary matrices in \SLPZ such that $(U_{1}^{-} U_{1}^{+} B Q_{1}' ) \{i_{1} , k \} > 0$ for all $j_{1} \prec k$.  Suppose there exists $i_{1} \prec j \prec j_{1}$.  Then we can add columns of $( U_{1}^{+} B Q_{1}' )\{ i_{1} , j \}$ to columns of $( U_{1}^{+} B Q_{1}') \{ i_{1} , j_{1} \}$ enough times to obtain a $Q_{1}''$ that is a product of nonnegative basic elementary matrices in \SLPZ such that  $(U_{1}^{-}U_{1}^{+} B Q_{1}' Q_{1}'' )\{ i_{1} ,k \} > 0$ for all $i_{1} \prec j_{1} \preceq k$.  Suppose there is no $j$ such that $i_{1} \prec j \prec j_{1}$.  Then $j_{1}$ is an immediate successor of $i_{1}$.  Since $U\{ i_{1}, j_{1} \} = ( U_{n} \cdots U_{2} ) \{ i_{1} , j_{1} \} + U_{1} \{ i_{1} , j_{1} \} = U_{1} \{ i_{1} , j_{1} \}$, we have that 
\begin{align*}
&(U_{1}^{-} (U_{1}^{+} B Q_{1}'))\{ i_{1} , j_{1} \} \\
&\qquad = U_{1} \{i_{1} \} (B Q_{1}')\{i_{1} , j_{1} \} + U_{1} \{ i_{1} , j_{1} \} (BQ_{1}') \{ j_{1} \}  \\
&\qquad = B\{ i_{1} \} Q_{1}' \{i_{1}, j_{1} \} + B \{ i_{1}, j_{1} \} Q_{1}' \{ j_{1} \} + U_{1} \{ i_{1} , j_{1} \} B \{ j_{1} \} Q_{1}' \{ j_{1} \} \\
&\qquad =  B \{ i_{1}, j_{1} \} + U_{1} \{ i_{1} , j_{1} \} B \{ j_{1} \} \\
&\qquad = U\{i_{1}\}B \{ i_{1}, j_{1} \} + U \{ i_{1} , j_{1} \} B \{ j_{1} \} \\
&\qquad = (U B) \{ i_{1} , j_{1} \} \\
&\qquad = ( B'V^{-1}) \{ i_{1} , j_{1} \} \\
&\qquad = B' \{ i_{1} \} V^{-1} \{ i_{1} , j_{1} \} + B' \{ i_{1} , j_{1} \} V^{-1}\{j_{1} \} \\
&\qquad = B' \{ i_{1} , j_{1} \} > 0.
\end{align*}
Therefore, we get $Q_{1}$ in \SLPZ that is a product of nonnegative basic elementary matrices in \SLPZ such that $\ftn{ (U_{1}^{-} , Q_{1} ) }{ U_{1}^{+}B }{ U_{1} B }$ is a positive equivalence through \MPplusZ.  Since $\ftn{ (U_{1} , Q_{1} ) }{ B }{ U_{1} B Q_1 }$ is the composition of positive equivalences
\begin{align*}
\xymatrix{
B \ar[rr]^-{ ( U_{1}^{+} , I ) }_-{+} & & U_{1}^{+}B \ar[rr]^-{ ( U_{1}^{-} , Q_{1} ) }_-{+} & & U_{1} B Q_{1}
}
\end{align*}
through \MPplusZ, we get that the equivalence $\ftn{ (U_{1} , Q_{1} ) }{ B }{ U_{1} B Q_{1} }$ is a positive equivalence through \MPplusZ.

In all cases, we get a $Q_{1}$ in \SLPZ that is a product of nonnegative basic elementary matrices in \SLPZ such that the equivalence $\ftn{ (U_{1} , Q_{1} ) }{ B }{ U_{1} B Q_{1} }$ is a positive equivalence through \MPplusZ.

By repeating the process for the matrices $U_{1} B Q_{1}$ and $U_{2} U_{1} B Q_{1}$, we get $Q_{2}$ that is the product of nonnegative basic elementary matrices in \SLPZ such that the equivalence $\ftn{ ( U_{2} , Q_{2} ) }{ U_{1} B Q_{1} }{ U_{2} U_{1} B Q_{1} Q_{2} }$ is a positive equivalence through \MPplusZ. We continue this process to get $Q_{i}$ that is the product of nonnegative basic elementary matrices in \SLPZ for $1 \leq i \leq n$ such that $\ftn{ ( U, Q ) }{ B }{ U B Q }$ is a positive equivalence through \MPplusZ, where $Q = Q_{1}\cdots Q_{n}$. 

We now show that there exists $P$ that is a product of nonnegative basic elementary matrices in \SLPZ[\mathbf{m}] such that $\ftn{ ( P , V^{-1} ) }{ B' }{P B' V^{-1} }$ is a positive equivalence through \MPplusZ. 
Throughout the rest of the proof, if $M \in \MPplusZ$, then $M \{ \{1, 2, \dots, i \} \}$ will denote the block matrix whose $\{ s, r \}$ block is $M \{ s, r \}$ for all $1 \leq s, r \leq i$.  First note that there are matrices $V_2, \dots, V_N$ in \SLPZ such that $V^{-1} = V_2 V_3 \cdots V_N$, each $V_i$ is the identity matrix except for possibly on blocks $V_i \{ l, i \}$ with $l\prec i$, and 
\[
V_2 \cdots V_i = 
\begin{pmatrix}
V^{-1} \{ \{ 1, \dots, i \} \} & 0 \\
0 & I
\end{pmatrix}.
\]
Let $V_i^-$ be the matrix in \SLPZ that is the identity matrix except for the blocks $V_i \{ l, i \}$ and $V_i^- \{ l, i \}$ is the nonpositive part of $V_i\{ l , i\}$ and  let $V_i^+$ be the matrix in \SLPZ that is the identity matrix except for the blocks $V_i \{ l, i \}$ and $V_i^+ \{ l, i \}$ is the nonnegative part of $V_i\{ l , i\}$.  Note that $V_i^+ V_i^-$ is equal to the identity matrix except for the blocks $V_i \{ l, i \}$ and $(V_i^+ V_i^-)\{ l,i\} = V_i^+\{l, i\} + V_i^-\{l, i \} = V_i \{ l , i \}$.  Therefore, $V_i = V_i^+ V_i^-$

We will inductively construct matrices $P_2 , P_3, \dots, P_N$ in \SLPZ[\mathbf{m}] such that each $P_i$ is the product of nonnegative basic matrices such that each $P_i$ is the identity outside of the blocks $\{l, i \}$ for $l \prec i$ and for each $2 \leq i \leq N$, we have that $\ftn{ ( P_i  ,  V_i ) }{ P_{i-1} \cdots P_2 B' V_2 \cdots V_{i-1} }{  P_i \cdots P_2 B' V_2 \dots V_i }$ is a positive equivalence through \MPplusZ.  Note that if we have constructed $P_2$, $P_3,\ldots,P_N$, then the composition of these positive equivalences through \MPplusZ gives a positive equivalence $\ftn{ ( P , V^{-1}) }{ B' }{ P B' V^{-1} }$ through \MPplusZ, where $P = P_k \cdots P_2$.  Thus, once we have constructed $P_2 , P_3, \dots, P_N$ the lemma holds.

We first construct $P_2$.  Note that if $1$ is not a predecessor of $2$, then $V_2^+ = V_2^- = I$.  Therefore, $\ftn{ ( I , V_2 ) }{ B' }{ B' V_2 }$ is a positive equivalence through \MPplusZ.  Suppose $1 \preceq 2$.  Suppose $m_1 = 0$.  Then $B' V_2^+ V_2^- = B' V_2^- = B'$ which implies that $\ftn{ ( I, V_2^- ) }{ B' V_2^+ }{ B' V_2 }$ is a positive equivalence through \MPplusZ.  So, $\ftn{ ( I, V_2 ) }{ B' }{ B'V_2 }$ is a positive equivalence through \MPplusZ since it is the composition of the positive equivalences $( I, V_2^+ )$ and $( I, V_2^-)$ through \MPplusZ.  Suppose $m_1 \neq 0$.  In this situation, we have three cases, $m_2 > 1$, $m_{2} = 1$, and $m_2 = 0$.  

Suppose $m_2 > 1$.  Note that $B' V_2^+ V_2^-$ is equal to $B'$ except for the $\{1,2\}$ block. It is clear that $\ftn{ ( I , V_2^+ ) }{ B' }{ B'V_2^+ }$ is a positive equivalence through \MPplusZ.  Since $m_{2} > 1$, we have that $(B'V_{2}^{+}) \{ 2 \} > 0$.  Hence, we may add rows of $(B'V_2^+) \{2\}$ to rows of $(B'V_2^+)\{1,2\}$ to get a matrix $P_2$ in \SLPZ[\mathbf{m}] that is the product of nonnegative basic matrices and is the identity outside of the block $\{1,2\}$ such that $\ftn{ (P_2, V_2^- ) }{ B'V_2^+ }{ P_2 B' V_2 }$ is a positive equivalence through \MPplusZ.  Composing the positive equivalences $(I , V_2^+)$ and $( P_2 , V_2^- )$ through \MPplusZ, we get a positive equivalence $\ftn{ ( P_2 , V_2 ) }{ B' }{ P_2 B V_2 }$ through \MPplusZ.  

Suppose $m_{2} \leq 1$.  Then $B\{ 2 \}$ is either the $1 \times 1$ zero matrix or the empty matrix.
\begin{align*}
(B' V_{2}) \{ 1, 2 \} &= B' \{1\} V_{2} \{1,2\} + B' \{1,2 \} V_{2} \{ 2 \} \\
			&= B' \{1\} V^{-1} \{1, 2\} + B' \{1,2\} V^{-1} \{2\} \\
			&= (B' V^{-1}) \{1,2\} \\
			&= (UB)\{1,2\} \\
			&= U \{1 \} B\{1,2\} + U \{1,2\} B \{2 \} \\
			&= B\{1,2\}  > 0
\end{align*}  
Therefore $\ftn{ ( I , V_2^- ) }{ B' V_2^+ }{ B' V_2 }$ is a positive equivalence through \MPplusZ and by composing the positive equivalences $( I, V_2^+ )$ and $(I , V_2^-)$ through \MPplusZ, we get a positive equivalence $\ftn{ (I, V_2 ) }{ B' }{ B' V_2 }$ through \MPplusZ.

So, in all cases, we have found a matrix $P_2$ in \SLPZ[\mathbf{m}] that is the product of nonnegative basic elementary matrices and is the identity outside of the block $\{1,2\}$ such that $\ftn{ ( P_2 , V_2 ) }{ B' }{ P_2 B' V_2 }$ is a positive equivalence through \MPplusZ.  

Let $2 \leq n \leq N-1$ and suppose we have constructed $P_2 , P_3, \dots, P_n$ in \SLPZ[\mathbf{m}] such that each $P_i$ is the product of nonnegative basic matrices and $P_i$ is the identity outside of the blocks $\{l, i \}$ with $l \prec i$ and for each $2 \leq i \leq n$, we have that $\ftn{ ( P_i  ,V_i ) }{ P_{i-1} \cdots P_2 B' V_2 \cdots V_{i-1} }{  P_i \cdots P_2 B' V_2 \dots V_i }$ is a positive equivalence through \MPplusZ.  

To simplify the notation, we set $B_i' = P_i \cdots P_2 B' V_2 \dots V_i$.  Since $B_n'$ is in $\MPplusZ$, we get a positive equivalence $\ftn{ ( I, V_{n+1}^+ ) }{ B_n' }{ B_n'V_{n+1}^+ }$ through \MPplusZ.  Note that $B_n'V_{n+1}^+V_{n+1}^-$ is equal to $B_n'$ except for the blocks $\{ i , n+1 \}$ with $i \preceq n+1$.  

Suppose $m_{n+1} > 1$.  Then $(B_n' V_{n+1}^+) \{ n+1 \}  > 0$.  Hence, we may add rows of $( B_n' V_{n+1}^+) \{ n+1 \}$ to rows of $(B_n' V_{n+1}^+)\{i, n+1 \}$ for all $i \prec n+1$, to obtain a matrix $P_{n+1}$ in \SLPZ[\mathbf{m}] that is the product of nonnegative basic matrices and is the identity outside of the blocks $\{ i, n+1\}$ for $i \prec n+1$ such that $\ftn{ ( P_{n+1}, V_{n+1}^- )}{B_n'V_{n+1}^+ }{ P_{n+1}B_n'V_{n+1}}$ is a positive equivalence through \MPplusZ.  Composing the positive equivalences $( I , V_{n+1}^+ )$ and $( P_{n+1} , V_{n+1}^-)$ through \MPplusZ, we get that $\ftn{ ( P_{n+1}, V_{n+1} ) }{ B_n'}{  P_{n+1} B_n' V_{n+1} }$ is a positive equivalence through \MPplusZ.

Suppose $m_{n+1} \leq 1$.  Let $J = \{ i_0, \dots, i_t \}$ be the set of elements $i_s\in\mathcal{P}$ that satisfy $i_s \prec n+1$, $m_{i_s} \neq 0$, and if $i_s \prec l \prec n+1$, then $m_l = 0$.
Note that for all distinct $i_s,i_r\in J$, $i_s$ is not a predecessor of $i_r$.
Note that if $J = \emptyset$, then $B_n' V_{n+1}^+ V_{n+1}^- = B_n' V_{n+1}^- = B_n'$.  This would imply that $\ftn{ ( I, V_{n+1}^- )}{ B_n' V_{n+1}^+ }{ B_n' V_{n+1}}$ is a positive equivalence through \MPplusZ and hence $\ftn{ ( I , V_{n+1} ) }{ B_n' }{ B_n' V_{n+1} }$ is a positive equivalence through \MPplusZ. 

Suppose $J \neq \emptyset$.  Note that for each $i_s \in J$, 
\begin{align*}
(B_n'V_{n+1} )\{ i_s, n+1 \} &= \sum_{ i_s \preceq l \preceq n+1 }  (P_n \cdots P_2 B' ) \{ i_s , l \}  (V_2 \dots V_nV_{n+1})\{ l, n+1 \}
 \\  				&=\sum_{ i_s \preceq l \preceq n+1 }  (P_n \cdots P_2 B' ) \{ i_s , l \}  V^{-1}\{ l, n+1 \} \\ 
 				&= ( ( P_n \cdots P_2 B' ) V^{-1} ) \{ i_s , n+1 \}
\end{align*}
 since $(V_2 \dots V_nV_{n+1})\{\{1, \dots n+1\}\} = V^{-1} \{ \{1, \dots, n+1 \} \}$.  Since $P_n \cdots P_2 U B = P_n \cdots P_2 B' V^{-1}$,
\begin{align*}
 ( ( P_n \cdots P_2 B' ) V^{-1} ) \{ i_s , n+1 \} &= ((P_n \cdots P_2 U) B) \{i_s, n+1 \} \\
 				&= \sum_{i_s \preceq l \preceq n+1 } (P_n \cdots P_2 U ) \{ i_s , l \}  B\{ l, n+1 \}. 
\end{align*}
Using the fact that $m_l = 0$ for all $i_s \prec l \prec n+1$, $B\{n+1\}$ is either the $1 \times 1$ zero matrix (if $m_{n+1} = 1$) or the empty matrix (if $m_{n+1} = 0$), and $(P_n \cdots P_2 U ) \{ i_s  \} = I$, we get that 
\begin{align*}
(B_n'V_{n+1} )\{ i_s, n+1 \} &=  B\{ i_{s}, n+1 \} + (P_n \cdots P_2 U ) \{ i_s , n+1 \} B \{n+1\} \\
				&= B \{ i_s, n+1 \}.
\end{align*}
Moreover, $(B_n'V_{n+1} )\{ i_s, n+1 \} = B \{ i_s, n+1 \} > 0$ because $m_{i_s} \neq 0$ and $B \in \MPplusZ$.

For each $l\prec n+1$ with $m_l\neq 0$, there exists an $s$ such that $l\preceq i_s$. 
Recall that $(B_n' V_{n+1}^+ )\{ i_s, n+1 \} > 0$, so if $l\prec i_s$, we may add rows of $(B_n'V_{n+1}^+ )\{ i_s, n+1 \}$ to rows of $( B_n'V_{n+1}^+)\{l , n+1 \}$, to get a matrix $P_{n+1}^l$ in \SLPZ[\mathbf{m}] that is the product of nonnegative basic elementary matrices and is the identity outside of the block $\{l,n+1\}$ such that $( P_{n+1}^l B_n' V_{n+1})\{ l, n+1 \} > 0$.  
Doing this for all $l \prec n+1$ with $m_l\neq 0$, we get a matrix $P_{n+1}$ in \SLPZ[\mathbf{m}] that is the product of nonnegative basic elementary matrices and is the identity outside of the blocks $\{l,n+1\}$ for $l \prec n+1$ such that $\ftn{  ( P_{n+1} , V_{n+1}^- )}{ B_n' V_{n+1}^+}{ P_{n+1} B_n' V_{n+1} }$ is a positive equivalence through \MPplusZ.  Composing the positive equivalences $( I, V_{n+1}^+ )$ and $( P_{n+1} , V_{n+1}^- )$ through \MPplusZ, we get that $\ftn{  ( P_{n+1} , V_{n+1})}{ B_n' }{ P_{n+1} B_n' V_{n+1} }$ is a positive equivalence through \MPplusZ. 

In all cases, we get a matrix $P_{n+1}$ in \SLPZ[\mathbf{m}] that is the product of nonnegative basic elementary matrices and is the identity outside of the blocks $\{l,n+1\}$ for $l \prec n+1$ such that $\ftn{  ( P_{n+1} , V_{n+1})}{ B_n' }{ P_{n+1} B_n' V_{n+1} }$ is a positive equivalence through \MPplusZ.  The claim now follows by induction.
\end{proof}

The next lemma allows us to reduce the general case to the case that the diagonal blocks $U\{ i \}$ and $V\{j\}$ are the identity matrices of the appropriate sizes when they are not the empty matrices.  This will allow us to use Lemma~\ref{lem:BoyleLemma4.6} to get the desired positive equivalence through \MPplusZ.

\begin{lemma}[{\cf\ \cite[Lemma~4.9]{MR1907894}}]\label{lem:BoyleLemma4.9}
Let $B\in \MPplusZ$ with $n_{l} \neq 0$ for all $l$. Fix $i$ with $m_{i}  > 1$.
\begin{enumerate}[(1)]
\item  \label{lem:BoyleLemma4.9-item1}
Suppose $E$ is a basic elementary matrix in \SLPZ[\mathbf{m}] such that $E \{ j, k \} = I \{ j, k \}$ when $(j,k) \neq (i, i)$ and 
\begin{align*}
\ftn{ ( E\{ i \} , I ) }{ B \{ i  \} }{ EB \{  i \}}
\end{align*}
is a positive equivalence through \Mplus[m_i\times n_i].  Then there exists $V \in \SLPZ$ that is the product of nonnegative basic elementary matrices in \SLPZ such that $V \{  k \} = I$ for all $k$ and 
\begin{align*}
\ftn{ ( E, V ) }{ B }{ E B V }
\end{align*}
is a positive equivalence through \MPplusZ.  

\item  \label{lem:BoyleLemma4.9-item2}
Suppose $E$ is a basic elementary matrix in \SLPZ such that $E \{ j, k \} = I \{ j, k \}$ when $(j,k) \neq (i, i)$ and 
\begin{align*}
\ftn{ (  I, E\{ i \} ) }{ B \{  i \} }{ BE \{  i \}}
\end{align*}
is a positive equivalence through \Mplus[m_i\times n_i].
Then there exists $U \in \SLPZ$ that is the product of nonnegative basic elementary matrices in \SLPZ such that $U \{ k \} = I$ for all $k$ and 
\begin{align*}
\ftn{ ( U, E ) }{ B }{ U B E }
\end{align*}
is a positive equivalence through \MPplusZ.
\end{enumerate}
\end{lemma}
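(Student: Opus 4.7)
The plan for Part~(1) is to construct an explicit $V \in \SLPZ$ with $V\{l\} = I$ for all $l$, such that right-multiplication by $V$ absorbs the (possibly negative) entries introduced in the blocks $\{i, k\}$ with $i \prec k$ when $B$ is left-multiplied by $E$. We then realize the equivalence $(E, V) \colon B \to EBV$ as a concatenation of basic positive equivalences through \MPplusZ of the form $B \to BV_1 \to BV_2 \to \cdots \to BV \to EBV$, where the $V_j$ are partial products of basic nonnegative elementary matrices in \SLPZ and the final step is left-multiplication by the basic elementary $E$.

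Write $E\{i\} = I_{m_i} + \epsilon e_{a,b}$ with $a \neq b$ in $\{1, \ldots, m_i\}$ and $\epsilon \in \{+1, -1\}$. For each $k$ with $i \prec k$, define $V\{i, k\}$ to be the $n_i \times n_k$ matrix all of whose entries equal a positive integer $N$ (to be chosen below); set all other off-diagonal blocks of $V$ to zero and each diagonal block $V\{l\}$ to $I_{n_l}$. Such a $V$ is realized by composing, in a suitable order, basic nonnegative elementary matrices of the form $I + e_{p, q}$ (repeated $N$ times per pair), where $p$ ranges over the rows of block $i$ and $q$ over the columns of block $k$ for each $k$ with $i \prec k$. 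Each such elementary adds one column of block $i$ of the current matrix to one column of block $k$; since $B \in \MPplusZ$ with $m_i > 1$ implies $B\{i\} > 0$, and $B\{j, i\} > 0$ for all $j \prec i$ with $m_j > 0$, every intermediate matrix $BV_j$ lies in \MPplusZ. Hence $B \to BV$ is a positive equivalence through \MPplusZ.

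It remains to verify $EBV \in \MPplusZ$ so that the final step $BV \to EBV$ is a basic positive equivalence through \MPplusZ. Since $E$ only affects row block $i$, we have $(EBV)\{j, k\} = (BV)\{j, k\}$ for all $j \neq i$, so these blocks lie in \MPplusZ by the previous step. The diagonal block $(EBV)\{i\} = E\{i\}B\{i\} = EB\{i\}$ is strictly positive by hypothesis. For $k$ with $i \prec k$,
\[
(EBV)\{i, k\} = E\{i\}B\{i, k\} + (EB\{i\})\,V\{i, k\},
\]
and the second summand has each column equal to $N$ times the vector of row sums of $EB\{i\}$, which is strictly positive because $EB\{i\} > 0$. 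Choosing $N$ sufficiently large thus makes $(EBV)\{i, k\} > 0$. For $k$ with $i \not\preceq k$, both $B\{i, k\}$ and $V\{i, k\}$ vanish, so $(EBV)\{i, k\} = 0$. Hence $EBV \in \MPplusZ$, finishing Part~(1).

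Part~(2) follows by the symmetric argument: for each $j \prec i$ with $m_j > 0$, take $U\{j, i\}$ to be the $m_j \times m_i$ matrix all of whose entries equal a large integer $N$, and leave all other off-diagonal blocks of $U$ zero and $U\{l\} = I_{m_l}$ for each $l$. Then $U$ is a product of basic nonnegative elementary matrices in $\SLPZ[\mathbf{m}]$, the intermediate matrix $UB$ lies in \MPplusZ by the same block-by-block reasoning (now adding rows of block $i$ into rows of blocks $j$ with $j \prec i$), and one computes $(UBE)\{j, i\} = B\{j, i\}E\{i\} + U\{j, i\}\,(BE\{i\})$, which is strictly positive for large $N$ because $BE\{i\} > 0$ by hypothesis. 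The main obstacle throughout is the careful block-by-block bookkeeping needed to check that each intermediate matrix in the factorization satisfies the positivity conditions defining \MPplusZ; but this reduces to inspection once the block structure of $\SLPZ$, $\SLPZ[\mathbf{m}]$, and the preorder constraints on off-diagonal blocks are unpacked.
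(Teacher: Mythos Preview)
Your proof is correct and follows essentially the same strategy as the paper's: pad the off-diagonal blocks $\{i,k\}$ (for $i \prec k$) using column additions from block $i$, so that after applying $E$ the positivity of $EB\{i\}$ dominates whatever negativity $E$ introduces. The only differences are cosmetic. The paper distinguishes the case where the off-diagonal entry of $E$ is $+1$ (then $V=I$ already works) from the case $-1$, and in the latter case selects a \emph{single} column $r$ of block $i$ with $B(s,r)>B(t,r)$ and adds just that column, $M$ times, into each column of the blocks $\{i,k\}$ with $i\prec k$; you instead add \emph{every} column of block $i$, $N$ times each, and appeal to positivity of the row sums of $EB\{i\}$. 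Both constructions yield a product of nonnegative basic elementary matrices in $\SLPZ$ with identity diagonal blocks, and both give $EBV\in\MPplusZ$ for large enough parameter. Your uniform treatment avoids the case split at the cost of a slightly larger $V$; the paper's choice is more economical but requires the extra observation that such a column $r$ exists. The same remarks apply, mutatis mutandis, to Part~(2).
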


\begin{proof}
We prove \ref{lem:BoyleLemma4.9-item1}, the proof of \ref{lem:BoyleLemma4.9-item2} is similar.  Let $E(s,t)$ be the nonzero offdiagonal entry of $E$.  If $E(s,t) =1$, then set $V = I$.  Suppose $E( s, t ) = -1$.  So, $E$ acts from the left to subtract row $t$ from row $s$.  Since $m_{i} > 1$ and 
\begin{align*}
\ftn{ ( E\{ i \} , I ) }{ B \{ i  \} }{ EB \{ i\}},
\end{align*}
is a positive equivalence through \Mplus[m_i\times n_i], we have that $(EB) \{ i \} > 0$.  Thus, there exists $r$ an index for a column through the $\{ i, i \}$ block such that $B( s, r ) > B( t, r )$.  Let $V$ be the matrix in \SLPZ that acts from the right to add column $r$ to column $q$, $M$ times, for every $q$ indexing a column through an $\{ i , j \}$ block for which $i \prec j$.  Choosing $M$ large enough, we have that $\ftn{ (E, I ) }{ B V }{ E B V }$ is a positive equivalence through \MPplusZ.  Therefore, $\ftn{ ( E, V ) }{ B }{ E B V  }$ is a positive equivalence through \MPplusZ since it is the composition of two positive equivalences through \MPplusZ: $\ftn{ ( I, V ) }{ B }{ B V }$ followed by $\ftn{ ( E , I ) }{ B V }{ E B V }$.
\end{proof}

We are now ready to prove the main result of this section.  This result will be used to show that if the adjacency matrices of $E$ and $F$ are \SLPE, then $E$ is move equivalent to $F$.  Consequently, $C^{*} (E)$ is Morita equivalent to $C^{*} (F)$.

\begin{theorem}[{\cf\ \cite[Theorem~4.4]{MR1907894}}]\label{thm:BoyleTheorem4.4}
Let $B, B' \in \MPplusZ$ with $n_{i} \neq 0$ for all $i$.  Suppose there exist $U \in \SLPZ[\mathbf{m}]$ and $V \in \SLPZ$ such that $U B V = B'$.  Then $\ftn{ ( U, V ) }{ B }{ B' }$ is a positive equivalence through \MPplusZ. 
\end{theorem}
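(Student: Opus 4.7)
The plan is to emulate Boyle's strategy for \cite[Theorem~4.4]{MR1907894}: first transform $B$ by a positive equivalence that realizes the diagonal action of $(U,V)$ on each block $\{i\}$, and then finish by handling the residual transformation --- which has identity diagonal blocks --- via Lemma~\ref{lem:BoyleLemma4.6}.

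First I would observe that conditions~(ii) and~(iii) in the definition of \MPplusZ force $n_i = 1$ whenever $m_i \leq 1$, so that $U\{i\}$ and $V\{i\}$, being \SL matrices of size at most one, are automatically the identity. Thus the only nontrivial diagonal action sits in the blocks $\{i\}$ with $m_i > 1$. Fix such an $i$. By condition~(iv), $B\{i\}, B'\{i\} \in \Mplus[m_i\times n_i]$ with $m_i, n_i \geq 3$, and the Smith normal form hypothesis provides, after routine sign adjustments, matrices $X \in \SLZ[m_i]$ and $Y \in \SLZ[n_i]$ with
\[
X B\{i\} Y = \begin{smallpmatrix} I_2 & 0 \\ 0 & F \end{smallpmatrix}.
\]
Since the restriction of $UBV=B'$ to the $\{i\}$-block gives $U\{i\}B\{i\}V\{i\}=B'\{i\}$, Theorem~\ref{thm:BoyleTheorem5.1} yields a positive equivalence $(U\{i\},V\{i\}) : B\{i\} \to B'\{i\}$ through \Mplus[m_i\times n_i].

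Each basic step of this equivalence on the $\{i\}$-block can then be lifted via Lemma~\ref{lem:BoyleLemma4.9} to a basic positive equivalence on the full matrix through \MPplusZ, at the cost of an auxiliary nonnegative matrix on the opposite side all of whose diagonal blocks are the identity. Iterating through all $i$ with $m_i > 1$ produces a positive equivalence $(U^\ast, V^\ast): B \to B''$ through \MPplusZ with $U^\ast\{i\} = U\{i\}$ and $V^\ast\{i\} = V\{i\}$ for every $i$; in particular $B''\{i\} = B'\{i\}$ for every $i$. Setting $\tilde U = U(U^\ast)^{-1} \in \SLPZ[\mathbf{m}]$ and $\tilde V = (V^\ast)^{-1}V \in \SLPZ$, both $\tilde U$ and $\tilde V$ have identity diagonal blocks and satisfy $\tilde U B'' \tilde V = B'$. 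Lemma~\ref{lem:BoyleLemma4.6} then supplies a positive equivalence $(\tilde U, \tilde V): B'' \to B'$ through \MPplusZ, and composing with the first stage gives the required positive equivalence $(U,V):B\to B'$ through \MPplusZ.

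The main obstacle is the bookkeeping in the lifting step: one must verify that every intermediate matrix belongs to \MPplusZ. In particular, one must check that the accumulated off-diagonal auxiliary factors preserve the block-strict positivity of condition~(i), and that processing the block $\{i\}$ leaves every other diagonal block $\{j\}$ untouched so the reduction can be carried out block by block. Both points follow directly from the identity-diagonal-block property of the auxiliary matrices produced by Lemma~\ref{lem:BoyleLemma4.9}, together with the fact that the operations on block $\{i\}$ keep it in \Mplus[m_i\times n_i] so that conditions~(ii)--(iv) are automatic; but they require some care to spell out rigorously.
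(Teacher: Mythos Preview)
Your proposal is correct and follows essentially the same approach as the paper's proof: both apply Theorem~\ref{thm:BoyleTheorem5.1} block by block on the diagonal, lift each basic step to the full matrix via Lemma~\ref{lem:BoyleLemma4.9} (whose auxiliary factors have identity diagonal blocks), and then finish with Lemma~\ref{lem:BoyleLemma4.6} applied to the residual equivalence, which by construction acts trivially on every diagonal block. Your closing paragraph correctly identifies where the bookkeeping lies, and those points are indeed handled by the identity-diagonal-block property of the auxiliary matrices from Lemma~\ref{lem:BoyleLemma4.9}.
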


\begin{proof}
By Theorem~\ref{thm:BoyleTheorem5.1}, for each $i$ with $m_{i} > 1$, we have that $\ftn{ ( U\{ i \} , V \{ i \} ) }{ B \{ i \} }{B' \{ i \} }$ is a positive equivalence through \Mplus[m_i\times n_i].  So, we may find a string of basic positive equivalences $( E_{1}, F_{1} ), \dots, ( E_{t} , F_{t} )$, with every $E_{l} \{ i, j \} = I\{ i,j \}$, $F_{l} \{ i, j \} = I \{i,j\}$ unless $i = j$ with $m_{i} > 1$, which accomplishes the decomposition inside the diagonal blocks as basic positive equivalences through \Mplus[m_i\times n_i], for $i\in\calP$ with $m_i>1$.  Note that in the case $m_{i} = 1$, then $U \{ i \} = V\{i\} = 1$.  By Lemma~\ref{lem:BoyleLemma4.9}, we may find $( U_{1} , V_{1} ), \dots, ( U_{t} , V_{t} )$ such that $U_{s} \in \SLPZ[\mathbf{m}]$, $V_{s} \in \SLPZ$, $U_{s} \{ k \} = I$, $V_{s} \{ k  \} = I$, for all $k\in\calP$ with $m_k\neq 0$, and such that we have the following positive equivalences 
\begin{align*}
\xymatrix{
B \ar[r]_-{+}^-{( U_{1} , F_{1} )} & \cdot \ar[r]_-{+}^-{( E_{1} , V_{1} )} & \cdots \ar[r]_-{+}^-{( U_{t} , F_{t} )} & \cdot \ar[r]_-{+}^-{( E_{t} , V_{t} )} & B''.
}
\end{align*}
through \MPplusZ. 
Let $X = E_{t} U_{t} \cdots E_{2} U_{2} E_{1} U_{1}$ and $Y = F_{1} V_{1} F_{2} V_{2} \cdots F_{t} V_{t}$.  Then for all $i$, we have that $X\{ i \} = U \{ i \}$ and $Y \{ i \} = V \{ i \}$.  Therefore, $( UX^{-1} ) \{ i \} = I$ and $( Y^{-1} V ) \{ i\} = I$ for all $i$.  Then by Lemma~\ref{lem:BoyleLemma4.6},
\begin{align*}
\xymatrix{
B'' \ar[rr]_-{+}^-{( U X^{-1} , Y^{-1} V ) } & & B'
}
\end{align*}
is a positive equivalence through \MPplusZ.  Thus, $\ftn{ ( U, V ) }{ B }{ B' }$ is a positive equivalence through \MPplusZ since it is the composition of two positive equivalences
\begin{equation*}
\xymatrix{
B \ar[r]_-{+}^-{ ( X , Y ) } & B'' \ar[rr]^-{( U X^{-1} , Y^{-1} V ) }_-{+} & & B'.
} 
\end{equation*}
through \MPplusZ.
\end{proof}

We will here give an important result that is a key element in proving the main lifting results. The result is also interesting in its own right, since it allows us to get a stable isomorphism for the corresponding graph algebras from a very concrete equation involving the adjacency matrices of two graphs. 

\begin{theorem}\label{thm:SLP-equivalence-implies-stable-isomorphism}
Let $E$ and $E'$ be graphs with finitely many vertices, and assume that $\Bsf_{E}\in\MPZccc[\mathbf{m}\times\mathbf{n}]$, $\Bsf_{E'}\in\MPZccc[\mathbf{m}'\times\mathbf{n}']$, $\mathbf{n}-\mathbf{m}=\mathbf{n}'-\mathbf{m}'$ and the component corresponding to $\Asf_{E}\{i\}$ is a noncyclic strongly connected component if and only if the component corresponding to $\Asf_{E'}\{i\}$ is a noncyclic strongly connected component (\ie, they have the same cyclic components and the same singular components). 
Let $\mathbf{r}=(r_i)_{i\in\calP}$ and $\mathbf{r}'=(r_i')_{i\in\calP}$ be multiindices such that $\mathbf{m}+\mathbf{r}=\mathbf{m}'+\mathbf{r}'$ and $r_i=0=r_i'$ whenever $i$ does not correspond to a noncyclic strongly connected component. 
Let there be given an \SLPEe $(U,V)$ from $-\iota_{\mathbf{r}}(-\Bsf_{E}^\bullet)$ to $-\iota_{\mathbf{r}'}(-\Bsf_{E'}^\bullet)$. 
Then $E\Meq E'$ and there exists a \calP-equivariant isomorphism $\Phi$ from $C^*(E)\otimes\K$ to $C^*(E')\otimes\K$ such that $\FKR(\calP;\Phi)=\FKRs(U,V)$. 
\end{theorem}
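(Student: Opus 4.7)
The strategy is to reduce both $E$ and $E'$ to canonical form, invoke the positive factorization theorem to decompose the resulting SLP-equivalence into basic positive steps, and realize each step graphically via Proposition~\ref{prop:toke}. I should point out at the outset that the main technical difficulty is the careful bookkeeping showing that canonical form is preserved throughout and that the hypotheses of Proposition~\ref{prop:toke} hold at every intermediate step.

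First, I apply Lemma~\ref{lem:we-can-put-it-in-canonical-form} (its last part, after if necessary first reducing $E$ and $E'$ to canonical form with a part that does not prescribe padding and then enlarging $\mathbf{r},\mathbf{r}'$ compatibly so that the prescribed padding can be achieved) to obtain graphs $F,F'$ in canonical form with $E\Meq F$, $E'\Meq F'$, $\Bsf_F\in\MPZccc[(\mathbf{m}+\mathbf{r})\times(\mathbf{n}+\mathbf{r})]$, $\Bsf_{F'}\in\MPZccc[(\mathbf{m}'+\mathbf{r}')\times(\mathbf{n}'+\mathbf{r}')]$. By the hypothesis $\mathbf{m}+\mathbf{r}=\mathbf{m}'+\mathbf{r}'$ together with $\mathbf{n}-\mathbf{m}=\mathbf{n}'-\mathbf{m}'$, both adjacency matrices have the same size and block structure, so $(\Bsf_F,\Bsf_{F'})$ is in standard form. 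The lemma also delivers $\calP$-equivariant isomorphisms $\Psi\colon C^*(E)\otimes\K\to C^*(F)\otimes\K$ and $\Psi'\colon C^*(E')\otimes\K\to C^*(F')\otimes\K$ and SLP-equivalences $(U_E,V_E)\colon -\iota_{\mathbf{r}}(-\Bsf_E^\bullet)\to\Bsf_F^\bullet$ and $(U_{E'},V_{E'})\colon -\iota_{\mathbf{r}'}(-\Bsf_{E'}^\bullet)\to\Bsf_{F'}^\bullet$ such that $\FKR(\calP;\Psi)=\FKRs(U_E,V_E)$ and $\FKR(\calP;\Psi')=\FKRs(U_{E'},V_{E'})$.

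Next, I form the composite SLP-equivalence $(U'',V''):=(U_{E'}UU_E^{-1},\,V_E^{-1}VV_{E'})$ from $\Bsf_F^\bullet$ to $\Bsf_{F'}^\bullet$. The canonical form conditions ensure that $\Bsf_F^\bullet,\Bsf_{F'}^\bullet\in\MPplusZ[(\mathbf{m}+\mathbf{r})\times(\mathbf{n}+\mathbf{r})]$ with $n_i\neq 0$ for all $i\in\calP$ (the positivity of off-diagonal blocks follows from Definition~\ref{def:CanonicalForm}(4); the conditions on diagonal blocks follow from items (3) and (5), and the fact that every regular vertex has a loop forces $B\{i\}=0$ when $m_i=1$ and $B\{i\}>0$ when $m_i>1$). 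Theorem~\ref{thm:BoyleTheorem4.4} therefore yields a factorization of $(U'',V'')$ as a positive equivalence $\Bsf_F^\bullet=B_0\to B_1\to\cdots\to B_k=\Bsf_{F'}^\bullet$ through $\MPplusZ$ via basic elementary row and column operations. Each intermediate $B_j$ is of the form $\Bsf_{F_j}^\bullet$ for a graph $F_j$ in canonical form, since the basic positive equivalences only modify $B^\bullet$ and the singular-vertex structure (rows removed to form $\Bsf^\bullet$) is untouched. For each basic step, the positivity inherited from $\MPplusZ$—every regular vertex having a loop (at least two when $m_i>1$), and strict positivity of blocks $B\{i,j\}$ for $i\prec j$—implies the hypothesis of Proposition~\ref{prop:toke} for the corresponding row or column addition. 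That proposition produces a $\calP$-equivariant isomorphism $\Phi_j\colon C^*(F_j)\otimes\K\to C^*(F_{j+1})\otimes\K$ with $\FKR(\calP;\Phi_j)$ equal to $\FKRs$ of the $j$-th basic equivalence, and in particular $F_j\Meq F_{j+1}$.

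Setting $\Phi_{FF'}:=\Phi_{k-1}\circ\cdots\circ\Phi_0$, functoriality gives $\FKR(\calP;\Phi_{FF'})=\FKRs(U'',V'')$. Define $\Phi:=(\Psi')^{-1}\circ\Phi_{FF'}\circ\Psi$. Using the elementary identities $U_{E'}^{-1}U''U_E=U$ and $V_EV''V_{E'}^{-1}=V$, together with functoriality of $\FKRs$ under composition of SLP-equivalences, one obtains $\FKR(\calP;\Phi)=\FKRs(U_{E'},V_{E'})^{-1}\circ\FKRs(U'',V'')\circ\FKRs(U_E,V_E)=\FKRs(U,V)$. Finally, concatenating $E\Meq F=F_0\Meq F_1\Meq\cdots\Meq F_k=F'\Meq E'$ gives $E\Meq E'$. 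The delicate point, as noted above, is the verification that $\MPplusZ$ is preserved along the entire factorization so that Proposition~\ref{prop:toke} applies uniformly—this is where the compatibility between canonical form (a graph-level notion) and $\MPplusZ$ (a matrix-level notion) is essential.
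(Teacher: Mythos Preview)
Your proof is correct and follows essentially the same route as the paper: reduce to canonical form via Lemma~\ref{lem:we-can-put-it-in-canonical-form} (after enlarging $\mathbf{r},\mathbf{r}'$ so that $r_i\geq 3$ on noncyclic strongly connected blocks, which is how the paper phrases the compatibility step), compose to obtain an \SLPEe between the canonical-form bullet matrices, factor it positively through $\MPplusZ$ via Theorem~\ref{thm:BoyleTheorem4.4}, and realize each basic step by Proposition~\ref{prop:toke}. The paper's proof is terser than yours and leaves implicit the bookkeeping you flag as delicate, but the underlying logic is identical.
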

\begin{proof}
We may assume that $r_i\geq 3$ for every $i\in\calP$ corresponding to a noncyclic strongly connected component. 
If follows from Lemma~\ref{lem:we-can-put-it-in-canonical-form} and its proof, that there exist graphs $E_1$ and $E_1'$ in canonical form with $E\Meq E_1$, $E'\Meq E_1'$, $\Bsf_{E_1},\Bsf_{E_1'}\in \MPZccc[(\mathbf{m}+\mathbf{r})\times(\mathbf{n}+\mathbf{r})]$ for the multiindex $\mathbf{r}$, and a \calP-equivariant isomorphisms $\Phi_1$ from $C^*(E)\otimes\K$ to $C^*(E_1)\otimes\K$ and $\Phi_2$ from $C^*(E')\otimes\K$ to $C^*(E_1')\otimes\K$ such that $\FKR( \Phi_1)$ and $\FKR( \Phi_2)$ are induced by \SLPEe{s} from $-\iota_\mathbf{r}(-\Bsf_E^\bullet)$ to $\Bsf_{E_1}^\bullet$ and from $-\iota_{\mathbf{r}'}(-\Bsf_{E'}^\bullet)$ to $\Bsf_{E_1'}^\bullet$ respectively, where $\mathbf{r}'=\mathbf{m}+\mathbf{r}- \mathbf{m}'$.
Composing the \SLPEe{s} above, we get an \SLPEe from $\Bsf_{E_1}^\bullet$ to  $\Bsf_{E_1'}^\bullet$, so from Theorem~\ref{thm:BoyleTheorem4.4} we have that this \SLPEe is a positive equivalence from $\Bsf_{E_1}^\bullet$ to  $\Bsf_{E_1'}^\bullet$ through \MPplusZ.
According to Proposition~\ref{prop:toke}, there exists a series of stable \calP-equivariant isomorphisms corresponding to the series of basic positive equivalences through \MPplusZ and inducing exactly the same on reduced filtered $K$-theory. 
\end{proof}

\section{Cuntz splice and \texorpdfstring{\FKR}{reduced filtered K-theory}}
\label{sec:cuntzsplice}

We know that the moves \OO, \II, \RR, \SSS imply stable isomorphism, \cf\ Theorem~\cite[Theorem~2.7]{MR3759003} and the references there. 
It was also recently shown in \cite[Theorem~4.8]{MR3713535} that the move \CC implies stable isomorphism. 
But it is very important for our proof to know what the isomorphism from $C^*(E)\otimes\K$ to $C^*(E_{u,-})\otimes\K$ induces on $K$-theory. 
This was done in the case of the Cuntz-Krieger algebras in \cite{MR3624419}. In fact, the induced map on $K$-theory was described in general for the isomorphism from  $C^*(E_{u,-})\otimes\K$ to $C^*(E_{u,--})\otimes\K$, but the description between 
$C^*(E)\otimes\K$ and $C^*(E_{u,--})\otimes\K$ was only proved for Cuntz-Krieger algebras. Thus we generalize that part.   
This is an important result needed in Section~\ref{sec:crelle-trick} and as part of the proof of the other main theorems.

In \cite{MR3713535} it was shown that $E\Meq E_{u,--}$ and thus $C^*(E)\otimes\K\cong C^*(E_{u,--})\otimes\K$. We show how this isomorphism acts on $K$-theory. 

\begin{proposition} \label{prop:cuntzsplicetwice}
Let $E$ be a graph with finitely many vertices, and let $u$ be a regular vertex in $E^0$ that supports at least two distinct return paths.  Assume that $\Bsf_E \in \MPZccc$ and $u$ belongs to the block $j\in\calP$.  
Then $\Bsf_{E_{u,--}}$ is an element of $\MPZccc[(\mathbf{m}+4\mathbf{e}_j)\times(\mathbf{n}+4\mathbf{e}_j)]$, there is an \SLPEe $(U,V)$ from $-\iota_{4\mathbf{e}_j}(-\Bsf_E^\bullet)$ to  $\Bsf_{E_{u,--}}^\bullet$, and there exists a \calP-equivariant isomorphism $\Phi$ from $C^*(E)\otimes\K$ to $C^*(E_{u,--} )\otimes\K$ such that $\FKR(\calP;\Phi)=\FKRs(U,V)$. 
\end{proposition}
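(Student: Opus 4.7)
The plan is to build $\Phi$ by composing two Cuntz splices (first at $u$, then at the auxiliary vertex $u_1$ of $E_{u,-}$) and to track the induced map on reduced filtered $K$-theory carefully enough that it agrees with the $\SLP$-equivalence arising from the same matrix manipulations. The proof naturally splits into three parts.

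First, I would verify the block structure claim. Since the new vertices $u_1, u_2$ added by the first Cuntz splice satisfy $u \to u_1 \to u$ and $u_1 \to u_2 \to u_1$, the set $\{u, u_1, u_2\}$ lies in one strongly connected component, and hence in block $j$; moreover all three are regular. The second Cuntz splice is then applied at $u_1$, which by construction supports two distinct return paths in $E_{u,-}$, and it adds two further regular vertices to the same strongly connected component. This accounts for exactly $4\mathbf{e}_j$ new rows and columns, leaving every other block unchanged, so $\Bsf_{E_{u,--}} \in \MPZccc[(\mathbf{m}+4\mathbf{e}_j)\times(\mathbf{n}+4\mathbf{e}_j)]$.

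Second, I would build the $\SLP$-equivalence. The excerpt notes that for the \emph{second} splice, $E_{u,-} \to E_{u,--}$, an explicit $\SLP$-equivalence $(U_2, V_2)$ from $-\iota_{2\mathbf{e}_j}(-\Bsf_{E_{u,-}}^\bullet)$ to $\Bsf_{E_{u,--}}^\bullet$ already exists in \cite{arXiv:1605.06153v1}, realized by a $\calP$-equivariant isomorphism $\Phi_2$. For the first splice $E \to E_{u,-}$ I would write down the analogous block operations $(U_1, V_1)$, which in general produce only a $\GLP$-equivalence from $-\iota_{2\mathbf{e}_j}(-\Bsf_E^\bullet)$ to $\Bsf_{E_{u,-}}^\bullet$ (the determinant of the $j$-th block may equal $-1$). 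The key observation is that doing the splice \emph{twice} makes the signs cancel: the composition $(U,V) = \bigl(U_2\,\iota_{2\mathbf{e}_j}(U_1),\; \iota_{2\mathbf{e}_j}(V_1)\,V_2\bigr)$ has determinant $1$ on every diagonal block, hence lies in $\SLP$. A direct calculation (essentially the one already carried out for Cuntz–Krieger matrices in \cite{arXiv:1605.06153v1}) shows $UBV = \Bsf_{E_{u,--}}^\bullet$ with $B = -\iota_{4\mathbf{e}_j}(-\Bsf_E^\bullet)$.

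Third, and most delicately, I would construct $\Phi$ with $\FKR(\calP;\Phi) = \FKRs(U,V)$. The existence of some $\Phi$ follows from Theorem~\ref{thm:cuntz-splice-implies-stable-isomorphism} applied twice; the issue is identifying its effect on reduced filtered $K$-theory. Here I would use the localization strategy of \cite{arXiv:1602.03709v2,MR866492}: both splices affect only the subquotient $C^*(E)$-ideal pair attached to the strongly connected component containing $u$, and this component's simple subquotient is purely infinite (since $u$ supports two distinct return paths), so a Cuntz–Krieger model applies there. By the Cuntz–Krieger version of the statement from \cite{arXiv:1605.06153v1}, the induced automorphism on that simple subquotient's $K$-theory is exactly what $\FKRs(U,V)\{j\}$ predicts. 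Combining this with the already-known description of $\FKR(\calP;\Phi_2)$ and with naturality of the six-term sequences along the gauge-invariant ideal chain, I can propagate the identification from the simple subquotient upward to conclude $\FKR(\calP;\Phi) = \FKRs(U,V)$.

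The main obstacle will be Part 3: turning the localization idea into a precise identification of $\FKR(\calP;\Phi_1)$ for the \emph{first} splice in the general (not Cuntz–Krieger) setting. The difficulty is that a single Cuntz splice may act nontrivially on filtered $K$-theory outside of the block $j$ through boundary maps, and one has to show that these side effects cancel once the splice is composed with its repeat; this is precisely where the results of \cite{arXiv:1602.03709v2} on Cuntz-splice invariance, together with the general description of $\Phi_2$ from \cite{arXiv:1605.06153v1}, must be invoked simultaneously.
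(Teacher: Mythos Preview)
Your approach is much more complicated than the paper's and, as you yourself anticipate, runs into a real obstacle in Part~3.  The paper proceeds entirely differently: it writes down explicit matrices $U\in\SLPZ[\mathbf{m}+4\mathbf{e}_j]$ and $V\in\SLPZ[\mathbf{n}+4\mathbf{e}_j]$ (identity outside the $j$th block, with a specific $4\times 4$ corner), checks by hand that $(U,V)$ takes $-\iota_{4\mathbf{e}_j}(-\Bsf_E^\bullet)$ to $\Bsf_{E_{u,--}}^\bullet$, and then simply invokes Theorem~\ref{thm:SLP-equivalence-implies-stable-isomorphism}.  That theorem says that \emph{any} \SLPEe between matrices of this shape is realized by some \calP-equivariant stable isomorphism $\Phi$ with $\FKR(\calP;\Phi)=\FKRs(U,V)$; it is proved via Boyle's positive factorization (Theorem~\ref{thm:BoyleTheorem4.4}) together with Proposition~\ref{prop:toke}.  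No Cuntz splice isomorphism is ever used in the argument.

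Your plan, by contrast, tries to build $\Phi$ as a composition of two Cuntz-splice isomorphisms and then identify its effect on filtered $K$-theory.  The trouble is that the only available description of what a \emph{single} Cuntz splice does to $\FKR$ is Corollary~\ref{cor:cuntzspliceinvariant}, and that corollary is derived \emph{from} the present proposition together with Theorem~\ref{thm:cuntz-splice-1}.  So your Part~3 is circular: the identification of $\FKR(\calP;\Phi_1)$ for the first splice $E\to E_{u,-}$ is precisely what this proposition is used to establish later.  (Also, a minor slip: Theorem~\ref{thm:cuntz-splice-1} for $E_{u,-}\to E_{u,--}$ gives a \GLPEe with $\det V\{j\}=-1$, not an \SLPEe as you state.)  The localization sketch you offer does not fill this gap, because the relevant result from \cite{arXiv:1605.06153v1} covers only the passage $E_{u,-}\to E_{u,--}$, not $E\to E_{u,-}$.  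The paper sidesteps all of this by appealing to Theorem~\ref{thm:SLP-equivalence-implies-stable-isomorphism}, which you should use directly.
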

\begin{proof}
The $j$'th diagonal block of $\Bsf_{E_{u,--}}^\bullet$ can be described as the matrix 
$$\begin{pmatrix}
\Bsf_{E}^\bullet\{j\} & \begin{smallpmatrix} 0&0&0&0 \\ \vdots & \vdots & \vdots & \vdots \\ 1 & 0 & 0 & 0 \end{smallpmatrix} \\ 
\begin{smallpmatrix}0 & \cdots & 1 \\ 0 & \cdots & 0 \\ 0 & \cdots & 0 \\ 0 & \cdots & 0  \end{smallpmatrix} & \begin{smallpmatrix}0 & 1 & 1 & 0 \\ 1 & 0 & 0 & 0 \\ 1 & 0 & 0 & 1 \\ 0 & 0 & 1 & 0\end{smallpmatrix}
\end{pmatrix}$$
Let $U\in\SLPZ[\mathbf{m}+4\mathbf{e}_j]$ and $V\in\SLPZ[\mathbf{n}+4\mathbf{e}_j]$ be the identity matrix everywhere except for the $j$'th diagonal block where they are 
\begin{align*}
\begin{pmatrix}
I& \begin{smallpmatrix} 0&0&0&0 \\ \vdots & \vdots & \vdots & \vdots \\ 0 & 1 & 0 & 0 \end{smallpmatrix} \\ 
\begin{smallpmatrix}0 & \cdots & 0 \\ 0 & \cdots & 0 \\ 0 & \cdots & 0 \\ 0 & \cdots & 0  \end{smallpmatrix} & \begin{smallpmatrix}1 & 0 & 0 & 1 \\ 0 & 1 & 0 & 0 \\ 0 & 0 & 1 & 0 \\ 0 & 0 & 0 & 1\end{smallpmatrix}
\end{pmatrix} \\
\begin{pmatrix}
I& \begin{smallpmatrix} 0&0&0&0 \\ \vdots & \vdots & \vdots & \vdots \\ 0 & 0 & 0 & 0 \end{smallpmatrix} \\ 
\begin{smallpmatrix}0 & \cdots & -1 \\ 0 & \cdots & 0 \\ 0 & \cdots & 0 \\ 0 & \cdots & 0  \end{smallpmatrix} & \begin{smallpmatrix}0 & -1 & 0 & 0 \\ -1 & 0 & 0 & 0 \\ -1 & 0 & 0 & -1 \\ 0 & 0 & -1 & 0\end{smallpmatrix}
\end{pmatrix} ,
\end{align*}
respectively. It is easy to verify that these blocks have indeed determinant $1$ and that $(U,V)$ is an \SLPEe from $-\iota_{4\mathbf{e}_j}(-\Bsf_E^\bullet)$ to $\Bsf_{E_{u,--}}^\bullet$.  The existence of $\Phi$ follows from Theorem~\ref{thm:SLP-equivalence-implies-stable-isomorphism}.
\end{proof}

The existence of the isomorphism in the following theorem is shown in \cite{MR3713535}, while $K$-theory computation is carried out in \cite{MR3624419}.

\begin{theorem}\label{thm:cuntz-splice-1}
Let $E$ be a graph with finitely many vertices, and let $u$ be a regular vertex in $E^0$.  Assume that $\Bsf_E \in \MPZccc$ and that $u$ belongs to the block $j\in\calP$.  Let $\mathbf{r}=2\mathbf{e}_j$ and $\mathbf{r}'=4\mathbf{e}_j$.  Then there exist $U\in\SLPZ[\mathbf{m}+\mathbf{r}']$ and $V\in\GLPZ[\mathbf{n}+\mathbf{r}']$ that are the identity matrix everywhere except for the $j$'th diagonal block, all the diagonal blocks of $U$ and $V$ have determinant $1$ except for the $j$'th diagonal block of $V$, which has determinant $-1$, and $(U,V)$ is a \GLPEe from $-\iota_{\mathbf{r}}(-\Bsf_{E_{u,-}}^\bullet)$ to  $\Bsf_{E_{u,--}}^\bullet$. 
Moreover, there exists a $\calP$-equivariant isomorphism $\Phi$ from $C^{*}(E_{u,-})$ to $C^{*}(E_{u,--})$ such that $\FKR(\calP;\Phi)=\FKR(U,V)$. 
\end{theorem}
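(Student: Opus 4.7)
The plan is a direct block-matrix construction followed by an invocation of the refined Cuntz-splice tracking from \cite{arXiv:1605.06153v1}. Since $U$ and $V$ must be the identity outside the $j$th block, the essential work concerns the $j$th diagonal block, where $(-\iota_{2\mathbf{e}_j}(-\Bsf_{E_{u,-}}^\bullet))\{j\}$ equals the block-diagonal matrix $\operatorname{diag}(\Bsf_{E_{u,-}}^\bullet\{j\},-I_2)$, while $\Bsf_{E_{u,--}}^\bullet\{j\}$, as exhibited in the proof of Proposition~\ref{prop:cuntzsplicetwice}, contains $\Bsf_{E_{u,-}}^\bullet\{j\}$ in its upper-left $(m_j+2)\times(n_j+2)$ corner, together with two further rows and columns for the vertices $v_1,v_2$ added by the second Cuntz splice at $u_1$; the new $2\times 2$ diagonal sub-block on $(v_1,v_2)$ is the antidiagonal $\begin{smallpmatrix} 0 & 1 \\ 1 & 0 \end{smallpmatrix}$, and the only nonzero cross entries encode the edges $u_1\to v_1$ and $v_1\to u_1$.

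With these descriptions in hand, I take
\[
U\{j\}=\begin{pmatrix} I_{m_j+2} & U_{12} \\ 0 & I_2 \end{pmatrix},\qquad V\{j\}=\begin{pmatrix} I_{n_j+2} & 0 \\ V_{21} & V_{22} \end{pmatrix},
\]
where $V_{22}=\begin{smallpmatrix} 0 & -1 \\ -1 & 0 \end{smallpmatrix}$, $V_{21}$ has a single nonzero entry $-1$ in its first row and $u_1$ column, and $U_{12}$ has a single nonzero entry $1$ in its $u_1$ row and second column. A direct block multiplication, using that the cross term $U_{12}V_{21}$ vanishes because the unique nonzero column of $U_{12}$ is disjoint from the unique nonzero row of $V_{21}$, then yields $U\{j\}\bigl(-\iota_{2\mathbf{e}_j}(-\Bsf_{E_{u,-}}^\bullet)\bigr)\{j\}V\{j\}=\Bsf_{E_{u,--}}^\bullet\{j\}$. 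Since $U\{j\}$ is block upper triangular with identity diagonal blocks, $\det U\{j\}=1$; and $\det V\{j\}=\det V_{22}=-1$. Outside block $j$ both matrices are the identity, so $U\in\SLPZ[\mathbf{m}+4\mathbf{e}_j]$ and $V\in\GLPZ[\mathbf{n}+4\mathbf{e}_j]$ exhibit the claimed determinant pattern, giving a \GLPEe of precisely the stated form.

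For the \calP-equivariant isomorphism $\Phi$ inducing $\FKR(U,V)$, I would invoke Theorem~\ref{thm:cuntz-splice-implies-stable-isomorphism} applied to $E_{u,-}$ at the vertex $u_1$ (which is regular in $E_{u,-}$ and supports exactly two distinct return paths: the loop at $u_1$ and the cycle $u_1\to u_2\to u_1$), combined with the $K$-theoretic description of that single Cuntz splice from \cite{arXiv:1605.06153v1} (referenced here as Corollary~\ref{cor:cuntzspliceinvariant}). Since the splice at $u_1$ is entirely supported in block $j$ and does not alter the lattice of gauge-invariant ideals, \calP-equivariance is automatic. The main obstacle is this last step: because $\det V\{j\}=-1$, Theorem~\ref{thm:SLP-equivalence-implies-stable-isomorphism} cannot be invoked as in the proof of Proposition~\ref{prop:cuntzsplicetwice}, so one must carefully match the explicit $K$-theoretic formulas from \cite{arXiv:1605.06153v1} against our constructed $(U,V)$, absorbing any residual sign ambiguity on the block $j$ simple subquotient by composing $\Phi$ with a \calP-equivariant automorphism of $C^*(E_{u,--})\otimes\K$ supported on the $v_1,v_2$ subalgebra.
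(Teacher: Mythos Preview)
Your explicit matrix construction for $(U,V)$ is correct, and a direct computation of the kind you outline is exactly what is needed for the first assertion. Note, however, that the paper does not prove this theorem in its own body: the sentence immediately preceding the statement records that the isomorphism is constructed in \cite{arXiv:1602.03709v2} and the $K$-theory computation is carried out in \cite{arXiv:1605.06153v1}. So there is no in-paper proof to compare against beyond those citations.

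The second half of your proposal has two genuine gaps. First, your appeal to Corollary~\ref{cor:cuntzspliceinvariant} is circular: in this paper that corollary is \emph{derived from} Theorem~\ref{thm:cuntz-splice-1} together with Proposition~\ref{prop:cuntzsplicetwice}, so it cannot be used as input. The relevant external result in \cite{arXiv:1605.06153v1} is precisely Theorem~\ref{thm:cuntz-splice-1} itself, not the corollary. Second, Theorem~\ref{thm:cuntz-splice-implies-stable-isomorphism} yields only a \emph{stable} isomorphism $C^*(E_{u,-})\otimes\K\cong C^*(E_{u,--})\otimes\K$, whereas the present statement asserts a genuine \stariso $C^*(E_{u,-})\cong C^*(E_{u,--})$ (no $\K$). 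That unstabilized isomorphism is a substantial construction in \cite{arXiv:1602.03709v2}, obtained by an explicit $\mathcal{O}_2$-embedding argument of the same flavour as the proof of Theorem~\ref{thm:hash-1} here; it is not a consequence of the stable statement. Your proposed workaround of ``absorbing residual sign ambiguity by a \calP-equivariant automorphism supported on the $v_1,v_2$ subalgebra'' would itself require nontrivial justification and is not how \cite{arXiv:1605.06153v1} proceeds.
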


\begin{corollary}\label{cor:cuntzspliceinvariant}
Let $E$ be a graph with finitely many vertices, let $u$ be a regular vertex in $E^0$ that supports at least two distinct return paths, and assume that $\Bsf_E\in\MPZccc$. 
Assume, moreover, that $u$ belongs to the block $j\in\calP$. 
Let $\mathbf{r}=2\mathbf{e}_j$.  Then $\Bsf_{E_{u,-}}\in\MPZccc[(\mathbf{m}+\mathbf{r})\times(\mathbf{n}+\mathbf{r})]$, and there exist $U\in\SLPZ[\mathbf{m}+\mathbf{r}]$ and $V\in\GLPZ[\mathbf{n}+\mathbf{r}]$ such that the following holds. The matrices $U$ and $V$ are equal to the identity matrices everywhere except for the $j$'th diagonal block. All the diagonal blocks of $U$ and $V$ have determinant $1$ except for the $j$'th diagonal block of $V$, which has determinant $-1$ and $(U,V)$ is a \GLPEe from $-\iota_{\mathbf{r}}(-\Bsf_{E}^\bullet)$ to  $\Bsf_{E_{u,-}}^\bullet$. 
Moreover, there exists a $\calP$-equivariant isomorphism $\Phi$ from $C^{*}(E) \otimes \K$ to $C^{*}(E_{u,-}) \otimes \K$ such that $\FKR(\calP;\Phi)=\FKRs(U,V)$. 
\end{corollary}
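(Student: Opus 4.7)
The plan is to derive this corollary by composing Proposition~\ref{prop:cuntzsplicetwice} and Theorem~\ref{thm:cuntz-splice-1}, while exhibiting $(U,V)$ by an explicit matrix construction. That $\Bsf_{E_{u,-}} \in \MPZccc[(\mathbf{m}+\mathbf{r})\times(\mathbf{n}+\mathbf{r})]$ is immediate: the two Cuntz-splice vertices $u_1, u_2$ are both regular and belong to the same non-cyclic strongly connected component as $u$, and $E_{u,-}$ retains the other structural conditions of Definition~\ref{def:circ}.

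For the $\calP$-equivariant isomorphism, I would apply Proposition~\ref{prop:cuntzsplicetwice} to obtain an \SLPEe $(U_1,V_1)\colon -\iota_{4\mathbf{e}_j}(-\Bsf_E^\bullet) \to \Bsf_{E_{u,--}}^\bullet$ together with a $\calP$-equivariant isomorphism $\Phi_1\colon C^*(E)\otimes\K \to C^*(E_{u,--})\otimes\K$ satisfying $\FKR(\calP;\Phi_1)=\FKRs(U_1,V_1)$, and apply Theorem~\ref{thm:cuntz-splice-1} to obtain a \GLPEe $(U_2,V_2)\colon -\iota_{2\mathbf{e}_j}(-\Bsf_{E_{u,-}}^\bullet) \to \Bsf_{E_{u,--}}^\bullet$ of the stated form together with a $\calP$-equivariant isomorphism $\Phi_2\colon C^*(E_{u,-}) \to C^*(E_{u,--})$ satisfying $\FKR(\calP;\Phi_2)=\FKR(U_2,V_2)$. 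Define $\Phi := (\Phi_2 \otimes \id_\K)^{-1} \circ \Phi_1$. Functoriality of $\FKR$ and the compatibility of $\FKRs$ with the $\kappa$-embeddings gives
$$\FKR(\calP; \Phi) \;=\; \FKRs(U_2^{-1}U_1,\, V_1 V_2^{-1}),$$
viewed as a $K$-web isomorphism from $K(\Bsf_E^\bullet)$ to $K(\Bsf_{E_{u,-}}^\bullet)$ via the canonical identifications $K(-\iota_{r\mathbf{e}_j}(-B)) \cong K(B)$.

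Next I construct $(U,V)$ explicitly. After a harmless relabelling placing $u_1, u_2$ at the last two row and column positions of block $j$ in $E_{u,-}$, let $a$ and $c$ denote the row and column indices of $u$ within $\Bsf_E^\bullet\{j\}$. Take $U$ and $V$ to be identity matrices off the $(j,j)$-block, set $U\{j\} := I + \mathbf{e}_a\mathbf{e}_{m_j+2}^T$, and let $V\{j\}$ be the identity except that its columns $c$, $n_j+1$, $n_j+2$ are replaced by $\mathbf{e}_c - \mathbf{e}_{n_j+1}$, $-\mathbf{e}_{n_j+2}$, and $-\mathbf{e}_{n_j+1}$ respectively. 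Direct inspection gives $\det U\{j\} = 1$ and $\det V\{j\} = -1$. A column-by-column calculation using the block form of $\Bsf_{E_{u,-}}^\bullet\{j\}$ dictated by Definition~\ref{def:cuntzsplice} verifies $U \cdot [-\iota_{\mathbf{r}}(-\Bsf_E^\bullet)] \cdot V = \Bsf_{E_{u,-}}^\bullet$ in the $(j,j)$-block; the off-diagonal blocks are preserved since $U\{j\}$ acts trivially on the zero $(m_j+2)$-th row that $-\iota_{\mathbf{r}}$ appends to any block $(j,k)$, and $V\{j\}$ acts trivially on the two corresponding zero columns in any block $(i,j)$.

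It remains to verify that $\FKR(\calP;\Phi) = \FKRs(U,V)$. Since every matrix involved is the identity off the $(j,j)$-block, both sides induce the identity on the $K$-web components indexed by $i \neq j$, reducing the task to comparing the induced isomorphisms on $\cok\Bsf_E^\bullet\{j\}$ and $\ker\Bsf_E^\bullet\{j\}$. This is a finite calculation using the explicit form of $(U_1,V_1)$ from the proof of Proposition~\ref{prop:cuntzsplicetwice}, the characterizing determinant and block-support data of $(U_2,V_2)$ from Theorem~\ref{thm:cuntz-splice-1}, and the explicit $(U,V)$ constructed above. This final $K$-theoretic bookkeeping is the main technical hurdle.
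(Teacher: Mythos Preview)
Your overall architecture matches the paper's: compose Proposition~\ref{prop:cuntzsplicetwice} and Theorem~\ref{thm:cuntz-splice-1} to get $\Phi=(\Phi_2\otimes\id_\K)^{-1}\circ\Phi_1$, and identify a \GLPEe $(U,V)$ of the required shape realizing $\FKR(\calP;\Phi)$. The difference is in how $(U,V)$ is obtained, and your route has a genuine gap.

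The paper does not construct $(U,V)$ independently. It takes the explicit $(U_1,V_1)$ from the proof of Proposition~\ref{prop:cuntzsplicetwice} and the explicit $(U_2,V_2)$ from the proof of Theorem~\ref{thm:cuntz-splice-1} (carried out in \cite{arXiv:1605.06153v1}), computes $U_2^{-1}U_1$ and $V_1V_2^{-1}$, and simply observes that these matrices have the form $\iota_{\mathbf r}(U)$ and $\iota_{\mathbf r}(V)$ for suitable $U\in\SLPZ[\mathbf m+\mathbf r]$, $V\in\GLPZ[\mathbf n+\mathbf r]$. That \emph{defines} $(U,V)$, and the equality $\FKRs(U,V)=\FKR(\calP;\Phi)$ is then automatic from the functoriality of $\FKR$ and the canonical identification $K(-\iota_{\mathbf r}(-B))\cong K(B)$. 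No separate $K$-theoretic comparison is needed.

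Your approach builds a specific $(U,V)$ by hand and then must verify $\FKRs(U,V)=\FKRs(U_2^{-1}U_1,V_1V_2^{-1})$. You claim this can be done using ``the characterizing determinant and block-support data of $(U_2,V_2)$'' from Theorem~\ref{thm:cuntz-splice-1}. This is where the argument fails: the determinant and block-support constraints in the statement of Theorem~\ref{thm:cuntz-splice-1} do \emph{not} determine the induced maps on $\cok\Bsf_{E_{u,-}}^\bullet\{j\}$ and $\ker\Bsf_{E_{u,-}}^\bullet\{j\}$. Two \GLPEe{s} with the same determinants and the same block support can induce different $K$-web isomorphisms, so there is no reason your explicit $(U,V)$ should match an arbitrary $(U_2,V_2)$ satisfying those constraints. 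To carry out your comparison you would need the actual matrices $(U_2,V_2)$ from \cite{arXiv:1605.06153v1}, at which point it is both shorter and safer to compute $U_2^{-1}U_1$, $V_1V_2^{-1}$ directly and read off $(U,V)$ from the $\iota_{\mathbf r}$-form, exactly as the paper does.
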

\begin{proof}
It follows from Theorem~\ref{thm:cuntz-splice-1} and Proposition~\ref{prop:cuntzsplicetwice} that the statement holds if we say that the \calP-equivariant isomorphism $\Phi$ is induced by such a \GLPEe from $-\iota_{\mathbf{r}}(\iota_{\mathbf{r}}(-\Bsf_{E}^\bullet))$ to  $-\iota_{\mathbf{r}}(-\Bsf_{E_{u,-}}^\bullet)$. 
Let $(U_1,V_1)$ be the \SLPEe from the proof of Proposition~\ref{prop:cuntzsplicetwice} and let $(U_2,V_2)$ be the \GLPEe from the proof of Theorem~\ref{thm:cuntz-splice-1}. 
By computing $U_2^{-1}U_1$ and $V_1V_2^{-1}$ we see that these are in fact of the form $\iota_\mathbf{r}(U)$ and $\iota_\mathbf{r}(V)$, respectively, for matrices $U\in\SLPZ[\mathbf{m}+\mathbf{r}]$ and $V\in\GLPZ[\mathbf{n}+\mathbf{r}]$ of the desired form. 
\end{proof}

\section{\texorpdfstring{$\GLP$}{GLP}-equivalence to \texorpdfstring{$\SLP$}{SLP}-equivalence}
\label{sec:crelle-trick}

In this section we are concerned with \hyperlink{mainthm-step-GLtoSL}{Step 4} in our proof outline in Section~\ref{sec:main} of the proof of \ref{thm:main-1-item-3} implies \ref{thm:main-1-item-1} in Theorem \ref{thm:main-1}. 

The next proposition will be our first step in going from a \GLPEe to an \SLPEe (provided that only diagonal blocks corresponding to noncyclic strongly connected components can be non-\SL). 
The idea is to alter our graphs in such a way that their $\Bsf^{\bullet}$ matrices are \GLPE, say by $(U,V)$, but where all the diagonal blocks of $U$ have determinant one --- provided that the \GLPEe has the right determinant on the diagonal blocks corresponding to the cyclic components and on the diagonal blocks corresponding to the not strongly connected components. Consequently, all sign problems are moved to $V$. 

\begin{proposition}\label{prop:GLtoSL1}
Let $E_{1}$ and $E_{2}$ be graphs with finitely many vertices such that $(\Bsf_{E_{1}}, \Bsf_{E_{2}} )$ is in standard form.
Suppose $\ftn{ (U_{1},V_{1}) }{ \Bsf^{\bullet}_{E_{1}} }{ \Bsf^{\bullet}_{E_{2}} }$ is a \GLPEe, where $U_{1} \in \GLPZ[\mathbf{m}]$, $V_1 \in \GLPZ[\mathbf{n}]$. 
If, for some $i\in\calP$, $m_i> 1$, and
\begin{equation}
\label{eq:prop:GLtoSL1-eq1} \det ( U_{1} \{ i\} ) = -1,
\end{equation}
then there exist graphs $F_{1}$ and $F_{2}$ with finitely many vertices, there exist $U_{2} \in \GLPZ[\mathbf{m}'], V_2 \in \GLPZ[\mathbf{n}']$, where $\mathbf{m}' =\mathbf{m}+ 2\mathbf{e}_i$ and $\mathbf{n}' =\mathbf{n}+ 2\mathbf{e}_i$, and there exist $\calP$-equivariant isomorphisms $\Phi_1\colon C^*(E_1)\otimes\K\rightarrow C^*(F_1)\otimes\K$ and $\Phi_2\colon C^*(E_2)\otimes\K\rightarrow C^*(F_2)\otimes\K$, such that 
\begin{enumerate}[(1)]
 \item\label{enum:prop:GLtoSL1-1} $E_{k} \Meq F_{k}$, $k=1,2$;
 \item\label{enum:prop:GLtoSL1-2} $(\Bsf_{F_{1}} , \Bsf_{F_{2}} )$ is in standard form;
 \item\label{enum:prop:GLtoSL1-3} $(U_2,V_2)$ is a \GLPEe from $\Bsf^{\bullet}_{F_{1}}$ to $\Bsf^{\bullet}_{F_{2}}$;
 \item\label{enum:prop:GLtoSL1-4} $\det ( U_{2} \{ i\}  )= 1$, $\det ( V_{2} \{ i\} ) = - \det( V_{1} \{ i \} )$;
 \item\label{enum:prop:GLtoSL1-5} $\det( U_{2} \{ j\} ) = \det( U_{1} \{ j\} )$ and $\det( V_{2} \{ j\} ) = \det( V_{1} \{ j\} )$ for all $j \neq i$; and
 \item\label{enum:prop:GLtoSL1-6} $\FKR(\calP;\Phi_2)\circ\FKRs(U_1,V_1) =\FKRs(U_2,V_2)\circ\FKR(\calP;\Phi_1)$.
\end{enumerate}
\end{proposition}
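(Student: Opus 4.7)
The plan is to Cuntz splice a suitably chosen regular vertex $u_k$ in block $i$ of each $E_k$ to form $F_k := (E_k)_{u_k,-}$, to compose the \GLPEe{s} supplied by Corollary~\ref{cor:cuntzspliceinvariant} with the $\iota_{2\mathbf{e}_i}$-embedding of $(U_1,V_1)$, and finally to correct the signs on block $i$ by means of a sign-flipping self-equivalence of $\Bsf_{F_1}^{\bullet}$ that acts trivially on the reduced filtered $K$-web. The essential ingredient is the Cuntz splice because, among the constructions at our disposal, it is the only one that contributes a $-1$ to a diagonal-block determinant (placed by Corollary~\ref{cor:cuntzspliceinvariant} on the $V$-side), whereas every Move-equivalence and edge expansion contributes only determinant $+1$.

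Since $(\Bsf_{E_1},\Bsf_{E_2})$ is in standard form and $m_i>1$, block $i$ of each $E_k$ is a noncyclic strongly connected component in which every regular vertex supports at least two distinct return paths, so the $u_k$ exist. After applying Lemma~\ref{lem:we-can-put-it-in-canonical-form} if necessary---at the cost of only further \SL-factors---we may assume $(\Bsf_{F_1},\Bsf_{F_2})$ is in standard form of sizes $(\mathbf{m}',\mathbf{n}')$. Corollary~\ref{cor:cuntzspliceinvariant} then supplies \calP-equivariant isomorphisms $\Phi_k\colon C^{*}(E_k)\otimes\K\to C^{*}(F_k)\otimes\K$ and \GLPEe{s} $(P_k,Q_k)\colon -\iota_{2\mathbf{e}_i}(-\Bsf_{E_k}^{\bullet})\to\Bsf_{F_k}^{\bullet}$, each the identity outside block $i$, with $\det(P_k\{i\})=1$, $\det(Q_k\{i\})=-1$, and $\FKR(\calP;\Phi_k)=\FKRs(P_k,Q_k)$. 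Composing yields the candidate
\[
(U_2',V_2'):=\bigl(P_2\,\iota_{2\mathbf{e}_i}(U_1)\,P_1^{-1},\ Q_1^{-1}\,\iota_{2\mathbf{e}_i}(V_1)\,Q_2\bigr)\colon\Bsf_{F_1}^{\bullet}\to\Bsf_{F_2}^{\bullet},
\]
which automatically satisfies \eqref{enum:prop:GLtoSL1-6} and the correct determinants on every block $j\neq i$, yet has $\det(U_2'\{i\})=-1$ and $\det(V_2'\{i\})=\det(V_1\{i\})$ on block $i$---precisely the opposite of what \eqref{enum:prop:GLtoSL1-4} demands.

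The main obstacle is therefore to inject a factor of $-1$ into each of $U_2'\{i\}$ and $V_2'\{i\}$ without disturbing the induced $K$-web map. I accomplish this by producing a sign-flipping self-equivalence first on $X:=-\iota_{2\mathbf{e}_i}(-\Bsf_{E_1}^{\bullet})$ and then transporting it to $\Bsf_{F_1}^{\bullet}$ by conjugation with $(P_1,Q_1)$. Since the lower-right $2\times 2$ corner of $X\{i\}$ is $-I_2$, the pair $(S,T)$---defined to be the identity outside block $i$ and given on block $i$ by $S\{i\}=\mathrm{diag}(I_{m_i},\mathrm{diag}(-1,1))$ and $T\{i\}=\mathrm{diag}(I_{n_i},\mathrm{diag}(-1,1))$---satisfies $SXT=X$, using that the last two rows of $X\{i,k\}$ and the last two columns of $X\{j,i\}$ vanish for $k\succ i$ and $j\prec i$. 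It has $\det(S\{i\})=\det(T\{i\})=-1$ and acts as the identity on every group and structural map of $\FKR(X)$, since the extra rank-$2$ summand satisfies $\cok(-I_2)=\ker(-I_2)=0$ and so contributes nothing to any group in the $K$-web. Setting $(\tau_1,\tau_2):=(P_1 S P_1^{-1},\,Q_1^{-1}T Q_1)$ then produces a self-equivalence of $\Bsf_{F_1}^{\bullet}$ with the same determinants and still-trivial $K$-web action (upper-triangularity in $\GLPZ$ is preserved by such conjugations since $S,T$ are block-diagonal), and the final $(U_2,V_2):=(U_2'\tau_1,\,\tau_2 V_2')$ gives $\det(U_2\{i\})=(-1)(-1)=1$ and $\det(V_2\{i\})=(-1)\det(V_1\{i\})=-\det(V_1\{i\})$, while $\FKRs(U_2,V_2)=\FKRs(U_2',V_2')$ because $(\tau_1,\tau_2)$ is trivial on the $K$-web; all the conclusions \eqref{enum:prop:GLtoSL1-1}--\eqref{enum:prop:GLtoSL1-6} then follow.
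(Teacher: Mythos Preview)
Your approach has a genuine gap in conclusion~\eqref{enum:prop:GLtoSL1-1}: you construct $F_k = (E_k)_{u_k,-}$ via the Cuntz splice, which yields only $E_k \MCeq F_k$, not the $E_k \Meq F_k$ that the proposition demands. Move~\CC\ is not among the moves generating $\Meq$, and applying Lemma~\ref{lem:we-can-put-it-in-canonical-form} afterward does not repair this, since that lemma produces a graph move-equivalent to its input $(E_k)_{u_k,-}$, not to the original $E_k$. The distinction matters: Proposition~\ref{prop:GLtoSL1} is the step that fixes the $U$-side determinant \emph{without} spending a Cuntz splice, so that Proposition~\ref{prop:GLtoSL2} can then spend Cuntz splices on the $V$-side; your argument effectively merges the two and proves the weaker Proposition~\ref{prop:GLtoSL2}-style statement twice.

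The paper's proof avoids this by using \emph{edge expansion} (Proposition~\ref{prop:edge-expansion}) twice in block $i$ on each $E_k$---a genuine $\Meq$ operation---to grow the blocks to size $\mathbf{m}', \mathbf{n}'$. The resulting \SLPEe{s} contribute only $+1$ determinants, so after composing with $\iota_{2\mathbf{e}_i}(U_1,V_1)$ one still has $\det = -1$ on block $i$ of the $U$-side. The sign is then corrected by exactly the mechanism you deploy at the end: a self-equivalence of the embedded matrix $-\iota_{2\mathbf{e}_i}(-\Bsf_{E_k}^\bullet)$ acting only on the two added coordinates (the paper uses the swap $\left(\begin{smallmatrix}0&1\\1&0\end{smallmatrix}\right)$ rather than your $\mathrm{diag}(-1,1)$, but the effect is identical), which has determinant $-1$ on block $i$ yet acts trivially on the $K$-web because the extra $-I_2$ summand has trivial kernel and cokernel.

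In other words, your assertion that the Cuntz splice ``is the only one that contributes a $-1$ to a diagonal-block determinant'' is the source of the error: no graph move needs to supply the $-1$, because the sign-flipping self-equivalence you yourself construct does the entire job. The graph modification is needed only to create two extra coordinates on which that self-equivalence can act invisibly to $K$-theory, and edge expansion---which stays within $\Meq$---suffices for that. Replacing your Cuntz splice with two edge expansions and dropping the now-redundant $(P_k,Q_k)$ determinant bookkeeping recovers the paper's argument.
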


\begin{proof}
Let $\mathbf{r}=2\mathbf{e}_i$.  For each $j=1,2$ use the last part of Proposition~\ref{prop:edge-expansion} to find $U_j'\in\SLPZ[\mathbf{m}']$ and $V_j'\in\SLPZ[\mathbf{n}']$ that are identity matrices everywhere except in the $i$'th diagonal block such that $(U_j',V_j')$ is an \SLPEe from $-\iota_{\mathbf{r}}(-\Bsf_{E_j}^\bullet)$ to $\Bsf_{E_j'}^\bullet$ and a $\calP$-equivariant isomorphism $\Phi_j'$ from $C^*(E_j)\otimes\K$ to $C^*(E_j')\otimes\K$ satisfying $\FKR(\calP;\Phi_j')=\FKRs(U_j',V_j')$, where we let $E_j'$ be the graph obtained by simple expansions of $E_j$ twice. 

Now we let $U\in\GLPZ[\mathbf{m}']$ and $V\in\GLPZ[\mathbf{n}']$ be equal to the identity matrices except for the $i$'th diagonal block where they are 
$$\begin{pmatrix}
I_{m_i} & 0 \\
0 & \begin{smallpmatrix} 0 & 1 \\ 1 & 0 \end{smallpmatrix}
\end{pmatrix}
\quad\text{and}\quad
\begin{pmatrix}
I_{n_i} & 0 \\
0 & \begin{smallpmatrix} 0 & 1 \\ 1 & 0 \end{smallpmatrix}
\end{pmatrix},$$
respectively. 
Now $(U\iota_{\mathbf{r}}(U_1),\iota_{\mathbf{r}}(V_1)V)$ is a \GLPEe from $-\iota_{\mathbf{r}}(-\Bsf_{E_1})$ to $-\iota_{\mathbf{r}}(-\Bsf_{E_2})$ inducing exactly $\FKR(U,V)$. So this changes the signs of the determinant as wanted. So using an argument similar to the argument for the proof of the last part of Lemma~\ref{lem:we-can-put-it-in-canonical-form} we can get a pair of graphs $(F_1, F_2)$ with the desired properties.
\end{proof}

We now use the Cuntz splice to fix potential sign problems on $V$ --- for the strongly connected components that are not cyclic.

\begin{proposition}\label{prop:GLtoSL2}
In Proposition~\ref{prop:GLtoSL1}, if we replace Equation~\eqref{eq:prop:GLtoSL1-eq1} by 
\begin{equation}
\det ( U_{1} \{ i\} ) = 1\quad\text{and}\quad\det( V_{1} \{i\} ) = -1
\end{equation} 
in the assumptions, then the proposition still holds with Conditions~\ref{enum:prop:GLtoSL1-1} and~\ref{enum:prop:GLtoSL1-4} replaced by 
\begin{enumerate}[(1)]
\item[(1')] $E_{k} \MCeq F_{k}$, for $k=1,2$, and 
\item[(4')] $\det ( U_{2} \{ i \} )  = \det ( V_{2} \{ i \} ) = 1$,
\end{enumerate} 
respectively, in the conclusion. 
\end{proposition}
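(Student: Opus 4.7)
The plan is to parallel the proof of Proposition~\ref{prop:GLtoSL1}, but replace the sign-flipping permutation on the edge-expansion extension by a Cuntz splice on $E_2$. The key input is Corollary~\ref{cor:cuntzspliceinvariant}: a single Cuntz splice at a vertex in block $i$ realizes a \GLPEe whose diagonal blocks are all \SL{} except for $V\{i\}$, whose determinant is $-1$. Applied to $E_2$ this cancels the prescribed sign $\det V_1\{i\} = -1$ while leaving $\det U_1\{i\} = 1$ untouched. Because $(\Bsf_{E_1}, \Bsf_{E_2})$ is in standard form and $m_i > 1$, block $i$ corresponds in each $E_k$ to a noncyclic strongly connected component of size at least three; canonical form guarantees that each regular vertex there is the base of a loop, so, combined with strong connectivity, each such vertex supports at least two distinct return paths. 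Fix such regular vertices $u_k$ in block $i$ of $E_k$.

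Applying Proposition~\ref{prop:edge-expansion} twice at $u_1$ produces a graph $G_1$ with $\Bsf_{G_1} \in \MPZccc[(\mathbf{m}+2\mathbf{e}_i)\times(\mathbf{n}+2\mathbf{e}_i)]$, an \SLPEe $(U^{(1)}, V^{(1)})$ from $-\iota_{2\mathbf{e}_i}(-\Bsf_{E_1}^\bullet)$ to $\Bsf_{G_1}^\bullet$, and a $\calP$-equivariant isomorphism $\Psi_1\colon C^*(E_1)\otimes\K \to C^*(G_1)\otimes\K$ realizing it. In parallel, Corollary~\ref{cor:cuntzspliceinvariant} applied at $u_2$ produces $G_2 = (E_2)_{u_2,-}$ with $\Bsf_{G_2} \in \MPZccc[(\mathbf{m}+2\mathbf{e}_i)\times(\mathbf{n}+2\mathbf{e}_i)]$, a \GLPEe $(U^{(2)}, V^{(2)})$ from $-\iota_{2\mathbf{e}_i}(-\Bsf_{E_2}^\bullet)$ to $\Bsf_{G_2}^\bullet$ with $\det U^{(2)}\{i\}=1$, $\det V^{(2)}\{i\}=-1$ and all other diagonal determinants equal to $1$, and a $\calP$-equivariant isomorphism $\Psi_2\colon C^*(E_2)\otimes\K \to C^*(G_2)\otimes\K$. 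The composite
\[
(U_2', V_2') := \bigl(U^{(2)}\, \iota_{2\mathbf{e}_i}(U_1)\, (U^{(1)})^{-1},\ (V^{(1)})^{-1}\, \iota_{2\mathbf{e}_i}(V_1)\, V^{(2)}\bigr)
\]
is a \GLPEe from $\Bsf_{G_1}^\bullet$ to $\Bsf_{G_2}^\bullet$. On block $i$ the determinants multiply to $\det U_2'\{i\}=1$ and $\det V_2'\{i\} = (-1)(-1) = 1$, while for $j\neq i$ all auxiliary factors are the identity on block $j$, giving $\det U_2'\{j\} = \det U_1\{j\}$ and $\det V_2'\{j\} = \det V_1\{j\}$.

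To conclude, I invoke the second part of Lemma~\ref{lem:we-can-put-it-in-canonical-form} to upgrade each $G_k$ to a graph $F_k$ in canonical form (with the same block multiindex) via an \SLPEe and a corresponding $\calP$-equivariant isomorphism $\Xi_k$. This requires verifying that $G_k$ still satisfies conditions~\ref{def:canonical-item-size3}, \ref{def:canonical-item-rowrank}, and~\ref{def:canonical-item-singularfirst} of Definition~\ref{def:CanonicalForm}: block $i$ has grown from $m_i\geq 3$ to $m_i+2$; the Smith normal form of $\Bsf_{G_k}^\bullet\{i\}$ is \GL-equivalent to that of the matrix obtained by adjoining a $\pm I_2$ summand to $\Bsf_{E_k}^\bullet\{i\}$, hence acquires two extra invariant factors equal to $1$; and the newly added vertices in block $i$ are all regular, so they can be listed after the pre-existing singular ones. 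Conjugating $(U_2', V_2')$ by the canonical-form \SL-equivalences yields the desired $(U_2, V_2)$ without altering any diagonal determinants, so~(4') and~(5) hold. Setting $\Phi_k := \Xi_k \circ \Psi_k$, the chains $E_1 \Meq G_1 \Meq F_1$ and $E_2 \MCeq G_2 \Meq F_2$ give~(1'), and naturality of $\FKR$ with respect to composition of the realizing isomorphisms gives~(6). The only substantive technical point is the determinant bookkeeping on block $i$, and that is essentially immediate from the construction; no further obstacle is expected.
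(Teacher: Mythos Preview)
Your proof is correct and follows essentially the same approach as the paper: enlarge block $i$ by two in each graph, using a Cuntz splice on one side (to introduce a $\det V\{i\}=-1$ that cancels the given sign) and a double edge expansion on the other (to match sizes via an \SLPEe), then pass to canonical form. The only difference is that you Cuntz splice $E_2$ and edge-expand $E_1$, whereas the paper does the reverse; this swap is inessential and the determinant bookkeeping works out identically either way.
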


\begin{proof}
Let $\mathbf{r}=2\mathbf{e}_i$.  We use the last part of Proposition~\ref{prop:edge-expansion} to find $U_2'\in\SLPZ[\mathbf{m}']$ and $V_2'\in\SLPZ[\mathbf{n}']$ that are identity matrices everywhere except in the $i$'th diagonal block such that $(U_2',V_2')$ is an \SLPEe from $-\iota_{\mathbf{r}}(-\Bsf_{E_2}^\bullet)$ to $\Bsf_{E_2'}^\bullet$ and a $\calP$-equivariant isomorphism $\Phi_2'$ from $C^*(E_2)\otimes\K$ to $C^*(E_2')\otimes\K$ satisfying $\FKR(\calP;\Phi_2')=\FKRs(U_2',V_2')$, where we let $E_2'$ be the graph obtained by simple expansions of $E_2$ twice. 

We can choose a regular vertex $u$ in the component of $E_1$ corresponding to $i\in\calP$ that supports at least two distinct return paths. 
From Corollary~\ref{cor:cuntzspliceinvariant} it follows that $\Bsf_{(E_1)_{u,-}}\in\MPZccc[(\mathbf{m}+\mathbf{r})\times(\mathbf{n}+\mathbf{r})]$, and that there exist $U_1'\in\SLPZ[\mathbf{m}+\mathbf{r}]$ and $V_1'\in\GLPZ[\mathbf{n}+\mathbf{r}]$ such that the following holds. The matrices $U_1'$ and $V_1$ are equal to the identity matrices everywhere except for the $i$'th diagonal block. All the diagonal blocks of $U_1'$ and $V_1'$ have determinant $1$ except for the $i$'th diagonal block of $V$, which has determinant $-1$ and $(U_1',V_1')$ is a \GLPEe from $-\iota_{\mathbf{r}}(-\Bsf_{E_1}^\bullet)$ to  $\Bsf_{(E_1)_{u,-}}^\bullet$. 
Moreover, there exists a $\calP$-equivariant isomorphism $\Phi_1'$ from $C^{*}(E_1) \otimes \K$ to $C^{*}((E_1)_{u,-}) \otimes \K$ such that $\FKR(\calP;\Phi)=\FKRs(U_1',V_1')$. 

So this changes the signs of the determinant as wanted. So using an argument similar to the argument for the proof of the last part of Lemma~\ref{lem:we-can-put-it-in-canonical-form} we can get a pair of graphs $(F_1, F_2)$ with the desired properties. 
\end{proof}

We now have all we need to modify a \GLPEe to an \SLPEe{} --- if the determinants of all the components that do not correspond to a non-cyclic strongly component are $1$.

\begin{theorem}\label{thm:GLtoSL}
Let $E_{1}$ and $E_{2}$ be graphs with finitely many vertices such that the pair $(\Bsf_{E_{1}} , \Bsf_{E_{2}} )$ is in standard form.  
Suppose $(U,V)$ is a \GLPEe from $\Bsf^{\bullet}_{E_{1}}$ to $\Bsf^{\bullet}_{E_{2}}$ satisfying that $U\{i\}=1$ whenever $m_i=1$ and $V\{i\}=1$ whenever $n_i=1$.
Then there exist graphs $F_{1}$ and $F_{2}$, $U'\in\SLPZ[\mathbf{m}']$,$V'\in\SLPZ[\mathbf{n}']$, and $\calP$-equivariant isomorphisms $\Phi_1\colon C^*(E_1)\otimes\K\rightarrow C^*(F_1)\otimes\K$ and $\Phi_2\colon C^*(E_2)\otimes\K\rightarrow C^*(F_2)\otimes\K$ such that $E_{j} \MCeq F_{j}$, for $j=1,2$, the pair $(\Bsf_{F_1}, \Bsf_{F_2})$ is in standard form, $(U',V')$ is an \SLPEe from $\Bsf^{\bullet}_{F_{1}}$ to $\Bsf^{\bullet}_{F_{2}}$, and 
 \begin{equation*}
 \FKR(\calP;\Phi_2)\circ\FKRs(U,V) =\FKRs(U',V')\circ\FKR(\calP;\Phi_1). 
 \end{equation*}
\end{theorem}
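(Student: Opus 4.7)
The plan is to iteratively eliminate the ``sign defects'' in the diagonal blocks of $U$ and $V$ by processing one index $i \in \calP$ at a time. Since $(\Bsf_{E_1}, \Bsf_{E_2})$ is in standard form and hence in $\MPZccc$, for each $i \in \calP$ we have three possibilities: $m_i = 0$ (and then $n_i = 1$), $m_i = n_i = 1$ (cyclic singleton component), or $m_i \geq 3$ with $n_i \geq m_i$ (non-cyclic strongly connected component). In the first two cases, the hypothesis $U\{i\}=1$ whenever $m_i=1$ together with the convention on the empty matrix forces $\det(U\{i\}) = \det(V\{i\}) = 1$; analogously for $V$ when $n_i \in \{0,1\}$. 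Thus the only potentially problematic blocks are those with $m_i > 1$, and for these Propositions~\ref{prop:GLtoSL1} and~\ref{prop:GLtoSL2} apply.

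Enumerate the indices $i \in \calP$ with $m_i > 1$ as $i_1, \ldots, i_\ell$. Set $(E_1^{(0)}, E_2^{(0)}) := (E_1, E_2)$ and $(U^{(0)}, V^{(0)}) := (U,V)$, and let $\Phi_k^{(0)}$ be the identity on $C^*(E_k)\otimes\K$ for $k=1,2$. Inductively, suppose we have constructed graphs $E_1^{(j)}, E_2^{(j)}$ with $(\Bsf_{E_1^{(j)}}, \Bsf_{E_2^{(j)}})$ in standard form, $E_k \MCeq E_k^{(j)}$ for $k=1,2$, a \GLPEe $(U^{(j)}, V^{(j)})$ from $\Bsf_{E_1^{(j)}}^\bullet$ to $\Bsf_{E_2^{(j)}}^\bullet$, and $\calP$-equivariant isomorphisms $\Phi_k^{(j)}\colon C^*(E_k)\otimes\K \to C^*(E_k^{(j)})\otimes\K$ such that
\[
\FKR(\calP;\Phi_2^{(j)}) \circ \FKRs(U,V) = \FKRs(U^{(j)}, V^{(j)}) \circ \FKR(\calP;\Phi_1^{(j)}),
\]
and such that $\det(U^{(j)}\{i_s\}) = \det(V^{(j)}\{i_s\}) = 1$ for all $s \leq j$, while $\det(U^{(j)}\{i\}) = \det(U\{i\})$ and $\det(V^{(j)}\{i\}) = \det(V\{i\})$ for all $i \notin \{i_1,\ldots,i_j\}$.

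To produce the $(j+1)$'st stage at the index $i := i_{j+1}$, apply Proposition~\ref{prop:GLtoSL1} at the index $i$ if $\det(U^{(j)}\{i\}) = -1$; this yields new graphs $F_1, F_2$, a \GLPEe $(\widetilde U, \widetilde V)$ from $\Bsf_{F_1}^\bullet$ to $\Bsf_{F_2}^\bullet$ with $\det(\widetilde U\{i\}) = 1$, preserves all other block determinants by condition \ref{enum:prop:GLtoSL1-5}, and produces compatible $\calP$-equivariant isomorphisms. Then, if $\det(\widetilde V\{i\}) = -1$ (with $\det(\widetilde U\{i\}) = 1$), apply Proposition~\ref{prop:GLtoSL2} at $i$ to produce graphs and a \GLPEe whose $i$'th diagonal block has $\det(U^{(j+1)}\{i\}) = \det(V^{(j+1)}\{i\}) = 1$, again preserving all other block determinants. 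Compose the new equivariant isomorphisms with $\Phi_k^{(j)}$ to obtain $\Phi_k^{(j+1)}$; by condition \ref{enum:prop:GLtoSL1-6} of Propositions~\ref{prop:GLtoSL1} and~\ref{prop:GLtoSL2}, the intertwining identity propagates. Since Proposition~\ref{prop:GLtoSL1} preserves Move equivalence and Proposition~\ref{prop:GLtoSL2} preserves Cuntz move equivalence, we have $E_k \MCeq E_k^{(j+1)}$. Finally, set $F_k := E_k^{(\ell)}$, $(U', V') := (U^{(\ell)}, V^{(\ell)})$, and $\Phi_k := \Phi_k^{(\ell)}$. Since $U^{(\ell)}\{i\}$ has determinant $1$ for all $i$ (either by the inductive construction or by the hypotheses on the original blocks), and similarly for $V^{(\ell)}$, the pair $(U', V')$ is an \SLPEe, which is what we wanted.

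The only real subtlety is the bookkeeping of how blocks other than the one being treated behave when we apply Propositions~\ref{prop:GLtoSL1} and~\ref{prop:GLtoSL2}. Condition \ref{enum:prop:GLtoSL1-5} of Proposition~\ref{prop:GLtoSL1} (and its analog in Proposition~\ref{prop:GLtoSL2}) guarantees that determinants of other blocks are untouched, which is precisely what is needed to ensure the inductive process terminates after exactly $\ell$ rounds without reintroducing a defect elsewhere. No deeper obstacle is present beyond verifying this compositional bookkeeping.
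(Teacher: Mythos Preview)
Your proof is correct and takes essentially the same approach as the paper: iteratively apply Propositions~\ref{prop:GLtoSL1} and~\ref{prop:GLtoSL2} to eliminate the sign defects one block at a time, relying on condition~\ref{enum:prop:GLtoSL1-5} to ensure that fixing one block does not disturb the others. The only organizational difference is that the paper makes two full passes over the blocks (first fixing all of $U$'s diagonal determinants via Proposition~\ref{prop:GLtoSL1}, then all of $V$'s via Proposition~\ref{prop:GLtoSL2}), whereas you interleave the two steps at each block; since condition~\ref{enum:prop:GLtoSL1-5} guarantees the other blocks are untouched either way, both orderings work.
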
 

\begin{proof}
The theorem follows from an argument similar to the argument in \cite[Theorem~6.8]{MR2270572} with Propositions~\ref{prop:GLtoSL1} and~\ref{prop:GLtoSL2} in place of \cite[Lemma~6.7]{MR2270572}. 

Briefly, the idea is that we are given a \GLPEe, say $(U,V)$. 
We go down the diagonal blocks and for each of them use Proposition~\ref{prop:GLtoSL1} if necessary to make sure the $U$ has positive determinants of the diagonal blocks. 
Then we go down the diagonal blocks again this time using Proposition~\ref{prop:GLtoSL2} to fix the determinant of the diagonal blocks of $V$ when necessary. 
\end{proof}

Using Theorem~\ref{thm:SLP-equivalence-implies-stable-isomorphism}, we get the following immediate consequence. 

\begin{corollary}\label{cor:GLtoSL}
Let $E_{1}$ and $E_{2}$ be graphs with finitely many vertices such that the pair $(\Bsf_{E_{1}} , \Bsf_{E_{2}} )$ is in standard form.  
Suppose $(U,V)$ is a \GLPEe from $\Bsf^{\bullet}_{E_{1}}$ to $\Bsf^{\bullet}_{E_{2}}$ satisfying that $U\{i\}=1$ whenever $m_i=1$ and $V\{i\}=1$ whenever $n_i=1$.

Then $E_1\MCeq E_2$ and there exists a \calP-equivariant isomorphism $\Phi$ from $C^*(E_1)\otimes\K$ to $C^*(E_2)\otimes\K$ such that $\FKR(\calP;\Phi)=\FKRs(U,V)$. 
\end{corollary}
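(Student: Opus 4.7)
The statement is essentially a packaging of the previous two theorems, and my plan is to derive it by simply composing them.

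First I would apply Theorem~\ref{thm:GLtoSL} directly to the hypothesized \GLPEe $(U,V)$ from $\Bsf^{\bullet}_{E_{1}}$ to $\Bsf^{\bullet}_{E_{2}}$. This yields graphs $F_{1}$ and $F_{2}$ with $E_{j} \MCeq F_{j}$, an \SLPEe $(U',V')$ from $\Bsf^{\bullet}_{F_{1}}$ to $\Bsf^{\bullet}_{F_{2}}$ (with $(\Bsf_{F_1},\Bsf_{F_2})$ still in standard form), together with \calP-equivariant isomorphisms $\Phi_{j}\colon C^{*}(E_{j})\otimes\K \to C^{*}(F_{j})\otimes\K$ satisfying
\[
\FKR(\calP;\Phi_{2})\circ \FKRs(U,V) \;=\; \FKRs(U',V')\circ \FKR(\calP;\Phi_{1}).
\]

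Next I would feed the \SLPEe $(U',V')$ into Theorem~\ref{thm:SLP-equivalence-implies-stable-isomorphism}. Because $(\Bsf_{F_1},\Bsf_{F_2})$ is in standard form, both $\Bsf_{F_{j}}$ lie in some $\MPZccc$ with the same difference $\mathbf{n}-\mathbf{m}$ and matching cyclic/singular component structure (the multiindices $\mathbf{r},\mathbf{r}'$ can be taken to be zero). Consequently the theorem produces $F_{1}\Meq F_{2}$ and a \calP-equivariant isomorphism $\Psi\colon C^{*}(F_{1})\otimes\K \to C^{*}(F_{2})\otimes\K$ with $\FKR(\calP;\Psi) = \FKRs(U',V')$.

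Finally I would assemble these pieces. Setting $\Phi := \Phi_{2}^{-1}\circ \Psi\circ \Phi_{1}$ gives a \calP-equivariant isomorphism $C^{*}(E_{1})\otimes\K \to C^{*}(E_{2})\otimes\K$, and applying the functor $\FKR(\calP;\,\cdot\,)$ along with the two displayed identities yields $\FKR(\calP;\Phi)=\FKRs(U,V)$. For the move equivalence, chain $E_{1}\MCeq F_{1}\Meq F_{2}\MCeq E_{2}$; since Move equivalence is stronger than Cuntz move equivalence, this gives $E_{1}\MCeq E_{2}$, completing the proof. There is no real obstacle here — both ingredients have been carefully stated so that their outputs glue directly, and the only thing to check is that the compositions of reduced filtered $K$-theory maps match up, which is immediate from the commutativity produced by Theorem~\ref{thm:GLtoSL}.
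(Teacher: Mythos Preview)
Your proposal is correct and follows essentially the same approach as the paper: the corollary is stated there as an immediate consequence of Theorem~\ref{thm:SLP-equivalence-implies-stable-isomorphism} applied to the output of Theorem~\ref{thm:GLtoSL}, and you have spelled out exactly that composition. The verification that $\FKR(\calP;\Phi)=\FKRs(U,V)$ via the displayed commutativity, and the chaining of the move equivalences, are precisely the expected steps.
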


\section{The \Pul move implies stable isomorphism}
\label{sec:MoveP}

In this section we will show that Move \PP\ implies stable isomorphism of the associated graph \cas. 
We do that by first showing that it implies  stable isomorphism of the associated graph \cas in the case where the graph is assumed to be in a very specific form. Then we use the invariance of the Cuntz splice to show that the general Move \PP\ implies stable isomorphism. Along the way, we also partly keep track of the induced maps on $K$-theory (on the gauge simple subquotients corresponding to the cyclic components).
The strategy of proving the invariance of Move \PP\ (in the case when the graph is assumed to be in a very specific form) is --- in principle --- the same as the proof of the invariance of the Cuntz splice. We prove that doing Move \PP\ twice yields a graph that is move equivalent to the original graph. Moreover, we prove that doing Move \PP\ once and twice yields isomorphic graph \cas. This is done by gluing two different relative graph \cas onto a graph \ca and see that the resulting graph \cas are isomorphic. 
As for the Cuntz splice, we need to show that these two relative graph \cas are isomorphic in such a way that certain projections are Murray-von~Neumann equivalent. 
The entire Section~\ref{subsec:classification-result} is devoted to proving this by appealing to various general classification results and techniques --- this is a very crucial step in applying the above strategy. 
In Section~\ref{subsec:inv-spec-pulelehua} we then prove the invariance of the specialized Move \PP. 
In Section~\ref{subsec:transform-to-apply-spec-pulelehua} we show that we can bring our graphs on the special form, so that we can apply the specialized Move \PP. In Section~\ref{subsec:invariance-of-pulelehua} we show that the general Move \PP\ can be reduced to applying a specialized  Move \PP\ and a series of Cuntz move equivalences --- thus proving the invariance of the general Move \PP.

\begin{assumption}\label{assumption:1-hash}
In this section, we will frequently assume that we have a graph satisfying the conditions in this assumption, by referring to it. Let $E = ( E^{0}, E^{1} , r_{E}, s_{E} )$ be a graph with finitely many vertices and we let $u_0$ be a vertex of $E$ satisfying the following conditions. 
\begin{enumerate}[(1)]
\item $\Bsf_{E}\in\MPZcc$, 
\item $\{u_0\}$ is a cyclic component,
\item $u_0$ emits at least two edges, 
\item $u_0$ only emits edges to non-cyclic strongly connected components, 
\item $u_0$ only emits edges to components that are immediate successors of $\{u_0\}$, 
\item $u_0$ emits exactly one edge to each component that is an immediate successor of $\{u_0\}$, 
\item each vertex in $r_E(s_E^{-1}(u_0))\setminus\{u_0\}$ has exactly one loop and it has exactly one other edge going out (to a vertex that is part of the same component),
\item each vertex in $r_E(s_E^{-1}(u_0))\setminus\{u_0\}$ has only incoming edges from other vertices belonging to the same component (except for the edge from $u_0$). 
\end{enumerate}
\end{assumption}

\begin{notation}\label{notation:OnceAndTwice-hash}
For each vertex $u$ in a graph, we let $\mathbf{E}^{u}_*$ and $\mathbf{E}^{u}_{**}$ denote the graphs: 
\begin{align*}
\mathbf{E}^u_* \  = \ \ \ \ \xymatrix{
  \bullet^{v_1^u} \ar@(ul,ur)[]^{e_{1}^u} \ar@/^/[r]^{e_{2}^u} & \bullet^{v_2^u} \ar@(ul,ur)[]^{e_{4}^u} \ar@/^/[l]^{e_{3}^u}
}
\end{align*}
\begin{align*}
\mathbf{E}_{**}^u \  =  \ \ \ \ \xymatrix{
	\bullet^{ w_{1}^u } \ar@(ul,ur)[]^{f_{1}^u}  \ar@/^/[r]^{ f_{2}^u } & \bullet^{ w_{2}^u } \ar@(ul,ur)[]^{f_{4}^u} \ar@/^/[r]^{ f_{5}^u }  \ar@/^/[l]^{f_{3}^u} &  \bullet^{w_3^u} 				\ar@(ul,ur)[]^{f_{7}^u} \ar@/^/[r]^{f_{8}^u} \ar@/^/[l]^{f_{6}^u}
	& \bullet^{w_4^u} \ar@(ul,ur)[]^{f_{10}^u} \ar@/^/[l]^{f_{9}^u}
	}
\end{align*}
Note that these graphs are isomorphic to the graphs $\mathbf{E}_{*}$ and $\mathbf{E}_{**}$, respectively, introduced in \cite[Notation~4.1]{MR3713535}. Here we have relabelled the vertices and edges and we have made a family of distinct copies of these graphs indexed over the vertices of the given graph. 
\end{notation}

\begin{notation}\label{notation:OnceAndTwice-hash-2}
Let $E = ( E^{0}, E^{1} , r_{E}, s_{E} )$ and $u_0$ be as in Assumption~\ref{assumption:1-hash}. 
Then we let $E_{u_0,\#}$ denote the graph described as follows:
\begin{align*}
E_{u_0,\#}^{0} &= E^{0} \sqcup\bigsqcup_{u\in r_E(s_E^{-1}(u_0))\setminus\{u_0\}} (\mathbf{E}^u_{*})^{0} \\
E_{u_0,\#}^{1} &= E^{1} \sqcup\bigsqcup_{u\in r_E(s_E^{-1}(u_0))\setminus\{u_0\}} \left((\mathbf{E}_{*}^u)^{1} \sqcup \{ e_5^u, e_6^u,e_7^u,e_8^u \}\right)
\end{align*}
with $r_{E_{u_0,\#}} \vert_{E^{1}} = r_{E}$, $s_{E_{u_0,\#}} \vert_{ E^{1} } = s_{E}$, $r_{E_{u_0,\#}} \vert_{(\mathbf{E}_{*}^u)^{1}} = r_{\mathbf{E}_{*}^u}$, $s_{E_{u_0,\#}} \vert_{(\mathbf{E}_{*}^u)^{1}} = s_{\mathbf{E}_{*}^u}$, and
\begin{align*}
	s_{E_{u_0,\#}}(e_5^u) &= u	& r_{E_{u_0,\#}}(e_5^u) &= v_{1}^u \\
	s_{E_{u_0,\#}}(e_6^u) &= v_1^u	& r_{E_{u_0,\#}}(e_6^u) &= u, \\
	s_{E_{u_0,\#}}(e_7^u) &= u_0	& r_{E_{u_0,\#}}(e_7^u) &= v_{2}^u, \\
	s_{E_{u_0,\#}}(e_8^u) &= u_0	& r_{E_{u_0,\#}}(e_8^u) &= v_{2}^u,
	\end{align*}
for every $u\in r_E(s_E^{-1}(u_0))\setminus\{u_0\}$. 
Moreover, we let $E_{u_0,\#\#}$ denote the graph described as follows:
\begin{align*}
E_{u_0,\#\#}^{0} &= E^{0} \sqcup \bigsqcup_{u\in r_E(s_E^{-1}(u_0))\setminus\{u_0\}} (\mathbf{E}^u_{**})^{0} \\
E_{u_0,\#\#}^{1} &= E^{1} \sqcup \bigsqcup_{u\in r_E(s_E^{-1}(u_0))\setminus\{u_0\}} \left((\mathbf{E}_{**}^u)^{1} \sqcup \{ f_{11}^u, f_{12}^u,f_{13}^u,f_{14}^u,f_{15}^u,f_{16}^u \}\right)
\end{align*}
with $r_{E_{u_0,\#\#}} \vert_{E^{1}} = r_{E}$, $s_{E_{u_0,\#\#}} \vert_{ E^{1} } = s_{E}$, $r_{E_{u_0,\#\#}} \vert_{(\mathbf{E}_{**}^u)^{1}} = r_{\mathbf{E}_{**}^u}$, $s_{E_{u_0,\#\#}} \vert_{(\mathbf{E}_{**}^u)^{1}} = s_{\mathbf{E}_{**}^u}$, and
\begin{align*}
	s_{E_{u_0,\#\#}}(f_{11}^u) &= u		& r_{E_{u_0,\#\#}}(f_{11}^u) &= w_{1}^u \\
	s_{E_{u_0,\#\#}}(f_{12}^u) &= w_1^u	& r_{E_{u_0,\#\#}}(f_{12}^u) &= u, \\
	s_{E_{u_0,\#}}(f_{13}^u) &= u_0	& r_{E_{u_0,\#}}(f_{13}^u) &= w_{2}^u, \\
	s_{E_{u_0,\#}}(f_{14}^u) &= u_0	& r_{E_{u_0,\#}}(f_{14}^u) &= w_{2}^u, \\
    s_{E_{u_0,\#}}(f_{15}^u) &= u_0	& r_{E_{u_0,\#}}(f_{15}^u) &= w_{4}^u, \\
	s_{E_{u_0,\#}}(f_{16}^u) &= u_0	& r_{E_{u_0,\#}}(f_{16}^u) &= w_{4}^u,
\end{align*}
for every $u\in r_E(s_E^{-1}(u_0))\setminus\{u_0\}$. 
For each $u\in r_E(s_E^{-1}(u_0))\setminus\{u_0\}$, we let $c_0^u$, $c_1^u$ and $c_2^u$ denote the unique arrow from $u_0$ to $u$, the loop based at $u$ and the unique arrow out of $u$, respectively. 
We let $e_0$ denote the loop based at $u_0$.

Note that $E_{u_0,\#}$ is (isomorphic to) the graph $E_{u_0,P}$ --- so this is a specialised move, that is only allowed in the much more restrictive settings of Assumption~\ref{assumption:1-hash} compared to the settings in Definition~\ref{def:hashmove}. 
\end{notation}

\begin{notation}\label{notation:OnceAndTwice-secondtime-hash}
Let $E = ( E^{0}, E^{1} , r_{E}, s_{E} )$ and $u_0$ be as in Assumption~\ref{assumption:1-hash}. 
Then we let $\underline{\mathbf{E}}_{u_0,\#}$ denote the graph where we remove all the vertices in $E^0\setminus r_E(s_E^{-1}(u_0))$ from $E_{u_0,\#}$ (and all corresponding edges going in or out from these vertices). 
Moreover, we let $\uuline{\mathbf{E}}_{u_0,\#\#}$ denote the graph where we remove all the vertices in $E^0\setminus r_E(s_E^{-1}(u_0))$ from $E_{u_0,\#\#}$ (and all corresponding edges going in or out from these vertices). 
We denote the vertices and edges in these graph by underlining them (in order to tell them apart from the others) --- like $\uline{u}_0$, $\uuline{u}_0$, $\uline{u}$, $\uuline{u}$, $\uline{v}_i^u$, $\uuline{w}_i^u$, $\uline{e}_0$, $\uuline{e}_0$, $\uline{e}_i^u$, $\uuline{f}_i^u$, $\uline{c}_0^u$, $\uline{c}_1^u$, $\uuline{c}_0^u$, $\uuline{c}_1^u$, for $u\in r_E(s_E^{-1}(u_0))\setminus\{u_0\}$. 
\end{notation}

\begin{example}\label{example:hash}
Consider the graph $E$:
\begin{align*}
\xymatrix{&\ar@{:>}[d] \\
&\bullet_{u_0} \ar@(ul,ur)\ar@/^/[drr]\ar@/_/[dl] \\
\ar@/^/[r] \bullet_{u_1}\ar@(dr,dl)[] & \bullet_{u_3} \ar@(ul,ur) \ar@/^/[l]\ar[d] &  \bullet_{u_4} \ar@(ul,ur) \ar@(dr,dl)\ar@/^/[r] & \ar@/^/[l] \bullet_{u_2}\ar@(dr,dl)[] \\
& \bullet_{u_5}\ar[ul]
}
\end{align*}
Then $E_{u_0,\#}$ is the graph:
\begin{align*}
\xymatrix{
&&\ar@{:>}[d] \\
\bullet_{v_2^{u_1}}\ar@(l,u)[]^{e_4^{u_1}}\ar@/^/[d]^{e_3^{u_1}} && \bullet_{u_0}\ar@/^/[ll]^{e_8^{u_1}} \ar@/_/[ll]_{e_7^{u_1}} \ar@(ul,ur)[]^{e_0}\ar@/^/[drr]|{c_0^{u_2}}\ar@/_/[dl]|{c_0^{u_1}} \ar@/^/[rrr]^{e_8^{u_2}} \ar@/_/[rrr]_{e_7^{u_2}} &&& \bullet_{v_2^{u_2}} \ar@(ul,ur)[]^{e_4^{u_2}} \ar@/^/[d]^{e_3^{u_2}}  \\
\bullet_{v_1^{u_1}}\ar@(d,l)[]^{e_1^{u_1}}\ar@/^/[u]^{e_2^{u_1}}\ar@/^/[r]^{e_6^{u_1}}&\ar@/^/[r]^{c_2^{u_1}} \bullet_{u_1}\ar@(dr,dl)[]^{c_1^{u_1}}\ar@/^/[l]^{e_5^{u_1}} & \bullet_{u_3} \ar@(ul,ur) \ar@/^/[l]\ar[d] &  \bullet_{u_4} \ar@(ul,ur) \ar@(dr,dl)\ar@/^/[r] & \ar@/^/[l]^{c_2^{u_2}} \bullet_{u_2}\ar@(dr,dl)[]^{c_1^{u_2}}\ar@/^/[r]^{e_5^{u_2}} & \bullet_{v_1^{u_2}}\ar@(dr,dl)[]^{e_1^{u_2}} \ar@/^/[l]^{e_6^{u_2}}\ar@/^/[u]^{e_2^{u_2}} \\
&& \bullet_{u_5}\ar[ul]
}
\end{align*}
and $E_{u_0,\#\#}$ is the graph:
\begin{align*}
\xymatrix{
\bullet_{w_4^{u_1}}\ar@(ul,ur)[]^{f_{10}^{u_1}}\ar@/^/[d]^{f_9^{u_1}} &&&&& \bullet_{w_4^{u_2}}\ar@(ul,ur)[]^{f_{10}^{u_2}}\ar@/^/[d]^{f_9^{u_2}} \\
\bullet_{w_3^{u_1}}\ar@(dl,ul)[]^{f_{7}^{u_1}}\ar@/^/[u]^{f_8^{u_1}}\ar@/^/[d]^{f_6^{u_1}} &&\ar@{:>}[d] &&& \bullet_{w_3^{u_2}}\ar@(ur,dr)[]^{f_{7}^{u_2}}\ar@/^/[u]^{f_8^{u_2}}\ar@/^/[d]^{f_6^{u_2}} \\
\bullet_{w_2^{u_1}}\ar@/^/[u]^{f_5^{u_1}}\ar@(dl,ul)[]^{f_4^{u_1}}\ar@/^/[d]^{f_3^{u_1}} && \bullet_{u_0} \ar@/_/[uull]^{f_{15}^{u_1}} \ar@/_1pc/[uull]_{f_{16}^{u_1}} \ar@/^1pc/[uurrr]^{f_{15}^{u_2}} \ar@/^/[uurrr]_{f_{16}^{u_2}} \ar@/^/[ll]^{f_{14}^{u_1}} \ar@/_/[ll]_{f_{13}^{u_1}} \ar@(ul,ur)[]^{e_0}\ar@/^/[drr]|{c_0^{u_2}}\ar@/_/[dl]|{c_0^{u_1}} \ar@/^/[rrr]^{f_{14}^{u_2}} \ar@/_/[rrr]_{f_{13}^{u_2}} &&& \bullet_{w_2^{u_2}} \ar@(ur,dr)[]^{f_4^{u_2}} \ar@/^/[d]^{f_3^{u_2}} \ar@/^/[u]^{f_5^{u_2}} \\
\bullet_{w_1^{u_1}}\ar@(d,l)[]^{f_1^{u_1}}\ar@/^/[u]^{f_2^{u_1}}\ar@/^/[r]^{f_{12}^{u_1}}&\ar@/^/[r]^{c_2^{u_1}} \bullet_{u_1}\ar@(dr,dl)[]^{c_1^{u_1}}\ar@/^/[l]^{f_{11}^{u_1}} & \bullet_{u_3} \ar@(ul,ur) \ar@/^/[l]\ar[d] &  \bullet_{u_4} \ar@(ul,ur) \ar@(dr,dl)\ar@/^/[r] & \ar@/^/[l]^{c_2^{u_2}} \bullet_{u_2}\ar@(dr,dl)[]^{c_1^{u_2}}\ar@/^/[r]^{f_{11}^{u_2}} & \bullet_{w_1^{u_2}}\ar@(dr,dl)[]^{f_1^{u_2}} \ar@/^/[l]^{f_{12}^{u_2}}\ar@/^/[u]^{f_2^{u_2}} \\
&& \bullet_{u_5}\ar[ul]
}
\end{align*}
Moreover, $\uline{\mathbf{E}}_{u_0,\#}$ is the graph:
\begin{align*}
\xymatrix{
\bullet_{\uline{v}_2^{u_1}}\ar@(l,u)[]^{\uline{e}_4^{u_1}}\ar@/^/[d]^{\uline{e}_3^{u_1}} && \bullet_{\uline{u}_0}\ar@/^/[ll]^{\uline{e}_8^{u_1}} \ar@/_/[ll]_{\uline{e}_7^{u_1}} \ar@(ul,ur)[]^{\uline{e}_0}\ar@/^/[drr]|{\uline{c}_0^{u_2}}\ar@/_/[dl]|{\uline{c}_0^{u_1}} \ar@/^/[rrr]^{\uline{e}_8^{u_2}} \ar@/_/[rrr]_{\uline{e}_7^{u_2}} &&& \bullet_{\uline{v}_2^{u_2}} \ar@(ul,ur)[]^{\uline{e}_4^{u_2}} \ar@/^/[d]^{\uline{e}_3^{u_2}}  \\
\bullet_{\uline{v}_1^{u_1}}\ar@(d,l)[]^{\uline{e}_1^{u_1}}\ar@/^/[u]^{\uline{e}_2^{u_1}}\ar@/^/[r]^{\uline{e}_6^{u_1}} & \bullet_{\uline{u}_1}\ar@(dr,dl)[]^{\uline{c}_1^{u_1}}\ar@/^/[l]^{\uline{e}_5^{u_1}} &  &  &  \bullet_{\uline{u}_2}\ar@(dr,dl)[]^{\uline{c}_1^{u_2}}\ar@/^/[r]^{\uline{e}_5^{u_2}} & \bullet_{\uline{v}_1^{u_2}}\ar@(dr,dl)[]^{\uline{e}_1^{u_2}} \ar@/^/[l]^{\uline{e}_6^{u_2}}\ar@/^/[u]^{\uline{e}_2^{u_2}} 
}
\end{align*}
and $\uuline{\mathbf{E}}_{u_0,\#\#}$ is the graph:
\begin{align*}
\xymatrix{
\bullet_{\uuline{w}_4^{u_1}}\ar@(ul,ur)[]^{\uuline{f}_{10}^{u_1}}\ar@/^/[d]^{\uuline{f}_9^{u_1}} &&&&& \bullet_{\uuline{w}_4^{u_2}}\ar@(ul,ur)[]^{\uuline{f}_{10}^{u_2}}\ar@/^/[d]^{\uuline{f}_9^{u_2}} \\
\bullet_{\uuline{w}_3^{u_1}}\ar@(dl,ul)[]^{\uuline{f}_{7}^{u_1}}\ar@/^/[u]^{\uuline{f}_8^{u_1}}\ar@/^/[d]^{\uuline{f}_6^{u_1}} &&&&& \bullet_{\uuline{w}_3^{u_2}}\ar@(ur,dr)[]^{\uuline{f}_{7}^{u_2}}\ar@/^/[u]^{\uuline{f}_8^{u_2}}\ar@/^/[d]^{\uuline{f}_6^{u_2}} \\
\bullet_{\uuline{w}_2^{u_1}}\ar@/^/[u]^{\uuline{f}_5^{u_1}}\ar@(dl,ul)[]^{\uuline{f}_4^{u_1}}\ar@/^/[d]^{\uuline{f}_3^{u_1}} && \bullet_{\uuline{u}_0} \ar@/_/[uull]^{\uuline{f}_{15}^{u_1}} \ar@/_1pc/[uull]_{\uuline{f}_{16}^{u_1}} \ar@/^1pc/[uurrr]^{\uuline{f}_{15}^{u_2}} \ar@/^/[uurrr]_{\uuline{f}_{16}^{u_2}} \ar@/^/[ll]^{\uuline{f}_{14}^{u_1}} \ar@/_/[ll]_{\uuline{f}_{13}^{u_1}} \ar@(ul,ur)[]^{\uline{e}_0}\ar@/^/[drr]|{\uuline{c}_0^{u_2}}\ar@/_/[dl]|{\uuline{c}_0^{u_1}} \ar@/^/[rrr]^{\uuline{f}_{14}^{u_2}} \ar@/_/[rrr]_{\uuline{f}_{13}^{u_2}} &&& \bullet_{\uuline{w}_2^{u_2}} \ar@(ur,dr)[]^{\uuline{f}_4^{u_2}} \ar@/^/[d]^{\uuline{f}_3^{u_2}} \ar@/^/[u]^{\uuline{f}_5^{u_2}} \\
\bullet_{\uuline{w}_1^{u_1}}\ar@(d,l)[]^{\uuline{f}_1^{u_1}}\ar@/^/[u]^{\uuline{f}_2^{u_1}}\ar@/^/[r]^{\uuline{f}_{12}^{u_1}}&  \bullet_{\uuline{u}_1}\ar@(dr,dl)[]^{\uuline{c}_1^{u_1}}\ar@/^/[l]^{\uuline{f}_{11}^{u_1}} &  &  &  \bullet_{\uuline{u}_2}\ar@(dr,dl)[]^{\uuline{c}_1^{u_2}}\ar@/^/[r]^{\uuline{f}_{11}^{u_2}} & \bullet_{\uuline{w}_1^{u_2}}\ar@(dr,dl)[]^{\uuline{f}_1^{u_2}} \ar@/^/[l]^{\uuline{f}_{12}^{u_2}}\ar@/^/[u]^{\uuline{f}_2^{u_2}} }
\end{align*}
The idea for the \emph{Pulelehua move} came during a research workshop that the
first three authors had in Hawaii. To us the graph $\uuline{\mathbf{E}}_{u_0,\#\#}$ above looks like a butterfly, so we chose the name \emph{Pulelehua}, which is the Hawaiian name for the \emph{Kamehameha butterfly}, a certain Hawaiian butterfly.
\end{example}

\begin{proposition} \label{prop:hashmovetwice}
Let $E$ be a graph with finitely many vertices, and let $u_0$ be a vertex of $E$ such that these satisfy Assumption~\ref{assumption:1-hash}. 
So $\Bsf_E\in\MPZcc$ for some $\mathbf{m},\mathbf{n}\in\N_0^N$ and some $\calP$ satisfying Assumption~\ref{ass:preorder} (\cf\ Assumption~\ref{assumption:1-hash}).
Assume, moreover, that $u_0$ belongs to the block $j\in\calP$. 

Let $\mathbf{r}=(r_i)_{i=1}^N$, where $r_i=4$ for all immediate successors $i$ of $j$, and $r_i=0$ otherwise. 
Then 
$\Bsf_{E_{u_0,\#\#}}\in\MPZcc[(\mathbf{m}+\mathbf{r})\times(\mathbf{n}+\mathbf{r})]$ 
and there is an \SLPEe from $-\iota_{\mathbf{r}}(-\Bsf_E^\bullet)$ to $\Bsf_{E_{u_0,\#\#}}^\bullet$. 
If, moreover, $\Bsf_E\in\MPZccc$, then $\Bsf_{E_{v,\#\#}}\in\MPZccc[(\mathbf{m}+\mathbf{r})\times(\mathbf{n}+\mathbf{r})]$. 
\end{proposition}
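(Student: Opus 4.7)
The plan is to reduce to Proposition~\ref{prop:cuntzsplicetwice} by interposing an intermediate graph $\tilde E$ in which each immediate successor $u$ of $u_0$ has been Cuntz-spliced twice but no extra edges from $u_0$ have been added. The graph $E_{u_0,\#\#}$ then differs from $\tilde E$ only in the row indexed by $u_0$ (two entries per immediate successor), and this residual difference can be installed by a single elementary row operation.

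For the block-structural claim, for each immediate successor $u$ of $u_0$ (in block $i$) the four added vertices $w_1^u,\ldots,w_4^u$ lie in the strongly connected component of $u$: the edges $f_{11}^u, f_{12}^u$ make $u$ and $w_1^u$ mutually reachable, and the Cuntz-splice-twice pattern keeps $w_1^u,\ldots,w_4^u$ mutually reachable, with $u_0$ still an immediate predecessor via the edges $c_0^u, f_{13}^u,\ldots,f_{16}^u$. Each $w_\ell^u$ carries its own loop, so the enlarged component remains non-cyclic; no new transition states are introduced, and the singleton $\{u_0\}$ is unchanged. Thus $\Bsf_{E_{u_0,\#\#}} \in \MPZcc[(\mathbf{m}+\mathbf{r})\times(\mathbf{n}+\mathbf{r})]$, and since no new cyclic components are created, the $\MPZccc$ condition (when $\Bsf_E\in\MPZccc$) is preserved as well.

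For the \SLPEe, note that by Assumption~\ref{assumption:1-hash}(4), (7), (8) each immediate successor $u$ of $u_0$ supports at least two distinct return paths: its unique loop, and a return path through its other outgoing edge followed by a path back to $u$ within its non-cyclic strongly connected component (such a path exists by strong connectedness). Hence Proposition~\ref{prop:cuntzsplicetwice} applies to each such $u$. Iterating --- which is possible because the Cuntz-splice structures added for distinct $u$'s are disjoint and the matrices produced in Proposition~\ref{prop:cuntzsplicetwice} are the identity outside of the single diagonal block containing the Cuntz-spliced vertex --- yields an \SLPEe $(U_1,V_1)\colon -\iota_\mathbf{r}(-\Bsf_E^\bullet)\to \Bsf_{\tilde E}^\bullet$ with $\Bsf_{\tilde E}\in \MPZccc[(\mathbf{m}+\mathbf{r})\times(\mathbf{n}+\mathbf{r})]$. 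Comparing $\Bsf_{\tilde E}^\bullet$ and $\Bsf_{E_{u_0,\#\#}}^\bullet$, the only differences occur in row $u_0$: the entries at $(u_0,w_2^u)$ and $(u_0,w_4^u)$ are $0$ in the former and $2$ in the latter, for each immediate successor $u$. Setting $U_2 := I + \sum_u 2\,E_{u_0, w_3^u}$ (sum over the immediate successors $u$ of $u_0$), and using that the $w_3^u$-row of $\Bsf_{\tilde E}^\bullet$ has entries $1$ at columns $w_2^u, w_4^u$ and $0$ elsewhere, we get $U_2 \Bsf_{\tilde E}^\bullet = \Bsf_{E_{u_0,\#\#}}^\bullet$. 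Since each position $(u_0,w_3^u)$ lies in a $\{j,i\}$-block with $j\preceq i$, the matrix $U_2$ is upper-triangular with ones on the diagonal and so is in $\SLPZ[\mathbf{m}+\mathbf{r}]$. Therefore $(U_2 U_1, V_1)$ is the desired \SLPEe from $-\iota_\mathbf{r}(-\Bsf_E^\bullet)$ to $\Bsf_{E_{u_0,\#\#}}^\bullet$.

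The main technical point is arranging the iteration of Proposition~\ref{prop:cuntzsplicetwice} over distinct immediate successors $u$ in potentially different diagonal blocks to yield a compatible global \SLPEe; this is straightforward because the matrices there are supported on the single diagonal block containing the Cuntz-spliced vertex and so commute pairwise. The key combinatorial observation that makes the second step clean is that $w_3^u$ emits edges exactly to $w_2^u$ and $w_4^u$ (apart from its own loop), so a single upper-triangular elementary operation produces precisely the two missing entries in the $u_0$-row, with no collateral corrections required.
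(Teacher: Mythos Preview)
Your reduction to Proposition~\ref{prop:cuntzsplicetwice} does not go through because the graph $\tilde E$ obtained by applying the double Cuntz splice $E_{u,--}$ at each immediate successor $u$ is \emph{not} the graph $E_{u_0,\#\#}$ with the extra $u_0$-edges removed. The four new vertices in $E_{u,--}$ sit in a ``Y''-shape: writing them $a_1,a_2,a_3,a_4$ (in the order used in the proof of Proposition~\ref{prop:cuntzsplicetwice}, where the lower-right $4\times 4$ block of $\Bsf^\bullet$ is $\begin{smallpmatrix}0&1&1&0\\1&0&0&0\\1&0&0&1\\0&0&1&0\end{smallpmatrix}$), one has $u\leftrightarrow a_1$, $a_1\leftrightarrow a_2$, $a_1\leftrightarrow a_3$, $a_3\leftrightarrow a_4$, so $a_1$ is a hub of degree three among the new vertices. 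By contrast, the $\mathbf{E}_{**}^u$ piece of $E_{u_0,\#\#}$ is a genuine chain $u\leftrightarrow w_1^u\leftrightarrow w_2^u\leftrightarrow w_3^u\leftrightarrow w_4^u$. Hence $\Bsf_{\tilde E}^\bullet$ and $\Bsf_{E_{u_0,\#\#}}^\bullet$ differ inside the diagonal blocks of the successor components, not only in row $u_0$, and your comparison step fails. In particular, your elementary operation $U_2$ relies on the $w_3^u$-row having its only nonzero entries at columns $w_2^u$ and $w_4^u$ --- true for the chain $\mathbf{E}_{**}^u$ --- but there is no vertex with that row pattern in the $E_{u,--}$ structure.

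A secondary issue: Proposition~\ref{prop:cuntzsplicetwice} is stated under the hypothesis $\Bsf_E\in\MPZccc$, whereas here you only have $\Bsf_E\in\MPZcc$. The explicit matrix identity in its proof does not actually require the extra circle, but you would need to say so. The paper's proof avoids both issues by writing down the \SLPEe $(U,V)$ directly, with the $U\{j,i\}$-block encoding the $u_0$-row contribution and the diagonal blocks $U\{i\},V\{i\}$ tailored to the chain $\mathbf{E}_{**}^u$ rather than to $E_{u,--}$.
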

\begin{proof}
It is clear that 
$\Bsf_{E_{u_0,\#\#}}\in\MPZcc[(\mathbf{m}+\mathbf{r})\times(\mathbf{n}+\mathbf{r})]$. 
It is also clear, that $\Bsf_{E_{u_0,\#\#}}\in\MPZccc[(\mathbf{m}+\mathbf{r})\times(\mathbf{n}+\mathbf{r})]$ whenever $\Bsf_E\in\MPZccc$.

We construct matrices $U\in\SLPZ[\mathbf{m}+\mathbf{r}]$ and  $V\in\SLPZ[\mathbf{n}+\mathbf{r}]$ as follows.
For each immediate successor $i\in\calP$ of $j$, we set
\begin{align*}
U\{i\}&:= \begin{pmatrix}
I & 
\begin{smallpmatrix} 
0 & 0 & 0 & 0 \\ 
\vdots & \vdots & \vdots & \vdots \\ 
0 & 1 & 0 & 0 
\end{smallpmatrix} \\ 
\begin{smallpmatrix} 
0 & \cdots & 0 \\ 
0 & \cdots & 0 \\ 
0 & \cdots & 0 \\ 
0 & \cdots & 0 
\end{smallpmatrix} & 
\begin{smallpmatrix} 
1 & 0 & 0 & 0 \\ 
0 & 1 & 0 & 1 \\ 
0 & 0 & 1 & 0 \\ 
0 & 0 & 0 & 1 
\end{smallpmatrix}
\end{pmatrix}, \\ 
U\{j,i\}&:=\begin{pmatrix}
\begin{smallpmatrix} 
0&\cdots & 0 
\end{smallpmatrix} & 
\begin{smallpmatrix} 
0 & 0 & 2 & 0
\end{smallpmatrix} 
\end{pmatrix}, \\
V\{i\}&:= \begin{pmatrix}
I & 
\begin{smallpmatrix} 
0 & 0 & 0 & 0 \\ 
\vdots & \vdots & \vdots & \vdots \\ 
0 & 0 & 0 & 0 
\end{smallpmatrix} \\ 
\begin{smallpmatrix} 
0 & \cdots & -1 \\ 
0 & \cdots & 0 \\ 
0 & \cdots & 0 \\ 
0 & \cdots & 0 
\end{smallpmatrix} & 
\begin{smallpmatrix} 
0 & -1 & 0 & 0 \\ 
-1 & 0 & 0 & 0 \\ 
0 & -1 & 0 & -1 \\ 
0 & 0 & -1 & 0 
\end{smallpmatrix}
\end{pmatrix},
\end{align*}
where we assume that each $u\in r_E(s_E^{-1}(u_0))\setminus\{u_0\}$ corresponds to the last vertex in its component when writing the adjacency matrix in block form. 
Everywhere else, we let $U$ and $V$ be equal to the identity. 
An elementary computation shows that $(U,V)$ is an \SLPEe from $-\iota_{\mathbf{r}}(-\Bsf_E^\bullet)$ to $\Bsf_{E_{u_0,\#\#}}^\bullet$. 
\end{proof}

\begin{corollary} \label{cor:hashmovetwice}
Let $E$ be a graph with finitely many vertices, and let $u_0$ be a vertex in $E^0$ such that these satisfy Assumption~\ref{assumption:1-hash}, and assume, moreover, that every cyclic strongly connected component is a singleton (\ie, $\Bsf_E\in\MPZccc$). 
Then $E \Meq E_{u_0,\#\#}$ and $C^*(E)\otimes\K\cong C^*(E_{u_0,\#\#})\otimes\K$.  
\end{corollary}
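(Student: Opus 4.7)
The plan is to simply combine Proposition~\ref{prop:hashmovetwice} with Theorem~\ref{thm:SLP-equivalence-implies-stable-isomorphism}. First I would verify that the hypotheses of Theorem~\ref{thm:SLP-equivalence-implies-stable-isomorphism} are met: both $E$ and $E_{u_0,\#\#}$ have finitely many vertices, and by Proposition~\ref{prop:hashmovetwice} we have $\Bsf_E\in\MPZccc[\mathbf{m}\times\mathbf{n}]$ and $\Bsf_{E_{u_0,\#\#}}\in\MPZccc[(\mathbf{m}+\mathbf{r})\times(\mathbf{n}+\mathbf{r})]$ for the multiindex $\mathbf{r}$ described there (supported on the immediate successors of $j$).

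Next I would check the compatibility condition on multiindices. Since $\mathbf{r}$ is the same on both sides (we are comparing $\Bsf_E^\bullet$ enlarged by $\iota_\mathbf{r}$ with $\Bsf_{E_{u_0,\#\#}}^\bullet$), we have $(\mathbf{n}+\mathbf{r})-(\mathbf{m}+\mathbf{r})=\mathbf{n}-\mathbf{m}$ as required. Moreover, each immediate successor of $j$ is assumed in Assumption~\ref{assumption:1-hash} to be a non-cyclic strongly connected component, and the construction of $E_{u_0,\#\#}$ (attaching copies of $\mathbf{E}_{**}^u$ and adding edges into them) does not alter which $i\in\calP$ correspond to cyclic versus non-cyclic strongly connected components, nor which correspond to singular vertices; so the structural matching required by Theorem~\ref{thm:SLP-equivalence-implies-stable-isomorphism} holds, and $\mathbf{r}$ is supported precisely on indices that correspond to non-cyclic strongly connected components, as demanded.

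Finally, Proposition~\ref{prop:hashmovetwice} supplies an \SLPEe $(U,V)$ from $-\iota_{\mathbf{r}}(-\Bsf_E^\bullet)$ to $\Bsf_{E_{u_0,\#\#}}^\bullet$, so Theorem~\ref{thm:SLP-equivalence-implies-stable-isomorphism} immediately yields $E\Meq E_{u_0,\#\#}$ together with a $\calP$-equivariant isomorphism $\Phi\colon C^*(E)\otimes\K\to C^*(E_{u_0,\#\#})\otimes\K$ (from which $C^*(E)\otimes\K\cong C^*(E_{u_0,\#\#})\otimes\K$ is immediate). There is no hard step: all the real work was done in constructing the explicit matrices $U,V$ in the proof of Proposition~\ref{prop:hashmovetwice} and in proving Theorem~\ref{thm:SLP-equivalence-implies-stable-isomorphism}; the corollary is just an invocation of those results.
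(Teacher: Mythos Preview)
Your proposal is correct and follows essentially the same route as the paper, which simply cites Proposition~\ref{prop:hashmovetwice}, Theorem~\ref{thm:SLP-equivalence-implies-stable-isomorphism}, and Theorem~\ref{thm:moveimpliesstableisomorphism}. Your explicit verification of the hypotheses of Theorem~\ref{thm:SLP-equivalence-implies-stable-isomorphism} (matching multiindices, $\mathbf{r}$ supported only on non-cyclic strongly connected components, unchanged component types) is exactly what is implicit in the paper's one-line proof; the paper's additional citation of Theorem~\ref{thm:moveimpliesstableisomorphism} is in fact redundant, since Theorem~\ref{thm:SLP-equivalence-implies-stable-isomorphism} already delivers both the move equivalence and the stable isomorphism.
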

\begin{proof}
This follows directly from Proposition~\ref{prop:hashmovetwice}, Theorem~\ref{thm:SLP-equivalence-implies-stable-isomorphism}, and \cite[Theorem~2.7]{MR3759003}. 
\end{proof}

\subsection{A classification result}
\label{subsec:classification-result}
By classification of simple, purely infinite graph \cas, \ie, by Kirchberg-Phillips classification, the graph \cas $C^*(\mathbf{E}_*)$ and $C^*(\mathbf{E}_{**})$ are isomorphic (this important case is actually due to R{\o}rdam, \cf\ \cite{MR1340839}). 
This is a key component in the proof of $C^*(E_{u,-})\cong C^*(E_{u,--})$ (\cf\ Theorem~\ref{thm:cuntz-splice-1}). Similarly to that proof, to show that $C^*(E_{u_0,\#})$ is isomorphic to $C^*(E_{u_0,\#\#})$, we would like to know that $C^*(\uline{\mathbf{E}}_{u_0,\#})$ and $C^*(\uuline{\mathbf{E}}_{u_0,\#\#})$ are still isomorphic if we do not enforce the summation relation at $\uline{v}_1$ and $\uuline{w}_1$, respectively. 

\begin{lemma}\label{lem: multiple strands}
Let $\A_{1}$ and $\A_{2}$ be unital, separable \cas.  For each $i = 1,2$, let $\{ \B_{k,i} \}_{ k = 1}^{n}$ be a collection of ideals of $\A_{i}$ such that 
\begin{enumerate}
\item each $\B_{k,i}$ is a separable, stable \ca and satisfies the corona factorization property,

\item $\B_{k,i} \cap \B_{l,i} = 0$ for all $k \neq l$,

\item $\A_{i} / \left( \bigoplus_{k=1}^{n} \B_{k,i} \right) \cong C(S^{1})$, and

\item the extension $\mathfrak{e}_{l, i}$ obtained by pushing forward the extension 
\[
\mathfrak{e}_{i} \colon 0 \to \bigoplus_{ k = 1}^{n} \B_{k,i} \to \A_{i} \to C(S^{1}) \to 0
\] via the surjective \starhomo $\pi_{l} \colon \bigoplus_{ k = 1}^{n} \B_{k,i} \rightarrow \B_{l,i}$ is a full extension.
\end{enumerate}
Suppose for each $k$, there exists a \stariso $\Psi_{k} \colon \B_{k,1} \rightarrow \B_{k,2}$ and there exists a \stariso $\Phi \colon C(S^{1}) \rightarrow C(S^{1})$ such that 
\[
K_{1-j} ( \Psi_{k} ) \circ K_{j} ( \tau_{\mathfrak{e}_{k,1}} ) =  K_{j} ( \tau_{\mathfrak{e}_{k,2}} ) \circ K_{j} ( \Phi ), 
\]
where we have identified $K_{j} ( \mathcal{Q} ( \B_{k,i} ))$ with $K_{1-j} ( \B_{k,i} )$, for $j=0,1$.  Then there exists a unitary $u \in \mathcal{M}(\B_{2})$ such that 
\[
\mathrm{Ad} ( \pi (u) ) \circ \overline{ \Psi } \circ \tau_{ \mathfrak{e}_{1} } = \tau_{ \mathfrak{e}_{2} } \circ \Phi
\]
where $\overline{\Psi}$ is the \stariso from $\mathcal{Q}( \B_{1} )$ to $\mathcal{Q}( \B_{2} )$ induced by the \stariso $\Psi = \bigoplus_{ k =1}^{n} \Psi_{k} \colon \B_1=\bigoplus_{ k  =1}^{n} \B_{k,1} \rightarrow \B_2=\bigoplus_{ k  =1}^{n}\B_{k,2}$ and $\pi$ is the canonical surjective \starhomo from $\mathcal{M}(\B_2)$ to $\mathcal{Q}(\B_2)$.
\end{lemma}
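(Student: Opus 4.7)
The plan is to reduce everything to the separate factors $\B_{k,i}$, then invoke the Kucerovsky--Ng absorption theorem for full extensions together with the UCT for $C(S^{1})$. Since the index set $\{1,\dots,n\}$ is finite, one has $\mathcal{M}(\B_{i})=\bigoplus_{k}\mathcal{M}(\B_{k,i})$ and $\mathcal{Q}(\B_{i})=\bigoplus_{k}\mathcal{Q}(\B_{k,i})$, with $\pi=\bigoplus_{k}\pi_{k}$, and under these identifications $\tau_{\mathfrak{e}_{i}}=\bigoplus_{k}\tau_{\mathfrak{e}_{k,i}}$ and $\overline{\Psi}=\bigoplus_{k}\overline{\Psi_{k}}$. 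The sought identity therefore decomposes into $n$ independent identities, one for each $k$: it suffices to produce a unitary $u_{k}\in\mathcal{M}(\B_{k,2})$ with
\begin{equation*}
\mathrm{Ad}(\pi_{k}(u_{k}))\circ\overline{\Psi_{k}}\circ\tau_{\mathfrak{e}_{k,1}} \;=\; \tau_{\mathfrak{e}_{k,2}}\circ\Phi,
\end{equation*}
and then set $u=(u_{k})_{k}$.

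Fix $k$. Both $\overline{\Psi_{k}}\circ\tau_{\mathfrak{e}_{k,1}}$ and $\tau_{\mathfrak{e}_{k,2}}\circ\Phi$ are Busby invariants of full extensions of the separable nuclear \ca $C(S^{1})$ by the separable stable \ca $\B_{k,2}$ with the corona factorization property (fullness is preserved under composing with $\overline{\Psi_{k}}$ and pre-composing with the automorphism $\Phi$). The Kucerovsky--Ng absorption theorem then says any such full extension is (strongly) absorbing, so two full extensions with target $\mathcal{Q}(\B_{k,2})$ are unitarily equivalent via a unitary of the form $\pi_{k}(v)$ with $v\in\mathcal{M}(\B_{k,2})$ if and only if they determine the same class in $KK^{1}(C(S^{1}),\B_{k,2})$. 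It remains to check that the K-theoretic compatibility hypothesis forces these two classes to agree.

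For this, invoke the UCT for the bootstrap algebra $C(S^{1})$: since $K_{*}(C(S^{1}))$ is free, the $\mathrm{Ext}^{1}_{\Z}$ terms vanish and
\begin{equation*}
KK^{1}(C(S^{1}),\B_{k,2})\;\cong\;\bigoplus_{j=0,1}\mathrm{Hom}\bigl(K_{j}(C(S^{1})),K_{1-j}(\B_{k,2})\bigr),
\end{equation*}
where the isomorphism is induced by the boundary maps $K_{j}(\mathcal{Q}(\B_{k,2}))\cong K_{1-j}(\B_{k,2})$ coming from the stability of $\B_{k,2}$. Applying $K_{j}$ to $\overline{\Psi_{k}}\circ\tau_{\mathfrak{e}_{k,1}}$ gives $K_{1-j}(\Psi_{k})\circ K_{j}(\tau_{\mathfrak{e}_{k,1}})$, which by assumption equals $K_{j}(\tau_{\mathfrak{e}_{k,2}})\circ K_{j}(\Phi)$; the latter is $K_{j}$ of $\tau_{\mathfrak{e}_{k,2}}\circ\Phi$. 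Hence the two Busby invariants induce the same homomorphisms on $K$-theory and so represent the same class in $KK^{1}(C(S^{1}),\B_{k,2})$. The absorbing property then yields the desired unitary $u_{k}\in\mathcal{M}(\B_{k,2})$, and assembling $u=(u_{k})_{k}$ completes the argument. The main technical obstacle is verifying the hypotheses of the Kucerovsky--Ng theorem for each pushforward extension---in particular that fullness is preserved under pre- and post-composition with the $\Psi_{k}$ and $\Phi$---and tracking the identifications $K_{j}(\mathcal{Q}(\B_{k,i}))\cong K_{1-j}(\B_{k,i})$ carefully enough that the K-theoretic compatibility hypothesis translates literally into equality of $KK^{1}$-classes.
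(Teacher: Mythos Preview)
Your approach is correct and follows the same overall strategy as the paper---use the UCT for $C(S^{1})$ to identify the $KK^{1}$-classes, then invoke absorption to obtain proper unitary equivalence. The main organizational difference is that you decompose everything into the $n$ factors $\B_{k,i}$ and argue componentwise, whereas the paper works with the global extension $\mathfrak{e}_{i}$ directly: it first shows $\mathfrak{e}_{i}$ itself is full (using the identification $\mathcal{Q}(\B_{i})\cong\bigoplus_{k}\mathcal{Q}(\B_{k,i})$ and fullness of each $\mathfrak{e}_{k,i}$), notes that $\B_{i}$ inherits the corona factorization property, and applies Elliott--Kucerovsky once. Your componentwise route is arguably cleaner, since the assembly $u=(u_{k})_{k}$ is immediate.

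The one place where you are more casual than the paper is the step ``same $KK^{1}$-class $\Rightarrow$ properly unitarily equivalent via $u\in\mathcal{M}(\B_{k,2})$'' for two \emph{unital} absorbing extensions. The paper does not take this as a black box: it invokes R{\o}rdam's observation that equality in $\mathrm{Ext}$ gives a unitary $w\in\mathcal{M}(\B_{2})$ with $\mathrm{Ad}(\pi(w))\circ(\tau_{\mathfrak{f}_{1}}\oplus\sigma_{1})=\tau_{\mathfrak{f}_{2}}\oplus\sigma_{2}$ for \emph{unital} trivial $\sigma_{i}$, then verifies these $\sigma_{i}$ are \emph{strongly} unital (lift to unital maps into $\mathcal{M}$) so that absorption applies to cancel them. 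Your citation of ``Kucerovsky--Ng'' covers the implication ``full $+$ CFP $\Rightarrow$ absorbing,'' but the passage from absorbing to proper unitary equivalence in the unital case needs exactly this care; you should either spell this out or cite a reference where the unital case is handled explicitly.
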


\begin{proof}
We claim that $\mathfrak{e}_{i}$ is an absorbing extension (note that the quotient is $C(S^1)$, and thus nuclear).  We first show that $\mathfrak{e}_{i}$ is a full extension.  Let $\Xi \colon \mathcal{Q}( \B_{i} ) \rightarrow \bigoplus_{ k = 1}^{n} \mathcal{Q} ( \B_{k,i} )$ be the canonical \stariso such that $\Xi \circ \tau_{ \mathfrak{e}_{i} } = \bigoplus_{ k = 1}^{ n } \tau_{ \mathfrak{e}_{k,i} }$.  Let $f \in C(S^{1})$ with $f \neq 0$.  Then $\tau_{ \mathfrak{e}_{k,i} } (f)$ is full in $\mathcal{Q} ( \B_{k,i} )$ by assumption.  Hence, $\Xi \circ \tau_{ \mathfrak{e}_{i} } (f)$ is full in $\bigoplus_{ k = 1}^{n} \mathcal{Q} ( \B_{k,i} )$ which implies that $\tau_{ \mathfrak{e}_{i} } (f)$ is full in $\mathcal{Q} ( \B_{i} )$.  Thus, showing that $\mathfrak{e}_{i}$ is a full extension.  Since $\B_{k,i}$ satisfies the corona factorization property, $\B_{i}$ satisfies the corona factorization property.  These two observations imply that $\mathfrak{e}_{i}$ is a purely large extension, and by Elliott-Kucerovsky \cite{MR1835515}, $\mathfrak{e}_{i}$ is an absorbing extension.

The assumption that 
\[
K_{1-j} ( \Psi_{k} ) \circ K_{j} ( \tau_{\mathfrak{e}_{k,1}} ) =  K_{j} ( \tau_{\mathfrak{e}_{k,2}} ) \circ K_{j} ( \Phi ), 
\]
for $j=0,1$, implies that for $j=0,1$
\begin{equation}\label{eq: index maps}
K_{1-j} ( \Psi ) \circ K_{j} ( \tau_{ \mathfrak{e}_{1} } ) = K_{j} ( \tau_{ \mathfrak{e}_{2} } ) \circ K_{j} ( \Phi ) . 
\end{equation}

Let $\mathfrak{f}_{1}$ be the extension obtained by pushing forward the extension $\mathfrak{e}_{1}$ via the \stariso $\Psi \colon \B_{1} \rightarrow \B_{2}$ and let $\mathfrak{f}_{2}$ be the extension obtained by pulling back the extension $\mathfrak{e}_2$ via the \stariso $\Phi \colon C(S^{1}) \rightarrow C(S^{1})$.  By Equation~\eqref{eq: index maps}, we have that $K_{0} ( \tau_{ \mathfrak{f}_{1} } ) = K_{0} ( \tau_{ \mathfrak{f}_{2} } )$ and $K_{1} ( \tau_{ \mathfrak{f}_{1} } ) = K_{1} ( \tau_{ \mathfrak{f}_{2} } )$.  Since the $K$-theory of $C(S^{1})$ is free, by the UCT, $[ \tau_{ \mathfrak{f}_{1} } ] = [ \tau_{ \mathfrak{f}_{2} } ]$ in $\kk^{1} ( C(S^{1} ), \B_{2} ) = \mathrm{Ext}^{1} ( C(S^{1}), \B_{2} )$.

Since $\tau_{\mathfrak{f}_{1}}$ and $\tau_{\mathfrak{f}_{2}}$ are unital extensions, by \cite[Page 112, 2nd paragraph]{MR1446202}, there are unital, trivial extensions $\sigma_{1} \colon C(S^{1} ) \rightarrow \mathcal{Q} ( \B_{2} )$ and $\sigma_{2} \colon C(S^{1} ) \rightarrow \mathcal{Q} ( \B_{2} )$ and a unitary $w$ in $\mathcal{M}( \mathfrak{B}_{2} )$ such that 
\[
\mathrm{Ad} ( \pi(w) ) \circ ( \tau_{ \mathfrak{f}_{1} } \oplus \sigma_{1} ) = \tau_{ \mathfrak{f}_{2}} \oplus \sigma_{2}.
\]

We claim that $\sigma_i$ is strongly unital, \ie, there exists a unital \starhomo $\widetilde{\sigma}_i \colon C( S^1 ) \rightarrow \mathcal{M} ( \B_2 )$ such that $\pi \circ \widetilde{\sigma}_i = \sigma_i$.  Since $\sigma_i$ is a trivial extension, there exists a \starhomo $\sigma_i' \colon C(S^1) \rightarrow \mathcal{M}( \B_2 )$ such that $\pi \circ \sigma_i' = \sigma_i$.  If $\sigma_i'$ is a unital \starhomo, then set $\widetilde{\sigma}_i = \sigma_i'$.  Suppose $\sigma_i'$ is not a unital \starhomo.  Since $\pi \circ \sigma_i' = \sigma_i$ and $\sigma_i$ is unital, we have that $p_i = 1_{ \mathcal{M}( \B_2 ) } - \sigma_i' ( 1_{ C(S^1) } ) \in \B_2$.  Let $\sigma_i'' \colon C(S^1) \rightarrow \B_2$ be the \starhomo defined by $\sigma_i'' ( f ) = f(1) p_i$.  Set $\widetilde{ \sigma}_i = \sigma_i' + \sigma_i''$.  Since $\sigma_i' (f) \sigma_i''(g) = \sigma_i'' (f) \sigma_i' (g) = 0$ for all $f,g \in C(S^1)$, we have that $\widetilde{ \sigma }_i$ is a unital \starhomo from $C(S^1)$ to $\mathcal{M}( \B_2 )$.  Since the image of $\sigma_i''$ is contained in $\B_2$, we have that $\pi \circ \widetilde{\sigma}_i = \sigma_i$.  This proves our claim.

Since $\mathfrak{e}_i$ is a unital, absorbing extension, $\mathfrak{f}_i$ is a unital, absorbing extension.  Hence, there exists a unitary $v_i$ in $\mathcal{M} ( \B_2 )$ such that 
\[
\mathrm{Ad} ( \pi( v_i ) ) \circ ( \tau_{\mathfrak{f}_{i}} \oplus \sigma_i ) = \tau_{\mathfrak{f}_{i}}
\]
since $\sigma_i$ is a strongly unital, trivial extension.  Set $u = v_2 w v_1^*$.  Then $u$ is a unitary in $\mathcal{M}( \B_2 )$ such that 
\begin{align*}
\left( \mathrm{Ad} ( \pi (u) ) \circ \tau_{ \mathfrak{f}_1 } \right)(f) &= \pi(v_2 w v_1^*) \tau_{\mathfrak{f}_{1}} (f) \pi ( v_1 w^* v_2^* ) \\
 												&= \pi ( v_2  w ) ( \tau_{\mathfrak{f}_{1}} \oplus \sigma_1)(f) \pi ( w^* v_2^* ) \\
												&= \pi ( v_2 ) ( \tau_{\mathfrak{f}_2} \oplus \sigma_2 )(f) \pi ( v_2^* ) \\
										&= \tau_{ \mathfrak{f}_2 } (f).
\end{align*} 
By \cite{MR1725815}, $\tau_{ \mathfrak{f}_{1} } = \overline{ \Psi } \circ \tau_{ \mathfrak{e}_{1} }$ and $\tau_{ \mathfrak{f}_{2} } = \tau_{ \mathfrak{e}_{2} } \circ \Phi$.  Thus, 
\[
\mathrm{Ad} ( \pi (u) ) \circ \overline{ \Psi } \circ \tau_{ \mathfrak{e}_{1} } = \mathrm{Ad} ( \pi (u) ) \circ \tau_{ \mathfrak{f}_{1} } =  \tau_{ \mathfrak{f}_{2} } =\tau_{ \mathfrak{e}_{2} } \circ \Phi. \qedhere 
\]
\end{proof}

\begin{lemma}\label{lem: K-theory ideal}
Consider the graphs 
\[
E = \ \ \ \ \xymatrix{
  \bullet^{u} \ar@(ul,ur)[]^{e_{7}}  \ar@/^/[r]^{e_{5}} \ar[d]^{ e_{7}' }& \bullet^{v_1} \ar@(ul,ur)[]^{e_{1}} \ar@/^/[r]^{e_{2}} \ar@/^{}/[l]^{e_{6}} \ar[ld]^{ e_{6}'} & \bullet^{v_2} \ar@(ul,ur)[]^{e_{4}} \ar@/^/[l]^{e_{3}} \\
  \bullet^{u'}
}
\]
and 
\[
F \  =  \ \ \ \ \xymatrix{
\bullet^{w} \ar@(ul,ur)[]^{f_{13}}  \ar@/^/[r]^{f_{11}} \ar[d]^{ f_{13}' }& 	\bullet^{ w_{1} } \ar@(ul,ur)[]^{f_{1}}  \ar@/^/[r]^{ f_{2} } \ar@/^{}/[l]^{f_{12}} \ar@/^{}/[ld]^{f_{12}'}  & \bullet^{ w_{2} } \ar@(ul,ur)[]^{f_{4}} \ar@/^/[r]^{ f_{5} }  \ar@/^/[l]^{f_{3}} &  \bullet^{w_3} 				\ar@(ul,ur)[]^{f_{7}} \ar@/^/[r]^{f_{8}} \ar@/^/[l]^{f_{6}}
	& \bullet^{w_4} \ar@(ul,ur)[]^{f_{10}} \ar@/^/[l]^{f_{9}} \\
	\bullet^{w'}
	}
\]
Then there exists a \stariso $\Phi \colon C^{*} ( E ) \otimes \K \rightarrow C^{*} (F) \otimes \K$ such that $\Phi ( q_{u} \otimes e_{11} ) \sim Q_{w} \otimes e_{11}$ and $\Phi ( q_{u'}  \otimes e_{11}) \sim Q_{w'} \otimes e_{11}$, where $\setof{ q_{u} , t_{e} }{ u \in E^{0} , e \in E^{1} }$ is a Cuntz-Krieger $E$-family generating $C^{*} (E)$, $\setof{ Q_{u} , T_{e} }{ u \in F^{0} , e \in F^{1} }$ is a Cuntz-Krieger $F$-family generating $C^{*} (F)$, and $\{ e_{ij} \}$ is a system of matrix units for $\K$.
\end{lemma}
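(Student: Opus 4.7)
The plan is to recognize both $C^*(E)$ and $C^*(F)$ as extensions of unital Kirchberg algebras by $\K$, classify the extensions using the absorbing-extension machinery, and then track the four projections through the resulting isomorphism. In both graphs the singleton containing the sink ($\{u'\}$ in $E$ and $\{w'\}$ in $F$) is the unique nontrivial saturated hereditary subset of vertices, yielding short exact sequences
\[
0 \to \mathfrak{J}_E \to C^*(E) \to B_E \to 0, \qquad 0 \to \mathfrak{J}_F \to C^*(F) \to B_F \to 0
\]
with $\mathfrak{J}_E \cong \mathfrak{J}_F \cong \K$ (each sink generates a simple ideal containing a minimal projection), and with $B_E, B_F$ unital Kirchberg algebras in the UCT class---the remaining vertices form a strongly connected component satisfying Condition~(K) in which every vertex supports at least two distinct return paths.

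A direct adjacency matrix computation gives $(K_0(B_E),K_1(B_E)) \cong (\Z,\Z) \cong (K_0(B_F),K_1(B_F))$ and $K_0(C^*(E)) \cong K_0(C^*(F)) \cong \Z$ with vanishing $K_1$; tracking vertex projections one verifies $[q_{u'}]_0 = 0 = [Q_{w'}]_0$, while $[q_u]_0$ and $[Q_w]_0$ both generate $\Z$. The boundary map $K_1(B) \to K_0(\K) = \Z$ is an isomorphism for both extensions, so the UCT identifies the extension classes in $\mathrm{Ext}^1(B,\K) \cong \mathrm{Hom}(K_1(B),\Z) \cong \Z$ as generators.

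By Kirchberg--Phillips (\cite{MR1780426}) there is a \stariso $\overline{\Phi}\colon B_E \otimes \K \to B_F \otimes \K$, and using the freedom of the induced $K$-theory isomorphism I would choose $\overline{\Phi}$ so that it carries the class of the image of $q_u$ to that of the image of $Q_w$. Since the extensions are full and $\K$ has the corona factorization property, by Elliott--Kucerovsky (\cite{MR1835515}) they are absorbing, so after adjusting $\overline{\Phi}$ by a sign flip on $K_1$ if necessary the two extensions become equivalent as elements of $\mathrm{Ext}^1$, and $\overline{\Phi}$ lifts to a \stariso $\Phi\colon C^*(E)\otimes\K \to C^*(F)\otimes\K$.

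Finally, since $\Phi$ takes the unique nontrivial ideal to itself, $\Phi(q_{u'}\otimes e_{11})$ is a minimal projection in $\mathfrak{J}_F\otimes\K\cong\K$, hence Murray--von~Neumann equivalent to $Q_{w'}\otimes e_{11}$. For the more delicate equivalence $\Phi(q_u\otimes e_{11}) \sim Q_w\otimes e_{11}$, the two projections have the same class in $K_0(C^*(F)\otimes\K)$ by construction, the same (generating) image class in $K_0(B_F\otimes\K)$, and are MvN equivalent in the Kirchberg quotient $B_F\otimes\K$; the hard part is lifting this equivalence across the ideal $\K$ to the ambient algebra, which I expect to be the main obstacle. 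This step is handled by combining the absorbing property with a partial-isometry-lifting argument, exploiting that the obstruction to such a lift vanishes when the $K_0$-classes of the two projections agree in the ambient algebra.
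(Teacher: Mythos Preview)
Your overall strategy matches the paper's almost exactly: both proofs view $C^*(E)$ and $C^*(F)$ as extensions of unital Kirchberg algebras by $\K$, compute the six-term sequences, use Kirchberg--Phillips on the quotients, and invoke absorbing-extension machinery to produce the \stariso $\Phi$ (the paper cites \cite[Proposition~2]{MR3425906} for the lift, which packages the same Elliott--Kucerovsky input you use). Your treatment of $q_{u'}$ is fine and agrees with the paper.

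The one place where you diverge from the paper, and where your argument is genuinely incomplete, is the last step for $q_u$. You correctly arrange that $K_0(\Phi)([q_u]) = [Q_w]$, but then propose to deduce $\Phi(q_u\otimes e_{11}) \sim Q_w\otimes e_{11}$ by lifting a Murray--von~Neumann equivalence from the Kirchberg quotient across the ideal $\K$. You flag this as ``the main obstacle'' and assert that the obstruction vanishes because the $K_0$-classes agree in the ambient algebra; but that implication is not automatic---equality in $K_0$ only gives $p\oplus r \sim q\oplus r$ for some $r$, and cancelling $r$ is exactly what fails in C*-algebras without cancellation. Your lifting heuristic does not supply the missing cancellation, and the map $K_0(\K)\to K_0(C^*(F))$ being zero here means the obstruction class in $K_0$ of the ideal is not visible in the ambient $K_0$.

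The paper avoids this entirely with a one-line observation you are missing: every graph \ca has the \emph{stable weak cancellation property}, so two full projections in $C^*(F)\otimes\K$ with the same $K_0$-class are automatically Murray--von~Neumann equivalent. Since both $\Phi(q_u\otimes e_{11})$ and $Q_w\otimes e_{11}$ are full (the vertex $u$, respectively $w$, reaches every other vertex) and have the same $K_0$-class by your construction of $\overline{\Phi}$, this finishes the argument immediately. Replace your lifting paragraph with this and the proof is complete.
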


\begin{proof}
For a graph $G=(G^0,G^1,r_G,s_G)$ and a subset $S \subsetneq G^0$, we let $G/S$ denote the subgraph with vertices $G^0\setminus S$, edges $G^1\setminus(r_G^{-1}(S)\cup s_G^{-1}(S))$ and range and source maps being the restrictions. 
A computation shows that the six-term exact sequence in $K$-theory induced by $\mathfrak{e} \colon 0 \to \mathfrak{J}_{\{ u' \} } \otimes \K \to C^{*} (E) \otimes \K \to C^{*} ( E / \{ u' \} ) \otimes \K \to 0$ is isomorphic to 
\[
\xymatrix{
\Z\langle [ q_{u'}]  \rangle \ar[r]^{0} & \Z \langle [ q_{u} ] \rangle \ar[r] & \Z \langle [\overline{q}_{u}] \rangle \ar[d]^{0}  \\
\Z\langle a\rangle \ar[u]^{1} & 0\ar[l] & 0 \ar[l]  
}
\]
where $\overline{q}_{u}$ is the image of $q_{u}$ in $C^{*} ( E / \{ u' \} )$ and $a$ is the generator that is sent to $[q_{u'}]$.  Another computation shows that the six-term exact sequence in $K$-theory induced by $\mathfrak{f} \colon 0 \to \mathfrak{J}_{\{ w' \} } \otimes \K \to C^{*} (F) \otimes \K \to C^{*} ( F / \{ w' \} ) \otimes \K \to 0$ is isomorphic to 
\[
\xymatrix{
\Z\langle [ Q_{w'}]  \rangle \ar[r]^{0} & \Z \langle [ Q_{w} ] \rangle \ar[r] & \Z \langle [\overline{Q}_{w}] \rangle \ar[d]^{0}  \\
\Z\langle b \rangle \ar[u]^{1} & 0\ar[l] & 0 \ar[l]  
}
\]
where $\overline{Q}_{w}$ is the image of $Q_{w}$ in $C^{*} ( F / \{ w' \} )$  and $b$ is the generator that is sent to $[Q_{w'}]$.  

Since $E / \{ u' \}$ and $F / \{w' \}$ are strongly connected graphs satisfying Condition~(K), the Kirchberg-Phillips classification result applies, 
so there exists a \stariso $\Psi_{2} \colon C^{*} ( E / \{u\} ) \otimes \K \to C^{*} ( F / \{ w' \} ) \otimes \K$ such that $K_{0} ( \Psi_2 ) ( [ \overline{ q }_{u}]  ) = [ \overline{Q}_{w} ]$ and $K_{1} ( \Psi_2 )$ is equal to the isomorphism that sends $a$ to $b$.  Moreover, since $\mathfrak{J}_{\{u'\}} \otimes \K \cong \K$ and $\mathfrak{J}_{\{w'\}} \otimes \K \cong \K$, we have that there exists a \stariso $\Psi_{0} \colon \mathfrak{J}_{\{u'\}} \otimes \K \rightarrow \mathfrak{J}_{\{ w' \} } \otimes \K$ such that $K_{0} ( \Psi_{0} ) ( [ q_{u'} ] ) = [ Q_{w'} ]$.  So, $K_{0} ( \overline{ \Psi }_{0} ) \circ K_{0} ( \tau_{ \mathfrak{e} } )= K_{0} ( \tau_{ \mathfrak{f}} ) \circ K_{0} ( \Psi_{2} )$ and $K_{1} ( \overline{ \Psi }_{0} ) \circ K_{1} ( \tau_{ \mathfrak{e} } )= K_{1} ( \tau_{ \mathfrak{f}} ) \circ K_{1} ( \Psi_{2} )$, where $\overline{\Psi}_0$ is the \stariso from $\mathcal{Q} ( \mathfrak{J}_{ \{ u' \} } \otimes \K)$ to $\mathcal{Q} ( \mathfrak{J}_{ \{ w' \} } \otimes \K )$ induced by the \stariso $\Psi_0$ .  Since the $K$-theory of $C^{*} ( E / \{ u' \} )$ is free, by the UCT, we have that $\kk ( \Psi_{2} ) \times [ \tau_{\mathfrak{f}} ] = [ \tau_{ \mathfrak{e} } ] \times \kk ( \Psi_{0} )$ in $\kk^{1} ( C^{*} (E / \{ u' \} ) , \mathfrak{J}_{\{w'\} } )$.  By \cite[Proposition~2]{MR3425906}, there exists a \stariso $\Phi \colon C^{*} (E) \otimes \K \rightarrow C^{*} (F) \otimes \K$ such that $\Phi_{0} = \Phi \vert_{\mathfrak{J}_{\{u'\}}} \colon \mathfrak{J}_{\{u'\}} \otimes \K \rightarrow \mathfrak{J}_{\{ w'\} } \otimes \K$ is a \stariso, the diagram
\[
\xymatrix{
0 \ar[r] & \mathfrak{J}_{\{u'\}} \otimes \K \ar[d]^{ \Phi_{0}} \ar[r] & C^{*} (E) \otimes \K \ar[d]^{ \Phi } \ar[r] & C^{*} ( E / \{ u' \} ) \otimes \K \ar[r] \ar[d]^{ \Psi_{2}} & 0 \\
0 \ar[r] & \mathfrak{J}_{ \{ w' \} } \otimes \K \ar[r] & C^{*} (F ) \otimes \K \ar[r] & C^{*} (F / \{w'\}) \otimes \K \ar[r] & 0.
}
\]
is commutative, and $\kk ( \Phi_{0} ) = \kk ( \Psi_{0} )$.  So, in particular, $K_{0} ( \Phi_0 ) ( [ q_{u'} ] ) = [ Q_{w'} ]$ in $K_{0} ( \mathfrak{J}_{\{w'\}} )$.  Since $\Phi(q_{u'} \otimes e_{11})$ and $Q_{w'} \otimes e_{11}$ generate the ideal $\mathfrak{J}_{\{w'\}} \otimes \K$ and every graph \ca has the stable weak cancellation property, we have that $\Phi(q_{u'} \otimes e_{11}) \sim Q_{w'} \otimes e_{11}$.

By the above six-term exact sequences in $K$-theory, $K_{0} ( \Phi )$ is completely determined by $K_{0} ( \Psi_{2} )$.  Since $K_{0} ( \Psi_{2} ) ( [ \overline{q}_{u} ] ) = [ \overline{Q}_{w} ]$, we have that $K_{0} ( \Phi ) ( [ q_{u} ] ) = [ Q_{w} ]$ in $K_{0} ( C^{*} (F) )$.  Since $\Phi ( q_{u} \otimes e_{11} )$ and $Q_{w}\otimes e_{11}$ are full projections in $C^{*} (F)$ and every graph \ca has the stable weak cancellation property, we have that $\Phi(q_{u} \otimes e_{11}) \sim Q_{w} \otimes e_{11}$.
\end{proof}

\begin{lemma}\label{lem: full extensions}
Let $E$ be a finite graph with exactly two nontrivial hereditary saturated subsets $H_{1}$ and $H_{2}$ such that $H_{1} = \{ w \}$, $w \in H_{2}$, $( H_{2} \setminus H_{1} , s^{-1} ( H_{2} ) \setminus r^{-1}(H_{1} ) , r, s ) $ is a strongly connected graph satisfying Condition~(K), $E^{0} \setminus ( H_{1} \cup H_{2} ) = \{ v \}$, $v$ supports exactly one loop and $w$ does not support any cycle.  Then $\mathfrak{J}_{H_{2}}$ is a stable \ca satisfying the corona factorization property, $C^{*} (E) / \mathfrak{J}_{ H_{2}} = C(S^{1})$, and the extension 
\[
\mathfrak{e} \colon 0 \to \mathfrak{J}_{ H_{2} } \to C^{*} (E) \to C^{*} (E) / \mathfrak{J}_{H_{2}} \to 0
\]
is a full extension.
\end{lemma}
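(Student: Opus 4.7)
The plan is to establish each of the four assertions separately, using the explicit three-layer structure of $E$: the sink $w$, the strongly connected Condition~(K) subgraph on $H_2 \setminus \{w\}$, and the loop-vertex $v$ outside $H_2$. A key preliminary observation is that $v$ must emit at least one edge into $H_2$, since otherwise $\{v\}$ would itself be hereditary saturated and produce a third nontrivial such subset, contradicting the hypothesis.

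First I would identify the quotient. Since the only loop at $v$ is the distinguished loop $e_0$ and every other edge from $v$ has range in $H_2$ (and therefore vanishes in the quotient), the quotient graph reduces to the single-vertex-one-loop graph, so $C^*(E)/\mathfrak{J}_{H_2} \cong C(S^1)$. Next, for stability, pick an edge $e_1$ from $v$ to some $u \in H_2$. The partial isometries $V_n := s_{e_0}^n s_{e_1} \in C^*(E) \subseteq \mathcal{M}(\mathfrak{J}_{H_2})$ satisfy $V_n^* V_n = p_u$ and have pairwise orthogonal range projections $V_n V_n^* \in \mathfrak{J}_{H_2}$, so $p_u$ is infinitely repeated in $\mathfrak{J}_{H_2}$; together with the fullness of $p_u$ in $\mathfrak{J}_{H_2}$ (using $u \geq u'$ for every $u' \in H_2$), standard stability criteria à la Hjelmborg-R\o{}rdam yield $\mathfrak{J}_{H_2} \cong \mathfrak{J}_{H_2} \otimes \K$. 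For the corona factorization property, the induced subgraph on $H_2$ satisfies Condition~(K) (the strongly connected part by hypothesis, the sink trivially), so $\mathfrak{J}_{H_2}$ has real rank zero, which suffices for CFP.

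For fullness of the extension $\mathfrak{e}$, the Busby map $\tau \colon C(S^1) \to \mathcal{Q}(\mathfrak{J}_{H_2})$ sends $1$ to $\pi(p_v)$ and the generator $z$ to the unitary $\pi(s_{e_0})$. For nonzero $f \in C(S^1)$, the closed two-sided ideal generated by $\tau(f)$ in $\mathcal{Q}(\mathfrak{J}_{H_2})$ contains $\tau(|f|^2)$, and by continuous functional calculus combined with conjugation by elements of $\mathcal{Q}(\mathfrak{J}_{H_2})$ lying outside $\tau(C(S^1))$, one can recover $\pi(p_v)$ inside this ideal. So fullness of $\tau(f)$ reduces to fullness of $\pi(p_v)$. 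For the latter, for each $u \in H_2$ choose a path $\mu$ from $v$ to $u$ in $E$; the partial isometry $s_\mu \in \mathcal{M}(\mathfrak{J}_{H_2})$ satisfies $s_\mu^* p_v s_\mu = p_u$, showing that the closed two-sided ideal in $\mathcal{M}(\mathfrak{J}_{H_2})$ generated by $p_v$ contains each $p_u$ and hence all of $\mathfrak{J}_{H_2}$, so its image in the corona is $\mathcal{Q}(\mathfrak{J}_{H_2})$, giving fullness of $\pi(p_v)$.

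The main technical obstacle is the passage from the multiplier-algebra ideal computation to genuine fullness in the corona---in other words, rigorously justifying that the ideal in $\mathcal{Q}(\mathfrak{J}_{H_2})$ generated by $\pi(p_v)$ is everything. Essentiality of $\mathfrak{J}_{H_1} \cong \K$ inside $\mathfrak{J}_{H_2}$ together with the stability already proven should make the ideal lattice of $\mathcal{Q}(\mathfrak{J}_{H_2})$ tractable enough to complete this step, but the corona-level analysis is the delicate part; a secondary subtlety is handling possible breaking vertex issues if $v$ emits infinitely many edges, though under the regularity implicit in the intended application this is avoided.
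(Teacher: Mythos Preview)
Your arguments for the quotient identification and for stability are reasonable, though the stability argument via Hjelmborg--R\o rdam needs more care than you indicate (the paper instead identifies $\mathfrak{J}_{H_2}$ with a concrete graph \ca\ and applies Tomforde's criterion). Your claim that real rank zero \emph{suffices} for the corona factorization property is not correct as a general fact; the paper instead invokes a result specific to graph \cas.

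The genuine gap is in the fullness argument. Note first that $\pi(p_v)=1_{\mathcal{Q}(\mathfrak{J}_{H_2})}$, since $1_{C^*(E)}-p_v=\sum_{u\in H_2}p_u\in\mathfrak{J}_{H_2}$; so your final paragraph establishing fullness of $\pi(p_v)$ is proving a triviality. The real content is your claimed reduction: that the ideal generated by $\tau(f)=f(\pi(s_{e_0}))$ already contains $1$. The phrase ``continuous functional calculus combined with conjugation by elements lying outside $\tau(C(S^1))$'' is not an argument: $\pi(s_{e_0})$ is a single unitary in $\mathcal{Q}(\mathfrak{J}_{H_2})$, and there is no general mechanism by which the ideal generated by $f$ of a unitary must contain $1$ without structural information about the ambient algebra. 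You recognize this yourself in the last paragraph, but the resolution is not optional---it is the entire proof.

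The paper's approach to fullness is quite different and avoids this difficulty. It first determines the ideal lattice of $\mathcal{Q}(\mathfrak{J}_{H_2})$ exactly, using a cited lemma on relative multiplier algebras: since $\mathfrak{J}_{H_1}\cong\K$ and $\mathfrak{J}_{H_2}/\mathfrak{J}_{H_1}$ is simple purely infinite and stable, $\mathcal{Q}(\mathfrak{J}_{H_2})$ has a \emph{unique} nontrivial ideal, namely the kernel of the induced map $\overline{\pi}\colon\mathcal{Q}(\mathfrak{J}_{H_2})\to\mathcal{Q}(\mathfrak{J}_{H_2}/\mathfrak{J}_{H_1})$. Thus fullness of $\tau_{\mathfrak e}(f)$ reduces to showing $\overline{\pi}(\tau_{\mathfrak e}(f))\neq 0$, which equals $\tau_{\overline{\mathfrak e}}(f)$ for the pushed-forward extension $\overline{\mathfrak e}\colon 0\to\mathfrak{J}_{H_2}/\mathfrak{J}_{H_1}\to C^*(E)/\mathfrak{J}_{H_1}\to C(S^1)\to 0$. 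One then checks that $C^*(E)/\mathfrak{J}_{H_1}$ is a graph algebra satisfying Condition~(L), so by the Cuntz--Krieger uniqueness theorem its unique nontrivial gauge-invariant ideal is essential; hence $\overline{\mathfrak e}$ is an essential extension and $\tau_{\overline{\mathfrak e}}$ is injective. This two-step strategy---compute the corona's ideal lattice, then verify the Busby map misses the one proper ideal by passing to an essential quotient extension---is what your sketch is missing.
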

 
\begin{proof}
By \cite{MR3254772}, $\mathfrak{J}_{H_{2}} \cong C^{*} (F)$ where $F$ is the graph obtained from the subgraph $( H_{2} , s^{-1} ( H_{2} ) , s, r )$ by adding infinitely many regular sources to each vertex $u \in H_{2}$ with $| s^{-1} ( v ) \cap r^{-1} (u) | \geq 1$.  It is clear that $F$ has no nonzero bounded graph trace and every vertex that is on a cycle is left infinite (as defined in \cite[Definitions~2.2 and~3.1]{MR2051143}).  By \cite[Theorem~3.2]{MR2051143}, $C^{*} ( F)$ is a stable \ca and hence $\mathfrak{J}_{H_{2}}$ is a stable \ca.  That $\mathfrak{J}_{H_{2}}$ satisfies the corona factorization property follows as in \cite[Proposition 6.1]{MR3056712}.  The fact that $C^{*} (E) / \mathfrak{J}_{ H_{2}} = C( S^{1} )$ is clear.    

We now show that  $\mathfrak{e}$ is a full extension.  Note that $\mathfrak{J}_{H_{1}} \cong \K$ and $\mathfrak{J}_{H_{2}} / \mathfrak{J}_{H_{1}}$ is a purely infinite simple \ca (since $H_1$ only consists of a sink, $E_{ H_{2} \setminus H_{1} } = ( H_{2} \setminus H_{1} , s^{-1} ( H_{2} ) \setminus r^{-1}(H_{1} ) , r, s )$ is a strongly connected graph satisfying Condition~(K) and $\mathfrak{J}_{H_{2}} / \mathfrak{J}_{H_{1}}$ is stably isomorphic to $C^{*} ( E_{H_{2} \setminus H_{1} } )$).  By \cite[Lemma~6.6]{MR3759003}, $\mathcal{Q} ( \mathfrak{J}_{H_{2}} )$ has exactly one nontrivial ideal and it is the kernel of $\overline{\pi} \colon \mathcal{Q} ( \mathfrak{J}_{H_{2}} ) \rightarrow \mathcal{Q} ( \mathfrak{J}_{H_{2}} / \mathfrak{J}_{H_{1}} )$, where $\overline{\pi}$ is the surjective \starhomo induced by $\pi \colon \mathfrak{J}_{H_{2}} \rightarrow \mathfrak{J}_{H_{2}} / \mathfrak{J}_{H_{1}}$.  Therefore, to show that $\mathfrak{e}$ is a full extension, it is enough to show that $\overline{\pi} ( \tau_{ \mathfrak{e} } (f) ) \neq 0$ for all $f \neq 0$.  

Consider the extension $\overline{\mathfrak{e}} \colon 0 \to \mathfrak{J}_{H_{2}} / \mathfrak{J}_{H_{1}} \to C^{*} ( E ) / \mathfrak{J}_{ H_{1}} \to C(S^{1} ) \to 0$.  So, $\overline{\mathfrak{e}}$ is the extension obtained by pushing forward the extension $\mathfrak{e}$ via the surjective \starhomo $\pi$.  Note that $C^{*} ( E ) / \mathfrak{J}_{H_{1}} \cong C^{*} ( E' )$ where $E'$ is the graph $( E^{0} \setminus H_{1} , r^{-1} ( E^{0} \setminus H_{1} ), s, r )$.  Note that $E'$ satisfies Condition~(L) (\ie, every cycle has an exit).  If $\mathfrak{I}$ is a nonzero ideal of $C^{*} (E')$ then there exists $u \in (E')^{0}$ such that $p_{u} \in \mathfrak{I}$.  Otherwise by Szyma\'nski's general Cuntz-Krieger uniqueness theorem (\cite[Theorem~1.2]{MR1914564}), the \starhomo $C^{*} (E') \rightarrow C^{*} (E') / \mathfrak{I}$ would be injective but this can not happen if $\mathfrak{I}$ is a nonzero ideal.  So, the unique nontrivial gauge invariant ideal of $C^{*} (E')$ corresponding to the ideal $\mathfrak{J}_{H_{2}} / \mathfrak{J}_{H_{1}}$ is an essential ideal of $C^{*} (E')$.  So, $\mathfrak{J}_{H_{2}} / \mathfrak{J}_{H_{1}}$ is an essential ideal of $C^{*} ( E ) / \mathfrak{J}_{ H_{1}}$.  This implies that $\overline{\mathfrak{e}}$ is an essential extension.  So, $\tau_{ \overline{\mathfrak{e}} } (f) \neq 0$ for all $f \neq 0$.  Since $\overline{ \pi } ( \tau_{ \mathfrak{e}} (f) ) =\tau_{ \overline{\mathfrak{e} } } (f)$ by \cite{MR1725815}, we have that $\overline{\pi} ( \tau_{ \mathfrak{e} } (f) ) \neq 0$ for all $f \neq 0$.  So, by the above paragraph, $\mathfrak{e}$ is a full extension.
\end{proof}

\begin{proposition} \label{prop:csdouble-hash}
Let $E=(E^0,E^1,r_E,s_E)$ be a graph with finitely many vertices, and let $u_0$ be a vertex in $E^0$ such that these satisfy Assumption~\ref{assumption:1-hash}. Let, moreover, 
$$\mathcal{E} = C^*\left(\uline{\mathbf{E}}_{u_0,\#}, (\uline{\mathbf{E}}_{u_0,\#})^0\setminus\left(\bigcup_{u\in r_E(s_E^{-1}(u_0))\setminus\{u_0\}}\uline{u}\right)\right)$$ 
and 
$$\mathcal{E}'=C^*\left(\uuline{\mathbf{E}}_{u_0,\#}, (\uuline{\mathbf{E}}_{u_0,\#})^0\setminus\left(\bigcup_{u\in r_E(s_E^{-1}(u_0))\setminus\{u_0\}}\uuline{u}\right)\right)$$ 
be the relative graph \cas (in the sense of Muhly-Tomforde \cite{MR2054981}). 
Let $p_{\uline{u}_0}$, $p_{\uline{u}}$, $p_{\uline{v}_i^u}$, $i=1,2$, $s_{\uline{e}_0}$,$s_{\uline{c}_j^u}$, $j=0,1$, $s_{\uline{e}_k^u}$, $k=1,2,\ldots,8$ for $u\in r_E(s_E^{-1}(u_0))\setminus\{u_0\}$ denote the canonical generators of $\mathcal{E}$ and let $p_{\uuline{u}_0}$, $p_{\uuline{u}}$, $p_{\uuline{w}_i^u}$, $i=1,2,3,4$, $\uuline{e}_0$,$s_{\uuline{c}_j^u}$, $j=0,1$, $s_{\uuline{f}_k^u}$, $k=1,2,\ldots,16$ for $u\in r_E(s_E^{-1}(u_0))\setminus\{u_0\}$ denote the canonical generators of $\mathcal{E}'$. 

Then there exists a \stariso $\Psi$ from $\mathcal{E}'$ to $\mathcal{E}$ such that 
\begin{align*}
p_{\uline{u}_0} &\sim \Psi\left( p_{\uuline{u}_0}\right),
\intertext{in $\mathcal{E}$, and for each $u\in r_E(s_E^{-1}(u_0))\setminus\{u_0\}$,}
p_{\uline{u}} - \left(s_{\uline{e}_5^u}s_{\uline{e}_5^u}^*  + s_{\uline{c}_1^u} s^*_{\uline{c}_1^u}\right) 
&\sim \Psi\left( p_{\uuline{u}} - \left(s_{\uuline{f}_{11}^u} s_{\uuline{f}_{11}^u}^* + s_{\uuline{c}_1^u} s^*_{\uuline{c}_1^u}\right)\right), \text{ and }\\
p_{\uline{u}} &\sim \Psi\left( p_{\uuline{u}}\right), 
\end{align*}
in $\mathcal{E}$, where $\sim$ denotes Murray-von~Neumann equivalence. Moreover, the induced map between $K_1$ of the quotients $C(S^1)$ (corresponding to the cyclic component $\{u_0\}$) is $-\id$. 
\end{proposition}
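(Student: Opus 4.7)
The plan is to realize $\mathcal{E}$ and $\mathcal{E}'$ as unital extensions of $C(S^1)$ by a direct sum of ideals indexed by $u \in r_E(s_E^{-1}(u_0)) \setminus \{u_0\}$, produce strand \starisos $\Psi_u \colon \mathfrak{I}'_u \to \mathfrak{I}_u$ with the required projection-level behavior via Lemma~\ref{lem: K-theory ideal}, and assemble them into a global \stariso $\Psi \colon \mathcal{E}' \to \mathcal{E}$ using the absorption machinery of Lemma~\ref{lem: multiple strands}, with the automorphism $\Phi \colon C(S^1) \to C(S^1)$ chosen to be the flip $f(z) \mapsto f(\overline z)$, so that $K_1(\Phi) = -\id$ automatically yields the final assertion of the proposition.

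For the extension setup, let $\mathfrak{I} \trianglelefteq \mathcal{E}$ be the ideal generated by all vertex projections other than $p_{\uline{u}_0}$, and define $\mathfrak{I}' \trianglelefteq \mathcal{E}'$ analogously. Since $\uline{u}_0$ lies in the relative set $S$ and every edge it emits other than the loop $\uline{e}_0$ terminates in a generator of $\mathfrak{I}$, the Cuntz-Krieger relation at $\uline{u}_0$ collapses in $\mathcal{E}/\mathfrak{I}$ to $p_{\uline{u}_0} = s_{\uline{e}_0} s_{\uline{e}_0}^*$, so $\mathcal{E}/\mathfrak{I} \cong C(S^1) \cong \mathcal{E}'/\mathfrak{I}'$. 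By Assumption~\ref{assumption:1-hash} the hereditary saturated closures of $\{\uline{u}, \uline{v}_1^u, \uline{v}_2^u\}$ for distinct $u$ are pairwise disjoint, giving $\mathfrak{I} = \bigoplus_u \mathfrak{I}_u$ and $\mathfrak{I}' = \bigoplus_u \mathfrak{I}'_u$. Using the standard identification of a relative graph \ca (with a regular vertex un-summed) with the ordinary graph \ca obtained by adjoining a sink and an edge from the un-summed vertex to that sink (\cite{MR2054981}), each strand $\mathfrak{I}_u$ is stably isomorphic to $C^*(E)$ for $E$ as in Lemma~\ref{lem: K-theory ideal}; under this identification the gap projection $p_{\uline{u}} - (s_{\uline{e}_5^u}s_{\uline{e}_5^u}^* + s_{\uline{c}_1^u}s_{\uline{c}_1^u}^*)$ corresponds to $q_{u'} \otimes e_{11}$ and $p_{\uline{u}}$ to $q_u \otimes e_{11}$, and analogously $\mathfrak{I}'_u$ is identified with $C^*(F) \otimes \K$ matching the $Q_w, Q_{w'}$ of Lemma~\ref{lem: K-theory ideal}. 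Lemma~\ref{lem: full extensions} supplies stability of each $\mathfrak{I}_u, \mathfrak{I}'_u$, the corona factorization property, and fullness of all four push-forward strand extensions.

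Invoking Lemma~\ref{lem: K-theory ideal} for each $u$ yields \starisos $\Psi_u \colon \mathfrak{I}'_u \to \mathfrak{I}_u$ realizing $\Psi_u(p_{\uuline{u}}) \sim p_{\uline{u}}$ and $\Psi_u(p_{\uuline{u}} - (s_{\uuline{f}_{11}^u}s_{\uuline{f}_{11}^u}^* + s_{\uuline{c}_1^u}s_{\uuline{c}_1^u}^*)) \sim p_{\uline{u}} - (s_{\uline{e}_5^u}s_{\uline{e}_5^u}^* + s_{\uline{c}_1^u}s_{\uline{c}_1^u}^*)$. Set $\Psi_0 = \bigoplus_u \Psi_u$ and let $\Phi$ be the flip. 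The $K$-theoretic compatibility hypothesis of Lemma~\ref{lem: multiple strands}, namely $K_{1-j}(\Psi_u) \circ K_j(\tau_{\mathfrak{e}_{u,1}}) = K_j(\tau_{\mathfrak{e}_{u,2}}) \circ K_j(\Phi)$ for each strand and $j = 0,1$, is to be verified by direct computation of the six-term sequences using the adjacency-matrix description of index maps in \cite{MR2922394}. Lemma~\ref{lem: multiple strands} then produces a unitary $v \in \mathcal{M}(\mathfrak{I})$ with $\mathrm{Ad}(\pi(v)) \circ \overline{\Psi}_0 \circ \tau_{\mathfrak{e}'} = \tau_{\mathfrak{e}} \circ \Phi$, and a standard pullback-of-extensions construction (as in \cite[Proposition~2]{MR3425906}) upgrades this data to a \stariso $\Psi \colon \mathcal{E}' \to \mathcal{E}$ that agrees with $\mathrm{Ad}(v) \circ \Psi_0$ on the ideal and intertwines $\Phi$ on the quotient. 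The remaining equivalence $p_{\uline{u}_0} \sim \Psi(p_{\uuline{u}_0})$ follows because both projections lift $1_{C(S^1)}$ under the quotient maps and graph \cas satisfy stable weak cancellation.

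The main obstacle is the last $K$-theory compatibility check: one must confirm that matching the strand isomorphisms forces $\Phi$ to be the flip rather than the identity. The sign inversion on $K_1(C(S^1))$ arises from the interplay of the doubled edges $\uline{u}_0 \to \uline{v}_2^u$ in $E_{u_0, \#}$ against the doubled edges $\uuline{u}_0 \to \uuline{w}_2^u$ and $\uuline{u}_0 \to \uuline{w}_4^u$ in $E_{u_0,\#\#}$, combined with the Cuntz splice structure of $\mathbf{E}_{**}^u$; the explicit \SLPEe constructed in the proof of Proposition~\ref{prop:hashmovetwice} (connecting $-\iota_{4\mathbf{e}_j}(-\Bsf_E^\bullet)$ to $\Bsf_{E_{u_0,\#\#}}^\bullet$ with a determinant-one block of the form appearing in the Cuntz-splice-invariance analysis) encodes precisely the boundary-map data needed to conclude $K_1(\Phi) = -\id$, paralleling the analogous step for the Cuntz splice carried out in \cite{arXiv:1605.06153v1}.
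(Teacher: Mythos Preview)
Your overall architecture matches the paper's: pass to ordinary graph \cas via Muhly--Tomforde by adding a sink $\uline{u}'$ (resp.\ $\uuline{u}'$) for each $u$, decompose the kernel of the map to $C(S^1)$ as a direct sum over the strands, invoke Lemma~\ref{lem: K-theory ideal} on each strand, feed the resulting data into Lemma~\ref{lem: multiple strands}, and lift the Busby-triangle equality to a \stariso using \cite{MR1725815}. So the strategy is correct.

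There is, however, a genuine gap in your last step. The claim that $p_{\uline{u}_0}\sim\Psi(p_{\uuline{u}_0})$ ``because both projections lift $1_{C(S^1)}$ and graph \cas have stable weak cancellation'' does not go through. Lifting $1_{C(S^1)}$ only forces $[p_{\uline{u}_0}]$ and $[\Psi(p_{\uuline{u}_0})]$ to agree modulo the image of $K_0(\mathfrak{I})\to K_0(\mathcal{E})$, and that image is \emph{not} zero once there is more than one strand: the index map $K_1(C(S^1))\to K_0(\mathfrak{I})=\bigoplus_u\Z$ hits only the diagonal copy of $\Z$. The paper closes this gap by an explicit $K_0$-computation: it shows $[q_{\uline{u}_0}]=[1_{C^*(G)}]$ (because $[q_{\uline{v}_1^u}]=[q_{\uline{u}'}]=0$ and $[q_{\uline{v}_2^u}]+[q_{\uline{u}}]=0$ in $K_0$ of each strand ideal) and, more delicately, $[q_{\uuline{u}_0}]=[1_{C^*(F)}]$ --- here the individual $[q_{\uuline{u}}]$ need not vanish, but their \emph{sum} over all $u$ lies in the image of $K_1(C(S^1))\to K_0(\mathfrak{J}_{\uuline{H}})$ and hence dies in $K_0(C^*(F))$. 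Unitality of the \stariso then gives $[\Phi(q_{\uline{u}_0})]=[q_{\uuline{u}_0}]$, and stable weak cancellation plus fullness finishes. You need this computation; ``both lift $1$'' is not enough.

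A smaller point: your justification that the quotient automorphism must be the flip is not right. Proposition~\ref{prop:hashmovetwice} compares $E$ with $E_{u_0,\#\#}$, not $E_{u_0,\#}$ with $E_{u_0,\#\#}$, so its \SLPEe does not encode the relevant boundary maps. The paper instead chooses $\Lambda$ \emph{a posteriori} to make the strand diagrams commute and then reads off $K_1(\Lambda)=-\id$ directly from the graph-algebra description of index maps in \cite[Remark~4.2]{MR2922394}, after computing both strand six-term sequences explicitly. That direct computation is what you should do here.
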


\begin{proof}
Let $G$ be the graph obtained from $\uline{\mathbf{E}}_{u_0,\#}$ by adding a sink $\uline{u}'$ for each $u \in r_E(s_E^{-1}(u_0))\setminus\{u_0\}$ and adding one edge from $\uline{u}_{0}$ to $\uline{u}'$, one edge from $\uline{u}$ to $\uline{u}'$, and one edge from $\uline{v}_{1}^{u}$ to $\uline{u}'$.  Let $F$ be the graph obtained from $\uuline{\mathbf{E}}_{u_0,\#\#}$ by adding a sink $\uuline{u}'$ for each $u \in r_E(s_E^{-1}(u_0))\setminus\{u_0\}$ and adding one edge from $\uuline{u}_{0}$ to $\uuline{u}'$, one edge from $\uuline{u}$ to $\uuline{u}'$, and one edge from $\uuline{v}_{1}^{u}$ to $\uuline{u}'$. 

By \cite[Theorem~3.7 and its proof]{MR2054981}, there exists a \stariso $\Lambda \colon C^{*} (G) \rightarrow \mathcal{E}$ such that $\Lambda ( q_{\uline{u}_{0} } ) = p_{\uline{u}_{0} }$, $\Lambda( q_{\uline{u} } + q_{ \uline{u}' } ) = p_{ \uline{u} }$ and $\Lambda ( q_{ \uline{ u }' } ) = p_{ \uline{u} } - ( s_{ \uline{c}_{1}^{u } } s_{ \uline{c}_{1}^{u } }^{*} + s_{ \uline{e}_{5}^{u } }  s_{ \uline{e}_{5}^{u } }^{*} )$.  Similarly, there exists a \stariso $\Gamma\colon C^{*} (F) \rightarrow \mathcal{E}'$ such that
 $\Gamma ( q_{\uuline{u}_{0} } ) = p_{\uuline{u}_{0} }$, $\Gamma( q_{\uuline{u} } + q_{ \uuline{u}' } ) = p_{ \uuline{u} }$ and $\Gamma ( q_{ \uuline{ u }' } ) = p_{ \uuline{u} } - ( s_{ \uuline{c}_{1}^{u } } s_{ \uuline{c}_{1}^{u } }^{*} + s_{ \uuline{f}_{11}^{u } }  s_{ \uuline{f}_{11}^{u } }^{*} )$.   With the canonical identifications, it is also clear that the induced maps on $K_1$ of the quotient $C(S^1)$ is the identity. Therefore, it is enough to construct a \stariso $\Phi \colon C^{*} (G) \rightarrow C^{*} (F)$ such that $\Phi ( q_{\uline{u}_{0}} ) \sim q_{ \uuline{u}_{0}}$, $\Phi( q_{\uline{u}} ) \sim q_{ \uuline{u}}$, and $\Phi ( q_{ \uline{u}' } ) \sim q_{ \uuline{u}' }$ for all $u \in r_E(s_E^{-1}(u_0))\setminus\{u_0\}$. 
 
For each $u \in r_E(s_E^{-1}(u_0))\setminus\{u_0\}$, set $\uline{H}_{u} = \{ \uline{u}, \uline{u}' , \uline{v}_{1}^{u} , \uline{ v}_{2}^{u} \}$.  Then $\uline{H}_{u}$ is a hereditary and saturated subset of $G^{0}$ and $\uline{H}_{u} \cap \uline{H}_{ w } = \emptyset$ if and only if $u \neq w$.  Note that $\mathfrak{J}_{ \uline{H}_u }$ canonically sits as an ideal of $C^* ( G_u )$, where $G_u$ is the subgraph $( \uline{H}_u \sqcup \{ \uline{u}_0 \} , s_G^{-1} ( \uline{H}_u \sqcup \{ \uline{u}_0 \} ) \cap r_G^{-1} ( \uline{H}_u \sqcup \{ \uline{u}_0 \} ), r, s)$.  By Lemma~\ref{lem: full extensions}, $\mathfrak{J}_{\uline{H}_u}$ is stable and the extension $0 \to \mathfrak{J}_{\uline{H}_u} \to C^* ( G_u ) \to C(S^1) \to 0$ is a full extension.

For each $u \in r_E(s_E^{-1}(u_0))\setminus\{u_0\}$, set $\uuline{H}_{u} = \{ \uuline{u}, \uuline{u}' , \uuline{w}_{1}^{u} , \uuline{w}_{2}^{u}, \uuline{w}_{3}^{u}, \uuline{w}_{4}^{u} \}$.  Then $\uuline{H}_{u}$ is a hereditary and saturated subset of $F^{0}$ and $\uuline{H}_{u} \cap \uuline{H}_{ w } = \emptyset$ if and only if $u \neq w$.  Note that $\mathfrak{J}_{ \uuline{H}_u }$ canonically sits as an ideal of $C^* ( F_u )$, where $F_u$ is the subgraph $( \uuline{H}_u \sqcup \{ \uuline{u}_0 \} , s_F^{-1} ( \uuline{H}_u \sqcup \{ \uuline{u}_0 \} ) \cap r_F^{-1} ( \uuline{H}_u \sqcup \{ \uuline{u}_0 \} ), r, s)$.  By Lemma~\ref{lem: full extensions}, $\mathfrak{J}_{\uuline{H}_{u}}$ is stable and the extension $0 \to \mathfrak{J}_{\uuline{H}_u} \to C^* ( F_u ) \to C(S^1) \to 0$ is a full extension.

Set $\uline{H} = \bigcup_{ u \in r_E(s_E^{-1}(u_0))\setminus\{u_0\} } \uline{H}_u$ and $\uuline{H} = \bigcup_{ u \in r_E(s_E^{-1}(u_0))\setminus\{u_0\} }\uuline{H}_u$.  Note that $C^{*} (G) / \mathfrak{J}_{\uline{H}} = C(S^{1})$ and $C^{*} (F) / \mathfrak{J}_{\uuline{H}} = C(S^{1})$.  Let $\uline{\mathfrak{e}}$ be the extension 
\[
0 \to \mathfrak{J}_{ \uline{H}} \to C^{*} (G) \to C(S^{1} ) \to 0
\] 
and let $\uuline{\mathfrak{e}}$ be the extension 
\[
0 \to \mathfrak{J}_{ \uuline{H} }  \to C^{*} (F) \to C(S^{1} ) \to 0.
\] 
By pushing forward the extension $\uline{\mathfrak{e}}$ via the surjective \starhomo $\uline{\pi}_{u}$ from $\bigoplus_{ v \in r_E(s_E^{-1}(u_0))\setminus\{u_0\}  } \mathfrak{J}_{ \uline{H}_{v} }$ to $\mathfrak{J}_{ \uline{H}_{u} }$, gives the extension
\[
\uline{\mathfrak{e}}_{u} \colon 0 \to \mathfrak{J}_{\uline{H}_u} \to C^* ( G_u ) \to C(S^1) \to 0.
\]
By pushing forward the extension $\uuline{\mathfrak{e}}$ via the surjective \starhomo $\uuline{\pi}_{u}$ from $\bigoplus_{ v \in r_E(s_E^{-1}(u_0))\setminus\{u_0\}  } \mathfrak{J}_{ \uuline{H}_{v} }$ to $\mathfrak{J}_{ \uuline{H}_{u} }$, gives the extension
\[
\uuline{\mathfrak{e}}_{u} \colon 0 \to \mathfrak{J}_{\uuline{H}_u} \to C^* ( F_u ) \to C(S^1) \to 0.
\]

A computation shows that the six-term exact sequence in $K$-theory induced by $\uline{\mathfrak{e}}_u$ is 
\[
\xymatrix{
\Z\langle [ q_{\uline{u}}]  \rangle \ar[r]^{0} & \Z\langle a_1\rangle \ar[r]^\cong & \Z\langle a_2\rangle \ar[d]^{0}  \\
\Z\langle a_3\rangle \ar[u]^{1} & 0\ar[l] & 0 \ar[l]  
}
\]
where $a_1,a_2,a_3$ are suitably chosen generators.  Another computation shows that the six-term exact sequence in $K$-theory induced by $\uuline{\mathfrak{e}}_u$ is  
\[
\xymatrix{
\Z\langle [ q_{\uuline{u}}]  \rangle \ar[r]^{0} & \Z \langle b_1 \rangle \ar[r]^\cong & \Z \langle b_2 \rangle \ar[d]^{0}  \\
\Z\langle b_3 \rangle \ar[u]^{1} & 0\ar[l] & 0 \ar[l]  
}
\]
where $b_1,b_2,b_3$ are suitably chosen generators. 

For every $u \in r_E(s_E^{-1}(u_0))\setminus\{u_0\}$, by Lemma~\ref{lem: K-theory ideal}, there exists a \stariso $\Gamma_{u} \colon \mathfrak{J}_{\uline{H}_{u}} \rightarrow \mathfrak{J}_{\uuline{H}_{u}}$ such that $\Gamma_{u} ( q_{\uline{u} } ) \sim q_{ \uuline{u} }$ and $\Gamma_{u} ( q_{\uline{u}'} ) \sim q_{ \uuline{u}' }$ for all $u \in r_E(s_E^{-1}(u_0))\setminus\{u_0\}$.  For every $u \in r_E(s_E^{-1}(u_0))\setminus\{u_0\}$, choose a \stariso $\Lambda \colon C(S^1) \rightarrow C(S^1)$ such that 
\[
K_0 ( \Gamma_u ) \circ K_1 ( \tau_{ \uline{\mathfrak{e}}_u } )  = K_1 ( \tau_{ \uuline{\mathfrak{e}}_u }) \circ K_1 ( \Lambda ),
\]
(\ie, $K_1(\Lambda)$ sends $a_3$ to $b_3$), and note that
\[
K_1 ( \Gamma_u ) \circ K_0 ( \tau_{ \uline{\mathfrak{e}}_u } )  = K_0 ( \tau_{ \uuline{\mathfrak{e}}_u } ) \circ K_0 ( \Lambda )
\]
trivially holds (since both sides are zero). Also note that by symmetry, we can use the same $\Lambda$ for all $u \in r_E(s_E^{-1}(u_0))\setminus\{u_0\}$. In fact, using the canonical identifications and the description of the cyclic six-term exact sequences in $K$-theory for graph \cas (\cf\ \cite[Remark~4.2]{MR2922394}), we easily see that we have that $K_1(\Lambda)=-\id$ (while, trivially, $K_0(\Lambda)=\id$). 

By Lemma~\ref{lem: multiple strands}, there exists a unitary $W \in \mathcal{M}( \mathfrak{J}_{\uuline{H} } )$ such that 
\[
\mathrm{Ad} (\pi(W) ) \circ \overline{\Gamma} \circ \tau_{ \uline{\mathfrak{e}} } = \tau_{ \uuline{\mathfrak{e}} } \circ \Lambda,
\]
where $\Gamma = \bigoplus_{u \in r_E(s_E^{-1}(u_0))\setminus\{u_0\}} \Gamma_u$ and $\overline{\Gamma}$ is the \stariso from $\mathcal{Q} ( \mathfrak{J}_{\uline{H} }  )$ to $\mathcal{Q} ( \mathfrak{J}_{\uuline{H}} )$ induced by $\Gamma$.  By \cite[Theorem~2.2]{MR1725815}, there exists a \stariso $\Phi \colon C^* (G) \rightarrow C^* (F)$ such that $\Phi \vert_ { \mathfrak{J}_{\uline{H}} }= \mathrm{Ad} ( W ) \circ \Gamma \vert_ { \mathfrak{J}_{\uline{H}} }$ and the induced map between the quotients is $\Lambda$.  Note that 
\[
\Phi ( q_{ \uline{u} } ) = \mathrm{Ad} ( W ) \circ \Gamma ( q_ {\uline{u} }  ) = \mathrm{Ad} (W) \circ \Gamma_u ( q_{\uline{u} } ) \sim \mathrm{Ad}(W) ( q_{ \uuline{u} } )  \sim q_{\uuline{u} }
\]
and
\[
\Phi ( q_{ \uline{u'} } ) = \mathrm{Ad} ( W ) \circ \Gamma ( q_ {\uline{u'} }  ) = \mathrm{Ad} (W) \circ \Gamma_u ( q_{\uline{u'} } ) \sim \mathrm{Ad}(W) ( q_{ \uuline{u'} } ) \sim q_{\uuline{u'} }
\]
in $C^*(F)$ for $u \in r_E(s_E^{-1}(u_0))\setminus\{u_0\}$.

Using $K$-theory computations in $K_0 ( \mathfrak{J}_{\uline{H}_u } )$, it is easy to see that $[ q_{ \uline{v}_1^u} ] = 0$, $[ q_{\uline{u}'} ] = 0$, and $[ q_{ \uline{v}_2^u} ] + [ q_{ \uline{u} } ] = 0$ in $K_0(C^*(G))$ (since $[ q_{\uline{v}_1^u} ] = [ q_{\uline{v}_1^u} ] + [ q_{\uline{u}'} ] + [ q_{ \uline{v}_2}^u ] + [ q_{\uline{u} } ]$ in $K_0 ( \mathfrak{J}_{\uline{H}_u } )$) for $u \in r_E(s_E^{-1}(u_0))\setminus\{u_0\}$.  Therefore,
\[
[ 1_{ C^* (G) } ] = [ q_{\uline{u}_0 } ] + \sum_{u \in r_E(s_E^{-1}(u_0))\setminus\{u_0\}} \left( [ q_{ \uline{v}_1^u} ] + [ q_{ \uline{v}_2^u} ] + [ q_{ \uline{u} } ]  + [ q_{\uline{u}'} ]  \right) = [ q_{\uline{u}_0} ]
\]
in $K_0(C^*(G))$. A similar computation shows that 
\[
[ 1_{ C^* (F) } ] =  [ q_{\uuline{u}_0 } ] + \sum_{u \in r_E(s_E^{-1}(u_0))\setminus\{u_0\}} [ q_{\uuline{u} } ]
\]
in $K_0 ( C^* (F) )$.  

Although, $[ q_{\uuline{u} } ]$ is not necessarily $0$ in $K_0 ( C^*(F) )$, $\sum_{u \in r_E(s_E^{-1}(u_0))\setminus\{u_0\}} [ q_{\uuline{u} } ]$ is in the image of the map from $K_1 ( C(S^1) )$ to $K_0 ( \mathfrak{J}_{ \uuline{H} } )$.  Hence, $\sum_{u \in r_E(s_E^{-1}(u_0))\setminus\{u_0\}} [ q_{\uuline{u} } ] = 0$ in $K_0 ( C^* (F) )$.  So, $[ 1_{ C^* (F) } ] =  [ q_{\uuline{u}_0 } ]$ in $K_0(C^*(F))$.  Since $\Phi$ is a \stariso, we have that $\Phi ( 1_{C^* (G) } ) = 1_{ C^*(F) }$ which implies that $[ \Phi ( q_{\uline{u}_0} ) ] = [ q_{\uuline{u}_{0}} ]$ in $K_0 ( C^* ( F) )$.  Since $q_{\uline{u}_0 }$ is full in $C^* (G)$ and $\Phi$ is a \stariso, $\Phi ( q_{\uline{u}_0} )$ is full in $C^* (F)$.  Note that $q_{ \uuline{u}_0}$ is full in $C^* (F)$.  Since every graph \ca has the stable weak cancellation property, we have that $\Phi ( q_{\uline{u}_0} ) \sim q_{\uuline{u}_0}$.
\end{proof}

\subsection{Invariance of the specialized Pulelehua move}

In this subsection, we show that the specialized Move \PP\ induces a stable isomorphism between the corresponding graph \cas{} --- and we keep to a certain extent track of what happens to the induced maps on $K$-theory. 
The idea is somewhat inspired by the proofs in \cite{MR3713535} respectively \cite{MR3624419}, but are more complicated. 

\label{subsec:inv-spec-pulelehua}
\begin{theorem}\label{thm:hash-1}
Let $E$ be a graph with finitely many vertices, and let $u_0$ be a vertex of $E$ such that these satisfy Assumption~\ref{assumption:1-hash}. 
Let $\Bsf_E\in\MPZcc$ for some $\mathbf{m},\mathbf{n}\in\N_0^N$ and some $\calP$ satisfying Assumption~\ref{ass:preorder} (\cf\ Assumption~\ref{assumption:1-hash}).
Assume, moreover, that $u_0$ belongs to the block $j\in\calP$. 

Let $\mathbf{r}=(r_i)_{i=1}^N$ and $\mathbf{r}'=(r_i')_{i=1}^N$, where $r_i=2$ and $r_i'=4$ for all immediate successors $i$ of $j$, and $r_i=0=r_i'$ otherwise. 
Then 
$\Bsf_{E_{u_0,\#}}\in\MPZcc[(\mathbf{m}+\mathbf{r})\times(\mathbf{n}+\mathbf{r})]$ 
and $\Bsf_{E_{u_0,\#\#}}\in\MPZcc[(\mathbf{m}+\mathbf{r}')\times(\mathbf{n}+\mathbf{r}')]$.

Then there exists a $\calP$-equivariant isomorphism $\Phi$ from $C^{*}(E_{u_0,\#})$ to $C^{*}(E_{u_0,\#\#})$. 

If all cyclic components are singletons and $\gcd(\Bsf_E\{i\})=1$ for all $i\in\calP$ corresponding to non-cyclic strongly connected components, then we can choose $\Phi$ such that 
there exists a \GLPEe $(U,V)$ from $-\iota_{\mathbf{r}}(-\Bsf_{E_{u_0,\#}})$ to $\Bsf_{E_{u_0,\#\#}}$ such that $U\{i\}$ and $V\{i\}$ are the $1\times 1$ matrix $1$ for all cyclic components $i\neq j$, $U\{j\}=-1$ and $V\{j\}=1$ and $\FKR(\calP;\Phi)=\FKR(U,V)$. 
\end{theorem}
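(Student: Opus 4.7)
The proof follows the blueprint of Theorem~\ref{thm:cuntz-splice-1}: the local \stariso $\Psi: \mathcal{E}' \to \mathcal{E}$ supplied by Proposition~\ref{prop:csdouble-hash} is to be amalgamated with the identity on the common ``remainder'' of $E$ to produce $\Phi$. Observe that inside $C^*(E_{u_0,\#})$ the subalgebra generated by $p_{u_0}$, $s_{e_0}$, the $s_{c_0^u}$, and the generators of the appended graphs $\mathbf{E}_*^u$ together with the extra edges $e_5^u,\ldots,e_8^u$ is canonically isomorphic to $\mathcal{E}$---relativeness at each $u$ is exactly what accommodates the further outgoing edges $c_2^u$ which live in the complementary subalgebra. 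A parallel description holds for $C^*(E_{u_0,\#\#})$ and $\mathcal{E}'$. I would therefore construct $\Phi$ via the universal property by specifying a Cuntz--Krieger $E_{u_0,\#}$-family inside $C^*(E_{u_0,\#\#})$: on the generators coming from $E$ take the same generators in $C^*(E_{u_0,\#\#})$; on the generators coming from $\mathcal{E}$ take their images under an adjusted form of $\Psi^{-1}: \mathcal{E} \to \mathcal{E}'$, viewed inside $C^*(E_{u_0,\#\#})$.

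\textbf{Gluing.} The Cuntz--Krieger relation at $u$ in $E_{u_0,\#}$ forces the adjusted $\Psi^{-1}$ to send the stripped projection $p_u - (s_{c_1^u}s^*_{c_1^u} + s_{e_5^u}s^*_{e_5^u})$ to $s_{c_2^u}s^*_{c_2^u}$, which in $C^*(E_{u_0,\#\#})$ is exactly $p_u - (s_{c_1^u}s^*_{c_1^u} + s_{f_{11}^u}s^*_{f_{11}^u})$; similarly the CK relation at $u_0$ forces $p_{u_0}\mapsto p_{u_0}$, and we need $p_u\mapsto p_u$. Proposition~\ref{prop:csdouble-hash} provides each of these three equalities only up to Murray--von Neumann equivalence. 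I would fix this by conjugating $\Psi^{-1}$ by a partial isometry $w \in C^*(E_{u_0,\#\#})$ assembled from pieces implementing the three MvN equivalences simultaneously (which is possible because the three relevant pairs of projections sit in orthogonal corners). The resulting adjusted family is a genuine Cuntz--Krieger $E_{u_0,\#}$-family and hence induces $\Phi$; applying the analogous construction with $\Psi$ in place of $\Psi^{-1}$ produces the inverse of $\Phi$. The decomposition preserves the lattice of gauge invariant ideals, so $\Phi$ is automatically $\calP$-equivariant.

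\textbf{K-theory tracking.} For the last assertion, the assumptions $\Bsf_E \in \MPZccc$ and $\gcd \Bsf_E\{i\} = 1$ on the noncyclic strongly connected blocks pass to $\Bsf_{E_{u_0,\#}}$ and $\Bsf_{E_{u_0,\#\#}}$, since the added blocks contain explicit $1$-entries coming from the loops and transition edges of $\mathbf{E}_*^u$, $\mathbf{E}_{**}^u$. Since $\Phi$ is $\calP$-equivariant, it induces a $K$-web isomorphism from $K(-\iota_{\mathbf{r}}(-\Bsf_{E_{u_0,\#}}^\bullet))$ to $K(\Bsf_{E_{u_0,\#\#}}^\bullet)$, which by Theorem~\ref{thm:mainBH} lifts to some \GLPEe $(U,V)$. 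The prescribed signs are read off from the local action of $\Phi$: Proposition~\ref{prop:csdouble-hash} asserts that the induced $K_1$-isomorphism on the $C(S^1)$-subquotient at $j$ is $-\id$, giving $U\{j\} = -1$ (since $(U^\mathsf{T})^{-1}$ induces $K_1$), while the $K_0$-map there is the identity so $V\{j\} = 1$; on the other cyclic components the $1\times 1$ diagonal blocks are invertible integers, and the freedom in Corollary~\ref{cor:BH-4.6-B} (using the $\gcd = 1$ condition on the noncyclic blocks that immediately precede them) combined with positivity lets us arrange $U\{i\} = V\{i\} = 1$ for all cyclic $i\neq j$.

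\textbf{Main obstacle.} The technical heart of the argument is the gluing in the second paragraph: one must construct a single partial isometry in $C^*(E_{u_0,\#\#})$ which simultaneously converts all three Murray--von Neumann equivalences of Proposition~\ref{prop:csdouble-hash} into honest equalities, and then verify that the resulting generators satisfy the Cuntz--Krieger sum relations of $E_{u_0,\#}$ at $u_0$ and at each $u$. This is a delicate bookkeeping step that relies essentially on Proposition~\ref{prop:csdouble-hash} supplying all three MvN equivalences through one and the same \stariso $\Psi$, so that the three adjustments are compatible.
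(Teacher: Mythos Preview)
Your route differs from the paper's in one essential way: the paper never works inside $C^*(E_{u_0,\#\#})$. Instead it embeds $C^*(E_{u_0,\#}) \oplus \mathcal{E}$ into $\mathcal{O}_2$ via Kirchberg's theorem and builds \emph{two} Cuntz--Krieger families there---one for $E_{u_0,\#}$ and one for $E_{u_0,\#\#}$---showing they generate the same subalgebra. The point of $\mathcal{O}_2$ is that any two nonzero projections are Murray--von~Neumann equivalent, so the partial isometries gluing the ``remainder'' of $E$ to the $\mathcal{E}$-part exist for free; the passage between the two families is then accomplished not by a global conjugation of $\Psi$ but by conjugating each boundary edge individually by a separate partial isometry $z_e = y_e + p$ built from one of the three equivalences in Proposition~\ref{prop:csdouble-hash}. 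Your approach---fix the gluing edges of $C^*(E_{u_0,\#\#})$ and globally adjust $\Psi^{-1}$ to match---is the dual strategy, and in principle it can be made to work, but it is more rigid.

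The gap is in your gluing step. The assertion that ``the three relevant pairs of projections sit in orthogonal corners'' is false: the stripped projection $p_{\uline{u}} - (s_{\uline{e}_5^u}s_{\uline{e}_5^u}^* + s_{\uline{c}_1^u}s_{\uline{c}_1^u}^*)$ is a \emph{subprojection} of $p_{\uline{u}}$, not orthogonal to it. To assemble a single unitary $w \in \mathcal{E}'$ whose adjoint action sends $\Psi^{-1}(p_{\uline{u}_0}) \mapsto p_{u_0}$, $\Psi^{-1}(p_{\uline{u}}) \mapsto p_u$, \emph{and} $\Psi^{-1}(\text{stripped}_{\uline{u}}) \mapsto \text{stripped}_u$ simultaneously, you would first need to decompose $p_{\uline{u}}$ as stripped plus its complement and establish that the complements are themselves Murray--von~Neumann equivalent in $\mathcal{E}'$, and likewise that the residual piece $\Psi^{-1}(\sum_i p_{\uline{v}_i^u})$ matches $\sum_i p_{\uuline{w}_i^u}$. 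Neither of these is supplied by Proposition~\ref{prop:csdouble-hash}; they require a separate cancellation argument in the relative graph algebra $\mathcal{E}'$. The paper's edge-by-edge conjugation avoids this entirely, since each $z_e$ uses only one of the three given equivalences at a time.

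Your $K$-theory justification for $U\{i\} = V\{i\} = 1$ at cyclic $i \neq j$ is also off: for a $1 \times 1$ diagonal block with $B\{i\} = 0$, the entries $U\{i\}$ and $V\{i\}$ are completely determined by the action of the $K$-web isomorphism on $\cok B\{i\} \cong \Z$ and $\ker B\{i\} \cong \Z$, so there is no ``freedom'' in Theorem~\ref{thm:mainBH} to adjust them. Positivity forces $V\{i\} = 1$, but $U\{i\}$ must be read off from the action of $\Phi$ on $K_1$ of the corresponding $C(S^1)$-subquotient. The paper's argument---which transfers directly to your construction---is that $\Phi$ is literally the identity on the generators of those components (they lie untouched in the remainder), so the induced $K_1$-map is the identity and hence $U\{i\} = 1$.
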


\begin{proof}
We let $r$ and $s$ denote the range and source maps of $E$.
As above, we let $$\mathcal{E} = C^*\left(\uline{\mathbf{E}}_{u_0,\#}, (\uline{\mathbf{E}}_{u_0,\#})^0\setminus\left(\{\uline{u}_0\}\cup\bigcup_{u\in r(s^{-1}(u_0))\setminus\{u_0\}}\uline{u}\right)\right)$$ 
and choose an isomorphism between $\mathcal{E}$ and 
$$\mathcal{E}'=C^*\left(\uuline{\mathbf{E}}_{u_0,\#\#}, (\uuline{\mathbf{E}}_{u_0,\#\#})^0\setminus\left(\{\uuline{u}_0\}\cup\bigcup_{u\in r(s^{-1}(u_0))\setminus\{u_0\}}\uuline{u}\right)\right),$$ according to Proposition~\ref{prop:csdouble-hash}. 

Since $C^*(E_{u_0,\#})$ and $\mathcal{E}$ are unital, separable, nuclear \cas, it follows from Kirchberg's embedding theorem that there exists a unital embedding 
\[
	C^*(E_{u_0,\#}) \oplus \mathcal{E} \hookrightarrow \mathcal{O}_2. 	
\]
We will suppress this embedding in our notation. 
In $\mathcal{O}_2$, we denote the vertex projections and the partial isometries coming from $C^*(E_{u_0,\#})$ by $p_v, v\in E_{u_0,\#}^0$ and $s_e,e\in E_{u_0,\#}^1$, respectively, and we denote the vertex projections and the partial isometries coming from $\mathcal{E}$ by $p_{\uline{u}_0}$, $p_{\uline{u}}$, $p_{\uline{v}_1^u}$, $p_{\uline{v}_2^u}$ and $s_{\uline{e}_0}$, $s_{\uline{c}_0^u}$, $s_{\uline{c}_1^u}$, $s_{\uline{e}_i^u}$, $i=1,2,\ldots,8$, for $u\in r(s^{-1}(u_0))\setminus\{u_0\}$, respectively.
Since we are dealing with an embedding, it follows from 
\cite[Theorem~1.2]{MR1914564} that for any vertex-simple cycle $\alpha_1 \alpha_2 \cdots \alpha_n$ in $E_{u_0,\#}$ without any exit, we have that the spectrum of $s_{\alpha_1} s_{\alpha_2} \cdots s_{\alpha_n}$ contains the entire unit circle.

We will define a new Cuntz-Krieger $E_{u_0,\#}$-family. 
We let
\begin{align*}
q_{u_0}&=p_{\uline{u}_0}, \\ 
q_v &= p_v, &&\text{for each }v \in E^0\setminus r(s^{-1}(u_0)), 
\intertext{and}
q_{u} &= p_{\uline{u}}, \\
q_{v_i^u} &= p_{\uline{v}_i^u}, &&\text{for }i=1,2,
\end{align*}
for all $u\in r(s^{-1}(u_0))\setminus\{u_0\}$. 
Since any two nonzero projections in $\mathcal{O}_2$ are Murray-von~Neumann equivalent, we can choose partial isometries $x_1^e \in \mathcal{O}_2$ such that 
\begin{align*}
	x_1^e (x_1^e)^* &= s_{e} s_{e}^*				
	& (x_1^e)^* x_1^e &= 
	\begin{cases}
	p_{\uline{u}_0} & \text{if } r(e)=u_0, \\
	p_{\uline{r(e)}} & \text{otherwise,}
	\end{cases}	
\intertext{for all $e\in r^{-1}(r(s^{-1}(u_0)))\setminus s^{-1}(r(s^{-1}(u_0)))$ and partial isometries $x_2^{c_2^u} \in \mathcal{O}_2$}	
	x_2^{c_2^u} (x_2^{c_2^u})^* &= p_{\uline{u}} - (s_{\uline{e}_5^{u}} s_{\uline{e}_5^{u}}^* + s_{\uline{c}_1^{u}} s_{\uline{c}_1^{u}}^*)	
	& (x_2^{c_2^u})^* x_2^{c_2^u} &= p_{r({c_2^u})},
\end{align*} 
for all $u\in r(s^{-1}(u_0))\setminus\{u_0\}$.

We let 
\begin{align*}
t_{e_0}&=s_{\uline{e}_0}, \\ 
t_e &= s_e, &&\text{for all edges }e\text{ between vertices in }E^0\setminus r(s^{-1}(u_0)) \\
t_{e} &= x_1^e, &&\text{for all }e\in r^{-1}(r(s^{-1}(u_0)))\setminus s^{-1}(r(s^{-1}(u_0))), 
\intertext{and}
t_{c_i^u} &= s_{\uline{c}_i^u}, &&i=0,1, \\
t_{c_2^u} &= x_2^{c_2^u}, \\
t_{e_i^u} &= s_{\uline{e}_i^u}, &&i=1,2,\ldots,8,
\end{align*}
for all $u\in r(s^{-1}(u_0))\setminus\{u_0\}$. 

By construction $\setof{ q_v }{ v \in E_{u_0,\#}^0 }$ is a set of orthogonal projections, and the collection $\setof{ t_e }{ e \in E_{u_0,\#}^1 }$ is a set of partial isometries.
Furthermore, by choice of $t_e$ for all edges not touching $r(s^{-1}(u_0))$, the relations are satisfied at all vertices not connected to a vertex in $r(s^{-1}(u_0))$ by an edge. 
The choice of $x_1^e, x_2^{c_2^u}$ ensures that the relations hold at the other vertices as well. 
Hence $\{q_v, t_e\}$ does indeed form a Cuntz-Krieger $E_{u_0,\#}$-family. 
Denote this family by $\mathcal{S}$.

Using the universal property of graph \cas, we get a \starhomo from $C^*(E_{u_0,\#})$ onto $C^*(\mathcal{S}) \subseteq \mathcal{O}_2$.
Let $\alpha_1 \alpha_2 \cdots \alpha_n$ be a vertex-simple cycle in $E_{u_0,\#}$ without any exit. 
It is evident that no vertex-simple cycle without any exit uses edges connected to $u_0$, $u$, $v_1^u$ or $v_2^u$ for $u\in r(s^{-1}(u_0))\setminus\{u_0\}$. 
Hence $t_{\alpha_1} t_{\alpha_2} \cdots t_{\alpha_n} = s_{\alpha_1} s_{\alpha_2} \cdots s_{\alpha_n}$ and so its spectrum contains the entire unit circle. 
It now follows from 
\cite[Theorem~1.2]{MR1914564} that the \starhomo from $C^*(E_{u_0,\#})$ to $C^*(\mathcal{S})$ is in fact a \stariso.

Let $\A$ be the subalgebra of $\mathcal{O}_2$ generated by $\setof{ p_v }{ v \in E^0\setminus r(s^{-1}(u_0)) }$ and $\mathcal{E}$.
Note that $\A$ has a unit, and although it does not coincide with the unit of $\mathcal{O}_2$ it does coincide with the unit of $C^*(\mathcal{S})$. 
In fact $\A$ is  a unital subalgebra of $C^*(\mathcal{S})$.

Let us denote by 
$p_{\uuline{u}_0}$, $p_{\uuline{u}}$, $p_{\uuline{w}_1^u}$, $p_{\uuline{w}_2^u}$, $p_{\uuline{w}_3^u}$, $p_{\uuline{w}_4^u}$ and $s_{\uuline{e}_0}$, $s_{\uuline{c}_0^u}$, $s_{\uuline{c}_1^u}$, $s_{\uuline{f}_i^u}$, $i=1,2,\ldots,16$, for $u\in r(s^{-1}(u_0))\setminus\{u_0\}$ the image of the canonical generators of $\mathcal{E}'$ in $\mathcal{O}_2$ under the chosen isomorphism between $\mathcal{E}'$ and $\mathcal{E}$ composed with the embedding into $\mathcal{O}_2$. 

By Proposition~\ref{prop:csdouble-hash}, certain projections in $\mathcal{E}$ are Murray-von~Neumann equivalent, hence we can find partial isometries $y_{e} \in \mathcal{E}$ such that 
\begin{align*}
    y_{c_2^u}^*y_{c_2^u} &= p_{\uline{u}}-\left(s_{\uline{e}_5^u}s_{\uline{e}_5^u}^* +s_{\uline{c}_1^u}s^*_{\uline{c}_1^u}\right), & 
    y_{c_2^u}y_{c_2^u}^* &= p_{\uuline{u}}-\left(s_{\uuline{f}_{11}^u}s_{\uuline{f}_{11}^u}^* +s_{\uuline{c}_1^u}s^*_{\uuline{c}_1^u}\right), 
\intertext{when $e=c_2^u$ for $u\in r(s^{-1}(u_0))\setminus\{u_0\}$, and }
    y_{e}^*y_{e} &= p_{\uline{r(e)}}, &
    y_{e}y_{e}^* &= p_{\uuline{r(e)}}, 
\end{align*}
when $e\in r^{-1}(r(s^{-1}(u_0)))\setminus s^{-1}(r(s^{-1}(u_0)))$ (here $\uline{r(e)}$ and $\uuline{r(e)}$ denotes $\uline{u}_0$ and $\uuline{u}_0$, respectively, if $r(e)=u_0$). 

For all these $e\in \setof{c_2^u }{u\in r(s^{-1}(u_0))\setminus\{u_0\}}\cup (r^{-1}(r(s^{-1}(u_0)))\setminus s^{-1}(r(s^{-1}(u_0))))$, we let 
$$    z_{e} =y_{e} + p, $$
where
$$p=\sum_{v\in E^0\setminus r(s^{-1}(u_0))}p_v.$$
Note that these are partial isometries and $p_v z_e =z_e p_v = p_v$, $v\in E^0\setminus r(s^{-1}(u_0))$, and $s_{e'} z_e =z_e s_{e'} = s_{e'}$ and $s_{e'}^* z_e =z_e s_{e'}^* = s_{e'}^*$, for all edges $e'$ between vertices in $E^0\setminus r(s^{-1}(u_0))$, for all such edges $e$. 

We will define a Cuntz-Krieger $E_{u_0,\#\#}$-family in $\mathcal{O}_2$. 
We let
\begin{align*}
P_{u_0}&=p_{\uuline{u}_0}, \\ 
P_v&=q_v = p_v, &&\text{for each }v \in E^0\setminus r(s^{-1}(u_0)), 
\intertext{and}
P_{u} &= p_{\uuline{u}}, \\
P_{w_i^u} &= p_{\uuline{w}_i^u}, &&\text{for }i=1,2,3,4,
\end{align*}
for all $u\in r(s^{-1}(u_0))\setminus\{u_0\}$. Let moreover 
\begin{align*}
S_{e_0}&=s_{\uuline{e}_0}, \\ 
S_e &= t_e=s_e, &&\text{for all edges }e\text{ between vertices in }E^0\setminus r(s^{-1}(u_0)) \\
S_{e} &= z_e t_e z_e^* = z_e x_1^e z_e^*, &&\text{for all }e\in r^{-1}(r(s^{-1}(u_0)))\setminus s^{-1}(r(s^{-1}(u_0))), 
\intertext{and}
S_{c_i^u} &= s_{\uuline{c}_i^u}, &&i=0,1, \\
S_{c_2^u} &= z_{c_2^u} t_{c_2^u} z_{c_2^u}^* =z_{c_2^u} x_2^{c_2^u} z_{c_2^u}^*, \\
S_{f_i^u} &= s_{\uuline{f}_i^u}, &&i=1,2,\ldots,16,
\end{align*}
for all $u\in r(s^{-1}(u_0))\setminus\{u_0\}$. 
Denote this family by $\mathcal{T}$.

By construction, $\setof{ P_v }{ v \in E_{u_0,\#\#}^0}$ is a set of orthogonal projections and $\setof{ S_e }{ e \in E_{u_0,\#\#}^1}$ is a set of partial isometries.
Furthermore, by the choice of $S_e$ for all edges not touching $r(s^{-1}(u_0))$, the relations are satisfied at all vertices not connected to a vertex in $r(s^{-1}(u_0))$ by an edge. 

Moreover, 
\begin{align*}
S_{e_0}^*S_{e_0} &= s_{\uuline{e}_0}^*s_{\uuline{e}_0} = p_{\uuline{u}_0}=P_{u_0}=P_{r(e_0)},&
S_{e_0}S_{e_0}^* &= s_{\uuline{e}_0}s_{\uuline{e}_0}^* \leq p_{\uuline{u}_0}=P_{u_0}=P_{s(e_0)},\\
S_{c_0^u}^*S_{c_0^u} &= s_{\uuline{c}_0^u}^*s_{\uuline{c}_0^u} = p_{\uuline{u}}=P_{u}=P_{r(c_0^u)}, &
S_{c_0^u}S_{c_0^u}^* &= s_{\uuline{c}_0^u}s_{\uuline{c}_0^u}^* \leq p_{\uuline{u}_0}=P_{u_0}=P_{s(c_0^u)},\\
S_{c_1^u}^*S_{c_1^u} &= s_{\uuline{c}_1^u}^*s_{\uuline{c}_1^u} = p_{\uuline{u}}=P_{u}=P_{r(c_1^u)}, &
S_{c_1^u}S_{c_1^u}^* &= s_{\uuline{c}_1^u}s_{\uuline{c}_1^u}^* \leq p_{\uuline{u}}=P_{u}=P_{s(c_1^u)},\\
S_{c_2^u}^*S_{c_2^u} 
&=p_{r({c_2^u})} =P_{r({c_2^u})} &
S_{c_2^u}S_{c_2^u}^* 
&=y_{c_2^u} y_{c_2^u}^* = p_{\uuline{u}}-\left(s_{\uuline{f}_{11}^u}s_{\uuline{f}_{11}^u}^* +s_{\uuline{c}_1^u}s^*_{\uuline{c}_1^u}\right) \\
&&&\leq p_{\uuline{u}}=P_{u}=P_{s(c_2^u)}, \\
S_{f_{i}^u}^*S_{f_{i}^u}&=s_{\uuline{f}_{i}^u}^*s_{\uuline{f}_{i}^u}=p_{\uuline{r_{E_{u_0,\#\#}}(f_i^u)}} & S_{f_{i}^u}S_{f_{i}^u}^*&=s_{\uuline{f}_{i}^u}s_{\uuline{f}_{i}^u}^*\leq p_{\uuline{s_{E_{u_0,\#\#}}(f_i^u)}} \\
&=P_{r_{E_{u_0,\#\#}}(f_i^u)}, & &=P_{s_{E_{u_0,\#\#}}(f_i^u)}, 
\intertext{for all $u\in r(s^{-1}(u_0))\setminus\{u_0\}$ and $i=1,2,\ldots,16$,}
S_e^*S_e&=s_e^*s_e=p_{r(e)}=P_{r(e)} &
S_eS_e^*&=s_es_e^*\leq p_{s(e)}=P_{s(e)} , 
\intertext{for all edges $e$ between vertices in $E^0\setminus r(s^{-1}(u_0))$,}
S_e^*S_e &= y_e y_e^* =p_{\uuline{r(e)}}=P_{r(e)}, & 
S_eS_e^* &= s_es_e^* \leq p_{s(e)}=P_{s(e)},
\end{align*}
for edges $e$ with $s(e)\in E^1\setminus r(s^{-1}(u_0))$ and $r(e)\in r(s^{-1}(u_0))$. 

Using the above, we immediately see that the Cuntz-Krieger relations hold at the vertices $u_0$ and $w_1^u$, for $u\in r(s^{-1}(u_0))\setminus\{u_0\}$. 
If $v\in E^0\setminus r(s^{-1}(u_0))$ such that $v$ is a finite emitter in $E_{u_0,\#\#}$ and there is an edge from $v$ to a vertex in $r(s^{-1}(u_0))$, then $v$ is also a finite emitter in $E$, and from the above it is clear that the Cuntz-Krieger relations hold at this vertex. 
If $u\in r(s^{-1}(u_0))\setminus\{u_0\}$, then $u$ is regular and
\begin{align*}
\smash{\sum_{e\in s_{E_{u_0,\#\#}}^{-1}(u)}S_eS_e^*} 
&=S_{c_1^u}S_{c_1^u}^* + S_{c_2^u}S_{c_2^u}^* + S_{f_{11}^u}S_{f_{11}^u}^* \\
&=s_{\uuline{c}_1^u}s_{\uuline{c}_1^u}^* + \left(p_{\uuline{u}}-\left(s_{\uuline{f}_{11}^u}s_{\uuline{f}_{11}^u}^* +s_{\uuline{c}_1^u}s^*_{\uuline{c}_1^u}\right)\right) +s_{\uuline{f}_{11}^u}s_{\uuline{f}_{11}^u}^* \\
&=p_{\uuline{u}}=P_{u}.
\end{align*}
Hence $\mathcal{T}$ is a Cuntz-Krieger $E_{u_0,\#\#}$-family. 

The universal property of $C^*(E_{u_0,\#\#})$ provides a surjective \starhomo from $C^*(E_{u_0,\#\#})$ to $C^*(\mathcal{T}) \subseteq \mathcal{O}_2$. 
Let $\alpha_1 \alpha_2 \cdots \alpha_n$ be a vertex-simple cycle in $E_{u_0,\#\#}$ without any exit. 
We see that all the edges $\alpha_i$ must be between vertices in $E^0\setminus r(s^{-1}(u_0))$, and hence we have
\begin{align*}
	S_{\alpha_1} S_{\alpha_2} \cdots S_{\alpha_n} = s_{\alpha_1} s_{\alpha_2}  \cdots s_{\alpha_n}  
\end{align*} 
and so its spectrum contain the entire unit circle. 
It now follows from 
\cite[Theorem~1.2]{MR1914564} that the \starhomo from $C^*(E_{u,\#\#})$ to $C^*(\mathcal{T})$ is in fact a \stariso.

Since $\A \subseteq C^*(\mathcal{S})$ and since 
$p_{\uuline{u}_0},p_{\uuline{u}},p_{\uuline{w}_i^u}, s_{\uuline{c}_0^u}, s_{\uuline{c}_1^u}, s_{\uuline{e}_0}, s_{\uuline{f}_j^u}\in \mathcal{E} \subseteq C^*(\mathcal{S})$, for $i=1,2,3,4$, $j = 1,2,\ldots, 16$, $u\in r(s^{-1}(u_0))\setminus\{u_0\}$, we have that $\mathcal{T} \subseteq C^*(\mathcal{S})$. 
So $C^*(\mathcal{T}) \subseteq C^*(\mathcal{S})$. 
But since $\A$ is also contained in $C^*(\mathcal{T})$ and $\mathcal{E} \subseteq C^*(\mathcal{T})$, it follows from 
\begin{align*}
t_e&= x_1^e = s_es_e^* x_1^e p_{\uline{r(e)}}  =(y_e^*y_e+p)s_es_e^* x_1^e p_{\uline{r(e)}} (y_e^*y_e + p ) \\
&=z_e^*z_ex_1^e z_e^*z_e =z_e^* S_e z_e 
\intertext{for all $e\in r^{-1}(r(s^{-1}(u_0)))\setminus s^{-1}(r(s^{-1}(u_0)))$,}
t_{c_2^u}&=x_2^{c_2^u} 
=(p_{\uline{u}} - (s_{\uline{e}_5^{u}} s_{\uline{e}_5^{u}}^* + s_{\uline{c}_1^{u}} s_{\uline{c}_1^{u}}^*))x_2^{c_2^u}p_{r({c_2^u})} \\
&= (y_{c_2^u}^*y_{c_2^u}+p)(p_{\uline{u}} - (s_{\uline{e}_5^{u}} s_{\uline{e}_5^{u}}^* + s_{\uline{c}_1^{u}} s_{\uline{c}_1^{u}}^*))x_2^{c_2^u}p_{r({c_2^u})}(y_{c_2^u}^*y_{c_2^u}+p) \\
&= z_{c_2^u}^*z_{c_2^u}x_2^{c_2^u}z_{c_2^u}^*z_{c_2^u} \\
&= z_{c_2^u}^*S_{c_2^u}z_{c_2^u} ,
\end{align*}
for all $u\in r(s^{-1}(u_0))\setminus\{u_0\}$, that we have that $\mathcal{S} \subseteq C^*(\mathcal{T})$, and hence $C^*(\mathcal{S}) \subseteq C^*(\mathcal{T})$. 
Therefore 
\[
	C^*(E_{u_0,\#}) \cong C^*(\mathcal{S}) = C^*(\mathcal{T}) \cong C^*(E_{u_0,\#\#}).
\]

Now assume that all cyclic components are singletons and $\gcd(\Bsf_E\{i\})=1$ for all $i\in\calP$ corresponding to non-cyclic strongly connected components. 
Then we know from Theorem~\ref{thm:mainBH} that there exists a \GLPEe $(U,V)$ from $-\iota_{\mathbf{r}}(-\Bsf_{E_{u_0,\#}})$ to $\Bsf_{E_{u_0,\#\#}}$ such that $\FKR(\calP;\Phi)=\FKR(U,V)$. 

It is clear that the reduced filtered $K$-theory isomorphism from $C^{*}(\mathcal{S})$ to $C^{*}(\mathcal{T})$ has to be positive on $K_0$ of the cyclic components, so it has to be the identity with the canonical identification of $K_0$ as cokernels --- \ie, $V\{i\}=1$ for all $i\in\calP$ corresponding to cyclic components. 
Moreover, from the construction above, it is clear that it is giving the identity map on $K_1$ for all other cyclic components than $\{u_0\}$ (since $t_e$ is mapped to $S_e$ for all edges in other cyclic components) --- \ie, $U\{i\}=1$ for all cyclic components $i\neq j$. 

From Proposition~\ref{prop:csdouble-hash} it follows that $[s_{\uline{e}_0}]_1=-[s_{\uuline{e}_0}]_1$ in $K_1$ of the gauge simple subquotient of $C^*(\mathcal{S})=C^*(\mathcal{T})$ corresponding to the component $\{u_0\}$. Thus it follows that the isomorphism changes the sign of $K_1$ for this component --- \ie, $U\{j\}=-1$. 
\end{proof}

Now we have the following immediate consequence.

\begin{corollary}\label{cor:hash-1}
Let $E$ be a graph with finitely many vertices and let $u_0$ be a vertex of $E$ such that $E$ and $u_0$ satisfy Assumption~\ref{assumption:1-hash}. 
Then there exists a \calP-equivariant isomorphism from $C^{*}(E_{u_0,\#})$ to $C^{*}(E_{u_0,\#\#})$. 
\end{corollary}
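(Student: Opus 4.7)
The plan is to observe that Corollary~\ref{cor:hash-1} is essentially an immediate restatement of the first part of Theorem~\ref{thm:hash-1}. Indeed, Theorem~\ref{thm:hash-1} takes as hypotheses exactly that $E$ has finitely many vertices and that $u_0$ together with $E$ satisfies Assumption~\ref{assumption:1-hash}, which is precisely the hypothesis of the corollary. The conclusion of Theorem~\ref{thm:hash-1} then asserts, among other things, the existence of a $\calP$-equivariant isomorphism $\Phi \colon C^{*}(E_{u_0,\#}) \to C^{*}(E_{u_0,\#\#})$. The further conclusions in Theorem~\ref{thm:hash-1} (the factorisation of $\FKR(\calP;\Phi)$ through a \GLPEe with prescribed diagonal blocks on the cyclic components) require the additional hypotheses that all cyclic components are singletons and $\gcd(\Bsf_E\{i\})=1$ on each non-cyclic strongly connected component; these extra hypotheses are not needed for the existence statement itself.

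So the proof would simply apply Theorem~\ref{thm:hash-1} to $E$ and $u_0$, and extract from its conclusion the $\calP$-equivariant isomorphism $\Phi$. There is no need to invoke the matrix-theoretic or $K$-theoretic part of the conclusion, and no need for the additional assumptions on $\gcd$ or on the cyclic components being singletons: those assumptions are only used inside the proof of Theorem~\ref{thm:hash-1} to apply Theorem~\ref{thm:mainBH} (the Boyle-Huang lifting result) in order to realize $\FKR(\calP;\Phi)$ as coming from an explicit \GLPEe, which is not needed here.

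I do not expect any genuine obstacle: the main work was already carried out in the proof of Theorem~\ref{thm:hash-1}, which built the isomorphism concretely by embedding $C^*(E_{u_0,\#})\oplus \mathcal{E}$ into $\mathcal{O}_2$ and exhibiting Cuntz–Krieger families $\mathcal{S}$ and $\mathcal{T}$ inside $\mathcal{O}_2$ generating the same subalgebra, and this construction of course also yields a $\calP$-equivariant isomorphism. Hence the proof of the corollary is a one-line invocation of that theorem.
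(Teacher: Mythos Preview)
Your proposal is correct and matches the paper's approach exactly: the paper states Corollary~\ref{cor:hash-1} as an immediate consequence of Theorem~\ref{thm:hash-1}, with no further argument given. Your explanation of why the extra hypotheses in the second part of Theorem~\ref{thm:hash-1} are not needed here is also accurate.
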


Thus we have the following fundamental result.

\begin{proposition}\label{prop:hash}
Let $E$ be a graph with finitely many vertices, and let $u_0$ be a vertex in $E$ such that Assumption~\ref{assumption:1-hash} holds. 
Then there exists a $\Prime_\gamma(C^*(E))$-equivariant isomorphism $\Phi$ from $C^*(E)\otimes\K$ to $C^*(E_{u_0,\#})\otimes\K$.

If, moreover, $j\in\calP$ corresponds to the component $\{u_0\}$, $\Bsf_E\in\MPZccc$, $\gcd(\Bsf_E\{i\})=1$ for all $i\in\calP$ corresponding to non-cyclic strongly connected components, then $\Bsf_{E_{u_0,\#}}\in\MPZccc[(\mathbf{m}+2\mathbf{r})\times(\mathbf{m}+2\mathbf{r})]$ and we can choose $\Phi$ such that there exists a \GLPEe $(U,V)$ from $-\iota_{4\mathbf{r}}(-B_{E})$ to $-\iota_{2\mathbf{r}}(-B_{E_{u_0,\#}})$ such that $U\{i\}$ and $V\{i\}$ are the $1\times 1$ matrix $1$ for all cyclic components $i\neq j$, $U\{j\}=-1$ and $V\{j\}=1$ and  $\FKR(\calP;\Phi)=\FKRs(U,V)$ under the canonical identifications, where $\mathbf{r}=(r_i)_{i=1}^N$ is defined by $r_i=1$ for all immediate successors $i$ of $j$, and $r_i=0$ otherwise. 
\end{proposition}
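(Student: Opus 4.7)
The plan is to triangulate between $E$, $E_{u_0,\#}$ and $E_{u_0,\#\#}$: Corollary~\ref{cor:hash-1} supplies a $\calP$-equivariant $*$-isomorphism $\Psi\colon C^*(E_{u_0,\#})\to C^*(E_{u_0,\#\#})$, while Proposition~\ref{prop:hashmovetwice} supplies an \SLPEe from $-\iota_{4\mathbf{r}}(-\Bsf_E^\bullet)$ to $\Bsf_{E_{u_0,\#\#}}^\bullet$ that lifts via Theorem~\ref{thm:SLP-equivalence-implies-stable-isomorphism} to a $\calP$-equivariant stable isomorphism $\Theta\colon C^*(E)\otimes\K\to C^*(E_{u_0,\#\#})\otimes\K$. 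The desired map is $\Phi=(\Psi\otimes\id_\K)^{-1}\circ\Theta$; $\Prime_\gamma$-equivariance then follows from the canonical identification in Proposition~\ref{prop:first-collect-structure-2}.

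Assume first $\Bsf_E\in\MPZccc$; this is the core case, as the stronger hypothesis in the second half of the proposition already falls under it. Proposition~\ref{prop:hashmovetwice} gives an \SLPEe $(U_1,V_1)$ from $-\iota_{4\mathbf{r}}(-\Bsf_E^\bullet)$ to $\Bsf_{E_{u_0,\#\#}}^\bullet$ in which $U_1$ and $V_1$ are identity outside the blocks indexed by immediate successors of $j$, so $U_1\{i\}=V_1\{i\}=1$ for every cyclic component $i$. Theorem~\ref{thm:hash-1} furnishes, under the gcd hypothesis, a \GLPEe $(U_2,V_2)$ from $-\iota_{2\mathbf{r}}(-\Bsf_{E_{u_0,\#}}^\bullet)$ to $\Bsf_{E_{u_0,\#\#}}^\bullet$ whose matrices are identity outside the $j$-th diagonal block, with $U_2\{j\}=-1$, $V_2\{j\}=1$, and $\FKR(\calP;\Psi)=\FKR(U_2,V_2)$. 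Because $U_2$ and $V_2$ are block diagonal, so are $U_2^{-1}$ and $V_2^{-1}$, and the composition $U=U_2^{-1}U_1$, $V=V_1V_2^{-1}$ is a \GLPEe from $-\iota_{4\mathbf{r}}(-\Bsf_E^\bullet)$ to $-\iota_{2\mathbf{r}}(-\Bsf_{E_{u_0,\#}}^\bullet)$ with $U\{j\}=(-1)\cdot 1=-1$, $V\{j\}=1\cdot 1=1$, and $U\{i\}=V\{i\}=1$ at every other cyclic component. Functoriality of $\FKR$ together with $\FKR(\calP;\Theta)=\FKRs(U_1,V_1)$ yields $\FKR(\calP;\Phi)=\FKRs(U,V)$.

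For the first assertion in the general case $\Bsf_E\in\MPZcc$, invoke Proposition~\ref{prop:structure-2}(3) to produce a graph $F$ with $\Bsf_F\in\MPZccc$, a move equivalence $E\Meq F$ and a $\Prime_\gamma$-equivariant stable isomorphism $\Lambda\colon C^*(E)\otimes\K\to C^*(F)\otimes\K$. Since $\{u_0\}$ is already a singleton cyclic component and $u_0$ only emits to non-cyclic components by Assumption~\ref{assumption:1-hash}(4), the reduction of Proposition~\ref{prop:structure-2}(3) acts only on cyclic components disjoint from the neighborhood of $u_0$; the vertex corresponding to $u_0$ in $F$ thus still satisfies Assumption~\ref{assumption:1-hash}, and $F_{u_0,\#}$ is $\Prime_\gamma$-equivariantly move equivalent to $E_{u_0,\#}$. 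Applying the $\MPZccc$ case to $F$ and composing with the transport isomorphisms coming from $\Lambda$ yields the required $\Phi$.

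The main obstacle is the bookkeeping in the general case: one must verify that the block-structure reductions packaged in Proposition~\ref{prop:structure-2}(3) commute with Move~\PP at $u_0$ up to $\Prime_\gamma$-equivariant stable isomorphism. This hinges on the locality of Move~\PP, which only touches $u_0$ and its immediate successors and is therefore disjoint from the other non-singleton cyclic components shrunk by the reduction, but it requires careful component-by-component tracking of the ideal correspondences.
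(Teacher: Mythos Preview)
Your core argument—compose Proposition~\ref{prop:hashmovetwice} with Theorem~\ref{thm:SLP-equivalence-implies-stable-isomorphism} to get $\Theta$, then invert the $\Psi$ from Theorem~\ref{thm:hash-1}/Corollary~\ref{cor:hash-1} and read off $(U,V)=(U_2^{-1}U_1,V_1V_2^{-1})$—is exactly the paper's proof. One imprecision: Theorem~\ref{thm:hash-1} does \emph{not} assert that $U_2,V_2$ are the identity outside the $j$-th diagonal block; it only controls the $1\times 1$ diagonal blocks at cyclic components. This does not damage your conclusion, because elements of $\GLPZ$ are block upper triangular, so diagonal blocks are multiplicative under products and inverses; just replace ``block diagonal'' by that observation.

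Where your proposal diverges from the paper is the general $\MPZcc$ case. The paper simply cites Corollary~\ref{cor:hashmovetwice} for the first assertion, but that corollary is stated under the extra hypothesis $\Bsf_E\in\MPZccc$ (needed to invoke Theorem~\ref{thm:SLP-equivalence-implies-stable-isomorphism}); so the paper's proof as written covers the first assertion only when all cyclic components are already singletons—which happens to be the only case used downstream. You noticed this and try to close the gap via Proposition~\ref{prop:structure-2}\ref{prop:structure-2-circcirccirc}, reducing other cyclic components to singletons while leaving the neighbourhood of $u_0$ untouched and then arguing that the $\#$-construction commutes with that reduction up to equivariant move equivalence. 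The idea is sound, and the locality you invoke is real: the reduction collapses cycles in components disjoint from $u_0$ and its immediate successors, and Move~\PP at $u_0$ only adds vertices and edges inside those successor components. What remains to be written down carefully is that the collapse moves on the far-away cyclic components, performed verbatim on $E_{u_0,\#}$, yield $F_{u_0,\#}$ and that the resulting stable isomorphism is $\Prime_\gamma$-equivariant; this is routine but not literally contained in any single cited lemma, so your caveat in the last paragraph is warranted.
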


\begin{proof}
The first part is a direct consequence of Corollary~\ref{cor:hashmovetwice} and Corollary~\ref{cor:hash-1}.

Now assume in addition that $j\in\calP$ corresponds to the component $\{u_0\}$, $\Bsf_E\in\MPZccc$, $\gcd(\Bsf_E\{i\})=1$ for all $i\in\calP$ corresponding to non-cyclic strongly connected components. 
By Theorem~\ref{thm:hash-1}, Proposition~\ref{prop:hashmovetwice} and Theorem~\ref{thm:SLP-equivalence-implies-stable-isomorphism}, the conclusions of the proposition follow. 
\end{proof}

\subsection{Transform graphs from canonical form to satisfy Assumption~\ref{assumption:1-hash}}
\label{subsec:transform-to-apply-spec-pulelehua}
Suppose that there are given graphs $E$ and $F$ with finitely many vertices such that $(\Bsf_E,\Bsf_F)$ is in standard form and a \GLPEe $(U,V)$ from $\Bsf_E^\bullet$ to $\Bsf_F^\bullet$ that satisfies that $V\{i\}=1$ for all $i\in\calP$ with $n_i=1$. Then the purpose of this subsection is to show that the problem of lifting $\FKR(U,V)$ to a stable isomorphism can be reduced to the case where we moreover assume that $U\{i\}=1$ for all $i\in\calP$ with $m_i=1$. 

\begin{lemma}\label{lem: canonical form to new form}
Let $E$ be a graph in canonical form and let $\Bsf_E \in \MPZccc$.  Suppose $\{u_0\}$ is a cyclic strongly connected component and the immediate successors of $\{u_0\}$ are all non-cyclic strongly connected components. Let $\calP_0$ denote the set of immediate successors of $\{u_0\}$, and assume that $\calP_0\neq\emptyset$.  
Let $\mathbf{r}=(r_i)_{i\in\calP}$, where $r_i = 1$ for $i \in \calP_0$, and $r_i = 0$ for $i \notin \calP_0$.

Then there exists a graph $F$ such that $\Bsf_F\in\MPZccc[(\mathbf{m}+3\mathbf{r})\times(\mathbf{n}+3\mathbf{r})]$, $E \MCeq F$, with $E^0 \subseteq F^0$ and $F$ satisfies the assumptions in Assumption~\ref{assumption:1-hash}.  
Moreover, $F$ can be chosen such that there exists a \calP-equivariant isomorphism $\Psi$ from $C^*(E)\otimes\K$ to $C^*(F)\otimes\K$ and a \GLPEe $(U,V)$ from $-\iota_{3\mathbf{r}}(-\Bsf_{E}^\bullet)$ to $\Bsf_F^\bullet$ that satisfy, that $U\{i\}=1$ for all $i\in\calP$ with $m_i=1$, $V\{i\}=1$ for all $i\in\calP$ with $n_i=1$ and $\FKR(\calP;\Psi)=\FKRs(U,V)$. 
\end{lemma}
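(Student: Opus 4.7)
The goal is to transform $E$ into the desired graph $F$ by (a) enlarging each successor component $i \in \calP_0$ by three extra vertices via simple edge expansions, and (b) using matrix moves to reshape the combinatorics around the edges from $u_0$, all while keeping every $1\times 1$ diagonal block of $\Bsf^\bullet$ (including the $u_0$-block and the diagonal blocks of any singular components) untouched. Composing everything will deliver both the stable isomorphism $\Psi$ and the \GLPEe $(U,V)$ with $\FKR(\calP;\Psi)=\FKRs(U,V)$, and the fact that we never touch the $1\times 1$ diagonal blocks will give $U\{i\}=1$ when $m_i=1$ and $V\{i\}=1$ when $n_i=1$.

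\textbf{Step 1 (expansions inside each successor).} First I would, for each $i \in \calP_0$, pick a vertex $v_i$ in the $i$-th strongly connected component (which is non-cyclic, hence by the canonical form of $E$ supports at least two loops and has $\Bsf_E^\bullet\{i\}>0$), and apply three successive simple edge expansions of loops (or cycles) based at $v_i$ or at the expanded vertices produced before. By Proposition~\ref{prop:edge-expansion}, each such expansion adds one vertex to component $i$, is realized by an \SLPEe of matrices equal to the identity outside the $i$-th diagonal block, and comes equipped with a $\calP$-equivariant stable isomorphism inducing the corresponding reduced filtered $K$-theory map. After these $3|\calP_0|$ expansions, we obtain a graph $E'$ with $\Bsf_{E'}\in\MPZccc[(\mathbf m+3\mathbf r)\times(\mathbf n+3\mathbf r)]$, an \SLPEe $(U',V')\colon -\iota_{3\mathbf r}(-\Bsf_E^\bullet)\to\Bsf_{E'}^\bullet$ with $U'\{i\},V'\{i\}$ identity outside $\calP_0$, and a matching $\calP$-equivariant stable isomorphism $\Psi'$.

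\textbf{Step 2 (rewiring by row/column additions).} Next, for each $i\in\calP_0$ I would apply Proposition~\ref{prop:toke} to perform a sequence of basic row/column additions that accomplish three things: (i) produce a distinguished vertex $u_i$ in component $i$ having exactly one loop and exactly one other outgoing edge (going to another vertex of component $i$); (ii) eliminate all incoming edges to $u_i$ from vertices outside component $i$ except the one coming from $u_0$; and (iii) consolidate all edges from $u_0$ into component $i$ so that exactly one edge from $u_0$ to $u_i$ remains (and no other edge from $u_0$ into $i$). Because every intermediate vertex we move edges along is either regular and supports at least two edges (as guaranteed by the canonical form together with Step 1), the hypotheses of Proposition~\ref{prop:toke} are met at each step, so each addition is realized by a basic positive \SLPEe (equal to the identity on every cyclic $1$-block and on the diagonal of every singular block) together with a matching $\calP$-equivariant stable isomorphism. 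After a final reordering of vertices (putting singular vertices first in each component, as required for canonical form) we obtain $F$ fulfilling Assumption~\ref{assumption:1-hash}.

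\textbf{Conclusion and main obstacle.} Composing the stable isomorphisms and the \GLPEe{s} from Steps 1 and 2 yields the desired $\Psi$ and $(U,V)$, and tracking the diagonal blocks through every move confirms $U\{i\}=1$ when $m_i=1$ and $V\{i\}=1$ when $n_i=1$ (in particular on the $u_0$-block). The main obstacle is the bookkeeping of Step 2: one must choreograph which of the three new vertices in each component becomes $u_i$ and in what order the row/column additions are performed so that conditions (6)--(8) of Assumption~\ref{assumption:1-hash} hold \emph{simultaneously}, while every intermediate adjacency matrix still lies in $\MPZccc$ (so that Proposition~\ref{prop:toke} applies at each step). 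In particular, handling the case when $u_0$ originally emits many edges into the same successor component $i$ and reducing them to exactly one through a sequence of legal additions is the combinatorial heart of the argument, and is where the three extra vertices per successor are crucially used.
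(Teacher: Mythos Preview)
Your plan differs substantially from the paper's approach, and the difference is not cosmetic. The paper does \emph{not} obtain the three extra vertices in each successor component by three edge expansions. Instead, for each $i\in\calP_0$ it first performs the Cuntz splice (Move~\CC) at a chosen regular vertex $w_i$ of component $i$, which adds two vertices $v_{1,i},v_{2,i}$ and, via Corollary~\ref{cor:cuntzspliceinvariant}, contributes a \GLPEe whose $V\{i\}$ has determinant $-1$. Only after a round of explicit row/column operations does the paper add the third vertex by a single edge expansion (of the edge $v_{2,i}\to v_{1,i}$) and then finish with a few more additions and subtractions. The reason the paper reaches for the Cuntz splice is structural: the vertex $v_{2,i}$ produced by the splice already has exactly one loop and exactly one other outgoing edge (to $v_{1,i}$), and its only incoming edge is from $v_{1,i}$. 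That is precisely the local shape demanded by conditions~(7) and~(8) of Assumption~\ref{assumption:1-hash}, so the remaining work is only to reroute the edges out of $u_0$ onto this prefabricated vertex.

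Your route, by contrast, starts from edge-expanded vertices, each of which has \emph{no} loop and a single in/out pair. Every attempt to install a loop on such a vertex by a legal row or column addition (Proposition~\ref{prop:toke}) simultaneously dumps onto it either all the outgoing edges or all the incoming edges of an adjacent vertex in the canonical-form block, immediately violating~(7) or~(8). You flag this as ``the main obstacle,'' but it is not bookkeeping---it is the entire content of the lemma. Without a concrete sequence of legal operations that manufactures a vertex with exactly one loop, exactly one other outgoing edge, and controlled incoming edges, Step~2 is an assertion rather than an argument. If such a sequence exists using only \SLPEe{s}, it would yield the stronger conclusion $E\Meq F$; but you have not exhibited it, and the paper's use of the Cuntz splice is exactly the device that sidesteps this difficulty. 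In short: the paper's Move~\CC\ is the key idea here, and your proposal replaces it with a step whose feasibility is left open.
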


\begin{proof}
For each $i\in\calP_0$ we let $S_{ i }$ denote the subset of $E^0$ consisting of the vertices of the component corresponding to $i$. Then $S_{ i } \cap S_{ j } = \emptyset$ when $i \neq j$ for $i,j\in\calP_0$. Moreover, $S_{i} = H_i \setminus H_i'$ for saturated hereditary subsets $H_i$ and $H_i'$ of $E^0$ with $H_i' \subseteq H_i$, for every $i\in\calP$.  By assumption, the subgraph $E_{S_{i} }$ has the property that it has at least one regular vertex and all the entries of $\Bsf_{ E_{S_{i} } }$ are positive or $\infty$, for $i\in\calP_0$. 

We now construct $F$.  For each $i\in\calP_0$, choose a regular vertex $w_i \in S_{i}$.  Since the entries of $\Bsf_{E_{S_{i}} }$ are positive or $\infty$, $w_i$ supports at least 2 loops.  Therefore, we may perform Move \CC at each $w_i$, for $i\in\calP$, to get a graph $E_1$. Clearly $E\MCeq E_1$.  Using Corollary~\ref{cor:cuntzspliceinvariant}, we see that $\Bsf_{E_1}\in\MPZccc[(\mathbf{m}+2\mathbf{r})\times(\mathbf{n}+2\mathbf{r})]$, and there exist $U\in\SLPZ[\mathbf{m}+2\mathbf{r}]$ and $V\in\GLPZ[\mathbf{n}+2\mathbf{r}]$ such that the following holds. All the diagonal blocks of $V$ have determinant $1$ except for the diagonal blocks $V\{i\}$, for $i\in\calP_0$, which have determinant $-1$ and $(U,V)$ is a \GLPEe from $-\iota_{2\mathbf{r}}(-\Bsf_{E}^\bullet)$ to  $\Bsf_{E_1}^\bullet$. 
Moreover, there exists a $\calP$-equivariant isomorphism $\Phi$ from $C^{*}(E) \otimes \K$ to $C^{*}(E_1) \otimes \K$ such that $\FKR(\calP;\Phi)=\FKRs(U,V)$. In particular, the \GLPEe $(U,V)$ induces the identity on $K$-theory corresponding to all components that are not non-cyclic strongly connected components (\ie, those satisfying $n_i=1$). 

Let $i\in\calP_0$ be given, and let $v_{1,i}$ and $v_{2,i}$ be the vertices from the Cuntz-splice.  We will now do column and row operations to $\Bsf_{E_1}$ such that each operation is legal in the sense that the resulting graph is move equivalent to $E_1$, and we obtain corresponding \calP-equivariant isomorphism and \SLPEe.  First add column $v_{1,i}$ to column $v$ for all $v$ with the property that $s_E^{-1}( u_0 ) \cap r_E^{-1}( v ) \neq \emptyset$ and $v_{1,i} \geq v$.  Now subtract column $v_{2,i}$ from column $w_i$.  Next, add row $v_{1,i}$ to row $u_0$.   We now add row $v_{2,i}$ to row $v_{1,i}$, $M_i$ times, where 
\[
M_i = \sum_{ \substack{ v \in E^0 ,  w_i \geq v \\ | s_E^{-1}( u_0 ) \cap r_E^{-1}( v ) | \geq 1 } } | s_E^{-1}( u_0 ) \cap r_E^{-1}( v ) |. 
\] 
Lastly, subtract column $v_{2,i}$ from column $v$, $| s_E^{-1}( u_0 ) \cap r_E^{-1}( v )|$ times, for all $v$ with the property that $s_E^{-1}( u_0 ) \cap r_E^{-1}( v ) \neq \emptyset$ and $v_{1,i} \geq v$. 

Using the resulting graph instead of $E$ we successively do the analogue operations for all the other $i \in\calP_0$.
We have now constructed a graph $E_2$ such that $E_2 \Meq E_1$ and $E_2$ has all the desired properties except for ``each vertex in $r(s^{-1}(u_0))\setminus\{u_0\}$ has exactly one loop and it has exactly one other edge going out (to a vertex that is part of the same component)'', \cf\ Assumption~\ref{assumption:1-hash}.  Moreover, since $E_2$ is obtained from $E_1$ by column and row operations, Theorem~\ref{thm:SLP-equivalence-implies-stable-isomorphism} gives us a \calP-equivariant isomorphism $\Psi_1$ from $C^*(E_1)\otimes\K$ to $C^*(E_2)\otimes\K$, such that $\FKR(\calP;\Psi_1)$ is induced by an \SLPEe from $\Bsf_{E_1}^\bullet$ and $\Bsf_{E_2}^\bullet$.
In particular, $E\MCeq E_2$ and we have a \calP-equivariant isomorphism $\Psi_1\circ\Phi$ from $C^*(E)\otimes\K$ to $C^*(E_2)\otimes\K$ such that $\FKR(\calP;\Psi_1\circ\Phi)$ is induced by a \GLPEe from $ -\iota_{2\mathbf{r}}(-\Bsf_{E}^\bullet)$ to $\Bsf_{E_2}^\bullet$ that is the identity on the $K$-theory corresponding to all components that are not non-cyclic strongly connected (\ie, those satisfying $n_i=1$).  

Now let $f_i$ be the edge going from $v_{2,i}$ to $v_{1,i}$. We edge expand this edge and call the new vertex $z_i$.
Then we add row $v_{2,i}$ to row $z_i$. Finally we subtract column $z_i$ from each column (other than $z_i$ and $v_{2,i}$) that receives an edge from $v_{2,i}$. 
We do this for each $i\in\calP_0$ and call the resulting graph $F$. 

Since we are only doing legal column and row operations, we have that $E_2 \Meq F$ and that there exists a \calP-equivariant isomorphism $\Psi_2$ from $C^*(E_2)\otimes\K$ to $C^*(F)\otimes\K$ such that $\FKR(\calP;\Psi_2)$ is induced by an \SLPEe from $-\iota_{\mathbf{r}}(-\Bsf_{E_2}^\bullet)$ to $\Bsf_F^\bullet$ (\cf\ Propositions~\ref{prop:edge-expansion} and~\ref{prop:toke}). 

By composing the above, we get the desired result. 
\end{proof}

\begin{lemma}\label{lem:nosucc}
Let $E=(E^{0},E^{1},r,s)$ be a graph with $\Bsf_E\in\MPZccc$. 
Assume that $i\in\calP$, that $i$ corresponds to a cyclic component, and that $i$ does not have any successors. 
Then there exists a \calP-equivariant automorphism $\Phi$ of $C^{*}(E)$ such that $\FKR(\calP;\Phi)=\FKR(U,I)$, where $(U,I)$ is the \GLPEe from $\Bsf_E^{\bullet}$ to $\Bsf_E^{\bullet}$ that satisfies that $U\{i\}=-1$ and it is the identity everywhere else. 
\end{lemma}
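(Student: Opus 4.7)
My plan is to construct $\Phi$ as an adjoint-flip automorphism at the loop in the distinguished component. First I unpack the hypothesis: since $\Bsf_E\in\MPZccc$ and $i$ is cyclic, $\mathcal{Y}_{\Bsf_E}(i)=\{u_0\}$ is a singleton and the unique return path based at $u_0$ is a loop $e_0$. Because $\mathcal{Y}_{\Bsf_E}$ is order-reversing and $i$ has no successors in $(\calP,\preceq)$, the component $\{u_0\}$ is minimal in $\Gamma_E$, so $u_0$ reaches no other vertex. Combined with the cyclic condition this forces $u_0\in E^0_{\mathrm{reg}}$ with $s^{-1}(u_0)=\{e_0\}$, $r(e_0)=s(e_0)=u_0$, and in particular $s_{e_0}$ is a unitary in the corner $p_{u_0}C^*(E)p_{u_0}$.

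I will then invoke the universal property of $C^*(E)$ on the family $\{p_v\}_{v\in E^0}$, $\{\widetilde{s}_e\}_{e\in E^1}$ defined by $\widetilde{s}_{e_0}:=s_{e_0}^*$ and $\widetilde{s}_e:=s_e$ for $e\neq e_0$. All four Cuntz-Krieger relations of Definition~\ref{def:graphca} are routinely verified using that $s_{e_0}$ is a unitary in $p_{u_0}C^*(E)p_{u_0}$ and that no edge other than $e_0$ has source or range $u_0$; for instance, $\widetilde{s}_{e_0}^*\widetilde{s}_{e_0}=s_{e_0}s_{e_0}^*=p_{u_0}=p_{r(e_0)}$, the summation relation at $u_0$ becomes $\widetilde{s}_{e_0}\widetilde{s}_{e_0}^*=s_{e_0}^*s_{e_0}=p_{u_0}$, and orthogonality $\widetilde{s}_{e_0}^*\widetilde{s}_f=s_{e_0}s_f=0$ for $f\neq e_0$ holds because no edge other than $e_0$ starts at $u_0$. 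The resulting \starhomo $\Phi\colon C^*(E)\to C^*(E)$ is its own inverse, hence an automorphism. For $\calP$-equivariance, $\Phi$ fixes every vertex projection and every $s_es_e^*$ (trivially for $e\neq e_0$, and for $e=e_0$ one has $\Phi(s_{e_0}s_{e_0}^*)=s_{e_0}^*s_{e_0}=p_{u_0}=s_{e_0}s_{e_0}^*$), so $\Phi$ fixes every gap projection and by Fact~\ref{fact:structure-1} preserves every gauge-invariant ideal; under the canonical identification $\Prime_\gamma(C^*(E))\cong\Gamma_E\cong\calP$ this is precisely $\calP$-equivariance.

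The final step is the computation of $\FKR(\calP;\Phi)$. Since $\Phi$ fixes every $p_v$ it is the identity on every $K_0$-group in the invariant. For $K_1$, the only simple subquotient on which $\Phi$ acts nontrivially is the one at $i$; by minimality of $\{u_0\}$ this subquotient is Morita equivalent to $C(\mathbb{T})$, with $K_1\cong\Z$ generated by the class of the unitary $s_{e_0}$, and $\Phi$ sends this generator to the class of its adjoint, i.e., acts by $-1$. For $j\neq i$, the $K_1$-generators of the simple subquotient at $j$ come from cycles involving only edges other than $e_0$, so $\Phi$ acts as the identity there. Comparing with the description of $\FKR(U,I)$ from Section~\ref{sec:red-filtered-K-theory-K-web-GLP-and-SLP-equivalences} --- $V^\mathsf{T}=I$ acts trivially on cokernels and $(U^\mathsf{T})^{-1}$ acts by $-1$ on the $K_1$-kernel at $i$ and trivially elsewhere --- yields $\FKR(\calP;\Phi)=\FKR(U,I)$. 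The genuinely subtle point is the choice of automorphism: the naive sign-flip $s_{e_0}\mapsto -s_{e_0}$ also defines an automorphism of $C^*(E)$, but induces the identity on $K_1$ since $[-u]_1=[u]_1$ in $K_1(C(\mathbb{T}))$ for any unitary $u$; only the adjoint-flip $s_{e_0}\mapsto s_{e_0}^*$ produces the required $-1$.
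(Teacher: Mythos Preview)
Your proof is correct and takes essentially the same route as the paper: construct $\Phi$ via the universal property by sending $s_{e_0}\mapsto s_{e_0}^*$ and fixing all other generators, then read off the action on filtered $K$-theory. The only difference is that you obtain bijectivity by observing that $\Phi$ is an involution, whereas the paper appeals to Szyma\'nski's general Cuntz-Krieger uniqueness theorem; your argument is the more direct of the two here (and note that minimality of $\{u_0\}$ only forces $s^{-1}(u_0)=\{e_0\}$, not $r^{-1}(u_0)=\{e_0\}$, but your orthogonality check correctly uses only the source condition).
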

\begin{proof}
Let $v_0$ be the vertex corresponding to $i$ and let $e_0$ be the unique edge going out from $v_0$. Note that $e_0$ is a loop. 
Let $(p_v)_{v\in E^{0}}$ and $(s_{e})_{e\in E^{1}}$ be a generating Cuntz-Krieger $E$-family for $C^{*}(E)$.
Let $P_v=p_v$, for all $v\in E^{0}$, and $S_e=s_e$, for all $e\in E^{1}\setminus \{e_0\}$. 
Let, moreover, $S_{e_0}=s_{e_0}^{*}$. 
It is clear that $(p_v)_{v\in E^{0}}$ and $(s_{e})_{e\in E^{1}}$ is a Cuntz-Krieger $E$-family. Thus we get a canonical \calP-equivariant homomorphism from $C^{*}(E)$ to $C^{*}(E)$, that is clearly surjective. 
That it is injective follows from 
\cite[Theorem~1.2]{MR1914564}. 
It is clear that the induced map on reduced filtered $K$-theory is as described. 
\end{proof}

\begin{corollary}\label{cor:Pulelehua}
Let $E_1$ and $E_2$ be graphs with finitely many vertices such that $\Bsf_{E_1},\Bsf_{E_2}\in\MPZccc$ and $(\Bsf_{E_1},\Bsf_{E_2})$ is in standard form. 
Assume that we have an isomorphism $\varphi\colon\FKRplus(\calP;C^*(E_1))\rightarrow\FKRplus(\calP;C^*(E_2))$ and a \GLPEe $(U,V)$ from $\Bsf_{E_1}^\bullet$ to $\Bsf_{E_2}^\bullet$ such that $\varphi=\FKR(U,V)$. 

Then there exists a pair of graphs $(F_1,F_2)$ such that $(\Bsf_{F_1} , \Bsf_{F_2} )$ is in standard form with $E_i\MCPeq F_i$, for $i=1,2$, $\Bsf_{F_1}, \Bsf_{F_2}\in\MPZccc[\mathbf{m}'\times\mathbf{n}']$ for some $\mathbf{m}'\geq\mathbf{m}$ and $\mathbf{n}'\geq\mathbf{n}$ such that we have a \calP-equivariant isomorphism $\Psi_i$ from $C^*(E_i)\otimes\K$ to $C^*(F_i)\otimes\K$, for $i=1,2$, and a \GLPEe $(U',V')$ from $\Bsf_{F_1}^\bullet$ to $\Bsf_{F_2}^\bullet$ such that 
$\varphi'=\FKRs(U',V')$ with 
$U'\{i\}=1$ for all $i\in\calP$ with $m_i'=1$ and $V'\{i\}=1$ for all $i\in\calP$ with $n_i'=1$, where $\varphi'=\FKR(\calP;\Psi_2)\circ\FKR(\calP;\kappa_{C^*(E_2)})\circ\varphi\circ\FKR(\calP;\kappa_{C^*(E_1)})^{-1}\circ\FKR(\calP;\Psi_1)^{-1}$. 
\end{corollary}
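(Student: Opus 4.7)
The plan is to iteratively modify $E_2$ (and, in parallel, $E_1$) by moves that flip, one at a time, the diagonal block $U\{i\}$ on each cyclic singleton $i\in\calP$ where it currently equals $-1$, while leaving the other diagonal blocks alone. First I would note that positivity of $\varphi$ forces $V\{i\}=1$ for every $i$ with $n_i=1$: the gauge-simple subquotient associated to either a cyclic singleton or a singular singleton has ordered $K_0$ canonically $(\Z,\N_0)$ with a preferred generator, so the only positive isomorphism is the identity. Consequently only the possibly negative blocks $U\{i\}$ for cyclic singletons $i$ (those with $m_i=1$) require correction, and for these $U\{i\}\in\{\pm1\}$.

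For a single cyclic singleton $i$ with vertex $u_0$ and $U\{i\}=-1$, I would perform the sign flip in one of two ways. If $i$ has no successors in $\calP$, then Lemma~\ref{lem:nosucc} supplies a $\calP$-equivariant automorphism of $C^*(E_2)$ whose reduced filtered $K$-theory action is the \GLPEe that is $-1$ on the $i$-th block of $U$ and the identity everywhere else; pre-composing $\varphi$ with this automorphism's inverse changes $U\{i\}$ to $+1$. If instead $i$ has successors, all of which are immediate and non-cyclic strongly connected, then Lemma~\ref{lem: canonical form to new form} applied at $u_0$ replaces $E_2$ by a graph $G_2$ satisfying Assumption~\ref{assumption:1-hash} at $u_0$, through a $\calP$-equivariant stable isomorphism whose induced \GLPEe is trivial on every singleton block; then Proposition~\ref{prop:hash} applied at $u_0$ in $G_2$ yields $C^*(G_2)\otimes\K\cong C^*((G_2)_{u_0,\#})\otimes\K$, realized by a \GLPEe whose only nontrivial action on singleton blocks is $U\{i\}\mapsto-U\{i\}$ and whose $V$-block at $i$ is $+1$. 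Composing the two moves flips $U\{i\}$ as desired.

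To assemble these individual sign flips I would iterate over the cyclic singletons $i\in\calP$ with $U\{i\}=-1$ in an order compatible with $\prec$, starting with the maximal elements and working downward. Since both Lemma~\ref{lem:nosucc} and Proposition~\ref{prop:hash} act as the identity on every singleton block other than $i$, each new sign flip preserves the normalizations already achieved on previously processed blocks. After handling every such $i$, the composition of the intermediate $\calP$-equivariant stable isomorphisms produces $\Psi_2\colon C^*(E_2)\otimes\K\to C^*(F_2)\otimes\K$ with $E_2\MCPeq F_2$; I would then construct $F_1$ from $E_1$ by the matching edge expansions supplied by Lemma~\ref{lem:we-can-put-it-in-canonical-form}, together with a $\calP$-equivariant $\Psi_1\colon C^*(E_1)\otimes\K\to C^*(F_1)\otimes\K$, so that $(\Bsf_{F_1},\Bsf_{F_2})$ is again in standard form, and verify block by block that the composed \GLPEe $(U',V')$ between $\Bsf_{F_1}^\bullet$ and $\Bsf_{F_2}^\bullet$ satisfies both the stated normalizations on singleton blocks and $\FKRs(U',V')=\varphi'$.

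The hard part will be handling cyclic singletons $i$ whose immediate successor set in $\calP$ contains further cyclic or singular components, since neither Lemma~\ref{lem:nosucc} nor Lemma~\ref{lem: canonical form to new form} applies directly. I expect to bypass this by preparatory moves on $E_1$ and $E_2$ — for instance Cuntz splices of a regular vertex in each intervening non-cyclic strongly connected component, or suitable outsplits exposing a non-cyclic strongly connected immediate successor — chosen so that every cyclic singleton still needing correction falls into one of the two treatable cases above. Controlling that these preparatory moves induce reduced filtered $K$-theory isomorphisms trivial on all singleton blocks, and that they cooperate cleanly with the top-down sign-flipping iteration without reintroducing negative signs elsewhere, is the delicate bookkeeping step that makes this portion of the argument substantial.
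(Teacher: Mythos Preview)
Your overall approach matches the paper's, but you have manufactured a difficulty that is not there. The ``hard part'' you anticipate---a cyclic singleton $i$ with $U\{i\}=-1$ having an immediate successor $j$ that is itself cyclic or singular---cannot occur, and this is the one observation you are missing.

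Here is why. You already noted that positivity forces $V\{k\}=1$ whenever $n_k=1$. Now suppose $i$ is a cyclic singleton and $j$ is an immediate successor with $n_j=1$ (so $j$ is cyclic or singular, hence $m_j\leq 1$). Since $i$ is an immediate predecessor of $j$, the $\{i,j\}$ block of the equation $U\Bsf_{E_1}^\bullet V=\Bsf_{E_2}^\bullet$ reduces to
\[
U\{i\}\,\Bsf_{E_1}^\bullet\{i\}\,V\{i,j\}+U\{i\}\,\Bsf_{E_1}^\bullet\{i,j\}\,V\{j\}+U\{i,j\}\,\Bsf_{E_1}^\bullet\{j\}\,V\{j\}=\Bsf_{E_2}^\bullet\{i,j\}.
\]
But $\Bsf_{E_1}^\bullet\{i\}=0$ (cyclic singleton), and $\Bsf_{E_1}^\bullet\{j\}$ is either the $1\times 1$ zero matrix or empty; so only the middle term survives, giving $U\{i\}\cdot\Bsf_{E_1}^\bullet\{i,j\}=\Bsf_{E_2}^\bullet\{i,j\}$. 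Both sides are strictly positive integers (canonical form, Condition~(4)), so $U\{i\}=1$. Thus whenever $U\{i\}=-1$ at a cyclic singleton $i$, \emph{all} immediate successors of $i$ are automatically non-cyclic strongly connected, and you are already in the case where Lemma~\ref{lem: canonical form to new form} and Proposition~\ref{prop:hash} (or Lemma~\ref{lem:nosucc} if there are no successors) apply directly. No preparatory Cuntz splices or outsplits are needed; the iteration goes through without the delicate bookkeeping you feared. Apart from this, your proof lines up with the paper's (the paper happens to modify $E_1$ rather than $E_2$, but this is immaterial).
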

\begin{proof}
Since $\varphi$ is positive on $K_0$ of the gauge simple subquotients, we get that $V\{i\}$ is the $1\times 1$ matrix $1$ for all $i\in\calP$ with $n_i=1$. 

Suppose that $i\in\calP$, that $i$ corresponds to a cyclic component (\ie, $m_i=1$), and $i$ has an immediate successor $j$ that does not correspond to a noncyclic strongly connected component.
Then the \GLPEe $(U,V)$ satisfies that $V\{i\}=1=V\{j\}$ and that $\Bsf_{E_1}^\bullet\{i,j\}$ and $\Bsf_{E_2}^\bullet\{i,j\}$ are positive integers. 
Thus it follows directly from this and the fact that $(U,V)$ is a \GLPEe from $\Bsf_{E_1}^\bullet$ to $\Bsf_{E_2}^\bullet$ that $U\{i\}=1$.

Now assume that we have a $j\in\calP$ such that $m_j=1$ and $U\{j\}=-1$, and let $u_0$ be the vertex corresponding to $j$. 
Then all immediate successors of $j$ correspond to noncyclic strongly connected components.  Let $\mathbf{r}=(r_i)_{i\in\calP}$ be the multiindex given by $r_i=1$ for all immediate successors $i$ of $j$, and $r_i=0$ otherwise.  First assume that $j$ has successors. 
Now it follows from Lemma~\ref{lem: canonical form to new form}, that there exists a graph $E_1'$ such that $E_1\MCeq E_1'$, $\Bsf_{E_1'}\in\MPZccc[(\mathbf{m}+3\mathbf{r})\times(\mathbf{n}+3\mathbf{r})]$ and $E_1'$ together with $u_0$ satisfies Assumption~\ref{assumption:1-hash}. Moreover, $E_1'$ can be chosen such that there exist a \calP-equivariant isomorphism $\Phi_1$ from $C^{*}(E_1)\otimes\K$ to $C^{*}(E_1')\otimes\K$ and a \GLPEe $(U',V')$ from $-\iota_{3\mathbf{r}}(-\Bsf_{E_1}^\bullet)$ to $\Bsf_{E_1'}^{\bullet}$ that satisfy, that $U'\{i\}=1$ for all $i\in\calP$ with $m_i=1$, $V'\{i\}=1$ for all $i\in\calP$ with $n_i=1$, and $\FKR(\calP;\Phi_1)=\FKR(U',V')$ (under the canonical identifications). 
Now it follows from Proposition~\ref{prop:hash} that there exist a graph $E_1''=(E_1')_{u_0,\#}$ such that $E_1'\MCPeq E_1''$, $\Bsf_{E_1''}\in\MPZccc[(\mathbf{m}+5\mathbf{r})\times(\mathbf{n}+5\mathbf{r})]$, a \calP-equivariant isomorphism $\Phi_1'$ from $C^*(E_1')\otimes\K$ to $C^*(E_1'')\otimes\K$ and a \GLPEe $(U'',V'')$ from $-\iota_{4\mathbf{r}}(-\Bsf_{E_1'}^\bullet)$ to $-\iota_{2\mathbf{r}}(-\Bsf_{E_1''}^\bullet)$ satisfying 
$U''\{i\}=1$ for all $i\in\calP\setminus\{j\}$ with $m_i=1$ and $V''\{i\}=1$ for all $i\in\calP$ with $n_i=1$, $U''\{j\}=-1$, and $\FKR(\calP;\Phi_1')=\FKRs(U'',V'')$. 
Let $E_2''=E_2$. 
Using Lemma~\ref{lem:we-can-put-it-in-canonical-form}, we get graphs $E_1'''$ and $E_2'''$ in canonical form such that 
$E_i''\Meq E_i'''$, $\Bsf_{E_i'''}\in\MPZccc[(\mathbf{m}+7\mathbf{r})\times(\mathbf{n}+7\mathbf{r})]$, a \calP-equivariant isomorphism $\Phi_i''$ from $C^*(E_i'')\otimes\K$ to $C^*(E_i''')\otimes\K$ and $\FKR(\calP;\Phi_i'')$ is induced by an \SLPEe from $-\iota_{2\mathbf{r}}(-\Bsf_{E_i''}^\bullet)$ to $\Bsf_{E_i'''}^\bullet$, for $i=1,2$. 
By composing all the above, we have graphs $E_1'''$ and $E_2'''$ in canonical form such that $E_i\MCPeq E_i'''$, $\Bsf_{E_i'''}\in\MPZccc[(\mathbf{m}+7\mathbf{r})\times(\mathbf{n}+7\mathbf{r})]$, a \calP-equivariant isomorphism $\Psi_i$ from $C^*(E_i)\otimes\K$ to $C^*(E_i''')\otimes\K$ for $i=1,2$, $\FKR(\calP;\Psi_1)=\FKRs(U_1''',V_1''')$ for a \GLPEe $(U_1''',V_1''')$ from $-\iota_{7\mathbf{r}}(-\Bsf_{E_1}^\bullet)$ to $\Bsf_{E_1'''}^\bullet$ satisfying $U_1'''\{i\}=1$, for all $i\neq j$ with $m_i=1$, and $V_1'''\{i\}=1$, for all $i\in\calP$ with $n_i=1$, $U_1'''\{j\}=-1$, $\FKR(\calP;\Psi_2)=\FKRs(U_2''',V_2''')$ for an \SLPEe $(U_2''',V_2''')$ from $-\iota_{7\mathbf{r}}(-\Bsf_{E_2}^\bullet)$ to $\Bsf_{E_2'''}^\bullet$.
We see that
$\varphi'=\FKRs(U_2'''U(U_1''')^{-1},(V_1''')^{-1}VV_2''')$ with 
$U_2'''U(U_1''')^{-1}\{i\}=U\{i\}$ for all $i\in\calP$ with $m_i=1$ and $i\neq j$ and $(V_1''')^{-1}VV_2'''\{i\}=1$ for all $i\in\calP$ with $n_i=1$, where $\varphi'=\FKR(\calP;\Psi_2)\circ\FKR(\calP;\kappa_{C^*(E_2)})\circ\varphi\circ\FKR(\calP;\kappa_{C^*(E_1)})^{-1}\circ\FKR(\calP;\Psi_1)^{-1}$. 
If $j$ does not have any successors, then it follows from Lemma~\ref{lem:nosucc} that there exists a \calP-equivariant automorphism $\Psi'$ of $C^*(E_1)$ such that $\FKR(\calP;\Psi')=\FKR(U',I)$, where $(U',I)$ is a \GLPEe from $\Bsf_{E_1}$ to $\Bsf_{E_1}$ satisfying that $U'\{j\}=-1$ and $U'$ is equal to the identity everywhere else. 
So we get the same result.

Now the corollary follows by induction. 
\end{proof}

\subsection{Invariance of the \Pul move}
\label{subsec:invariance-of-pulelehua}
In this subsection we show the invariance of Move~\PP from Definition~\ref{def:hashmove}, \ie, $C^*(E)\otimes\K\cong C^*(E_{u,P})\otimes\K$ whenever $E$ and $u\in E^0$ satisfy the conditions in Definition~\ref{def:hashmove}. We have already shown this in Proposition~\ref{prop:hash} for the special case when $E$ and $u$ satisfy the conditions of Assumption~\ref{assumption:1-hash}, so we show that we can reduce to that case.

\begin{proposition}\label{prop:first-step-of-invariance-of-Pulelehua-move}
Let $E$ be a graph and let $u$ be a regular vertex that supports a loop and no other return path, the loop based at $u$ has an exit, and if $w \in E^0 \setminus \{ u \}$ and $s_E^{-1} (u) \cap r_E^{-1}(w) \neq \emptyset$, then $w$ is a regular vertex and supports at least two distinct return paths.  Then there exists a graph $F$ with finitely many vertices and a regular vertex $v$ such that 
\begin{itemize}
\item $v$ supports a loop and no other return path;

\item the loop based at $v$ has an exit;

\item if $w \in F^0 \setminus \{ v \}$ and $s_F^{-1}(v) \cap r_F^{-1}(w) \neq \emptyset$, then $w$ is a regular vertex and supports at least two distinct return paths;

\item $E \MCeq F$;

\item $E_{u, P } \MCeq F_{v, P }$; and

\item $\Bsf_F \in \MPZccc$.
\end{itemize}
\end{proposition}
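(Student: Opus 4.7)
The goal is to reduce the general hypotheses of Definition~\ref{def:hashmove} to the considerably more restrictive Assumption~\ref{assumption:1-hash}, while preserving Cuntz move equivalence classes both of the graph and of the $P$-moved graph. My plan is to perform a sequence of Cuntz move equivalences on $E$ and, in parallel, matching moves on $E_{u,P}$, until the resulting $F$ and vertex $v$ enjoy all of (1)--(8) of Assumption~\ref{assumption:1-hash}.

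First I would bring $E$ into a form in which $\Bsf_E\in\MPZccc$ using Proposition~\ref{prop:structure-2} (removing breaking vertices, transition states, and collapsing cyclic components to singletons). The hypotheses on $u$ force $\{u\}$ to already be a cyclic singleton component: since $u$ supports a loop and only one return path, any second vertex in the strongly connected component of $u$ would yield a second return path at $u$. Moreover, $u$ emits at least two edges (the loop plus an exit), and every $w\in S=\{w\in E^0\setminus\{u\}:s_E^{-1}(u)\cap r_E^{-1}(w)\neq\emptyset\}$ lies in a non-cyclic strongly connected component by hypothesis, so conditions (2), (3), (4) come for free. The main work is to arrange (5)--(8) while keeping $\Bsf_E\in\MPZccc$.

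For conditions (5) and (6) I would repeatedly in-split each $w\in S$ at the partition separating $r_E^{-1}(w)\cap s_E^{-1}(u)$ edge by edge from the rest of $r_E^{-1}(w)$, replacing $w$ by multiple copies so that each edge from $u$ lands on its own distinguished copy; this forces exactly one $u$-edge to each immediate successor component. Any non-immediate target coming from a situation where $u$ emits to two components lying in a chain can similarly be eliminated by an appropriate out-split at $u$ or in-split at the lower target, since the single edge in question can be funneled through an intermediate copy lying strictly above. For conditions (7) and (8) I would isolate each $w\in S$ from stray incoming edges from outside its own component by in-splitting at $w$ (putting the edge from $u$ and the inner edges of the component into one piece and the stray edges into another). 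Finally, to force $w$ to support exactly one loop and exactly one other outgoing edge within its component, I use that the receiving component is non-cyclic strongly connected and that Cuntz splicing (Corollary~\ref{cor:cuntzspliceinvariant}) is freely available, together with the matrix manipulations of Proposition~\ref{prop:matrix-moves} and the edge-expansion of Proposition~\ref{prop:edge-expansion}, to introduce a new vertex of the required clean shape and re-route edges through it, then put $\Bsf_F$ back into $\MPZccc$ using Lemma~\ref{lem:we-can-put-it-in-canonical-form}.

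The second task, namely $E_{u,P}\MCeq F_{v,P}$, is handled by noting that every move I use on $E$ is either supported away from $u$ (so it lifts verbatim to a move of the same type on $E_{u,P}$ because the extra Cuntz-splice vertices and extra edges from $u$ are left untouched) or takes place at a vertex in $S$ and involves only edges to and from $u$, in which case the parallel effect on the doubled edges $\overline{e}_w,\tilde{e}_w$ of Definition~\ref{def:hashmove} and on the Cuntz-splice vertices $v_1^w,v_2^w$ produces exactly the effect required, i.e.\ the $P$-move on the transformed graph $F$ at $v$. The verification is bookkeeping: each in-split/out-split/Cuntz-splice performed on $E$ gets applied to $E_{u,P}$ on the corresponding vertex, and the bijection between the new graphs respects the $P$-move structure. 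The main obstacle I anticipate is the simultaneous reconciliation of conditions (6) and (7): separating the edges out of $u$ by in-splitting at $w$ creates several copies of $w$, each of which must still be made to support exactly one loop and exactly one other outgoing edge; this requires interleaving in-splittings at the target with Cuntz splices and column-row additions inside the component, and tracking that after each such operation the updated $S$ and the doubled $P$-edges still admit a matching modification on the $P$-moved graph.
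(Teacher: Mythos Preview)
You have misread the target of the proposition. The statement only asks that $\Bsf_F\in\MPZccc$ together with the Move~\PP\ hypotheses on $v$; it does \emph{not} ask for the much stronger conditions (5)--(8) of Assumption~\ref{assumption:1-hash}. The reduction to Assumption~\ref{assumption:1-hash} is carried out elsewhere (Lemma~\ref{lem: canonical form to new form}), under the additional hypothesis that $E$ is already in canonical form, and by completely different means. Consequently, most of your outline---the in-splittings at the targets $w\in S$, the attempt to force exactly one edge per immediate successor, the isolation of $w$ via Cuntz splices---is aimed at the wrong goal and is unnecessary here.

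The paper's proof is far more elementary. Its key observation is that the moves needed to reach $\MPZccc$ (iterated removal of regular sources, collapse of regular non-cycle vertices, collapse of non-singleton cyclic components, outsplitting infinite emitters to eliminate breaking vertices) never touch $u$ and never touch any vertex in $S=r_E(s_E^{-1}(u))\setminus\{u\}$, because vertices in $S$ are regular and support at least two return paths by hypothesis, and $u$ itself is a cyclic singleton. Hence each such move, applied verbatim to $E_{u,P}$, yields $G_{u,P}$ for the resulting $G$; the parallel tracking is automatic rather than ``bookkeeping.'' Your proposal, by contrast, manipulates vertices in $S$ directly (in-splitting $w$, Cuntz splicing inside the receiving component), and then the claim that the $P$-move on the new graph agrees up to $\MCeq$ with the transformed $E_{u,P}$ is no longer obvious: after in-splitting $w$ in $E_{u,P}$ the new copies $w^i$ carry edges to and from $v_1^w$, and matching this against $F_{v,P}$ (which Cuntz-splices each $w^i$ separately) requires a genuine argument that you do not supply.
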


\begin{proof}
Note that $w$ is a regular source of $E_{u, P}$ if and only if $w \in E^0$ and $w$ is a regular source of $E$.  Thus, removing all regular sources of $E$, then removing all new regular sources, then continuing this process, we get a graph $G$ with $u \in G^{0} \subseteq E^{0}$ such that $G \Meq E$ and $E_{u,P} \Meq G_{u, P}$.  So, we may assume that $E$ has no regular sources.  Note that the regular vertices that do not support a cycle in $E^{0}$ are precisely the regular vertices that do not support a cycle in $E_{u,P}$.  So collapsing these regular vertices that do not support a cycle results in a graph $G$ with $u \in G^{0} \subseteq E^{0}$ such that $E \Meq G$ and $E_{u,P} \Meq G_{u,P}$.  Therefore, we may assume that every regular vertex of $E$ supports a cycle. 

Note that the regular vertices of $E$ that support exactly one return path are precisely the regular vertices of $E_{u,P}$ that support exactly one return path.  Thus, collapsing all vertices that support exactly one return path results in a graph $G$ with $u \in G^{0} \subseteq E^{0}$ such that $E \Meq G$ and $E_{u,P} \Meq G_{u,P}$.  Hence, we may assume that every vertex of $E$ that supports exactly one return path supports precisely one loop and no other return path.  

We will now outsplit all infinite emitters $w$ of $E$ with the property that there exists $w' \in E^{0}$ such that $0 < | s_{E}^{-1} ( w ) \cap r_{E}^{-1} (w') | < \infty$.  Let
\[ 
\mathcal{T} = \setof{ w \in E^{0} }{ | s_{E}^{-1} (w) | = \infty\text{ and }\exists w' \in E^{0}\text{ such that } 0 < | s_{E}^{-1} ( w ) \cap r_{E}^{-1} (w') | < \infty }.
\]
For each $w \in \mathcal{T}$, partition $s_{E}^{-1} (w)$ as follows:  
\[
\mathcal{E}_{1,w} = \setof{ e \in s_E^{-1}(w) }{ | s_{E}^{-1} ( w ) \cap r_{E}^{-1} (r_{E}(e) ) | < \infty }
\]
and $\mathcal{E}_{2,w} = s_{E}^{-1} (w) \setminus \mathcal{E}_{1,v}$.  Performing Move~\OO to $E$ at $w$ for every $w \in \mathcal{T}$, we get a graph $E_{1}$ such that $E \Meq E_{1}$ and $E_{1}$ has no breaking vertices.   

Note that $s_{E}^{-1} (u) \cap r^{-1}_{E}(w) = \emptyset$  for all $w \in \mathcal{T}$ since $w$ is an infinite emitter and $s_{E}^{-1} (u) \cap r^{-1}_{E}(w') \neq \emptyset$ implies $w'$ is a regular vertex.  Therefore, $s_{E}^{-1} (w) = s_{E_{u,P}}^{-1}(w)$.  Partition $s_{E_{u,P}}^{-1}(w)$ using the partition of $\mathcal{E}_{1,w} \sqcup \mathcal{E}_{2,w}$, \ie,
\[
s_{E_{u,P}}^{-1}(w) = s_{E}^{-1} (w) = \mathcal{E}_{1,w} \sqcup \mathcal{E}_{2,w}.
\]
Performing Move~\OO to $E_{u, P}$ at $w$ for each $w \in \mathcal{T}$, using the above partition, we get a graph $G_{1}$ with no breaking vertices and $u \in G_{1}^{0}$.  Note that $G_{1} = (E_{1})_{u,P}$. 

We may have introduced new regular vertices that are not base point of a cycle but using a similar argument as in the first paragraph, we obtain a graph $F$ with $u \in F^{0}$ such that $F$ and $u$ has the desired property.
\end{proof}

\begin{proposition}\label{prop:second-step-of-invariance-of-Pulelehua-move}
Let $E$ be a graph and let $u$ be a regular vertex that supports a loop and no other return path, the loop based at $u$ has an exit, and if $w \in E^0 \setminus \{ u \}$ and $s_E^{-1} (u) \cap r_E^{-1}(w) \neq \emptyset$, then $w$ is a regular vertex and supports at least two distinct return paths.  Suppose $\Bsf_{E} \in \MPZccc$ and that $j\in\calP$ corresponds to the component $\{u\}$.  Set 
\[
\calP_0 = \setof{ i \in \calP }{ \exists w\text{ in the }i\text{'th component, such that }s_E^{-1}(u) \cap r_E^{-1}(w) \neq \emptyset }.
\]
Then there exists a \GLPEe, $(U,V)$, from $\Bsf_{E_{u, P} }$ to $- \iota_{\mathbf{r}} ( - \Bsf_E )$ such that $U\{i\}=1$ for all $i\neq j$ with $m_i=1$, $V\{i\}=1$ for all $i$ with $n_i=1$, and $U \{ j \} = -1$, where $r_i = 0$ for all $i \notin \calP_0$ and 
\[
r_i = 2\cdot \left| \setof{ w }{ w\text{ is in the }i\text{'th component and }s^{-1}(u) \cap r^{-1}(w) \neq \emptyset } \right|
\]
for all $i \in \calP_0$.

Denote the set $\setof{ w \in E^0 \setminus \{u\} }{ s_E^{-1}(u) \cap r_E^{-1}(w) \neq \emptyset }$ by $S$.  Then there exists a \GLPEe, $(U_1,V_1)$, from $\Bsf_{E_{u, P} }$ to $\Bsf_{E_{S,-}}$ such that $U_1\{i\}$ and $V_1\{i\}$ are \SL-matrices for $i \neq j$ and $U_1 \{ j \} = -1$ and $V_1 \{ j \} = 1$.     
\end{proposition}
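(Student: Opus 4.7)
The plan is to establish the second \GLPEe first, by an explicit construction, and then derive the first by composing it with iterated Cuntz splice equivalences furnished by Corollary~\ref{cor:cuntzspliceinvariant}. By Definition~\ref{def:hashmove}, the matrices $\Bsf_{E_{u, P}}$ and $\Bsf_{E_{S,-}}$ agree in every entry except at the positions $(u, v_2^w)$ for $w \in S$, where $\Bsf_{E_{u, P}}(u, v_2^w) = 2\Bsf_E(u, w)$ while $\Bsf_{E_{S,-}}(u, v_2^w) = 0$. Moreover, since by hypothesis $u$ supports only the loop at $u$ as a return path, $\Asf_E(u,u) = 1$ and hence $\Bsf_E(u,u) = 0$; likewise $u$ emits only into $\{u\} \cup S$.

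For the second statement, I will take $V_1 = I$ and define $U_1$ to be the identity matrix everywhere except in the row indexed by $u$, setting $U_1(u,u) = -1$ and $U_1(u, v_1^w) = 2\Bsf_E(u, w)$ for each $w \in S$. Since $u$ lies in the cyclic block $j$ and each $v_1^w$ lies in an immediate successor block in $\calP_0$, $U_1$ belongs to $\GLPZ[\mathbf{m}+\mathbf{r}]$ with $U_1\{j\} = -1$ and $U_1\{i\} = I$ for $i \neq j$. The identity $U_1 \Bsf_{E_{u,P}} = \Bsf_{E_{S,-}}$ then reduces to a short row computation: each row $v_1^w$ of $\Bsf_{E_{u,P}}$ has exactly two nonzero entries, both equal to $1$, located at columns $w$ and $v_2^w$. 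Forming the combination $-\mathrm{row}_u + \sum_{w \in S} 2\Bsf_E(u, w) \cdot \mathrm{row}_{v_1^w}$ then flips the sign back to $\Bsf_E(u,w)$ at each column $w$, annihilates the $-2\Bsf_E(u,w)$ at each column $v_2^w$, keeps the $(u,u)$-entry at $0$ (using $\Bsf_E(u,u)=0$), and leaves every other column of row $u$ at $0$ (using that $u$ emits nowhere outside $\{u\} \cup S$).

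For the first statement, I will apply Corollary~\ref{cor:cuntzspliceinvariant} once at each $w \in S$, which is permissible since each such $w$ is regular and supports at least two distinct return paths. Composing the resulting Cuntz splice equivalences produces a \GLPEe $(U_2, V_2)$ from $-\iota_{\mathbf{r}}(-\Bsf_E)$ to $\Bsf_{E_{S,-}}$ such that $U_2$ and $V_2$ coincide with the identity outside the blocks in $\calP_0$, every diagonal block of $U_2$ has determinant $1$, and the diagonal blocks of $V_2$ on $\calP_0$ have determinants $\pm 1$; in particular $U_2\{i\} = V_2\{i\} = 1$ for every cyclic $i$. The composition $(U_2^{-1} U_1, V_1 V_2^{-1})$ then furnishes the desired \GLPEe from $\Bsf_{E_{u, P}}$ to $-\iota_{\mathbf{r}}(-\Bsf_E)$ with $U\{j\} = -1$, $U\{i\} = 1$ for every other cyclic $i$, and $V\{i\} = 1$ for every cyclic $i$.

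The main obstacle is identifying the explicit $U_1$ that flips the sign in block $j$ while simultaneously cancelling the extra entries at the $(u, v_2^w)$ positions; this works precisely because $\Bsf_E(u,u) = 0$ and $u$ emits only into $\{u\} \cup S$. Without either hypothesis the analogous row combination would introduce an uncorrectable stray entry in row $u$, since a sign correction inside the $1 \times 1$ cyclic block $j$ affords no further freedom.
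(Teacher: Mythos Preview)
Your construction of $(U_1,V_1)$ for the second assertion is exactly the paper's: writing out the product $U' E_{u,v_1^{w_t}}^{-2n_t}\cdots E_{u,v_1^{w_1}}^{-2n_1}$ from the paper gives precisely the matrix you describe, with $(u,u)$-entry $-1$ and $(u,v_1^w)$-entry $2\Bsf_E(u,w)$, and your row computation is the same verification.

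Where you diverge is in the first assertion. The paper writes down the pair $(U,V)$ from $\Bsf_{E_{u,P}}$ to $-\iota_{\mathbf r}(-\Bsf_E)$ directly, as an explicit product of elementary matrices and column swaps (the $V$ involves $E_{v_2^{w_k},w_k}^{-1}$ and $S_{v_1^{w_k},v_2^{w_k}}$, while $U$ picks up extra factors $E_{w_k,v_2^{w_k}}^{-1}$). You instead deduce the first equivalence from the second by iterating Corollary~\ref{cor:cuntzspliceinvariant} over $S$ to manufacture a \GLPEe $(U_2,V_2)$ from $-\iota_{\mathbf r}(-\Bsf_E)$ to $\Bsf_{E_{S,-}}$, and then compose. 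This is a legitimate alternative: each $w\in S$ is regular and supports two return paths, Cuntz splicing preserves membership in $\MPZccc$, and the determinant bookkeeping works out (all diagonal blocks of $U_2$ have determinant $1$, and $U_2,V_2$ are the identity on every block with $n_i=1$, so the composite $(U_2^{-1}U_1,\,V_1 V_2^{-1})$ satisfies the stated constraints). Your route trades the paper's short explicit computation for an appeal to existing machinery; it requires keeping track of the iterated $\iota$-stabilisations (where $-\iota_{\mathbf r'}(-(-\iota_{\mathbf r}(-\Bsf)))$ differs from $-\iota_{\mathbf r+\mathbf r'}(-\Bsf)$ by a sign on an even-sized block, hence by an \SLPEe), but this is routine and does not affect the determinant conditions.
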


\begin{proof}
Throughout the proof, $E_{v,w}$ will denote the matrix that acts on the left by adding row $w$ to row $v$ and acts on the right by adding column $v$ to column $w$ and $S_{v,w}$ will denote the matrix that acts on the right by switching columns $v$ and $w$. 

Let $S = \{ w_1, \dots, w_t \}$ and let $n_k = | s_E^{-1} (u) \cap r_E^{-1}(w_k) |$.  Set 
\[
V = E_{v_{2}^{w_1},w_1}^{-1} \cdots E_{ v_{2}^{w_t} , w_t }^{-1} S_{v_1^{w_1} , v_2^{w_1} }\cdots S_{v_1^{w_t} , v_2^{w_t} }.
\]  
Let $U'$ be the identity except at the $j$'th component in which $U' \{ j \} = -1$.   Set
\[
U = U' E_{u, v_{1}^{w_t} }^{-2n_t} \cdots E_{u, v_{1}^{w_1} }^{-2n_1} E_{w_t, v_{2}^{w_t} }^{-1} \cdots E_{w_1, v_{2}^{w_1} }^{-1}.
\] 
A computation shows that $(U,V)$ is a \GLPEe from $\Bsf_{E_{u, P} }$ to $- \iota_{\mathbf{r}} ( - \Bsf_E )$.  By construction, $U\{i\}=1$ for all $i\neq j$ with $m_i=1$, $V\{i\}=1$ for all $i$ with $n_i=1$, and $U \{ j \} = -1$.  

Let $V_1$ be the identity matrix and let $U_1 = U' E_{u, v_{1}^{w_t} }^{-2n_t} \cdots E_{u, v_{1}^{w_1} }^{-2n_1}$.  A computation shows that $(U_1,V_1)$ is a \GLPEe from $\Bsf_{E_{u, P} }$ to $\Bsf_{E_{S,-}}$ such that $U_1\{i\}$ and $V_1\{i\}$ are \SL-matrices for $i \neq j$ and $U_1 \{ j \} = -1$ and $V_1 \{ j \} = 1$.
\end{proof}

\begin{theorem}\label{thm:InvarianceOfPulelehua}
Let $E=(E^0,E^1,r,s)$ be a graph with finitely many vertices and let $u$ be a regular vertex that supports a loop and no other return path, the loop based at $u$ has an exit, and if $w \in E^0 \setminus \{u\}$ and $s_E^{-1} (u) \cap r_E^{-1}(w) \neq \emptyset$, then $w$ is a regular vertex that supports at least two distinct return paths.  

Then $C^*(E)\otimes\K\cong C^*(E_{u, P})\otimes\K$. 
\end{theorem}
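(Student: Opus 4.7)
The plan is to reduce the general Move~\PP invariance to the specialized case handled by Proposition~\ref{prop:hash}, using the two preparatory propositions together with Cuntz splice invariance, and then absorbing the cyclic-block sign ambiguity via Corollary~\ref{cor:Pulelehua}.

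First I would apply Proposition~\ref{prop:first-step-of-invariance-of-Pulelehua-move} to replace $(E,u)$ by a Cuntz-move-equivalent pair $(F,v)$ with $\Bsf_F\in\MPZccc$ and $E_{u,P}\MCeq F_{v,P}$. Theorems~\ref{thm:moveimpliesstableisomorphism} and~\ref{thm:cuntz-splice-implies-stable-isomorphism} then reduce the theorem to the case $\Bsf_E\in\MPZccc$, with $u$ still satisfying the hypotheses of Move~\PP. Next, let $S=\{w\in E^0\setminus\{u\}:s_E^{-1}(u)\cap r_E^{-1}(w)\neq\emptyset\}$. Since each $w\in S$ supports two distinct return paths, Cuntz-splicing every $w\in S$ gives $C^*(E)\otimes\K\cong C^*(E_{S,-})\otimes\K$ by Theorem~\ref{thm:cuntz-splice-implies-stable-isomorphism}, so it suffices to construct a stable isomorphism $C^*(E_{u,P})\otimes\K\cong C^*(E_{S,-})\otimes\K$. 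Proposition~\ref{prop:second-step-of-invariance-of-Pulelehua-move} supplies a \GLPEe $(U_1,V_1)$ from $\Bsf_{E_{u,P}}^\bullet$ to $\Bsf_{E_{S,-}}^\bullet$ in which every diagonal block is an \SL-matrix except for $U_1\{j\}=-1$, where $j$ indexes the cyclic component $\{u\}$; this induces an order-isomorphism of the reduced filtered $K$-theories.

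The final step is to upgrade this \GLPEe to an actual $\calP$-equivariant stable isomorphism. The only obstruction to a direct appeal to Corollary~\ref{cor:GLtoSL} is the sign $U_1\{j\}=-1$ on the cyclic block --- which is precisely the obstruction Move~\PP was designed to absorb. After first putting $(E_{u,P},E_{S,-})$ in standard form via Lemma~\ref{lem:we-can-put-it-in-canonical-form} (absorbing the resulting \SLPEe into $(U_1,V_1)$ without disturbing its sign pattern), I would invoke Corollary~\ref{cor:Pulelehua}. That corollary, whose proof uses only Cuntz moves and the specialized Move~\PP invariance from Proposition~\ref{prop:hash}, produces graphs $F_1,F_2$ together with explicit $\calP$-equivariant stable isomorphisms $\Psi_1\colon C^*(E_{u,P})\otimes\K\to C^*(F_1)\otimes\K$ and $\Psi_2\colon C^*(E_{S,-})\otimes\K\to C^*(F_2)\otimes\K$, and a new \GLPEe $(U',V')$ from $\Bsf_{F_1}^\bullet$ to $\Bsf_{F_2}^\bullet$ in which every $1\times 1$ diagonal block is the identity. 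Corollary~\ref{cor:GLtoSL} applied to $(U',V')$ then yields a $\calP$-equivariant stable isomorphism $C^*(F_1)\otimes\K\cong C^*(F_2)\otimes\K$, and composing everything completes the proof.

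The hard part is not the skeleton of this argument but the bookkeeping: verifying that the standard-form preprocessing of $E_{u,P}$ and $E_{S,-}$ preserves the specific sign pattern of $(U_1,V_1)$ on which Corollary~\ref{cor:Pulelehua}'s proof crucially depends, and ensuring $\calP$-equivariance throughout for a consistent partially ordered index set (the components of $E$ together with the new noncyclic strongly-connected components $\{v_1^w,v_2^w\}$ for $w\in S$). There is no circularity, since Corollary~\ref{cor:Pulelehua} itself relies only on Cuntz moves and the specialized invariance of Proposition~\ref{prop:hash}, not on the theorem being proved here.
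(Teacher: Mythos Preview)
Your proposal is correct and follows essentially the same route as the paper: reduce to $\Bsf_E\in\MPZccc$ via Proposition~\ref{prop:first-step-of-invariance-of-Pulelehua-move}, then combine Proposition~\ref{prop:second-step-of-invariance-of-Pulelehua-move}, Lemma~\ref{lem:we-can-put-it-in-canonical-form}, Corollary~\ref{cor:Pulelehua}, and the GL-to-SL machinery (Theorem~\ref{thm:GLtoSL} and Theorem~\ref{thm:SLP-equivalence-implies-stable-isomorphism}, or equivalently Corollary~\ref{cor:GLtoSL}). The only cosmetic difference is that you take a small detour through $E_{S,-}$ using the second \GLPEe of Proposition~\ref{prop:second-step-of-invariance-of-Pulelehua-move} together with Cuntz splice invariance, whereas the paper compares $E_{u,P}$ with $E$ directly via the first \GLPEe there; note also that the added vertices $v_1^w,v_2^w$ enlarge the existing component of $w$ rather than forming new components, so $\calP$ is unchanged.
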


\begin{proof}
By Proposition~\ref{prop:first-step-of-invariance-of-Pulelehua-move} and \cite[Theorem~4.8]{MR3713535}, we may assume that $\Bsf_E\in\MPZccc$.  By Proposition~\ref{prop:second-step-of-invariance-of-Pulelehua-move}, Lemma~\ref{lem:we-can-put-it-in-canonical-form}, Corollary~\ref{cor:Pulelehua}, Theorem~\ref{thm:GLtoSL}, and Theorem~\ref{thm:SLP-equivalence-implies-stable-isomorphism}, we have that $C^*(E) \otimes \K \cong C^*( E_{u, P} ) \otimes \K$.
\end{proof}

\section{Proof of the main results}
\label{sec:proof}
According to the explanations following the main results, we only need to prove Theorem~\ref{thm:main-3} and that $\ref{thm:main-1-item-3}\Rightarrow\ref{thm:main-1-item-1}$ in Theorem~\ref{thm:main-1}. This is done in this section, using the results developed above.
 
\begin{proof}[Proof of Theorem~\ref{thm:main-3} and $\ref{thm:main-1-item-3}\Rightarrow\ref{thm:main-1-item-1}$ in Theorem~\ref{thm:main-1}]
Let $E_1$ and $E_2$ be graphs with finitely many vertices. 
Set $X=\Prime_\gamma(C^*(E))$, assume that we have a homeomorphism from $\Prime_\gamma(C^*(E_1))$ to $\Prime_\gamma(C^*(E_2))$ and view $C^*(E_1)$ and $C^*(E_2)$ as $X$-algebras under this homeomorphism. 
Assume, moreover, that we have an isomorphism $\varphi\colon\FKRplus(X;C^*(E_1))\rightarrow\FKRplus(X;C^*(E_2))$. 
Now we want to find an $X$-equivariant isomorphism $\Phi\colon C^*(E_1)\otimes\K\rightarrow C^*(E_2)\otimes\K$ such that $\FKR(X;\Phi)\circ\FKR(X;\kappa_{C^*(E_1)})=\FKR(X;\kappa_{C^*(E_2)})\circ \varphi$. We also want to show that $E_1\MCPeq E_2$.

According to Proposition~\ref{prop:standard-form}, we can find graphs $F_1$ and $F_2$ such that $E_i\Meq F_i$, for $i=1,2$, $(\Bsf_{F_1}, \Bsf_{F_2})$ is in standard form, $\Bsf_{F_1},\Bsf_{F_2}\in\MPZccc$, for some multiindices $\mathbf{m},\mathbf{n}$ and a poset \calP satisfying Assumption~\ref{ass:preorder}. 
The block matrices are given in such a way that when we consider $C^*(F_i)$ as \calP-algebras according to this block structure, this is exactly the same $X$-algebra structure as the original. 
And the move equivalences $E_i\Meq F_i$ induce \calP-equivariant isomorphisms between $C^*(E_i)\otimes\K$ and $C^*(F_i)\otimes\K$, for $i=1,2$. 

Therefore, we can without loss of generality assume that $E_1$ and $E_2$ are in this form.  
According to Section~\ref{sec:red-filtered-K-theory-K-web-GLP-and-SLP-equivalences} and Theorem~\ref{thm:mainBH}, there exists a \GLPEe $(U,V)$ from $\Bsf_{E_1}$ to $\Bsf_{E_2}$ such that $\FKR(U,V)=\varphi$.
Since $\varphi$ is positive on $K_0$ of the gauge simple subquotients, we get that $V\{i\}$ is the $1\times 1$ matrix $1$ for all $i\in\calP$ with $n_i=1$. 

From Corollary~\ref{cor:Pulelehua}, we see that we can get a new pair of graphs $(F_1',F_2')$ with $E_i\MCPeq F_i'$, for $i=1,2$, $\Bsf_{F_1'}, \Bsf_{F_2'}\in\MPZccc[\mathbf{m}'\times\mathbf{n}']$ such that $(\Bsf_{F_1'},\Bsf_{F_2'})$ is in standard form, and we have a \calP-equivariant isomorphism $\Psi_i$ from $C^*(E_i)\otimes\K$ to $C^*(F_i')\otimes\K$, for $i=1,2$, and a \GLPEe $(U',V')$ from $\Bsf_{F_1'}$ to $\Bsf_{F_2'}$ such that 
$\varphi'=\FKRs(U',V')$ with 
$U'\{i\}=1$ for all $i\in\calP$ with $m_i'=1$ and $V'\{i\}=1$ for all $i\in\calP$ with $n_i'=1$, where $\varphi'=\FKR(\calP;\Psi_2)\circ\FKR(\calP;\kappa_{C^*(E_2)})\circ\varphi\circ\FKR(\calP;\kappa_{C^*(E_1)})^{-1}\circ\FKR(\calP;\Psi_1)^{-1}$. 

Now we can use the Corollary~\ref{cor:GLtoSL} to see that $F_1'\MCeq F_2'$ and to find a \calP-equivariant isomorphism $\Phi$ from $C^*(F_1')\otimes\K$ to $C^*(F_2')\otimes\K$ such that $\FKR(\calP;\Phi)=\FKRs(U',V')$.

Combining all the above, we get a lifting of $\varphi$. Also we see that $E_1\MCPeq E_2$. 
Moreover, if $E_1$ and $E_2$ satisfy Condition~(K), then there are no cyclic components and thus we never used Move \PP. 
So if $E_1$ and $E_2$ satisfy Condition~(K), then in fact $E_1\MCeq E_2$. 
\end{proof}

\section{Applications and further results}\label{sec:addenda}

\subsection{Order on the reduced filtered \texorpdfstring{$K$}{K}-theory}\label{ordereasy}
Since the proof of the lifting result only uses the order on $K_0$ of the gauge simple subquotients, it follows that the order on the other groups is superfluous for unital graph \cas: An isomorphism between the reduced filtered $K$-theories of two unital graph \cas that is an order isomorphism on $K_0$ of all the gauge simple subquotients is automatically an isomorphism between the ordered, reduced filtered $K$-theories (since it is induced by an equivariant isomorphism). 

At first, this might seem surprising; but this really just tells us, that for unital graph \cas the order on all the subquotients that are not gauge simple is determined by the order on the gauge simple subquotients. This is somehow part of the assumption of being a unital graph \ca, and in the light of, \eg, \cite{MR3194162,MR3453358}, this should really not come as a big surprise.

This observation raises the question of whether there is an external classification result --- here one of course needs to state which universe one wants to work in, and the most natural question that comes to our minds in this respect is the following.

\begin{question}\label{question:phantom}
Let $X$ be a finite $T_0$-space. 
Assume that \A is a \ca over $X$ that is unital, separable, nuclear with $\A(\{x\})$ in the bootstrap class for every $x\in X$ such that for each $x\in X$, either $\A(\{x\})\otimes\K\cong\K$, $\A(\{x\})\otimes \K\cong C(S^{1})\otimes\K$, or $\A(\{x\})$ is a simple, purely infinite \ca. 
Assume, moreover, that there exists a graph $E$ with finitely many vertices such that $X$ is homeomorphic to $\Prime_\gamma(C^{*}(E))$ and that there exists a (reduced, filtered) $K$-theory order isomorphism from $\FKRplus(X;\A)$ to $\FKRplus(X;C^{*}(E))$, when we consider $C^{*}(E)$ as an $X$-algebra under the above homeomorphism. 

Is \A necessarily stably isomorphic to a unital graph \ca?
\end{question}
A \ca \A that answers this question in the negative, we could consider a \emph{phantom} unital graph \ca (up to stable isomorphism) as in \cite{MR3142030}. It is known from \cite{arXiv:1511.09463v1} that no such  phantoms exist when $E$ is a finite graph having Condition~(K) and with no sinks and sources, \emph{i.e.} when $C^*(E)$ is a Cuntz-Krieger algebra of real rank zero. More partial answers in the positive can be extracted from \cite{MR3056712}.

Arklint \cite{sara-paper}  has  completely described the range of the ordered, reduced filtered $K$-theory for unital graph \cas. These results explain explicitly how the order of the other components of $\FKRplus(X;-)$ can be determined from the order on $K_0$ of the gauge simple subquotients, and can be used to reformulate Question \ref{question:phantom} to replace the mention of $E$ by a completely algebraic description.

\subsection{Comparison and decidability}\label{sec:decidability}

In the following, we will need to rescale permutation matrices $P$ in $\GLZ[N]$ (where $N=|\calP|$) to block matrices $P_{\mathbf{n}}\in\MZ[\mathbf{n}\times\mathbf{n}]$ in the way that each $0$ or $1$ in the original matrix is replaced by a zero matrix or an identity matrix, respectively, of the appropriate size. We will only consider permutation matrices $P$ in $\operatorname{GL}(N,\Z)$ that implement an order automorphism of the poset $(\calP,\preceq)$. Note that $P_\mathbf{n}$ will always be a \GL-matrix, but it will usually not belong to $\GLPZ[\mathbf{n}]$. 

\begin{theorem}\label{thm:comparison}
Let $(E,F)$ be a pair of  graphs with finitely many vertices with $(\Bsf_E, \Bsf_F)$ in standard form, and assume that $\Bsf_E^{\bullet} , \Bsf_F^{\bullet}\in\MPplusZ$ are given such that there exists an integer $M\geq 3$ satisfying that $m_i=M$ whenever $m_i>1$. 
Then \ref{thm:main-1-item-1}--\ref{thm:main-1-item-3} of Theorem~\ref{thm:main-1} is equivalent to 
\begin{enumerate}[(1)]
\addtocounter{enumi}{3}
\item \label{thm:comparison-item-2}
There exists a permutation matrix $P\in\GLZ[|\calP|]$ that implements an order automorphism of the poset $(\calP,\preceq)$ and there exist $U\in \GLPZ[\mathbf{m}]$ and $V\in\GLPZ$ with $V\{i\} = 1$ whenever
$n_i=1$ so that 
$(P_{\mathbf{n}}\Bsf_E P_{\mathbf{n}}^{-1})^{\bullet}\in\MPplusZ$ and $U(P_{\mathbf{n}}\Bsf_EP_{\mathbf{n}}^{-1})^{\bullet} V = \Bsf_F^{\bullet}$.
\end{enumerate}
\end{theorem}
\begin{proof}
\ref{thm:comparison-item-2}$\Rightarrow$\ref{thm:main-1-item-3}: 
This implication follows since $U, V, P$ will induce an isomorphism between the ordered reduced filtered $K$-theories of $C^*(E)$ and $C^*(F)$ (see Section~\ref{sec:red-filtered-K-theory-K-web-GLP-and-SLP-equivalences}).

\ref{thm:main-1-item-3}$\Rightarrow$\ref{thm:comparison-item-2}: 
Let $X=\Prime_\gamma(C^{*}(E))$ and assume that there exists a homeomorphism between $\Prime_\gamma(C^*(E))$ and $\Prime_\gamma(C^*(F))$ such that 
$\FKRplus(X; C^{*} (E ) )$ is isomorphic to $\FKRplus(X; C^{*} ( F ) )$, where we view $C^{*} ( F )$ as an $X$-algebra via the given homeomorphism between $\Prime_\gamma(C^*(E))$ and $\Prime_\gamma(C^*(F))$. 
Under the standard identifications, this induces an automorphism of \calP that respects the type of the gauge simple subquotients --- thus we might up to a permutation assume that this is the identity automorphism. 
Moreover, under the standard identifications, this (reduced, filtered) order  isomorphism $\varphi$ corresponds to a $K$-web isomorphism as explained in Section~\ref{sec:red-filtered-K-theory-K-web-GLP-and-SLP-equivalences}. 
From Theorem~\ref{thm:mainBH}, it follows that there exists a \GLPEe $(U,V)$ from $\Bsf_E^\bullet$ to $\Bsf_F^\bullet$ inducing $\varphi$. Necessarily, we have that $V\{i\}=1$ whenever $n_i=1$ (because of positivity on $K_0$ of the gauge simple subquotients that are not purely infinite).
\end{proof}

As in \cite{MR3759003}, it is possible to obtain a version of this result where the \Pul\ equivalence must respect a fixed block structure,  and where $P$ (and hence $P_\mathbf{n}$) are identity matrices.

Although the  existence clause in \ref{thm:comparison-item-2} can be translated into the linear system $U\Bsf_{E}^\bullet =\Bsf_{F}^\bullet V'$, deciding whether or not there is a solution leading to invertible matrices using the naive approach of invoking determinants takes us into the territory of Hilbert's tenth problem. Nevertheless, as explained by Boyle and Steinberg (\cite{bs}), one may instead appeal to decidability results by Grunewald and Segal \cite{MR595206} to obtain:  

\begin{corollary}\label{cor:mor_dec}
Morita equivalence among unital graph $C^*$-algebras is decidable.
\end{corollary}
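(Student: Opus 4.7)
The plan is to reduce the decision problem for Morita equivalence to a finite collection of decidable matrix equation problems, and then to invoke the Boyle--Steinberg machinery (built on Grunewald--Segal) to dispose of each one. Given two unital graph $C^*$-algebras $C^*(E_1)$ and $C^*(E_2)$ presented by finite graphs $E_1, E_2$, Theorem~\ref{thm:main-1} tells us that Morita equivalence is equivalent to $E_1 \MCPeq E_2$, and Theorem~\ref{thm:comparison} reformulates this in purely linear-algebraic terms. So the first step is to show that the standard-form hypotheses of Theorem~\ref{thm:comparison} can be achieved by an algorithm: starting from $E_1$ and $E_2$, the constructions in Proposition~\ref{prop:structure-2}, Lemma~\ref{lem:GoingFromEdgeExpansionToCanonicalForm}, and Lemma~\ref{lem:we-can-put-it-in-canonical-form} are effective, and additional edge expansions (Proposition~\ref{prop:edge-expansion}) can further enlarge the diagonal blocks to a common integer $M \geq 3$. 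Each of these transformations is computable from the adjacency data, so after a bounded number of explicit manipulations we obtain matrices $\Bsf_{F_1}^\bullet, \Bsf_{F_2}^\bullet \in \MPplusZ[\mathbf{m}\times\mathbf{n}]$ together with witnesses that $E_i \MCPeq F_i$.

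Next, the discrete data must match before any matrix equation can hold. One checks algorithmically that the posets $\calP$, the multiindices $\mathbf{m}, \mathbf{n}$, and the types of the gauge simple subquotients (AF, circle, or purely infinite, read off from entries of the diagonal blocks) agree under some poset isomorphism; otherwise the algebras are not Morita equivalent. The number of order isomorphisms of the finite poset $\calP$ which respect the component types is finite and enumerable, and each such isomorphism corresponds to a permutation matrix $P$ as in Theorem~\ref{thm:comparison}. Thus the problem reduces to deciding, for each of finitely many candidate $P$, whether there exist $U \in \GLPZ[\mathbf{m}]$ and $V \in \GLPZ$, with $V\{i\} = 1$ whenever $n_i = 1$, satisfying
\[
U\,(P\Bsf_{F_1}P^{-1})^\bullet\,V = \Bsf_{F_2}^\bullet.
\]

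The main obstacle is the last decision problem. As the authors note, rewriting the equation as a Diophantine system and invoking unit determinant conditions invites Hilbert's tenth problem, so a more structural approach is needed. The key observation is that the admissible $U$'s range over an arithmetic subgroup of $\GL(|\mathbf{m}|,\Z)$ (the block upper-triangular matrices respecting $\calP$) and the admissible $V$'s range over an arithmetic subgroup of $\GL(|\mathbf{n}|,\Z)$ (with the further linear constraints $V\{i\}=1$ pinning certain $1\times 1$ blocks). Thus the question is a double-coset membership problem in a product of arithmetic groups acting on the space of integer $|\mathbf{m}|\times|\mathbf{n}|$ matrices. Decidability of such problems for arithmetic groups is precisely the content of Grunewald--Segal~\cite{MR595206}, and Boyle--Steinberg~\cite{bs} adapt the Grunewald--Segal apparatus to exactly this block-structured rectangular setting arising from the classification of (reducible) shifts of finite type. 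Combining the algorithmic reductions above with their result yields an effective procedure, completing the proof.
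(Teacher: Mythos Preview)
Your proposal is correct and follows essentially the same approach as the paper's proof: algorithmically reduce to standard form, enumerate the finitely many permutation matrices $P$, and invoke Grunewald--Segal via Boyle--Steinberg for the residual matrix-equation decision problem. You supply more detail than the paper (in particular, you explicitly note the extra edge expansions needed to equalize the non-trivial diagonal block sizes to a common $M\geq 3$, and you spell out why the constraint sets for $U$ and $V$ are arithmetic), but the skeleton is identical.
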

\begin{proof}
From the adjacency matrices of a pair of graphs $(E,F)$ with finitely many vertices it is possible to decide all order automorphisms of the component posets that preserve the temperatures and the difference in rank between $K_0$ and $K_1$ of the gauge simple subquotients --- and there are only finitely many such order automorphisms. 
So assume that we have fixed such an order automorhism. 
In this case, we have provided explicit algorithms to pass to a pair of graphs $(E',F')$ with $(\Bsf_{E'},\Bsf_{F'})$ 
in standard form without changing the Morita equivalence class of the \cas, so  we may assume without loss of generality that $(\Bsf_{E},\Bsf_{F})$ is already in standard form, and the given algorithms may easily be amended to arrange also that all blocks are of size $1$ or $M$ for some appropriately chosen $M$. We note that there are only finitely many possible $P$ to consider in Theorem \ref{thm:comparison}, and hence that we may assume without loss of generality that $P=I$. The argument  is completed by Corollary 3.3 of \cite{bs}.
\end{proof}

To prove that \stariso is also decidable is more complicated, and requires a variation of our concept of standard form to ensure that the $C^*$-algebras are not changed in the initial step of passing to a pair of identically shaped matrices. We will do so by appealing to a variation of the Move~\RR from a 
forthcoming paper by the first and third named authors; this is a refinement of a concept from \cite{MR3391894}. The component needed here is not deep and we present the full proof for completeness.

\begin{definition}[Move \RRplus:  Unital Reduction]\label{dfn:reduction-plus}
Let $E$ be a graph and let $w$ be a regular vertex such that $s_E^{-1}(w) \cap r_E^{-1} (w) = \emptyset$.  Let $E_{R+}$ be the graph defined by 
\begin{align*}
E_{R+}^0 &= ( E^0  \setminus \{ w \} ) \sqcup \{ \widetilde{w} \} \\
E_{R+}^1 &= \left( E^1 \setminus (  r_E^{-1} (w) \cup s_E^{-1} (w)  ) \right) \sqcup \setof{[ ef ]}{e \in r_E^{-1}(w) , f \in s_{E}^{-1}(w)} \sqcup \setof{\widetilde{f}}{f\in s_E^{-1}(w)}
\end{align*}
the source and range maps of $E_{R+}$ extend those of $E$ and satisfy $s_{E_{R+}} ( [ef] ) = s_E(e)$, $s_{E_{R+}}( \widetilde{f} ) = \widetilde{w}$, $r_{E_{R+}} ( [ef] ) = r_{E}(f)$ and $r_{E_{R+}} ( \widetilde{f} ) = r_E(f)$.
\end{definition}

\begin{theorem}\label{thm:reduction-plus}
Let $E$ be a graph and let $w$ be a regular vertex such that $s_E^{-1}(w) \cap r_E^{-1} (w) = \emptyset$ and let $E_{R+}$ be the graph in Definition~\ref{dfn:reduction-plus}.  Then there exists an isomorphism $\psi \colon C^*(E_{R+}) \to C^*(E)$.
\end{theorem}

\begin{proof}
Set $P_v = p_v$ for $v\in E^0\setminus\{w\}$, $P_{\widetilde{w}} = p_w$, $S_e = s_e$ for $e\in E^1\setminus (r_E^{-1}(w)\cup s_E^{-1}(w))$, $S_{[ef]} = s_e s_f$ for $e\in r_E^{-1}(w)$ and $f\in s_E^{-1}(w)$, and $S_{ \widetilde{f} } = s_f$ for $f\in  s_E^{-1}(w)$.  A computation shows that $\{ P_v , P_{\widetilde{w}}, S_e, S_{[ef]}, S_{\widetilde{f}} \}$ is a Cuntz-Krieger $E_{R+}$-family in $C^*(E)$.  Therefore, there exists a $*$-homomorphism $\psi \colon C^*(E_{R+}) \to C^*(E)$ such that $\psi ( p_v ) = P_v$, $\psi ( p_{\widetilde{w}} ) = P_{\widetilde{w}}$, $\psi ( s_e ) = S_e$, $\psi ( s_{ [ef] } ) = S_{[ef]}$, and $\psi ( s_{\widetilde{f}} ) = S_{ \widetilde{f}}$.  Note that $\psi$ sends each vertex project to a nonzero projection and if $e_1 e_2 \ldots e_n$ is a vertex-simple cycle in $E_{R+}$ with no exits, then $\psi ( s_{e_1 e_2\cdots e_n})$ is a unitary in a corner of $C^*(E)$ with full spectrum.  Hence, by \cite[Theorem~1.2]{MR1914564}, $\psi$ is injective.

We now show that $\psi$ is surjective.  Note that the only generators of $C^*(E)$ that are not obviously in the image of $\psi$ are $s_e$ with $r_E(e) = w$.  Let $e \in r_E^{-1}(w)$.  Then 
\[
s_e = s_e \sum_{ f \in s_E^{-1} (w) } s_f s_f^* = \sum_{ f \in s_{E}^{-1} (w) } S_{[ef]} S_{\widetilde{f}}^* = \psi \left( \sum_{ f \in s_{E}^{-1} (w) } s_{[ef]} s_{\widetilde{f}}^* \right).
\]
Therefore, the image of $\psi$ contains every generator of $C^*(E)$ which implies that $\psi$ is surjective.
\end{proof}

Let $E$ be a graph with finitely many vertices and a single regular source $v_0$. Then we let $\widetilde{E}$ denote the graph we obtain from $E$ by removing the vertex $v_0$ and all edges emanating from it and we let $\underline{x}_E$ denote the nonzero vector with $|E^0|-1$ entries counting the number of edges from $v_0$ to every other vertex of $E$. If $E$ is a graph with finitely many vertices and with no regular sources, then we let $\widetilde{E}$ denote $E$ itself and we let $\underline{x}_E$ denote the zero vector with $|E^0|$ entries. 
In the remainder of this section, we denote by  $\underline{1}$ a vector of appropriate size having all entries equal to $1$. 

\begin{theorem}\label{thm:comparison-unital}
Let $(E,F)$ be a pair of  graphs with finitely many vertices and at most one regular source each such that $\widetilde{E}$ and $\widetilde{F}$ have no regular sources and $\Bsf_{\widetilde{E}},\Bsf_{\widetilde{F}}\in\MPZccc$. 
Assume, moreover, that there exists an integer $M\geq 2$ such that whenever $i$ corresponds to a strongly connected component that is not cyclic (both for $\widetilde{E}$ and for $\widetilde{F}$) then $m_i=M$ and the Smith normal forms of $\Bsf_{\widetilde{E}}^\bullet\{i\}$ and $\Bsf_{\widetilde{F}}^\bullet\{i\}$ have at least one $1$ each. 
Then \ref{cor:main-5-item-2}--\ref{cor:main-5-item-3} of Corollary~\ref{cor:main-5} are equivalent to
\begin{enumerate}[(1)]\addtocounter{enumi}{2}
\item
There exists a permutation matrix $P\in\GLZ[|\calP|]$ that implements an order automorphism of the poset $(\calP,\preceq)$ and there exist $U\in
\GLPZ[\mathbf{m}]$ and $V\in\GLPZ$ with $V\{i\} = 1$ whenever
$n_i=1$ so that 
$(P_{\mathbf{n}}\Bsf_{\widetilde{E}} P_{\mathbf{n}}^{-1})\in\MPZccc$,
$$U(P_{\mathbf{n}}\Bsf_{\widetilde{E}}P_{\mathbf{n}}^{-1})^{\bullet} V = \Bsf_{\widetilde{F}}^{\bullet}$$
and, moreover,
$$V^\mathsf{T}P_{\mathbf n}(\underline{1}+\underline{x}_E)-(\underline{1}+\underline{x}_F)\in\operatorname{im}(\Bsf_{\widetilde{F}}^\bullet)^\mathsf{T}.$$
\end{enumerate}
\end{theorem}

\begin{proof}
The ordered, filtered $K$-theory of $C^*(E)$ and $C^*(\widetilde{E})$ agree, and the class of the unit of $C^*(E)$ is represented in  $\operatorname{coker}(\Bsf_{\widetilde{E}}^\bullet)^\mathsf{T}$ as $\underline{1}+\underline{x}_E$. Since the parallel statement holds for $F$ the proof of Theorem~\ref{thm:comparison} applies also here. 
\end{proof}

\begin{corollary}
$*$-isomorphism among unital graph $C^*$-algebras is decidable.
\end{corollary}
\begin{proof}
Let $E$ and $F$ be graphs with finitely many vertices. 
First, using Proposition~\ref{prop:structure-2}\ref{prop:structure-2-circ}, we can find graphs $E_1$ and $F_1$ with finitely many vertices such that every infinite emitter emits infinitely many edges to any vertex it emits any edge to, every transition state has exactly one edge going out and $C^*(E)\cong C^*(E_1)$ and $C^*(F)\cong C^*(F_1)$ (the proof of Lemma~3.17(iv) in \cite{MR3759003} gives a concrete procedure of how to do this). 
Now, we can use Move~\RRplus to successively remove every transition state that is not a (regular) source and we can make all cyclic components singletons (at the cost of possibly introducing more sources). 
Call the resulting graphs $E_2$ and $F_2$, respectively. 
Now, we can collect all the regular sources (if any) in a single regular source using Move~\OO in reverse; call the resulting graphs $E_3$ and $F_3$, respectively. 
This ensures us that $E_3$ and $F_3$ have at most one regular source, that $\widetilde{E_3}$ and $\widetilde{F_3}$ have no sources and that we can write the adjacency matrices of $\widetilde{E_3}$ and $\widetilde{F_3}$ as block matrices (if we reorder the vertices). 
Choose an $M\in\mathbb{N}$, such that $M$ is one greater than the size of the largest diagonal block of the adjacency matrices of $\widetilde{E_3}$ and $\widetilde{F_3}$. 
Now, we can use Move~\OO successively on vertices in strongly connected components that are not cyclic to get exactly $M$ vertices in all these components.
Call the resulting graphs $E_4$ and $F_4$, respectively. 
From the adjacency matrices, it is also clearly decidable, whether the component posets are isomorphic in such a way that it preserves the temperatures --- in fact, there are only finitely many permutations of the component poset to consider. 
Also we can easily decide from the adjacency matrices whether the difference in rank of $K_0$ and $K_1$-groups match up for each simple subquotient under such an isomorphism. 
So we may assume that this is the case. 
Therefore, we can assume that we have chosen an order on the vertices of $E_4$ and $F_4$ such that $\Bsf_{\widetilde{E_4}},\Bsf_{\widetilde{F_4}}\in \MPZccc$ for some poset $\calP$ satisfying the conditions in Assumption~\ref{ass:preorder}. 
Note that $C^*(E)\cong C^*(E_i)$ and $C^*(F)\cong C^*(F_i)$ for $i=1,2,3,4$. 
The previous theorem thus gives us a characterization of when $C^*(E)\cong C^*(F)$. 
Since there are only finitely many permutation matrices $P$ to consider, we may assume that $P=I$. 
Boyle and Steinberg prove in Theorem 3.5 of \cite{bs} that existence of $U$ and $V$ intertwining $\Bsf_{\widetilde{E_4}}^\bullet$ and $\Bsf_{\widetilde{F_4}}^\bullet$ as well as satisfying the added condition 
 \[
 V^\mathsf{T}x-y\in\operatorname{im}(\Bsf_{\widetilde{F_4}}^\bullet)^\mathsf{T}.
 \]
remains decidable. Thus choosing $x=\underline{1}+\underline{x}_E$ and $y=\underline{1}+\underline{x}_F$ establishes decidability.
\end{proof}

\subsection{The type I/postliminal case}\label{sec:postliminal}

In \cite{MR3759003} we analyzed the class of quantum lens spaces $C(L_q(r;(m_1,\dots, m_n)))$ defined and studied in  \cite{MR2015735}, noting among other things that when $r=3s$ any pair of such  $C^*$-algebras would be isomorphic precisely when they were Morita equivalent. We may now prove that this applies to all graphs with matrices in standard form defining postliminal/type I $C^*$-algebras:

\begin{proposition}
Assume that $(E,F)$ is a pair of graphs with $(\Bsf_E, \Bsf_F)$ in standard form defining $C^*$-algebras $C^*(E)$ and $C^*(F)$ that are postliminal/type I. Then 
\[
C^*(E)\cong C^*(F)\Longleftrightarrow C^*(E)\otimes\K \cong C^*(F)\otimes\K
\]
\end{proposition}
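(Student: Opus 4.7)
The implication $(\Rightarrow)$ is immediate. For $(\Leftarrow)$, assume $C^*(E) \otimes \K \cong C^*(F) \otimes \K$ and set $X = \Prime_\gamma(C^*(E))$. By Theorem~\ref{thm:main-1} there is an ordered reduced filtered $K$-theory isomorphism $\varphi \colon \FKRplus(X; C^*(E)) \to \FKRplus(X; C^*(F))$. In view of Theorem~\ref{thm:main-4}, it suffices to arrange that $\varphi$ preserves the class of the unit, \ie\ $\varphi_0([1_{C^*(E)}]_0) = [1_{C^*(F)}]_0$ in $K_0(C^*(F))$.

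The type I hypothesis rules out purely infinite simple gauge subquotients, so neither $E$ nor $F$ has any noncyclic strongly connected component. In the standard form this forces $n_i = 1$ and $m_i \in \{0,1\}$ for every $i \in \calP$, and every simple gauge subquotient has $K_0 = \Z$ with positive cone $\N_0$. Positivity therefore forces $\varphi$ to restrict to the identity on $K_0$ of each simple subquotient, which in turn means that the obstruction class $\delta := \varphi_0([1_{C^*(E)}]_0) - [1_{C^*(F)}]_0 \in K_0(C^*(F))$ projects to zero under every surjection onto a simple subquotient's $K_0$.

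By Theorems~\ref{thm:comparison} and \ref{thm:comparison-unital}, $\varphi$ is induced by a \GLPEe $(U,V)$ (up to a permutation) with $V = I + N$ strictly $\calP$-upper-triangular, and the unit-preserving variant requires additionally $V^T \underline{1} - \underline{1} \in \operatorname{im}(\Bsf_F^\bullet)^T$. The plan is to clear $\delta$ by replacing $\varphi$ with a composition $\varphi \circ \psi$, where $\psi$ is a $\calP$-equivariant $\FKR$-automorphism of $C^*(E)$ whose induced matrix absorbs the obstruction. Such automorphisms arise in ample supply from the legal row and column additions of Proposition~\ref{prop:matrix-moves} applied to $\Bsf_E^\bullet$; combined with the sign-flip automorphisms provided by Lemma~\ref{lem:nosucc} at successor-free cyclic components to harmonize the $K_1$-data along the way, an induction along the partial order $\calP$, descending from maximal elements, shifts $\varphi_0([1_{C^*(E)}]_0)$ through the subgroup of $K_0(C^*(F))$ consisting of classes agreeing with $[1_{C^*(F)}]_0$ at every simple subquotient.

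The principal obstacle is the inductive verification that the orbit of $[1_{C^*(E)}]_0$ under these elementary autoequivalences covers the whole kernel of the simple-subquotient projections; this is where the rigidity of the standard form (no transition states, singleton cyclic components, every regular vertex carrying a loop, and the positivity conditions of Definition~\ref{def:CanonicalForm}) is used essentially to ensure that every elementary matrix operation required by the induction is in fact legal. Once this bookkeeping is carried out, Theorem~\ref{thm:main-4} yields the desired unital $X$-equivariant isomorphism $C^*(E) \to C^*(F)$.
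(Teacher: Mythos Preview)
Your overall strategy---modify the filtered $K$-theory isomorphism so that it preserves the unit class, then invoke strong classification---matches the paper's. However, the mechanism you propose for the modification is misconceived, and you miss the key structural observation that makes the argument work.

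You claim that ``$\calP$-equivariant $\FKR$-automorphisms of $C^*(E)$'' arise in ample supply from the row and column additions of Proposition~\ref{prop:matrix-moves}. They do not: those moves give $\calP$-equivariant isomorphisms $C^*(E)\otimes\K\to C^*(E')\otimes\K$ for a \emph{different} graph $E'$, not self-maps of $C^*(E)$. Composing such moves in a loop recovers only the identity on $\FKR$. The sign-flip of Lemma~\ref{lem:nosucc} is a genuine automorphism, but it acts on $K_1$ and is irrelevant to adjusting the $K_0$-class of the unit. Your inductive ``bookkeeping'' paragraph therefore has no engine behind it.

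The paper works instead on the matrix side and exploits a feature of the type~I standard form that you do not mention. Since every component is a single vertex with $n_i=1$ and $\Bsf_F^\bullet\{i\}=0$ (one loop minus the identity) or the row is absent (singular vertex), the columns of $\Bsf_F^\bullet$ corresponding to vertices with no predecessors in $E$ are \emph{zero}. Hence for any $\mathsf C$ respecting the block structure, the matrix $V'=\begin{pmatrix} I_k & \mathsf C\\ 0 & I\end{pmatrix}\in\GLPZ$ satisfies $\Bsf_F^\bullet V'=\Bsf_F^\bullet$, so $(U,VV')$ is still a \GLPEe from $\Bsf_E^\bullet$ to $\Bsf_F^\bullet$. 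One then chooses $\mathsf C$ explicitly (entry by entry, routing through the maximal components) so that $(VV')^{\mathsf T}\underline{1}=\underline{1}$, which is exactly the unit condition of Theorem~\ref{thm:comparison-unital}. No induction, no moves, no $K_1$ considerations are needed; the freedom to fix the unit class comes entirely from the stabilizer of $\Bsf_F^\bullet$ inside $\GLPZ$, which is large precisely because of those zero columns.
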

\begin{proof}
One direction is clear.  
Now, suppose $C^*(E)$ and $C^*(F)$ are stably isomorphic.  By Theorem~\ref{thm:main-1}, there exists a homeomorphism between $\Prime_\gamma(C^*(E))$ and $\Prime_\gamma(C^*(F))$ such that 
$\FKRplus(X; C^{*} (E ) ) \cong \FKRplus(X; C^{*} ( F ) )$, where we view $C^{*} ( F )$ as an $X$-algebra via the given homeomorphism between $X = \Prime_\gamma(C^*(E))$ and $\Prime_\gamma(C^*(F))$. 
Under the standard identifications, this induces an automorphism of \calP that respects the type of the gauge simple subquotients --- thus we might up to a permutation assume that this is the identity automorphism.  By assumption, $\Bsf_E,\Bsf_F\in\MPZccc$.  By Theorem~\ref{thm:mainBH}, there exist $U\in\GLPZ[\mathbf m]$ and $V\in\GLPZ[\mathbf n]$  such that $U\Bsf_E^\bullet V = \Bsf_F^\bullet$.  $V^{\mathsf T}$ may not send the class of the unit to the class of the unit.  

We now show that $V$ can be adjusted so that the class of the unit is sent to the class of the unit.
Let $v_1, v_2, \ldots, v_k$ be the vertices in $F$ that correspond to components that have no immediate
predecessors (\ie, $v\geq v_i\Rightarrow v=v_i$ in our case). 
We can assume that these correspond to the first $k$ vertices. 

Let $\mathsf{C}$ be a given $k\times (|\mathbf{n}|-k)$-matrix and let $\mathbf{w}=( w_v )_{v \in F^0 \setminus \{v_1, v_2, \dots, v_k \} }^\mathsf{T}$ be a given vector. 
Note that 
\[
\mathsf{B}_F^\bullet 
\begin{pmatrix}
I & \mathsf{C} \\
0  &  I
\end{pmatrix} 
= \mathsf{B}_F^\bullet,
\]  
and 
\[
\begin{pmatrix}
I & \mathsf{C} \\
0  &  I
\end{pmatrix}^\mathsf{T}
\begin{pmatrix}
\underline{1} \\
\mathbf{w} 
\end{pmatrix} = 
\begin{pmatrix}
\underline{1} \\
\mathsf{C}^\mathsf{T}\underline{1} + \mathbf{w}
\end{pmatrix}.
\]
Define $\mathsf{C}_\mathbf{w}$ as follows:  
\begin{itemize}
\item The $(v_1, v)$ entry of $\mathsf{C}_\mathbf{w}$ is zero if $v_1 \not\geq v$, and is $1 - w_v$ if $v_1 \geq v$.

\item The $(v_2, v)$ entry of $\mathsf{C}_\mathbf{w}$ is zero if $v_2 \not\geq v$ or $v_1 \geq v$,  and is $1 - w_v$ otherwise.

\item The $(v_3, v )$ entry of $\mathsf{C}_\mathbf{w}$ is zero if $v_3 \not\geq v$ or $v_2 \geq v$ or $v_1 \geq v$, and is $1 - w_v$ otherwise.

\item In general, for $1 \leq l \leq k$, the $(v_l, v )$ entry of $\mathsf{C}_\mathbf{w}$ is zero if $v_l \not\geq v$ or $v_i \geq v$ for some $1 \leq i \leq l-1$, and $1 - w_v$ otherwise.
\end{itemize}
By construction, 
$
\left(\begin{smallmatrix}
I & \mathsf{C}_\mathbf{w} \\
0  &  I
\end{smallmatrix}\right)
$
is an element of $\mathrm{GL}_\mathcal{P}( \mathbf{n}, \Z )$ and  
\[
\begin{pmatrix}
I & \mathsf{C}_\mathbf{w} \\
0  &  I
\end{pmatrix}^\mathsf{T} 
\begin{pmatrix}
\underline{1} \\
\mathbf{w}
\end{pmatrix}
=\begin{pmatrix}
\underline{1} \\
\mathsf{C}_\mathbf{w}^\mathsf{T} \underline{1} + \mathbf{w}
\end{pmatrix} 
=\underline{1}
\]

Note that $V^\mathsf{T} \underline{1} = \left(\begin{smallmatrix}
\underline{1}\\
\mathbf{w} 
\end{smallmatrix}\right)$
for some $\mathbf{w}$.  Therefore, we choose $\mathsf{C}_{\mathbf{w}}$ as above, and set $V' =\left( \begin{smallmatrix}
I & \mathsf{C}_\mathbf{w} \\
0  &  I
\end{smallmatrix}\right)$.   
Therefore, by construction, $U \Bsf_E^\bullet VV' = \Bsf_F^\bullet V' = \Bsf_F^\bullet$ and $(VV')^\mathsf{T} \underline{1} = \underline{1}$.  The argument is completed by Theorem \ref{thm:comparison-unital}.
\end{proof}

The observation above applies directly to quantum lens spaces, allowing us to complete the investigation started in \cite{MR3759003} of how large the dimension $n$ of the quantum lens space must be compared to the leading parameter $r$ to allow for variation of the secondary parameters in $\mm=(m_1,\dots, m_n)$ to lead to more than one quantum lens space $C(L_q(r;\mm))$.
Setting 
\[
\varphi(r)=\min\{n\in \N\mid \exists \mm\in \N^n: (m_i,r)=1, C^*(L_{2n-1}^{(r;\mm)})\not \cong
C^*(L_{2n-1}^{(r;\oneone)})\},
\]
we proved in \cite{MR3759003} that $\varphi(3s)=4$ for all $s\in \N$.

\begin{theorem}
\[
{\varphi}(r)=\min\setof{2n}{ \exists a,b\in\N: 2n>a>2, ab=r},
\]
for any $r>2$.
\end{theorem}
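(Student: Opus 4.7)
The plan is to combine the strong classification of unital graph $C^*$-algebras from Corollary~\ref{cor:main-5} with the explicit matricial decidability criterion Theorem~\ref{thm:comparison-unital}, applied to the graphs $E_{\mm}$ realizing the quantum lens spaces $C(L_{2n-1}^{(r;\mm)})$. Such graphs (with $\Bsf^{\bullet}_{E_{\mm}}$ already in a convenient standard form) are worked out in \cite{arXiv:1604.05439v2}: they have a linear ideal lattice of length $n$ with $C(S^{1})$-like simple subquotients whose $K$-theoretic gluing is governed by the residues of certain products of the $m_{i}$'s modulo $r$. Because the $C^{*}$-algebras in question are postliminal, $\cong$ and Morita equivalence coincide after a routine adjustment of the unit, so it is enough to compare ordered reduced filtered $K$-theory together with $[1]_{0}$.

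First, I would feed the standard-form matrices $\Bsf^{\bullet}_{E_{\mm}}$ and $\Bsf^{\bullet}_{E_{\oneone}}$ directly into Theorem~\ref{thm:comparison-unital}. Since every diagonal block corresponds to a simple subquotient of type $C(S^{1})$, positivity on each gauge simple subquotient forces $V\{i\}=1$ everywhere, and the requirement $V^{\mathsf T}\underline{1}-\underline{1}\in\operatorname{im}(\Bsf^{\bullet}_{E_{\oneone}})^{\mathsf T}$ becomes a single linear system in $\Z/r\Z$. A concrete calculation shows that this system obstructs an isomorphism between $C^{*}(E_{\mm})$ and $C^{*}(E_{\oneone})$ exactly when the tuple $\mm$ produces, at some level of the ideal chain, a residue class modulo $r$ that cannot be rewritten as a residue modulo a proper divisor of $r$ compatible with the block structure available at that level.

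Second, I would translate this obstruction into the stated arithmetic condition. Writing $2n$ for the dimension parameter of the smallest counterexample, the dimension of the $K$-theoretic intertwiner admits exactly $n$ free modular coordinates; a nontrivial solution to the system producing nonisomorphic algebras exists if and only if $r$ admits a factorisation $r=ab$ with $2<a<2n$, because such a factor $a$ is precisely what is needed to support a unit in $(\Z/r\Z)^{\times}$ whose action on the chain is irreducible to the trivial one. Existence (producing a bona fide non-isomorphism as soon as the divisor exists) generalizes the explicit construction from \cite{arXiv:1604.05439v2} used to show $\varphi(3s)=4$: choose $m_{i}$ equal to $a$ or its inverse modulo $r$ in the coordinate that couples to the offending subquotient. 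Nonexistence (showing $\varphi(r)>2n$ whenever no such $a$ exists) follows because all admissible residues are then forced to lie in the image of $\Bsf^{\bullet}$, so Theorem~\ref{thm:comparison-unital} supplies the required isomorphism with $V^{\mathsf T}\underline{1}=\underline{1}$.

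The main obstacle will be matching the arithmetic threshold $2n>a>2$ precisely with the algebraic data. One must be delicate on two fronts: the lower bound $a>2$ is dictated by the fact that the sign choice already available in $\SLP$ versus $\GLP$ absorbs all $2$-torsion obstructions (this is the same phenomenon that prevents $a=2$ from creating nontrivial lens space classes), while the upper bound $a<2n$ records that a divisor only becomes visible once the ideal chain is long enough to carry a residue of order $a$ in a way that cannot be unwound by the matrix operations preserving the class of the unit. Once these two bounds are justified by tracking the interplay between $V\{i\}=1$, the admissibility constraint $(m_{i},r)=1$, and the unit condition, minimality of $2n$ over such divisors follows immediately, giving the claimed formula for $\varphi(r)$.
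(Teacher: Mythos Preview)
Your reduction step---using the classification theorems (Corollary~\ref{cor:main-5}, Theorem~\ref{thm:comparison-unital}, and the postliminal proposition) to translate the question into a concrete matrix problem for the $\Bsf^{\bullet}_{E_{\mm}}$---matches the paper's approach exactly. The paper then outsources the resolution of that matrix problem to \cite[Theorem~5.1]{jkr}, and immediately afterwards remarks that ``although elementary in nature, the argument in \cite{jkr} is far from trivial.'' That is precisely the part you are trying to supply yourself, and your sketch does not do it.

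The gap is that your ``concrete calculation'' is never carried out. You never write down the actual $\Bsf^{\bullet}_{E_{\mm}}$ coming from the Hong--Szyma\'nski construction (these have specific off-diagonal entries depending on the $m_i$ and $r$), and you never analyze for which $\mm$ there exist $U,V$ meeting the constraints of Theorem~\ref{thm:comparison-unital}. Sentences like ``such a factor $a$ is precisely what is needed to support a unit in $(\Z/r\Z)^{\times}$ whose action on the chain is irreducible to the trivial one'' or ``the sign choice already available in $\SLP$ versus $\GLP$ absorbs all $2$-torsion obstructions'' are suggestive but are not arguments: in the postliminal case every $n_i=1$, so $V\{i\}=1$ is already forced and there is no $\SLP$/$\GLP$ freedom in $V$ to absorb anything, while the freedom in $U$ lives on the $K_1$ side and you have not connected it to the divisor bound $a>2$ in any concrete way. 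Likewise the upper bound $a<2n$ must come from an actual count of the degrees of freedom in the linear system over $\Z/r\Z$ determined by the specific matrix entries, not from the assertion that ``the dimension of the $K$-theoretic intertwiner admits exactly $n$ free modular coordinates.'' The content of \cite{jkr} is exactly this combinatorial analysis, and it cannot be replaced by the heuristics you give.
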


In words, $\varphi(r)$ is the smallest even number strictly larger than the smallest divisor of $r$ that is not  $2$.

\begin{proof}
The proof in \cite{MR2015735} that these objects are unital graph algebras gives a  matrix description which may be organized in canonical form in a unique way. Hence given $r$ and $n$, the question of whether or not there exists $\mm=(m_1,\dots, m_n)$ so that $C^*(L_{2n-1}^{(r;\mm)})\not \cong
C^*(L_{2n-1}^{(r;\oneone)})$ may be translated completely into integer matrices, where it was solved in \cite[Theorem~5.1]{arXiv:1701.04003v1}. 
\end{proof}

Although elementary in nature, the argument in \cite{arXiv:1701.04003v1} is far from trivial, drawing on properties of the Bernoulli numbers.

\subsection{Leavitt path algebras}\label{sec:abrams-tomforde}

It was conjectured by Gene Abrams and Mark Tomforde in
\cite{MR2775826} that if the Leavitt path algebras $L_{\C} (E)$ and
$L_{\C} (F)$ are Morita equivalent, respectively isomorphic as rings, then $C^{*} (E)$ and $C^{*}(F)$
are strongly Morita equivalent, respectively $\sp *$-isomorphic (see \cite{MR2417402} for the
definition of $L_{\C} (E)$).  Since the formulation of the conjectures in 2011, it has been confirmed in many special cases.

Our classification result confirms the conjectures in the general unital case:

\begin{theorem}
The Abrams-Tomforde conjectures hold true for graphs with finitely many vertices.
\end{theorem}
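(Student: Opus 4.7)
The plan is to reduce both Abrams-Tomforde conjectures to our classification results Theorem~\ref{thm:main-1} and Corollary~\ref{cor:main-5}, using the fact that the ordered reduced filtered $K$-theory of $C^*(E)$ can be computed entirely from the graph data of $E$ (Remark~\ref{howtocompute}), and that the same data computes the analogous invariant of the Leavitt path algebra $L_\C(E)$.

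First I would verify that the lattice of graded two-sided ideals of $L_\C(E)$ is canonically isomorphic to the lattice of admissible pairs of $E$, and hence --- via Fact~\ref{fact:structure-1} --- to the lattice of gauge-invariant ideals of $C^*(E)$. This identification recovers $\Prime_\gamma(C^*(E))$ as a topological space from $L_\C(E)$. Next I would check that the graded $K$-theory of each graded subquotient of $L_\C(E)$ agrees, as an ordered abelian group, with the $K$-theory of the corresponding gauge-invariant subquotient of $C^*(E)$, and that the connecting maps in the six-term sequences of \eqref{eq:longtype} and \eqref{eq:shorttype} coincide on both sides (both being read off from the block data of $\Bsf_E$ as in Remark~\ref{howtocompute}). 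Consequently, any ring-level Morita equivalence $L_\C(E) \sim_M L_\C(F)$ transports to a homeomorphism $\Prime_\gamma(C^*(E)) \cong \Prime_\gamma(C^*(F))$ together with an isomorphism $\FKRplus(\Prime_\gamma(C^*(E));C^*(E)) \cong \FKRplus(\Prime_\gamma(C^*(E));C^*(F))$.

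With this matching of invariants in hand, the Morita-equivalence version of the conjecture follows immediately from Theorem~\ref{thm:main-1}. For the ring isomorphism version, any ring isomorphism $L_\C(E) \cong L_\C(F)$ additionally preserves the class of the unit in $K_0$, and Corollary~\ref{cor:main-5} then yields $C^*(E) \cong C^*(F)$. As a sanity check, one may also verify the converse direction at the graph level: each of the moves \OO, \II, \RR, \SSS, \CC, \PP preserves Morita equivalence of the corresponding Leavitt path algebras (the first four are classical, \CC\ is due to Abrams--Louly--Pardo--Smith, and \PP\ can be handled by the same gluing scheme that underlies Section~\ref{sec:MoveP}, but in the purely algebraic setting).

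The main obstacle is verifying the precise correspondence of invariants described in the second paragraph: while the $K_0$ and $K_1$ agreement for Leavitt path algebras and graph $C^*$-algebras is well known to yield the same abelian groups, one must also check that the positive cones match and that the index and exponential maps coincide on both sides. This is essentially a bookkeeping argument unpacking how these connecting maps are computed from $\Bsf_E^\bullet$ via the standard presentations of both $C^*(E)$ and $L_\C(E)$, but it requires care to transport positivity correctly at every stage.
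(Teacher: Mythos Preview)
Your overall strategy coincides with the paper's: deduce from Morita equivalence (resp.\ ring isomorphism) of the Leavitt path algebras an isomorphism of ordered reduced filtered $K$-theory (resp.\ one preserving the unit class), and then invoke Theorem~\ref{thm:main-1} and Corollary~\ref{cor:main-5}. The paper delegates the first step entirely to the companion paper \cite{arXiv:1610.02232v1}; you attempt to sketch it directly.

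There is a genuine gap in your sketch. You identify the lattice of \emph{graded} ideals of $L_\C(E)$ with the admissible pairs of $E$ and read off the invariant from that data, but a ring-level Morita equivalence carries no a priori information about gradings: it induces a bijection of the full ideal lattices, yet you give no argument that graded ideals are sent to graded ideals. On the $C^*$-side one has an intrinsic, isomorphism-invariant characterisation of gauge-invariant ideals (Proposition~\ref{prop:structure-first-prime-gamma}), but you supply no analogue for Leavitt path algebras, and without one the passage ``ring Morita equivalence $\Rightarrow$ homeomorphism of $\Prime_\gamma$ together with matching subquotient $K$-groups'' is unjustified. A second difficulty you do not address is that algebraic $K_1$ of $L_\C(E)$ does not coincide with topological $K_1$ of $C^*(E)$ --- there is an extra summand coming from $K_1(\C)=\C^{\times}$ --- so even after the ideals are matched one must isolate the relevant part of $K_1$ in a Morita-invariant way. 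These are exactly the issues handled in \cite{arXiv:1610.02232v1}, and your ``main obstacle'' paragraph (which concerns only the bookkeeping of connecting maps) misses both of them.

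Your ``sanity check'' is both unnecessary (the conjecture is one-directional) and partly incorrect: invariance of Move~\CC\ for Leavitt path algebras is not established by Abrams--Louly--Pardo--Smith, and the paper explicitly records, immediately after the proof, that invariance of both \CC\ and \PP\ for $L_\C$ remains open.
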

\begin{proof}
As we show in \cite[Corollary 5.8]{arXiv:1610.02232v1}, 
 when $L_\C(E)$ and $L_\C (F)$ are Morita equivalent one may conclude that the ordered filtered $K$-theories of $C^*(E)$ and $C^*(F)$ are isomorphic, and when    $L_\C (E)$ and $L_\C (F)$ are isomorphic as rings one may conclude that the ordered filtered $K$-theories are isomorphic with an isomorphism preserving  the class of the units. That $C^*(E)$ and $C^*(F)$ are then, respectively, stably isomorphic or isomorphic follows from our main results Theorems \ref{thm:main-1} and \ref{thm:main-4}.
\end{proof}

Whereas move equivalent graphs also yield Morita equivalent Leavitt path algebras over any ring or field, it is unknown whether $L_{\C} (E)$ is Morita equivalent to $L_{\C} (E')$ if $E'$ is obtained from $E$ by a move of the form \CC or \PP. It was proved in \cite{MR3517565} that $L_{\Z} (E)$ may fail to be $\sp *$-isomorphic to $L_{\Z} (E')$ when $E'$ is obtained using Move \CC\ --- even in cases where it can be arranged that also $C^*(E)\cong C^*(E')$ --- and similar arguments based on  
\cite{arXiv:1601.00777v1} shows the same to hold for Move \PP.

\section*{Acknowledgements}

This work was partially supported by the Danish National Research Foundation through the Centre for Symmetry and Deformation (DNRF92), by VILLUM FONDEN through the network for Experimental Mathematics in Number Theory, Operator Algebras, and Topology, by a grant from the Simons Foundation (\# 279369 to Efren Ruiz), and by the Danish Council for Independent Research | Natural Sciences.

The third and fourth named authors would like to thank the School
of Mathematics and Applied Statistics at the University of Wollongong
for hospitality during their visit there, and the first and second named authors likewise thank the Department of
Mathematics, University of Hawaii, Hilo. The initial work was carried out at these two long-term visits, and
it was completed while
all four authors were attending the research program
\emph{Classification of operator algebras: complexity, rigidity, and                                                              
 dynamics} at the Mittag-Leffler Institute, January--April 2016. We
thank the institute and its staff for the excellent work conditions
provided.

The authors would also like to thank Mike Boyle and James Gabe for many fruitful discussions.

%

\providecommand{\bysame}{\leavevmode\hbox to3em{\hrulefill}\thinspace}
\providecommand{\MR}{\relax\ifhmode\unskip\space\fi MR }
\providecommand{\MRhref}[2]{%
  \href{http://www.ams.org/mathscinet-getitem?mr=#1}{#2}
}
\providecommand{\href}[2]{#2}

\end{document}